\newcommand{\un}{\underline}
\DeclareMathOperator{\supp}{supp}
\DeclareMathOperator{\card}{card}
\newcommand{\llb}{\llbracket}
\newcommand{\rrb}{\rrbracket}
\newcommand{\ralpha}{{\boldsymbol{\alpha}}}
\newcommand{\rbeta}{{\boldsymbol{\beta}}}
\newcommand{\rgamma}{{\boldsymbol{\gamma}}}
\newcommand{\rdelta}{{\boldsymbol{\delta}}}
\newcommand{\rsigma}{{\boldsymbol{{\sigma}}}}
\newcommand{\rtau}{{\boldsymbol{{\tau}}}}
\newcommand{\rphi}{{\boldsymbol{\varphi}}}
\newcommand{\hatralpha}{{\hat{\boldsymbol{\alpha}}}}
\newcommand{\hatrbeta}{{\hat{\boldsymbol{\beta}}}}
\newcommand{\hatrsigma}{{\hat{\boldsymbol{\sigma}}}}
\newcommand{\btheta}{\mathbin{\theta}}
\newcommand{\rx}{{\boldsymbol{x}}}
\newcommand{\ry}{{\boldsymbol{y}}}
\DeclareMathOperator{\diam}{diam}
\DeclareMathOperator{\sig}{sig}
\DeclareMathOperator{\maj}{maj}
\DeclareMathOperator{\med}{med}
\newcommand{\rlambda}{\boldsymbol{\lambda}}
\newcommand{\rkappa}{{\boldsymbol{{\kappa}}}}
\newcommand{\barsig}{{\overline\sig}}
\DeclareMathOperator{\bs}{\mathcal{S}}
\DeclareMathOperator{\B}{\mathcal{B}}
\newcommand{\lfrac}[2]{\genfrac{}{}{1pt}{}{#1}{\makebox[20pt][c]{$\scriptstyle #2$}}}
\newcommand{\hathat}[1]{%
\begingroup%
  \let\macc@kerna\z@%
  \let\macc@kernb\z@%
  \let\macc@nucleus\@empty%
  \hat{\mathchoice%
{\raisebox{.4ex}{\vphantom{\ensuremath{\displaystyle #1}}}}%
{\raisebox{.4ex}{\vphantom{\ensuremath{\textstyle #1}}}}%
{\raisebox{.3ex}{\vphantom{\ensuremath{\scriptstyle #1}}}}%
{\raisebox{.14ex}{\vphantom{\ensuremath{\scriptscriptstyle #1}}}}%
\smash{\hat{#1}}}%
\endgroup%
}
\definecolor{orange}{RGB}{255,102,0}
\definecolor{ggreen}{RGB}{0,153,0}
\definecolor{darkblue}{RGB}{0,0,255}
\definecolor{purple}{RGB}{153,51,255}
\definecolor{turq}{RGB}{72,209,204}
\definecolor{gray}{RGB}{220,220,220}
\definecolor{orange2}{RGB}{255,100,0}
\definecolor{purple2}{RGB}{159,51,250}
\definecolor{rred}{rgb}{0.9, 0.17, 0.31}
\definecolor{naugreen}{cmyk}{.43,0,.34,.38}
\definecolor{naublue}{cmyk}{1,.72,0,.32}
\definecolor{mediterranean}{cmyk}{.67,0,.08,.3}
\definecolor{rose}{cmyk}{0,1.00,.20,0}
\definecolor{darkorchid}{cmyk}{.6,.9,0,.05}
\definecolor{butterfly}{cmyk}{.95,.59,0,.10}
\definecolor{springgreen}{cmyk}{1.00,0,.70,.02}
\definecolor{darkred}{cmyk}{0,1,1,.5}
\definecolor{nectarine}{cmyk}{0,0.70,1.00,0}
\definecolor{icyblue}{cmyk}{.84,.25,0,.06}
\definecolor{manatee}{rgb}{0.59, 0.6, 0.67}
\newcommand{\textover}[3][l]{%
 \makebox[\widthof{#3}][#1]{#2}%
 }
\newcommand{\checknextarg}{\@ifnextchar\bgroup{\gobblenextarg}{\ \cdots \ }}
\newcommand{\gobblenextarg}[2]{\underset{\textover[c]{$\scriptstyle #1$}{$\scriptstyle +++$}}{\un{\textover[c]{$#2$}{$s$}}}\@ifnextchar\bgroup{\gobblenextarg}{ \ \cdots }}
\theoremstyle{definition}
\newtheorem{theorem}{Theorem}[section]
\newtheorem{corollary}[theorem]{Corollary}
\newtheorem{lemma}[theorem]{Lemma}
\newtheorem{conjecture}[theorem]{Conjecture}
\newtheorem{proposition}[theorem]{Proposition}
\newtheorem{example}[theorem]{Example}
\tikzset{
my box/.style = {
, line cap = round
, line join = round
}
}
\newcommand{\highlight}[3]{
\path [my box, line width = #1, draw = #2,opacity=.2] #3;
}
\newcommand{\convexpath}[2]{
[
create hullnodes/.code={
\global\edef\namelist{#1}
\foreach [count=\counter] \nodename in \namelist {
\global\edef\numberofnodes{\counter}
\node at (\nodename) [draw=none,name=hullnode\counter] {};
}
\node at (hullnode\numberofnodes) [name=hullnode0,draw=none] {};
\pgfmathtruncatemacro\lastnumber{\numberofnodes+1}
\node at (hullnode1) [name=hullnode\lastnumber,draw=none] {};
},
create hullnodes
]
($(hullnode1)!#2!-90:(hullnode0)$)
\foreach [
evaluate=\currentnode as \previousnode using \currentnode-1,
evaluate=\currentnode as \nextnode using \currentnode+1
] \currentnode in {1,...,\numberofnodes} {
  let
\p1 = ($(hullnode\currentnode)!#2!-90:(hullnode\previousnode)$),
\p2 = ($(hullnode\currentnode)!#2!90:(hullnode\nextnode)$),
\p3 = ($(\p1) - (hullnode\currentnode)$),
\n1 = {atan2(\y3,\x3)},
\p4 = ($(\p2) - (hullnode\currentnode)$),
\n2 = {atan2(\y4,\x4)},
\n{delta} = {-Mod(\n1-\n2,360)}
  in 
{-- (\p1) arc[start angle=\n1, delta angle=\n{delta}, radius=#2] -- (\p2)}
}
-- cycle
}
\begin{document}

\title{Braid graphs in simply-laced triangle-free Coxeter systems are median}
\author{Jillian Barnes, Jadyn V.~Breland, Dana C.~Ernst, Ruth Perry}
 
\address{
Department of Mathematics and Statistics,
Northern Arizona University PO Box 5717,
Flagstaff, AZ 86011
}
\email{jdb546@nau.edu, Dana.Ernst@nau.edu, Ruth.Perry@nau.edu}

\address{
Mathematics Department,
University of California Santa Cruz,
1156 High Street,
Santa Cruz, CA 95064
}
\email{jbreland@ucsc.edu}

\subjclass[2010]{20F55, 05C60, 05E15, 05A05}
\keywords{Coxeter group, braid class, braid graph, partial cube, median graph}

\begin{abstract}
Any two reduced expressions for the same Coxeter group element are related by a sequence of commutation and braid moves. Two reduced expressions are said to be braid equivalent if they are related via a sequence of braid moves. Braid equivalence is an equivalence relation and the corresponding equivalence classes are called braid classes. Each braid class can be encoded in terms of a braid graph in a natural way.  In a recent paper, Awik et al.~proved that when a Coxeter system is simply laced and triangle free (i.e., the corresponding Coxeter graph has no three-cycles), the braid graph for a reduced expression is a partial cube (i.e., isometric to a subgraph of a hypercube). In this paper, we will provide an alternate proof of this fact, as well as determine the minimal dimension hypercube into which a braid graph can be isometrically embedded, which addresses an open question posed by Awik et al. For our main result, we prove that braid graphs in simply-laced triangle-free Coxeter systems are median, which is a strengthening of previous results.
\end{abstract}

\maketitle


\section{Introduction}

Every element of a Coxeter group can be written as an expression in the generators and when the number of generators in an expression is minimal (including multiplicity), the expression is said to be reduced. While an element in a Coxeter group may have many different reduced expressions representing it, Matsumoto's Theorem~\cite[Theorem~1.2.2]{Geck2000} states that any two reduced expressions for the same element are related via a sequence of commutations and so-called braid relations. The commutation and braid relations for a Coxeter system (i.e., a Coxeter group together with a distinguished set of generators) are encoded in the corresponding Coxeter graph. If all of the braid relations of a Coxeter system are of length three, we say that the Coxeter system is simply laced. In addition, if the corresponding Coxeter graph does not contain any three cycles, we say that the Coxeter system is triangle free. 

In light of Matsumoto's Theorem, for a fixed element $w$ in a Coxeter group, we define the Matsumoto graph of $w$ to be the graph having vertex set equal to the set of reduced expressions of $w$, where two vertices are connected by an edge if the corresponding reduced expressions are related by a single commutation or braid move. In~\cite{Bergeron2015}, the authors proved that for finite Coxeter systems, every cycle in a Matsumoto graph has even length. This result was extended to arbitrary Coxeter systems in~\cite{Grinberg2017}. In particular, every Matsumoto graph is bipartite.

Matsumoto's Theorem inspires two different equivalence relations on the set of reduced expressions for a group element. Two reduced expressions for the same Coxeter group element are said to be commutation equivalent if we can obtain one from the other via a sequence of commutation moves. Analogously, we define two reduced expressions to be braid equivalent if they are related by a sequence of braid moves. The corresponding equivalence classes are referred to as commutation classes and braid classes, respectively.

Commutation classes have been studied extensively in the literature, often in the context of Coxeter systems of type $A_n$ (i.e., the symmetric group $S_{n+1}$ with adjacent transpositions as the generating set). For example, see \cite{Bedard1999, Denoncourt2016, Elnitsky1997, Gutierres2020, Gutierres2022, Jonsson2009, Tenner2006, Tenner2023}.  In contrast, braid classes have received very little focused attention. However, braid classes have appeared in the work of Bergeron, Ceballos, and Labb\'e~\cite{Bergeron2015} while Zollinger~\cite{Zollinger1994a} provided formulas for the cardinality of braid classes in the case of Coxeter systems of type $A_n$. Fishel et al.~\cite{Fishel2018} provided upper and lower bounds on the number of reduced expressions for a fixed permutation in Coxeter systems of type $A_n$ by simultaneously studying the interaction between commutation and braid classes. In~\cite{ABCE2024}, Awik et al.~initiated a study of the architecture of braid classes in a special class of Coxeter systems. This paper aims to extend these results.

Each braid class can be encoded in terms of a graph in a natural way. The braid graph for a reduced expression is defined to be the graph with vertex set equal to the corresponding braid class, where two vertices are connected by an edge if the corresponding reduced expressions are related via a single braid move.  Note that every braid graph is equal to one of the connected components of the graph obtained by deleting the edges corresponding to commutation moves in the Matsumoto graph for the corresponding group element. In~\cite{ABCE2024}, the authors proved that every reduced expression in a simply-laced Coxeter system has a unique factorization as a product of so-called links, which in turn induces a decomposition of the braid graph into a box product of the braid graphs for each link factor. Moreover, the authors proved that when the Coxeter system is also triangle free, the braid graph for a reduced expression is a partial cube, which are graphs isometric to a subgraph of a hypercube. 

The distance between to two vertices in a hypercube is simply the number of coordinates in which they differ. This metric is inherited by partial cubes, which play an important role in areas such as parallel computing and coding theory, as one of their key features is the ability to efficiently compute distances between vertices.  See~\cite{ovchinnikov2008partial} and the references therein for a thorough survey of partial cubes. Braid graphs in simply-laced triangle-free Coxeter systems provide a wealth of examples of naturally-occurring partial cubes.  

A median graph is a graph in which every three vertices $u$, $v$, and $w$ have a unique median. That is, there is a unique vertex that simultaneously lies on a geodesic between $u$ and $v$, a geodesic between $u$ and $w$, and a geodesic between $v$ and $w$. Every median graph is a partial cube. The concept of median graph originated in the work of Birkhoff and Kiss~\cite{BirkhoffKiss1947} and Avann~\cite{Avann1961}, but the first time that these graphs were explicitly referred to as median appears to have been in a paper by Nebesky~\cite{Nebesky1971} in 1971. Median graphs have a vast literature. Mulder~\cite{Mulder2010} provides a thorough introduction into the structure theory of median graphs and presents an overview of their many applications. 
Also see~\cite{Klavzar1999} for a survey of median graphs that contains over fifty different characterizations. One application of median graphs has been found in recent studies in phylogenetics~\cite{Bandelt2002, Dress1997}.

In this paper, we prove that braid graphs in simply-laced triangle-free Coxeter systems are median, which strengthens the results of~\cite{ABCE2024}. In addition, we provide a combinatorial characterization of the median of any three braid-equivalent reduced expressions. We also describe the geodesic and cycle structure of braid graphs in simply-laced triangle-free Coxeter systems.

This paper is organized as follows. We begin by recalling the necessary definitions and results from graph theory in Section~\ref{sec:graph theory}, which will be utilized in the sections that follow. Section~\ref{sec:Coxeter systems} introduces the basic terminology of Coxeter systems and establishes our notation. In this section, we also formally define braid classes and braid graphs. In Section~\ref{sec:architecture of braid graphs}, we provide a summary of the required results from~\cite{ABCE2024} that describe the combinatorial architecture of braid graphs in simply-laced triangle-free Coxeter systems. One of our main tools is the notion of signature, which allows one to describe a reduced expression using a minimal amount of information. Section~\ref{sec:geodetic structure} contains new results regarding the geodetic structure of braid graphs. We show that each braid move occurs at most once along a geodesic between two braid equivalent reduced expressions. Moreover, any two geodesics between a fixed pair of reduced expressions utilize the same set of braid moves. We take advantage of these results to bound the diameter of a braid graph in terms of the length of the reduced expression. We also obtain a formula for the distance between two reduced expressions in terms of signatures. In Section~\ref{sec:partial cube structure}, we determine the semicubes of a braid graph in simply-laced triangle-free Coxeter systems, as well as the equivalence classes of edges with respect to the Djoković--Winkler relation, in terms of signatures. As a consequence, we obtain a new proof that braid graphs are partial cubes and compute the isometric dimension of a braid graph, the latter of which settles a conjecture from~\cite{ABCE2024}. Next, Section~\ref{sec:cycle structure} investigates the structure of cycles in braid graphs. Section~\ref{sec:median structure} contains our main result (Theorem~\ref{thm:braid graph for link median} and Corollary~\ref{cor:braid graph for reduced exp median}), which states that the braid graph for a reduced expression in a simply-laced triangle-free Coxeter system is a median graph. Further, we describe the interval between two reduced expressions using 
signatures. Consequently, we obtain a combinatorial description of the median operator on a braid graph. We conclude with a list of conjectures and interesting open problems in Section~\ref{sec:closing}.
	
It is worth mentioning that if one replaces each commutation relation with the absence of a relation (i.e., in the notation of Section~\ref{sec:Coxeter systems}, replace $m(s,t)=2$ with $m(s,t)=\infty$), then each braid graph is actually the full Matsumoto graph for the corresponding group element. In particular, all of our results also apply in the context when the Coxeter graph is a complete graph, where each edge is labeled with 3 or $\infty$ and there are no three cycles labeled entirely with 3's.

The work contained in this paper was initiated in the first and fourth authors' master's theses~\cite{Barnes2022, Perry2024}.

\section{Required graph theory}\label{sec:graph theory}

In this section, we introduce the necessary concepts and terminology from graph theory. All of the graphs discussed throughout this paper are assumed to be finite, connected, and simple. 

Let $G$ be a graph. We will denote the vertex set of $G$ as $V(G)$ and the edge set as $E(G)$. The \emph{subgraph induced by $S \subseteq V(G)$}, denoted $G[S]$, is the graph whose vertex set is $S$ and whose edges are all the edges of $G$ that have both endpoints in $S$. A \emph{graph homomorphism} $f:G\to H$ between graphs $G$ and $H$ is a function $f:V(G)\to V(H)$ satisfying $\{f(u),f(v)\} \in E(H)$ whenever $\{u,v\}\in E(G)$. An injective graph homomorphism $f:G\to H$ is called an \emph{embedding} of $G$ into $H$. If, in addition, $\{f(u), f(v)\} \in E(H)$ implies that $\{u,v\} \in E(G)$, then we say that $f$ is an \emph{induced embedding}. If $f$ is an induced embedding, then $G$ is isomorphic to the subgraph of $H$ induced by the image of $f$. 

A \emph{geodesic} between two vertices $u$ and $v$ of $G$ is a shortest path (sequence of vertices connected by edges) between $u$ and $v$. A subset $U$ of vertices is \emph{convex} if it contains all vertices along the geodesics connecting two vertices of $U$. We define the distance between vertices $u$ and $v$ via
\[
d_G(u, v) := \text{the length of any geodesic between $u$ and $v$.}
\]
Note that if the context is clear, we will simply write $d(u,v)$ in place of $d_G(u,v)$. Since $G$ is assumed to be connected, the function $d_G$ makes $V(G)$ into a finite metric space. The \emph{diameter} of $G$ is defined via
\[
\diam(G) := \max\{d(u,v) \mid u,v \in V(G)\}.
\]
In other words, $\diam(G)$ is the length of any maximal length geodesic between any two vertices of $G$. If $d(u,v) = \diam(G)$, then $u$ and $v$ are said to be \emph{diametrical}.

An \emph{isometric embedding} $f:G\to H$ is a function $f:V(G)\to V(H)$ that satisfies $d_G(u, v) = d_H(f(u), f(v))$ for all $u,v\in V(G)$. An \emph{isometric subgraph} of $G$ is a subgraph $H$ of $G$ with the property that the inclusion map $V(G) \hookrightarrow V(H)$ is an isometric embedding. If $f:G\to H$ is an isometric embedding, then $f$ is automatically an induced embedding since preserving distance also preserves adjacency. In this case, the subgraph of $G$ induced by the image of $f$ is an isometric subgraph of $H$. However, the converse is not always true as seen in the example below. 

\begin{example}\label{ex:inducedandnot}
The embedding $f$ depicted in Figure~\ref{fig:inducedembed1} is an induced embedding. However, this map is not an isometric embedding since $d_G(a,e)=4$ while $d_H(f(a),f(e))=2$.  On the other hand, the embedding $g$ shown in Figure~\ref{fig:notinducedembed} is not an induced embedding since $\{g(d),g(a)\} \in E(H)$ while $\{d,a\} \notin E(G)$.
\end{example}

\begin{figure}[!ht]
\centering
\subcaptionbox{\label{fig:inducedembed1}}[.45\linewidth]{
\begin{tikzpicture}[every circle node/.style={draw, circle ,inner sep=1.25pt},scale=1.25]
\def\shiftA{-3.5}
\node [circle] (1) [label=left:$\scriptstyle e$] at (\shiftA,-2){};
\node [circle] (2) [label=left:$\scriptstyle d$] at (\shiftA,-1){};
\node [circle] (3) [label=left:$\scriptstyle c$] at (\shiftA,0){};
\node [circle] (4) [label=left:$\scriptstyle b$] at (\shiftA,1){};
\node [circle] (5) [label=left:$\scriptstyle a$] at (\shiftA,2){};
\node (x) at (-4.25,2) {$G$};
\node (x) at (0,2) {$H$};
\draw [turq,-, very thick] (1) to (2);
\draw [turq,-, very thick] (2) to (3);
\draw [turq,-, very thick] (3) to (4);
\draw [turq,-, very thick] (4) to (5);
\draw [very thick, ->,black] (-3,0) -- (-2,0) node[midway,above]{$f$};
\node [circle] (1) [label=left:$\scriptstyle f(e)$] at (0,-0.707){};
\node [circle] (2) [] at (.707,-1.414){};
\node [circle] (3) [label=left:$\scriptstyle f(d)$] at (-.707,0){};
\node [circle] (4) at (.707,0){};
\node [circle] (5) [label=left:$\scriptstyle f(c)$] at (0,0.707){};
\node [circle] (6) [label=right:$\scriptstyle f(a)$] at (1.414,0.707){};
\node [circle] (7) [label=left:$\scriptstyle f(b)$] at (.707,1.414){};
\draw [black,-, very thick] (1) to (2);
\draw [turq,-, very thick] (1) to (3);
\draw [black,-, very thick] (1) to (4);
\draw [turq,-, very thick] (3) to (5);
\draw [black,-, very thick] (5) to (4);
\draw [turq,-, very thick] (5) to (7);
\draw [turq,-, very thick] (6) to (7);
\draw [black,-, very thick] (6) to (4);
\begin{pgfonlayer}{background}
\highlight{8pt}{turq}{(1.center) to (3.center) to (5.center) to (7.center) to (6.center)}
\end{pgfonlayer}
\end{tikzpicture}}
\hspace{.5cm}
\subcaptionbox{\label{fig:notinducedembed}}[.45\linewidth]{
\begin{tikzpicture}[every circle node/.style={draw, circle ,inner sep=1.25pt},scale=1.25]
\def\shiftA{-3.5}
\node [circle] (1) [label=left:$\scriptstyle e$] at (\shiftA,-2){};
\node [circle] (2) [label=left:$\scriptstyle d$] at (\shiftA,-1){};
\node [circle] (3) [label=left:$\scriptstyle c$] at (\shiftA,0){};
\node [circle] (4) [label=left:$\scriptstyle b$] at (\shiftA,1){};
\node [circle] (5) [label=left:$\scriptstyle a$] at (\shiftA,2){};
\node (x) at (-4.25,2) {$G$};
\node (x) at (0,2) {$H$};
\draw [turq,-, very thick] (1) to (2);
\draw [turq,-, very thick] (2) to (3);
\draw [turq,-, very thick] (3) to (4);
\draw [turq,-, very thick] (4) to (5);
\draw [very thick, ->,black] (-3,0) -- (-2,0) node[midway,above]{$g$};
\node [circle] (1) [label=left:$\scriptstyle g(d)$] at (0,-0.707){};
\node [circle] (2) [label=left:$\scriptstyle g(e)$] at (.707,-1.414){};
\node [circle] (3) [label=left:$\scriptstyle g(c)$] at (-.707,0){};
\node [circle] (4) [label=right:$\scriptstyle g(a)$] at (.707,0){};
\node [circle] (5) [label=left:$\scriptstyle g(b)$] at (0,0.707){};
\node [circle] (6) at (1.414,0.707){};
\node [circle] (7) at (.707,1.414){};
\draw [turq,-, very thick] (1) to (2);
\draw [turq,-, very thick] (1) to (3);
\draw [black,-, very thick] (1) to (4);
\draw [turq,-, very thick] (3) to (5);
\draw [turq,-, very thick] (5) to (4);
\draw [black,-, very thick] (5) to (7);
\draw [black,-, very thick] (6) to (7);
\draw [black,-, very thick] (6) to (4);
\begin{pgfonlayer}{background}
\highlight{8pt}{turq}{(1.center) to (2.center) to (3.center) to (5.center) to (4.center)}
\end{pgfonlayer}
\end{tikzpicture}}
\caption{An induced embedding and an embedding that is not induced as described in Example~\ref{ex:inducedandnot}. Neither embedding is an isometric embedding.}\label{fig:inducedembed} 
\end{figure}
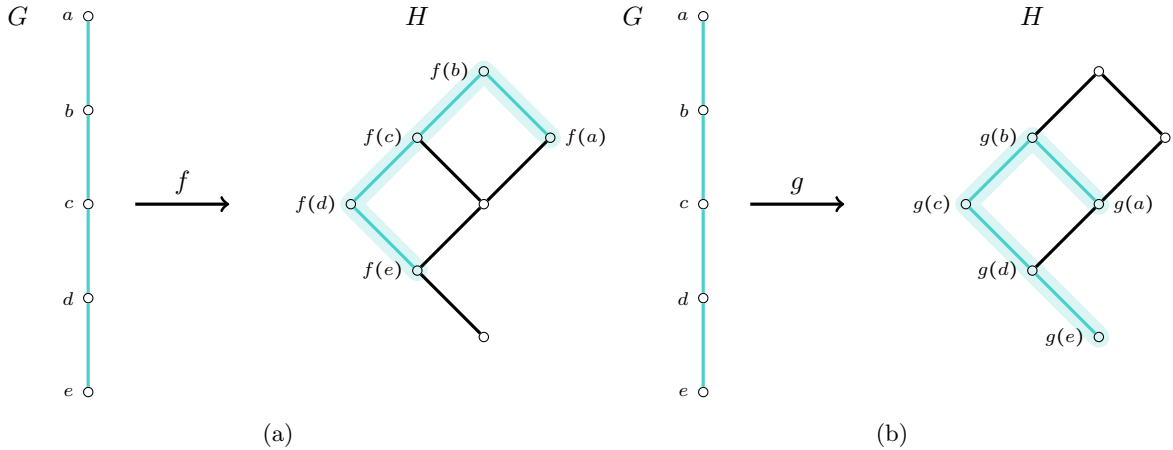

If the subgraph determined by a cycle of a graph is an isometric subgraph, we refer to the cycle and the corresponding subgraph as a \emph{isometric cycle}. If the vertices of a cycle form a convex set, we say that the cycle and the corresponding subgraph are a \emph{convex cycle}.  Note that every convex cycle is also an isometric cycle, but the converse is not true as the next example illustrates.

\begin{example}\label{ex:isometric vs convex}
The subgraph indicated in Figure~\ref{fig:convex cycle} is a convex cycle, and hence an isometric cycle.  On the other hand, the subgraph highlighted in Figure~\ref{fig:isometric cycle} is an isometric cycle but not a convex cycle. The subgraph highlighted in Figure~\ref{fig:cycle} is a cycle that is neither isometric nor convex.
\end{example} 

\begin{figure}[!ht]
\centering
\subcaptionbox{\label{fig:convex cycle}}[.3\linewidth]{
	\begin{tikzpicture}[every circle node/.style={draw, circle ,inner sep=1.25pt},scale=1.25]
		\node [circle] (6) at (4.8,0){};
		\node [circle] (7) [] at (4.093,0.707){};
		\node [circle] (8) [] at (5.507,0.707){};
		\node [circle] (9) [] at (4.8,1.414){};
		\node [circle] (10) [] at (4.8,-1){};
		\node [circle] (11) [] at (4.093,-.293){};
		\node [circle] (12) [] at (5.507,-.293){};
		\node [circle] (13) at (4.8,0.414){};
		\draw [turq,-, very thick] (6) to (7);
		\draw [turq,-, very thick] (6) to (8);
		\draw [turq,-, very thick] (7) to (9);
		\draw [turq,-, very thick] (9) to (8);
		\draw [black,-, very thick] (10) to (11);
		\draw [black,-, very thick] (10) to (12);
		\draw [black,-, very thick] (11) to (13);
		\draw [black,-, very thick] (13) to (12);
		\draw [black,-, very thick] (13) to (9);
		\draw [black,-, very thick] (6) to (10);
		\draw [black,-, very thick] (7) to (11);
		\draw [black,-, very thick] (8) to (12);
		\begin{pgfonlayer}{background}
			\highlight{8pt}{turq}{(7.center) to (9.center) to (8.center) to (6.center) to (7.center)}
		\end{pgfonlayer}
\end{tikzpicture}}
\hspace{.5cm}
\subcaptionbox{\label{fig:isometric cycle}}[.3\linewidth]{\begin{tikzpicture}[every circle node/.style={draw, circle ,inner sep=1.25pt},scale=1.25]
\node [circle] (6) at (4.8,0){};
\node [circle] (7) [] at (4.093,0.707){};
\node [circle] (8) [] at (5.507,0.707){};
\node [circle] (9) [] at (4.8,1.414){};
\node [circle] (10) [] at (4.8,-1){};
\node [circle] (11) [] at (4.093,-.293){};
\node [circle] (12) [] at (5.507,-.293){};
\node [circle] (13) at (4.8,0.414){};
\draw [black,-, very thick] (6) to (7);
\draw [black,-, very thick] (6) to (8);
\draw [turq,-, very thick] (7) to (9);
\draw [turq,-, very thick] (9) to (8);
\draw [turq,-, very thick] (10) to (11);
\draw [turq,-, very thick] (10) to (12);
\draw [black,-, very thick] (11) to (13);
\draw [black,-, very thick] (13) to (12);
\draw [black,-, very thick] (13) to (9);
\draw [black,-, very thick] (6) to (10);
\draw [turq,-, very thick] (7) to (11);
\draw [turq,-, very thick] (8) to (12);
\begin{pgfonlayer}{background}
\highlight{8pt}{turq}{(7.center) to (9.center) to (8.center) to (12.center) to (10.center) to (11.center) to (7.center)}
\end{pgfonlayer}
\end{tikzpicture}}
\hspace{.5cm}
\subcaptionbox{\label{fig:cycle}}[.3\linewidth]{
	\begin{tikzpicture}[every circle node/.style={draw, circle ,inner sep=1.25pt},scale=1.25]
		\node [circle] (6) at (4.8,0){};
		\node [circle] (7) [] at (4.093,0.707){};
		\node [circle] (8) [] at (5.507,0.707){};
		\node [circle] (9) [] at (4.8,1.414){};
		\node [circle] (10) [] at (4.8,-1){};
		\node [circle] (11) [] at (4.093,-.293){};
		\node [circle] (12) [] at (5.507,-.293){};
		\node [circle] (13) at (4.8,0.414){};
		\draw [turq,-, very thick] (6) to (7);
		\draw [black,-, very thick] (6) to (8);
		\draw [turq,-, very thick] (7) to (9);
		\draw [turq,-, very thick] (9) to (8);
		\draw [black,-, very thick] (10) to (11);
		\draw [turq,-, very thick] (10) to (12);
		\draw [black,-, very thick] (11) to (13);
		\draw [black,-, very thick] (13) to (12);
		\draw [black,-, very thick] (13) to (9);
		\draw [turq,-, very thick] (6) to (10);
		\draw [black,-, very thick] (7) to (11);
		\draw [turq,-, very thick] (8) to (12);
		\begin{pgfonlayer}{background}
			\highlight{8pt}{turq}{(7.center) to (9.center) to (8.center) to (12.center) to (10.center) to (6.center) to (7.center)}
		\end{pgfonlayer}
\end{tikzpicture}}
\caption{A convex cycle, an isometric cycle that is not convex, and a cycle that is neither isometric nor convex, as described in Example~\ref{ex:isometric vs convex}.}\label{fig:isometric vs convex} 
\end{figure}
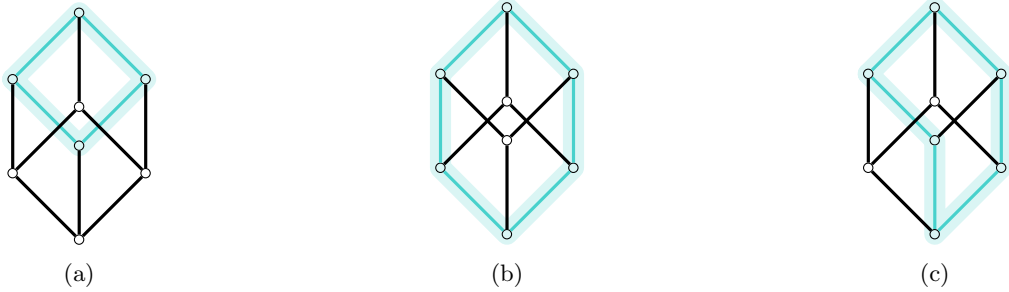

 The \emph{box product} of graphs $G_1$ and $G_2$, denoted $G_1 \square G_2$, is the graph whose vertex set is $V(G_1) \times V(G_2)$ and there is an edge from $(u_1, v_1)$ to $(u_2, v_2)$ provided either:
\begin{itemize}
\item[(a)] $u_1 = u_2$ and $\{v_1,v_2\} \in E(G_2)$, or
\item[(b)] $v_1 = v_2$ and $\{u_1,u_2\} \in E(G_1)$.
\end{itemize}
Note that the box product operation is associative and commutative up to isomorphism.

If $n \in \mathbb{N} \cup \{0\}$, then we define the set of \emph{binary strings} of length $n$ as:
\begin{center}
$\{0,1\}^n := \{a_1a_2 \cdots a_n \mid a_k \in \{0,1\}\}$.
\end{center}
Note that the empty string is the only string of length zero. The \emph{hypercube} of dimension $n \in \mathbb{N} \cup \{0\}$, denoted $Q_n$, is the graph with vertex set $V(Q_n) = \{0, 1\}^n$ and two vertices are adjacent when their corresponding binary strings differ by exactly one digit. Note that for $n,m \in \mathbb{N} \cup \{0\}$, $Q_n \square Q_m \cong Q_{n+m}$. 

A graph $G$ is called a \emph{partial cube} if it can be isometrically embedded in some hypercube $Q_n$. The \emph{isometric dimension} of a partial cube $G$, denoted $\dim_I(G)$, is defined as the minimum dimension of the hypercube into which $G$ can be isometrically embedded. That is,
\[
\dim_I(G):= \min\{n \in \mathbb{N} \cup \{0\} \mid \text{there exists an isometric embedding of} \ G \ \text{into} \ Q_n\}.
\]

\begin{example}\label{ex:partial cubes}
Figures~\ref{fig:partialcube_a} and~\ref{fig:partialcube_b} depict examples of partial cubes together with isometric embeddings into a hypercube. It turns out that the isometric dimensions are 4 and 3, respectively.
\end{example}

\begin{figure}[h!]
\centering
\subcaptionbox{\label{fig:partialcube_a}}[.45\textwidth]{
	\begin{tikzpicture}[every circle node/.style={draw, circle ,inner sep=1.25pt}, scale=0.9]
		\begin{scope}[xshift=.75em]
			\node [circle] (1) [] at (30:1){};
			\node [circle] (2) [] at (90:1){};
			\node [circle] (3) []at (150:1){};
			\node [circle] (4) [] at (210:1){};
			\node [circle] (5) [] at (0,-1){};
			\node [circle] (6) [] at (330:1){};
			\draw [turq,-, very thick] (1) to (2);
			\draw [turq,-, very thick] (1) to (6);
			\draw [turq,-, very thick] (2) to (3);
			\draw [turq,-, very thick] (3) to (4);
			\draw [turq,-, very thick] (4) to (5);
			\draw [turq,-, very thick] (5) to (6);
		\end{scope}
		\begin{scope}[xshift=-1.75em]
			\node [circle] (6) at (4.8,0){};
			\node [circle] (7) [label=left:\small \phantom{101}] at (4.093,0.707){};
			\node [circle] (8) [label=right:\small \phantom{011}] at (5.507,0.707){};
			\node [circle] (9) [label=above:\small \phantom{111}] at (4.8,1.414){};
			\node [circle] (10) [label=below:\small \phantom{000}] at (4.8,-1){};
			\node [circle] (11) [label=left:\small \phantom{100}] at (4.093,-.293){};
			\node [circle] (12) [label=right:\small \phantom{010}] at (5.507,-.293){};
			\node [circle] (13) at (4.8,0.414){};
			\node at (5.75,-.2) {\small \phantom{001}};
			\node at (6.25,0.414+.2) {\small \phantom{110}};
			\draw [black,-, very thick] (6) to (7);
			\draw [black,-, very thick] (6) to (8);
			\draw [turq,-, very thick] (7) to (9);
			\draw [turq,-, very thick] (9) to (8);
			\draw [turq,-, very thick] (10) to (11);
			\draw [turq,-, very thick] (10) to (12);
			\draw [black,-, very thick] (11) to (13);
			\draw [black,-, very thick] (13) to (12);
			\draw [black,-, very thick] (13) to (9);
			\draw [black,-, very thick] (6) to (10);
			\draw [turq,-, very thick] (7) to (11);
			\draw [turq,-, very thick] (8) to (12);
			\begin{pgfonlayer}{background}
				\highlight{8pt}{turq}{(7.center) to (9.center) to (8.center) to (12.center) to (10.center) to (11.center) to (7.center)}
			\end{pgfonlayer}
		\end{scope}
		
		\draw [very thick, right hook->,black] (1.5,0) -- (3,0);
	\end{tikzpicture}
}
\subcaptionbox{\label{fig:partialcube_b}}[.45\textwidth]{
\begin{tikzpicture}[every circle node/.style={draw, circle ,inner sep=1.25pt},scale=.9]
\node [circle] (1) [] at (0,1){};
\node [circle] (2) [] at (0,0.1){};
\node [circle] (3) [] at (-.707,1.707){};
\node [circle] (4) at (.707,1.707){};
\node [circle] (5) [] at (0,2.414){};
\node [circle] (6)  at (1.414,2.414){};
\node [circle] (7) [] at (.707,3.121){};
\draw [turq,-, very thick] (1) to (2);
\draw [turq,-, very thick] (1) to (3);
\draw [turq,-, very thick] (1) to (4);
\draw [turq,-, very thick] (3) to (5);
\draw [turq,-, very thick] (5) to (4);
\draw [turq,-, very thick] (5) to (7);
\draw [turq,-, very thick] (6) to (7);
\draw [turq,-, very thick] (6) to (4);
\begin{scope}[xshift=-1em]
\def\shiftright{6}
\def\shiftup{1.6}
\def\radone{1/(sqrt(2+sqrt(2)))}
\def\radtwo{2.5 * \radone}
\node [circle] (9) at  ({\shiftright +  \radone * cos(0)},  {\shiftup + \radone * sin(0)}){};
\node [circle] (10) at ({\shiftright + \radone * cos(45)},  {\shiftup + \radone * sin(45)}){};
\node [circle] (11) at ({\shiftright + \radone * cos(90)},  {\shiftup + \radone * sin(90)}){};
\node [circle] (12) at ({\shiftright + \radone * cos(135)}, {\shiftup + \radone * sin(135)}){};
\node [circle] (13) at ({\shiftright + \radone * cos(180)}, {\shiftup + \radone * sin(180)}){};
\node [circle] (14) at ({\shiftright + \radone * cos(225)}, {\shiftup + \radone * sin(225)}){};
\node [circle] (15) at ({\shiftright + \radone * cos(270)}, {\shiftup + \radone * sin(270)}){};
\node [circle] (16) at ({\shiftright + \radone * cos(315)}, {\shiftup + \radone * sin(315)}){};
\node [circle] (17) at ({\shiftright + \radtwo * cos(0)},{\shiftup + \radtwo * sin(0)}){};
\node [circle] (18) at ({\shiftright + \radtwo * cos(45)},  {\shiftup + \radtwo * sin(45)}){};
\node [circle] (19) at ({\shiftright + \radtwo * cos(90)},  {\shiftup + \radtwo * sin(90)}){};
\node [circle] (20) at ({\shiftright + \radtwo * cos(135)}, {\shiftup + \radtwo * sin(135)}){};
\node [circle] (21) at ({\shiftright + \radtwo * cos(180)}, {\shiftup + \radtwo * sin(180)}){};
\node [circle] (22) at ({\shiftright + \radtwo * cos(225)}, {\shiftup + \radtwo * sin(225)}){};
\node [circle] (23) at ({\shiftright + \radtwo * cos(270)}, {\shiftup + \radtwo * sin(270)}){};
\node [circle] (24) at ({\shiftright + \radtwo * cos(315)}, {\shiftup + \radtwo * sin(315)}){};
\draw [black,-, very thick] (9) to (24);
\draw [turq,-, very thick] (24) to (23);
\draw [black,-, very thick] (23) to (14);
\draw [black,-, very thick] (14) to (9);

\draw [black,-, very thick] (11) to (18);
\draw [black,-, very thick] (18) to (17);
\draw [turq,-, very thick] (17) to (16);
\draw [black,-, very thick] (16) to (11);

\draw [turq,-, very thick] (13) to (20);
\draw [black,-, very thick] (20) to (19);
\draw [black,-, very thick] (19) to (10);
\draw [black,-, very thick] (10) to (13);

\draw [black,-, very thick] (15) to (22);
\draw [black,-, very thick] (22) to (21);
\draw [black,-, very thick] (21) to (12);
\draw [black,-, very thick] (12) to (15);

\draw [black,-, very thick] (10) to (17);
\draw [turq,-, very thick] (17) to (24);
\draw [black,-, very thick] (24) to (15);
\draw [black,-, very thick] (15) to (10);

\draw [black,-, very thick] (12) to (19);
\draw [black,-, very thick] (19) to (18);
\draw [black,-, very thick] (18) to (9);
\draw [black,-, very thick] (9) to (12);

\draw [black,-, very thick] (14) to (21);
\draw [black,-, very thick] (21) to (20);
\draw [black,-, very thick] (20) to (11);
\draw [black,-, very thick] (11) to (14);

\draw [turq,-, very thick] (16) to (23);
\draw [turq,-, very thick] (23) to (22);
\draw [turq,-, very thick] (22) to (13);
\draw [turq,-, very thick] (13) to (16);

\begin{pgfonlayer}{background}
\highlight{8pt}{turq}{(24.center) to (23.center) to (22.center) to (13.center) to (20.center)} 
\highlight{8pt}{turq}{(23.center) to (16.center) to (13.center)}
\highlight{8pt}{turq}{(16.center) to (17.center) to (24.center)}
\end{pgfonlayer}
\end{scope}
\draw [very thick, right hook->, black] (2.75-1,1.6) -- (4.5-1,1.6) node[midway,above]{};
\end{tikzpicture}}
\caption{Examples of partial cubes.}\label{fig:partialcubes} 

\end{figure}
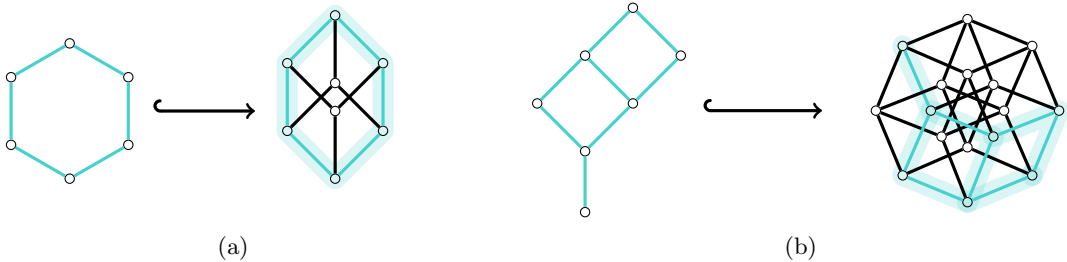

The following result from~\cite{ovchinnikov2008partial} states that the box product of two partial cubes is a partial cube.
\begin{proposition}\label{boxplus}
If $G_1$ and $G_2$ are partial cubes, then $G_1 \square G_2$ is a partial cube with $\dim_I(G_1 \square G_2) = \dim_I(G_1)+\dim_I(G_2)$.
\end{proposition}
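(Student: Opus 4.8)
The plan is to establish the two inequalities $\dim_I(G_1 \square G_2) \le \dim_I(G_1) + \dim_I(G_2)$ and $\dim_I(G_1 \square G_2) \ge \dim_I(G_1) + \dim_I(G_2)$ separately: the first via an explicit isometric embedding, the second via a coordinate-counting argument. The one ingredient used throughout is the standard distance formula for box products: for all $(u_1,v_1),(u_2,v_2) \in V(G_1 \square G_2)$,
\[
d_{G_1 \square G_2}\big((u_1,v_1),(u_2,v_2)\big) = d_{G_1}(u_1,u_2) + d_{G_2}(v_1,v_2).
\]
I would prove ``$\le$'' by concatenating a geodesic in the layer $G_1 \times \{v_1\}$ with a geodesic in the layer $\{u_2\} \times G_2$, and ``$\ge$'' by projecting an arbitrary walk onto the two coordinates to obtain walks in $G_1$ and $G_2$ whose lengths sum to that of the original walk. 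In particular each layer $G_1 \times \{v_0\}$ is an isometric subgraph isomorphic to $G_1$, and likewise for the second factor.

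For the upper bound, fix isometric embeddings $f_1 \colon G_1 \hookrightarrow Q_n$ and $f_2 \colon G_2 \hookrightarrow Q_m$ realizing $n = \dim_I(G_1)$ and $m = \dim_I(G_2)$, and define $g \colon G_1 \square G_2 \to Q_{n+m}$ by $g(u,v) = (f_1(u), f_2(v))$, using the isomorphism $Q_{n+m} \cong Q_n \square Q_m$ from the excerpt. Since Hamming distance in $Q_{n+m}$ splits as the sum of the Hamming distances on the first $n$ and last $m$ coordinates, applying the box-product distance formula on both sides shows immediately that $g$ is isometric. Hence $G_1 \square G_2$ is a partial cube with $\dim_I(G_1 \square G_2) \le n + m$.

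The heart of the argument is the lower bound, which I expect to be the main obstacle. Let $g \colon G_1 \square G_2 \hookrightarrow Q_N$ be isometric with $N = \dim_I(G_1 \square G_2)$; after composing with a coordinate-flipping automorphism of $Q_N$ (which is an isometry) we may assume $g(u_0,v_0) = 0^N$ for a fixed basepoint $(u_0,v_0)$. Let $S_1$ be the set of coordinates on which the layer $g(G_1 \times \{v_0\})$ takes the value $1$ somewhere, and define $S_2$ analogously from $\{u_0\} \times G_2$. Because all other coordinates are identically $0$ on each layer, restricting $g$ to $G_1 \times \{v_0\}$ and then to the coordinates in $S_1$ gives an isometric embedding $G_1 \hookrightarrow Q_{|S_1|}$, so $|S_1| \ge \dim_I(G_1)$, and similarly $|S_2| \ge \dim_I(G_2)$. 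It then suffices to show $S_1 \cap S_2 = \emptyset$: if some coordinate $i$ lay in both, witnessed by $g(u_1,v_0)_i = 1$ and $g(u_0,v_1)_i = 1$, then the distance formula shows $(u_0,v_0)$ lies on a geodesic from $(u_1,v_0)$ to $(u_0,v_1)$, hence $g(u_0,v_0)$ lies on a geodesic between $g(u_1,v_0)$ and $g(u_0,v_1)$ in $Q_N$; but a vertex on a geodesic between two hypercube vertices must agree with both of them in every coordinate where they already agree, forcing $g(u_0,v_0)_i = 1$ and contradicting $g(u_0,v_0) = 0^N$. Therefore $N \ge |S_1| + |S_2| \ge \dim_I(G_1) + \dim_I(G_2)$, which together with the upper bound gives equality. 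The delicate points to get right are that an isometric embedding carries geodesics to geodesics (so it preserves the relation ``$c$ lies on a geodesic between $a$ and $b$''), and the elementary hypercube lemma about vertices lying between two others, proved by a symmetric-difference computation; everything else is bookkeeping.
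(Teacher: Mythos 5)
Your proposal is correct. Note, however, that the paper does not prove this statement at all: Proposition~\ref{boxplus} is quoted from the survey~\cite{ovchinnikov2008partial}, so there is no in-paper argument to compare against. Measured against the standard proof in that reference (and against the toolkit the paper itself sets up), your route is genuinely different: the usual argument gets the dimension formula by counting Djokovi\'c--Winkler classes, observing that an edge lying in a $G_1$-layer is never $\btheta$-related to an edge lying in a $G_2$-layer and that $\btheta$-classes restricted to layers correspond to $\btheta$-classes of the factors, whence the count adds and Proposition~\ref{prop:isometric dimension} finishes. Your argument instead is a self-contained embedding-plus-coordinate-count: the product of optimal embeddings handles the upper bound, and for the lower bound you normalize a minimal embedding so the basepoint maps to $0^N$, extract the coordinate supports $S_1$, $S_2$ of the two basepoint layers (each layer being isometric by the box-product distance formula, so $|S_i|\geq\dim_I(G_i)$), and prove $S_1\cap S_2=\emptyset$ via the interval identity $d(a,b)=d(a,c)+d(c,b)$ together with the elementary fact that a Hamming-interval vertex agrees with its endpoints wherever they agree. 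All the steps check out: the distance formula for $\square$, preservation of interval membership under isometric embeddings, and the coordinate-wise equality argument are each sound, and disjointness of $S_1,S_2\subseteq\{1,\dots,N\}$ gives $N\geq|S_1|+|S_2|$. What your approach buys is independence from the semicube/$\btheta$ machinery (it uses only Hamming-distance bookkeeping); what the standard approach buys is brevity once Proposition~\ref{prop:isometric dimension} is available, and it exhibits the $\btheta$-classes of the product explicitly, which is the structural fact the rest of the paper trades on.
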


The rest of this section mimics the development in~\cite{Mulder1978} and~\cite{ovchinnikov2008partial}. Let $G$ be a graph and let $\{u,v\} \in E(G)$. Define $W_{uv} \subseteq V(G)$ via
\[
W_{uv} := \{w\in V(G) \mid d(w,u) < d(w,v)\}.
\]
That is, $W_{uv}$ is the set of vertices in $G$ that are closer to $u$ than $v$. Both the subgraph $G[W_{uv}]$ and the set $W_{uv}$ are referred to as a \emph{semicube} of $G$. The two semicubes $W_{uv}$ and $W_{vu}$ are referred to as \emph{opposite semicubes}. 

\begin{proposition}\label{prop:semicubeonestepcloser}
Let $G$ be a graph. If $w \in W_{uv}$ for some edge $\{u,v\} \in E(G)$, then $d(w,v) = d(w,u) + 1$. Moreover, $W_{uv} = \{w \in V(G) \mid d(w,v)=d(w,u)+1\}$.
\end{proposition}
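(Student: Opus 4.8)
The plan is to exploit the two facts that $d$ is a metric (in particular satisfies the triangle inequality) and that, because $G$ is finite, connected, and simple, all distances are nonnegative integers and $d(u,v)=1$ for the edge $\{u,v\}$.

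First I would establish the crude estimate that $d(w,u)$ and $d(w,v)$ differ by at most $1$ for every vertex $w$. Indeed, by the triangle inequality, $d(w,v)\le d(w,u)+d(u,v)=d(w,u)+1$ and likewise $d(w,u)\le d(w,v)+d(v,u)=d(w,v)+1$, so $|d(w,u)-d(w,v)|\le 1$. Now suppose $w\in W_{uv}$, i.e.\ $d(w,u)<d(w,v)$. Since both quantities are integers whose difference is strictly positive and at most $1$, that difference must equal exactly $1$; hence $d(w,v)=d(w,u)+1$, proving the first assertion.

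For the "moreover" part I would argue the two set inclusions separately. The inclusion $W_{uv}\subseteq\{w\mid d(w,v)=d(w,u)+1\}$ is precisely the statement just proved. Conversely, if $w$ satisfies $d(w,v)=d(w,u)+1$, then in particular $d(w,u)<d(w,v)$, so $w\in W_{uv}$ by definition of the semicube. This yields the reverse inclusion and hence equality.

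There is no real obstacle here: the only point requiring a moment's care is that the argument uses the integrality of graph distances (a strict inequality between integers that differ by at most $1$ forces the difference to be exactly $1$), which is guaranteed by the standing assumption that all graphs are finite, connected, and simple. Everything else is a direct application of the triangle inequality.
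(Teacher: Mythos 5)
Your proof is correct. The paper itself states this proposition without proof (it is part of the background material imported from the cited sources on partial cubes), and your argument—triangle inequality giving $|d(w,u)-d(w,v)|\le 1$ since $d(u,v)=1$, plus integrality of graph distances forcing the strict inequality to be an equality, followed by the easy reverse inclusion—is exactly the standard justification, with the standing assumptions (connected, simple) correctly invoked where needed.
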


That is, if $\{u,v\} \in E(G)$, then all vertices in $W_{vu}$ are exactly one step further from $u$ than $v$ in $G$. Note that some vertices may not be in either semicube, namely the ones equidistant from both $u$ and $v$.

\begin{proposition}\label{prop:bisemis}
A graph $G$ is bipartite if and only if $W_{uv}$ and $W_{vu}$ form a partition of $V(G)$ for any edge $\{u,v\} \in E(G)$. 
\end{proposition}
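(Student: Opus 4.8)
The plan is to prove both implications directly, using only elementary facts about distances and closed walks; the already-noted observation (Proposition~\ref{prop:semicubeonestepcloser}) is not really needed here, though it is consistent with the argument.

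For the forward direction, suppose $G$ is bipartite and fix an edge $\{u,v\}$. The sets $W_{uv}$ and $W_{vu}$ are automatically disjoint, since no vertex $w$ can satisfy both $d(w,u)<d(w,v)$ and $d(w,v)<d(w,u)$. So it suffices to show that every vertex lies in one of them, i.e.\ that no vertex $w$ is equidistant from $u$ and $v$. If such a $w$ existed, say with $d(w,u)=d(w,v)=k$, then concatenating a geodesic from $w$ to $u$, the edge $\{u,v\}$, and a geodesic from $v$ back to $w$ produces a closed walk of odd length $2k+1$. Since a bipartite graph contains no closed walk of odd length (every walk alternates between the two color classes), this is a contradiction. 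Hence $W_{uv}\cup W_{vu}=V(G)$ and the two semicubes partition $V(G)$.

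For the converse, suppose $W_{uv}$ and $W_{vu}$ partition $V(G)$ for every edge $\{u,v\}$. Since $G$ is connected, I would fix a base vertex $r$ and define $c(w)\in\{0,1\}$ to be the parity of $d(r,w)$, then check that $c$ is a proper $2$-coloring of $G$. Given any edge $\{u,v\}$, the vertex $r$ lies in exactly one of $W_{uv}$, $W_{vu}$ by hypothesis, so $d(r,u)\neq d(r,v)$; on the other hand $|d(r,u)-d(r,v)|\le 1$ because $u$ and $v$ are adjacent. Therefore $d(r,u)$ and $d(r,v)$ differ by exactly $1$, so $c(u)\neq c(v)$. Thus adjacent vertices receive different colors, and $G$ is bipartite.

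Neither direction presents a serious obstacle; the only point requiring care is to remember that the global connectedness assumption is what makes the parity-coloring argument in the converse go through and what guarantees $|d(r,u)-d(r,v)|\le 1$. One could instead prove the converse by contraposition, using the fact that a shortest odd cycle is an isometric subgraph and that its vertex ``antipodal'' to a given edge is equidistant from that edge's two endpoints, contradicting the partition hypothesis; but the parity-coloring argument is shorter and sidesteps that lemma entirely.
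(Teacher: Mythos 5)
Your proof is correct. The paper states Proposition~\ref{prop:bisemis} without proof, recalling it from the partial-cube literature (\cite{Mulder1978}, \cite{ovchinnikov2008partial}), and your argument --- disjointness plus the odd closed walk of length $2k+1$ ruling out equidistant vertices for the forward direction, and the parity of $d(r,\cdot)$ combined with $|d(r,u)-d(r,v)|\le 1$ for the converse --- is exactly the standard argument those sources use, so there is nothing to compare beyond noting that your handling of connectedness (needed both for $d$ to be defined and for the parity coloring) is the one point of care and you address it.
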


Using the notion of semicubes, we define the \emph{Djoković--Winkler relation} $\btheta$ on the set of edges of a graph. If $G$ is a graph, we define $\{x,y\} \btheta \{u,v\}$ if and only if $\{u,v\}$ connects a vertex in $W_{xy}$ to a vertex in $W_{yx}$. Note that $\btheta$ is reflexive and symmetric, but not necessarily transitive. 

\begin{example}\label{nottrans}
Figure~\ref{fig:nottrans} provides an example where $\btheta$ is not transitive. The vertices shaded in \textcolor{turq}{teal} are in the semicube $W_{uv}$ while the vertices shaded in \textcolor{magenta}{magenta} are in the semicube $W_{vu}$. If we consider the edges $b_1, b_2$, and $b_3$, we see that $b_1 \btheta b_2$ and $b_1 \btheta b_3$ while $b_2 \not{\btheta} b_3$. 
\end{example}

\begin{figure}[h!]
\centering
\begin{tikzpicture}[every circle node/.style={draw, circle, inner sep=1.25pt},scale=1]
\node [circle] (a) [label=below:$\scriptstyle u$] at (0,0){};
\node [circle] (b) [] at (0,2){};
\node [circle] (c) [label=below:$\scriptstyle v$] at (2,0){};
\node [circle] (d) [] at (2,2){};
\node [circle] (e) [] at (1,1){};

\draw [black,-, very thick] (a) --  (b);
\draw [black,-, very thick] (b) -- (d) node[midway,above]{$\scriptstyle b_3$};
\draw [black,-, very thick] (c) -- (d);
\draw [black,-, very thick] (c) -- (a) node[midway,below]{$\scriptstyle b_1$};
\draw [black,-, very thick] (b) -- (e) node[midway,below]{$\scriptstyle b_2$};
\draw [black,-, very thick] (e) -- (c);

\def\scaleNum{0.3}
\draw[color=turq, fill=turq, opacity=0.3] plot[smooth cycle, tension=.6, very thick] coordinates { (-\scaleNum, \scaleNum+2) (\scaleNum, \scaleNum+2) (\scaleNum, -\scaleNum) (-\scaleNum, -\scaleNum) };

\def\scaleNumm{0.4}
\draw[color=magenta,fill=magenta, opacity=0.3] plot[smooth cycle, tension=.6, very thick] coordinates { (1-\scaleNumm, 1) (2-\scaleNumm, \scaleNumm+2) (2+\scaleNumm, \scaleNumm+2) (2+\scaleNumm, -\scaleNumm) (2-\scaleNumm, -\scaleNumm) };
\end{tikzpicture}
\caption{An example in which $\btheta$ is not transitive as described in Example~\ref{nottrans}.} \label{fig:nottrans}
\end{figure}
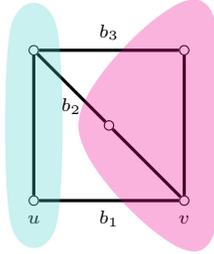

 The following proposition from~\cite{ovchinnikov2008partial} characterizes partial cubes in terms of semicubes and the Djoković--Winkler relation. 

\begin{proposition}\label{prop:TFAE partial cube}
Let $G$ be a graph. The following statements are equivalent:
\begin{itemize}
\item[(i)] $G$ is a partial cube.
\item[(ii)] $G$ is bipartite and all semicubes are convex.
\item[(iii)] $G$ is bipartite and $\btheta$ is an equivalence relation.
\item[(iv)] $G$ is bipartite and for all $\{x,y\}, \{u,v\} \in E(G)$, if $\{x,y\} \btheta \{u,v\}$, then $\{W_{xy}, W_{yx}\} = \{W_{uv}, W_{vu}\}$.
\item[(v)] $G$ is bipartite and for any pair of adjacent vertices of $G$, there is a unique pair of opposite semicubes separating these two vertices.
\end{itemize}
\end{proposition}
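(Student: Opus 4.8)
The plan is to prove the five statements equivalent along the cycle $(i)\Rightarrow(ii)\Rightarrow(iii)\Rightarrow(iv)\Rightarrow(v)\Rightarrow(i)$, treating bipartiteness as a standing backbone: it is built into (ii)--(v), and it follows from (i) because every hypercube is bipartite and subgraphs of bipartite graphs are bipartite. Beyond elementary metric-graph manipulations, the only inputs I expect to need are Proposition~\ref{prop:semicubeonestepcloser}, which identifies $W_{uv}$ with $\{w:d(w,v)=d(w,u)+1\}$, and Proposition~\ref{prop:bisemis}, which in the bipartite case makes $\{W_{uv},W_{vu}\}$ a partition of $V(G)$.

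For $(i)\Rightarrow(ii)$ I would fix an isometric embedding $f\colon G\hookrightarrow Q_n$; for an edge $\{u,v\}$, if $i$ is the unique coordinate in which $f(u)$ and $f(v)$ differ, a one-line distance computation together with Proposition~\ref{prop:semicubeonestepcloser} gives $W_{uv}=\{w\in V(G): f(w)_i=f(u)_i\}$, i.e.\ the trace on $G$ of a coordinate-fixing subcube. Coordinate-fixing subcubes are convex in $Q_n$, and convexity pulls back along isometric embeddings, so $W_{uv}$ is convex in $G$. For $(ii)\Rightarrow(iii)$ the engine is the lemma: \emph{if all semicubes are convex and $\{x,y\}\btheta\{u,v\}$ with $x\in W_{uv}$, then $W_{xy}=W_{uv}$ and $W_{yx}=W_{vu}$}. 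To prove it, if some $w\in W_{uv}$ did not lie in $W_{xy}$ then $w\in W_{yx}$ by Proposition~\ref{prop:bisemis}, so $d(w,x)=d(w,y)+1$, and a $w$--$y$ geodesic followed by the edge $\{y,x\}$ would be a $w$--$x$ geodesic through $y\notin W_{uv}$, contradicting convexity of $W_{uv}$; the symmetric argument handles $W_{yx}=W_{vu}$. Since $\btheta$ is automatically reflexive and symmetric and $\btheta$-related edges now share a common unordered pair of opposite semicubes, transitivity follows. The step $(iv)\Rightarrow(v)$ is then routine, since $\{W_{uv},W_{vu}\}$ separates $u$ and $v$ and any competing separating pair $\{W_{xy},W_{yx}\}$ forces $\{x,y\}\btheta\{u,v\}$, hence equality by (iv).

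The heavy lifting happens at $(iii)\Rightarrow(iv)$ and $(v)\Rightarrow(i)$. For the former I would show that when $\btheta$ is an equivalence relation, each $\btheta$-class $F$ is a minimal edge cut whose removal leaves exactly two components --- the two opposite semicubes of any edge of $F$ --- which is exactly what (iv) asserts, and which simultaneously re-establishes convexity of every semicube. For $(v)\Rightarrow(i)$ I would enumerate the $\btheta$-classes $F_1,\dots,F_n$, attach to each its now well-defined ordered pair of opposite semicubes $W_i^{0},W_i^{1}$, and define $f\colon V(G)\to\{0,1\}^n$ by $f(x)_i=\varepsilon$ iff $x\in W_i^{\varepsilon}$. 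Adjacent vertices differ in exactly one coordinate, since an edge lies in a unique $\btheta$-class and hence crosses exactly one of the cuts; and $f$ is isometric because along any geodesic $u=z_0,\dots,z_k=v$ no $\btheta$-class is used twice --- a repeated crossing of a cut would drag a subpath of the geodesic out of and back into a convex semicube --- so $d_G(u,v)=k$ equals the number of $\btheta$-classes separating $u$ and $v$, which is the Hamming distance of $f(u)$ and $f(v)$.

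The step I expect to be the real obstacle is precisely the passage from the ``local'' hypotheses in (iii)/(v) back to the ``global'' picture in (ii)/(iv): concretely, the claim that deleting a $\btheta$-class from a bipartite graph with transitive $\btheta$ leaves exactly two connected components. Everything else reduces to bookkeeping with Propositions~\ref{prop:semicubeonestepcloser} and~\ref{prop:bisemis} and with the convexity lemma above, whereas this one needs a genuine argument tracking how the edges of a fixed $\btheta$-class meet the components of the deleted graph; that is where I would concentrate the effort, or, failing a short self-contained proof, simply defer to~\cite{ovchinnikov2008partial}.
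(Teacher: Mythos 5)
The first thing to say is that the paper does not prove this proposition at all: it is quoted from~\cite{ovchinnikov2008partial} (it packages the classical theorems of Djokovi\'{c} and Winkler), so there is no in-paper argument to measure your proposal against; you are attempting to reprove a known theorem from scratch, and the comparison can only be with the standard proofs in the literature.

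Measured that way, your outline has the right shape, and several pieces are sound: (i)$\Rightarrow$(ii) via coordinate-fixing subcubes, the convexity lemma driving (ii)$\Rightarrow$(iii) (using the symmetry of $\btheta$ granted in the paper together with Propositions~\ref{prop:semicubeonestepcloser} and~\ref{prop:bisemis}), and (iv)$\Rightarrow$(v). But there are two genuine gaps. First, the step you yourself flag --- that when $\btheta$ is transitive, each $\btheta$-class is a cut whose removal leaves exactly the two opposite semicubes as components, i.e.\ (iii)$\Rightarrow$(iv) --- is precisely the substance of Winkler's theorem and hence of the whole proposition; leaving it as ``defer to the citation'' means the proposal does not prove the statement but reduces it to the same reference the paper already cites. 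Second, your (v)$\Rightarrow$(i) step is circular within your own cyclic scheme: you index coordinates by ``$\btheta$-classes,'' which presupposes transitivity (statement (iii)), and your ``no repeated crossing'' argument invokes convexity of semicubes (statement (ii)); neither is available when only (v) is assumed. This second gap is repairable without those inputs: index the coordinates by the distinct unordered pairs $\{W_{uv},W_{vu}\}$ arising from edges (no transitivity needed); by Proposition~\ref{prop:bisemis} each such pair partitions $V(G)$, so the map to $\{0,1\}^n$ is well defined; by the uniqueness in (v) each edge flips exactly one coordinate; and a geodesic cannot cross the same pair twice, because if edges $\{a,b\}$ and $\{d,c\}$ occur in this order along a geodesic with $a,c\in W_{ab}$ and $b,d\in W_{ba}$, then $d(c,a)<d(c,b)$ by definition of $W_{ab}$, while positions along the geodesic force $d(c,b)=d(c,a)-1$, a contradiction; isometry follows. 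With that repair the only remaining hole is (iii)$\Rightarrow$(iv), which still requires Winkler's argument (or the citation), so as it stands the proposal is an outline rather than a proof.
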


If $G$ is a partial cube, then $G$ is bipartite and $\btheta$ is an equivalence relation on $E(G)$. If $G$ is a partial cube and $\{u,v\} \in E(G)$, we denote the equivalence class of $\{u,v\}$ under $\btheta$ as $F_{uv}$. That is,
\[
F_{uv} := \{\{a,b\} \in E(G) \mid \{u,v\}\btheta\{a,b\}\} = \{\{a,b\} \in E(G) \mid a \in W_{uv}, b \in W_{vu}\}.
\]
Notice that $F_{uv}$ is the set of edges joining $W_{uv}$ and $W_{vu}$. We will refer to the edges in $F_{uv}$ as $F$-\emph{edges relative to $W_{uv}$}, or simply \emph{$F$-edges} if the context is clear. Note that while $F_{uv} = F_{vu}$, $W_{uv} \neq W_{vu}$. According to~\cite{Ovchinnikov2008media}, when $G$ is a partial cube, the semicubes $W_{uv}$ and $W_{vu}$ are complementary halfspaces in the metric space $V(G)$. In this case, the set $F_{uv}$ can be regarded as a hyperplane separating these halfspaces.

Figure~\ref{fig:semicubes} gives a rough illustration of the semicubes $W_{uv}$ and $W_{vu}$ for an edge $\{u,v\}$ in a partial cube, where all vertices contained within the \textcolor{turq}{teal} box are closer to $u$ than $v$ and all the vertices contained in the \textcolor{magenta}{magenta} box are closer to $v$ than $u$. The edges in black are the edges in $F_{uv}$. Notice that the \textcolor{magenta}{magenta} box is smaller than the \textcolor{turq}{teal} box, illustrating that opposite semicubes need not have the same cardinality.

The next proposition from~\cite{ovchinnikov2008partial} provides a method for computing the isometric dimension of a partial cube.

\begin{proposition}\label{prop:isometric dimension}
If $G$ is a partial cube, then $\dim_I(G)$ is equal to the number of equivalence classes induced by the Djoković--Winkler relation $\btheta$.
\end{proposition}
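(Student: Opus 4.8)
The plan is to prove the two inequalities $\dim_I(G) \le m$ and $\dim_I(G) \ge m$, where $m$ is the (necessarily finite, since $E(G)$ is finite) number of equivalence classes of $\btheta$; the case where $G$ is a single vertex is trivial, so assume $G$ has an edge. Since $G$ is a partial cube, Proposition~\ref{prop:TFAE partial cube} tells us that $\btheta$ is an equivalence relation and that each class is the set $F_i$ of edges joining a pair of opposite semicubes, and by Proposition~\ref{prop:bisemis} each such pair $W_i, W_i'$ partitions $V(G)$. I would fix, for each $i \in \{1, \dots, m\}$, a choice of ``distinguished side'' $W_i$.

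For the upper bound, I would define $\varphi \colon V(G) \to \{0,1\}^m$ by setting the $i$th coordinate of $\varphi(v)$ to $0$ if $v \in W_i$ and to $1$ if $v \in W_i'$, and then show that $\varphi$ is an isometric embedding into $Q_m$. If $\{a,b\} \in E(G)$, then $\{a,b\}$ lies in exactly one class $F_i$; the endpoints are separated by $\{W_i, W_i'\}$, while for every $j \ne i$ they lie on the same side (otherwise $\{a,b\}$ would join $W_j$ and $W_j'$, forcing $\{a,b\} \in F_j$). Hence $\varphi$ carries edges to edges, so $d_{Q_m}(\varphi(a), \varphi(b)) \le d_G(a,b)$ for all $a, b$. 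For the reverse inequality, fix a geodesic $a = v_0, v_1, \dots, v_k = b$ and a class $F_i$. If $a$ and $b$ lie on the same side of $\{W_i, W_i'\}$, then convexity of that semicube (Proposition~\ref{prop:TFAE partial cube}(ii)) puts the whole geodesic on one side, so it contains no edge of $F_i$. If $a$ and $b$ lie on opposite sides, let $v_p$ be the first vertex not on $a$'s side; then $v_0, \dots, v_{p-1}$ lie on $a$'s side, and since $v_p, \dots, v_k$ is a geodesic joining two vertices of the convex opposite semicube, the vertices $v_p, \dots, v_k$ all lie on $b$'s side, so $\{v_{p-1}, v_p\}$ is the unique edge of the geodesic in $F_i$. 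Adding these counts (each $0$ or $1$) over all $i$ gives $d_G(a,b) = k = \#\{i : \varphi(a)_i \ne \varphi(b)_i\} = d_{Q_m}(\varphi(a), \varphi(b))$, so $\dim_I(G) \le m$.

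For the lower bound, I would take an arbitrary isometric embedding $\psi \colon V(G) \to \{0,1\}^n$ and show $n \ge m$. Because adjacent vertices sit at Hamming distance $1$, each edge $e = \{a,b\}$ picks out the unique coordinate $c(e) \in \{1, \dots, n\}$ in which $\psi(a)$ and $\psi(b)$ differ. The key step is to show that if $c(e) = j$ for $e = \{x,y\}$, then $W_{xy} = \{w \in V(G) : \psi(w)_j = \psi(x)_j\}$: indeed, since $\psi(x)$ and $\psi(y)$ agree off coordinate $j$, the Hamming distances $d_{Q_n}(\psi(w),\psi(x))$ and $d_{Q_n}(\psi(w),\psi(y))$ differ by exactly $1$, with $w$ nearer $x$ precisely when $\psi(w)_j = \psi(x)_j$. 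Thus the opposite semicubes of $e$ are exactly the two halfspaces cut out by coordinate $c(e)$, and one then checks that $c(e) = c(e')$ if and only if $e$ and $e'$ are separated by the same pair of opposite semicubes, which by Proposition~\ref{prop:TFAE partial cube}(iv) holds exactly when $e \btheta e'$. Hence $c$ factors through a well-defined injection from the $m$ classes of $\btheta$ into $\{1, \dots, n\}$, giving $m \le n$. The two bounds together yield $\dim_I(G) = m$.

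The hard part is the isometry claim in the upper bound, specifically that every geodesic crosses each $\btheta$-class exactly once. This is precisely where the partial-cube hypothesis is essential: convexity of semicubes is what prevents a geodesic from crossing and re-crossing a class, and without it $\varphi$ is merely a graph homomorphism. Everything else is bookkeeping with Hamming distances, granted the elementary facts that hypercube edges flip a single coordinate and that an isometric embedding forces semicubes to coincide with coordinate halfspaces.
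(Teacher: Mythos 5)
The paper does not actually prove this proposition; it is quoted from the survey~\cite{ovchinnikov2008partial}, so there is no in-paper argument to compare yours against. Your blind proof is correct and is the standard argument. The upper bound is the canonical embedding with one coordinate per $\btheta$-class: this is well defined because each class determines a single pair of opposite semicubes (Proposition~\ref{prop:TFAE partial cube}(iv)) which partitions $V(G)$ (Proposition~\ref{prop:bisemis}), and your use of convexity of semicubes (Proposition~\ref{prop:TFAE partial cube}(ii)) to show a geodesic crosses each class at most once is exactly the point where the partial-cube hypothesis enters; the counting that converts this into an equality of distances is right. The lower bound correctly identifies, for an arbitrary isometric embedding $\psi$ into $Q_n$, the opposite semicubes of an edge $e$ with the two halfspaces cut out by the coordinate $c(e)$ that $e$ flips. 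The only step you leave as ``one then checks'' is the equivalence between $c(e)=c(e')$ and having the same semicube pair. The direction you need for injectivity, $c(e)=c(e')\Rightarrow e\btheta e'$, is immediate from your key step plus the definition of $\btheta$ (an edge joining the two halves of the pair for $e$ is by definition $\btheta$-related to $e$). The converse, which you need only if you insist on well-definedness rather than picking one representative edge per class, also holds: if $e=\{x,y\}$ and $e\btheta e'$, then by Proposition~\ref{prop:TFAE partial cube}(iv) the semicube pairs coincide, and since $\psi(x)$ and $\psi(y)$ differ only in coordinate $c(e)$, the endpoints of $e$ are separated by the coordinate-$c(e)$ bipartition but by no other coordinate bipartition, forcing $c(e')=c(e)$. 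With that one-line detail filled in, your proof is complete and self-contained, which is arguably a nicer state of affairs than the paper's citation.
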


\begin{figure}[h!]
\centering
\begin{tikzpicture}[every circle node/.style={draw, circle ,inner sep=1.25pt}, scale=1.25]
\draw[line width=1.5pt, draw=turq, fill=turq, opacity=0.3] (0.25,-0.25) rectangle (2.5,3.25);
\node [circle] (1) [label=left:$\scriptstyle u$] at (1.75,0.75){}; 
\node [circle] (5) [] at (1.75,1){};
\node [circle] (2) [] at (1.75, 2){};
\node (x) at (1.5,3.5) {$W_{uv}$};
\draw[line width=1.5pt, draw=magenta, fill=magenta, opacity=0.3] (3,.5) rectangle (4.5,2.25);
\node [circle] (3) [label=right:$\scriptstyle v$] at (3.75,0.75){}; 
\node [circle] (6) [] at (3.75,1){};
\node [circle] (4) [] at (3.75, 2){};
\draw [black,-, very thick] (1) to (3);
\draw [black,-, very thick] (2) to (4);
\draw [black,-, very thick] (5) to (6);
\node (y) at (3.75,2.5) {$W_{vu}$};
\node (z) at (2.75,1.5) {$\vdots$};
\end{tikzpicture}

\caption{A rough illustration of two semicubes for a partial cube together with the corresponding class of $F$-edges.}
\label{fig:semicubes}
\end{figure}
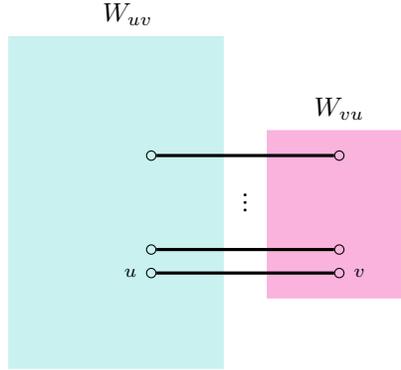

\begin{example}\label{ex:semicubes}
Consider the partial cube in Figure~\ref{fig:semi cubes}. The semicube $W_{u_1 v_1}$ is highlighted in \textcolor{turq}{teal} while the opposite semicube $W_{v_1 u_1}$ is highlighted in \textcolor{magenta}{magenta}. The corresponding $F$-edges are colored black. In Figure~\ref{fig:F-edges}, we utilized five colors to indicate each of the five equivalence classes of $F$-edges. It follows that the isometric dimension of this graph is 5 according to Proposition~\ref{prop:isometric dimension}.
\end{example}

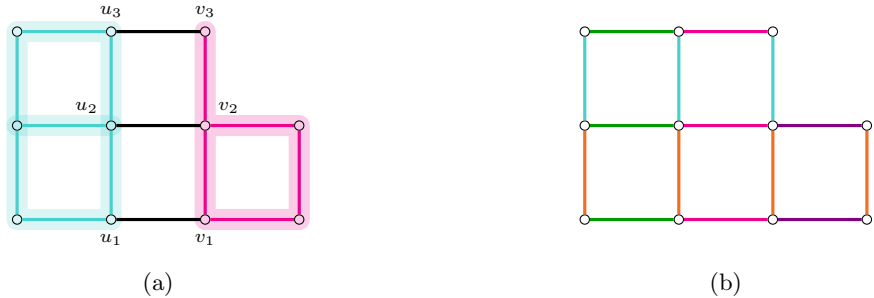
\begin{figure}[h!]
\centering
\subcaptionbox{\label{fig:semi cubes}}[.45\textwidth]{
\begin{tikzpicture}[every circle node/.style={draw, circle, inner sep=1.25pt},scale=1.25]
\node [circle] (a) [label=below:$\scriptstyle u_1$] at (0,0){};
\node [circle] (b) [label=above left:$\scriptstyle u_2$] at (0,1){};
\node [circle] (c) [label=below:$\scriptstyle v_1$] at (1,0){};
\node [circle] (d) [label=above right:$\scriptstyle v_2$] at (1,1){};

\node [circle] (e) [label=above:$\scriptstyle u_3$] at (0,2){};
\node [circle] (f) [label=above:$\scriptstyle v_3$] at (1,2){};
\node [circle] (g) [] at (2,1){};
\node [circle] (h) [] at (2,0){};

\node [circle] (i) [] at (-1,0){};
\node [circle] (j) [] at (-1,1){};
\node [circle] (k) [] at (-1,2){};


\draw [turq,-, very thick] (a) to (b);
\draw [black,-, very thick] (b) to (d);
\draw [magenta,-, very thick] (c) to (d);
\draw [black,-, very thick] (c) to (a);
\draw [turq,-, very thick] (b) to (e);
\draw [black,-, very thick] (e) to (f);
\draw [magenta,-, very thick] (f) to (d);
\draw [magenta,-, very thick] (d) to (g);
\draw [magenta,-, very thick] (g) to (h);
\draw [magenta,-, very thick] (h) to (c);
\draw [turq, -, very thick] (i) to (j);
\draw [turq, -, very thick] (j) to (k);
\draw [turq, -, very thick] (i) to (a);
\draw [turq, -, very thick] (j) to (b);
\draw [turq, -, very thick] (k) to (e);

\begin{pgfonlayer}{background}
\highlight{8pt}{turq}{(a.center) to (b.center) to (e.center) to (k.center) to (j.center) to (i.center) to (a.center)}
\highlight{8pt}{turq}{(j.center) to (b.center)}
\highlight{8pt}{magenta}{(f.center) to (d.center) to (g.center) to (h.center) to (c.center) to (d.center)}
\end{pgfonlayer}
\end{tikzpicture}
}
\subcaptionbox{\label{fig:F-edges}}[.45\textwidth]{
\begin{tikzpicture}[every circle node/.style={draw, circle, inner sep=1.25pt},scale=1.25]
\node [circle] (a) [label=below:$\phantom{\scriptstyle u_1}$] at (0,0){};
\node [circle] (b) [] at (0,1){};
\node [circle] (c) [] at (1,0){};
\node [circle] (d) [] at (1,1){};

\node [circle] (e) [] at (0,2){};
\node [circle] (f) [] at (1,2){};
\node [circle] (g) [] at (2,1){};
\node [circle] (h) [] at (2,0){};

\node [circle] (i) [] at (-1,0){};
\node [circle] (j) [] at (-1,1){};
\node [circle] (k) [] at (-1,2){};

\draw [nectarine,-, very thick] (a) to (b);
\draw [magenta,-, very thick] (b) to (d);
\draw [nectarine,-, very thick] (c) to (d);
\draw [magenta,-, very thick] (c) to (a);
\draw [turq,-, very thick] (b) to (e);
\draw [magenta,-, very thick] (e) to (f);
\draw [turq,-, very thick] (f) to (d);
\draw [violet,-, very thick] (d) to (g);
\draw [nectarine,-, very thick] (g) to (h);
\draw [violet,-, very thick] (h) to (c);
\draw [nectarine, -, very thick] (i) to (j);
\draw [turq, -, very thick] (j) to (k);
\draw [ggreen, -, very thick] (i) to (a);
\draw [ggreen, -, very thick] (j) to (b);
\draw [ggreen, -, very thick] (k) to (e);
\end{tikzpicture}
}
\caption{Example of semicubes for the partial cube from Example~\ref{ex:semicubes} together with the corresponding equivalence classes of $F$-edges.} \label{fig:semicubeequivclass}
\end{figure}

%
%

If $G$ is a graph and $\{u,v\}\in E(G)$, then we define the following sets:
\begin{align*}
U_{uv} &:=\{w\in W_{uv}\mid w\text{ is adjacent to a vertex in }W_{vu}\},\\
U_{vu} &:=\{w\in W_{vu}\mid w\text{ is adjacent to a vertex in }W_{uv}\}.
\end{align*}

The next result appears in~\cite{ovchinnikov2008partial}.

\begin{proposition}\label{prop:iso for U sets}
If $G$ is a partial cube with $\{u,v\} \in E(G)$, then the set $F_{uv}$ is a matching and induces an isomorphism between the induced subgraphs $G[U_{uv}]$ and $G[U_{vu}]$.
\end{proposition}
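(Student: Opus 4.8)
The plan is to proceed in three stages: first prove that $F_{uv}$ is a matching, then use this matching to build a bijection $\varphi\colon U_{uv}\to U_{vu}$, and finally show that $\varphi$ is a graph isomorphism $G[U_{uv}]\to G[U_{vu}]$. The common engine for all three stages is an observation I would record at the outset: if $\{a,b\}\in F_{uv}$ with $a\in W_{uv}$ and $b\in W_{vu}$, then $W_{ab}=W_{uv}$ and $W_{ba}=W_{vu}$. This follows from Proposition~\ref{prop:TFAE partial cube}(iv), since membership $\{a,b\}\in F_{uv}$ means exactly $\{u,v\}\btheta\{a,b\}$ and hence $\{W_{uv},W_{vu}\}=\{W_{ab},W_{ba}\}$ as unordered pairs; as $a$ lies in both $W_{ab}$ and $W_{uv}$, and $W_{uv}\cap W_{vu}=\emptyset$ because $G$ is bipartite and opposite semicubes partition $V(G)$ (Proposition~\ref{prop:bisemis}), we cannot have $W_{ab}=W_{vu}$, so $W_{ab}=W_{uv}$ and therefore $W_{ba}=W_{vu}$.

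For the matching claim, suppose toward a contradiction that some $w\in W_{uv}$ is incident to two distinct $F$-edges $\{w,x\}$ and $\{w,y\}$ with $x,y\in W_{vu}$. Applying the observation to each of these edges gives $W_{xw}=W_{vu}=W_{yw}$. But $x\in W_{xw}$ (since $d(x,x)=0<1=d(x,w)$), so $x\in W_{yw}$, which says $d(x,y)<d(x,w)=1$ and forces $x=y$, a contradiction. The symmetric argument rules out a vertex of $W_{vu}$ being incident to two $F$-edges, so $F_{uv}$ is a matching. Consequently I may define $\varphi$ by sending each $w\in U_{uv}$ to the unique vertex $\varphi(w)\in W_{vu}$ joined to $w$ by an $F$-edge; then $\varphi(w)\in U_{vu}$, and the analogously defined map $U_{vu}\to U_{uv}$ is a two-sided inverse, so $\varphi$ is the bijection induced by $F_{uv}$.

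The substantive stage is showing that $\varphi$ preserves and reflects adjacency. Fix distinct $w,w'\in U_{uv}$ and set $x:=\varphi(w)$, $x':=\varphi(w')$, so $x\neq x'$ and $\{w,x\},\{w',x'\}\in F_{uv}$; assume $\{w,w'\}\in E(G)$, with the goal of showing $\{x,x'\}\in E(G)$. By the observation $W_{x'w'}=W_{vu}$, and since $x\in W_{vu}$ this gives $x\in W_{x'w'}$, so Proposition~\ref{prop:semicubeonestepcloser} applied to the edge $\{x',w'\}$ yields $d(x,w')=d(x,x')+1$. On the other hand $d(x,w')\le d(x,w)+d(w,w')=1+1=2$ by the triangle inequality. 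Hence $d(x,x')\le 1$, and as $x\neq x'$ we conclude $d(x,x')=1$, i.e.\ $\{x,x'\}\in E(G)$; this edge lies in $G[U_{vu}]$ because $x,x'\in U_{vu}$. The reverse implication follows by re-running the same argument with the roles of $u$ and $v$ (hence of $\varphi$ and $\varphi^{-1}$) interchanged. Therefore $\varphi$ is an isomorphism, which completes the proof.

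The step I expect to be the main obstacle---really the only idea needed---is the opening observation: recognizing that every edge of $F_{uv}$ splits the vertex set into the same two halves as $\{u,v\}$. With that in hand, both the matching property and adjacency-preservation reduce to one-line computations via Propositions~\ref{prop:bisemis} and~\ref{prop:semicubeonestepcloser}. A minor but genuine subtlety is that one must establish $F_{uv}$ is a matching on \emph{both} sides before $\varphi$ can even be asserted to be a bijection, which is why the matching argument includes its symmetric case.
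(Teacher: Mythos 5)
Your proof is correct: the opening observation that every edge of $F_{uv}$ determines the same pair of opposite semicubes (via Proposition~\ref{prop:TFAE partial cube}(iv) and the disjointness from Proposition~\ref{prop:bisemis}) is exactly the right engine, and both the matching argument and the adjacency argument via Proposition~\ref{prop:semicubeonestepcloser} plus the triangle inequality go through as written. Note that the paper does not prove this statement itself but cites it from~\cite{ovchinnikov2008partial}, so there is no internal proof to compare against; your self-contained argument is essentially the standard one from the partial-cube literature.
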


\begin{example}
As an example of the previous proposition, the black edges in Figure~\ref{fig:semi cubes} induce an isomorphism between the subgraphs induced by $U_{u_1v_1}=\{u_1,u_2,u_3\}$ and $U_{v_1u_1}=\{v_1,v_2,v_3\}$.
\end{example}

The \emph{interval} between vertices $u$ and $v$ of a graph $G$ is the set
\[
I(u,v):=\{w\in V(G) \mid d(u,v) = d(u,w) + d(w,v)\}.
\]
That is, $I(u,v)$ is the collection of vertices that lie on some geodesic between $u$ and $v$. A graph $G$ is \emph{median} if 
\[
|I(u,v) \cap I(u,w) \cap I(v,w)| = 1
\]
for all $u,v,w \in V(G)$. In other words, $G$ is median if there is a unique vertex $x$ that simultaneously lies on a geodesic between $u$ and $v$, a geodesic between $u$ and $w$, and a geodesic between $v$ and $w$ for all triples $u,v,w$. For a median graph $G$, define $\med(u,v,w)$ to be the unique vertex in $I(u, v) \cap I(u, w) \cap I(v, w)$. 

\begin{example}\label{ex:median}
The shading in Figures~\ref{fig:median1} and~\ref{fig:notmedian1} depicts $I(u,v)$ in \textcolor{red}{red}, $I(v,w)$ in \textcolor{blue}{blue}, and $I(u,w)$ in \textcolor{ggreen}{green}. In Figure~\ref{fig:median1}, we see that all three colors overlap at the vertex $x$, illustrating that $| I(u,v) \cap I(u,w) \cap I(v,w) | = 1$. It turns out that the same is true for any three vertices in this graph. Thus, the graph given in Figure~\ref{fig:median1} is median. On the other hand, in Figure~\ref{fig:notmedian1}, we see that there is no vertex common to all of these intervals, and so the graph in Figure~\ref{fig:notmedian1} is not median.
\end{example}

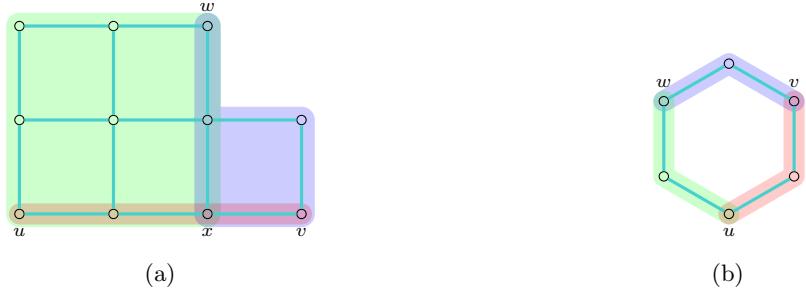
\begin{figure}[h!]
\centering
\subcaptionbox{\label{fig:median1}}[.45\linewidth]{
\begin{tikzpicture}[every circle node/.style={draw, circle, inner sep=1.25pt},scale=1.25]
\node [circle] (a) [] at (0,0){};
\node [circle] (b) [] at (0,1){};
\node [circle] (c) [label=below:$\scriptstyle x$] at (1,0){};
\node [circle] (d) [] at (1,1){};

\node [circle] (e) [] at (0,2){};
\node [circle] (f) [label=above:$\scriptstyle w$] at (1,2){};
\node [circle] (g) [] at (2,1){};
\node [circle] (h) [label=below:$\scriptstyle v$] at (2,0){};

\node [circle] (i) [label=below:$\scriptstyle u$] at (-1,0){};
\node [circle] (j) [] at (-1,1){};
\node [circle] (k) [] at (-1,2){};
\draw [turq,-, very thick] (a) to (b);
\draw [turq,-, very thick] (b) to (d);
\draw [turq,-, very thick] (c) to (d);
\draw [turq,-, very thick] (c) to (a);
\draw [turq,-, very thick] (b) to (e);
\draw [turq,-, very thick] (e) to (f);
\draw [turq,-, very thick] (f) to (d);
\draw [turq,-, very thick] (d) to (g);
\draw [turq,-, very thick] (g) to (h);
\draw [turq,-, very thick] (h) to (c);
\draw [turq,-, very thick] (i) to (j);
\draw [turq,-, very thick] (j) to (k);
\draw [turq,-, very thick] (i) to (a);
\draw [turq,-, very thick] (j) to (b);
\draw [turq,-, very thick] (k) to (e);
\begin{pgfonlayer}{background}
\fill[green,opacity=0.2] \convexpath{a,i,j,k,e,f,d,c}{4pt};
\fill[blue,opacity=0.2] \convexpath{g,h,c,d,f,d}{4pt};
\highlight{8pt}{red,opacity=.2}{(i.center) to (a.center) to (h.center)};
\end{pgfonlayer}
\end{tikzpicture}}
\subcaptionbox{\label{fig:notmedian1}}[.45\linewidth]{
\begin{tikzpicture}[every circle node/.style={draw, circle ,inner sep=1.25pt}]
\node [circle] (1) [label=$\scriptstyle v$] at (30:1){};
\node [circle] (2) [] at (90:1){};
\node [circle] (3) [label=$\scriptstyle w$]at (150:1){};
\node [circle] (4) [] at (210:1){};
\node [circle] (5) [label=below:$\scriptstyle u$] at (0,-1){};
\node [circle] (6) [] at (330:1){};
\draw [turq,-, very thick] (1) to (2);
\draw [turq,-, very thick] (1) to (6);
\draw [turq,-, very thick] (2) to (3);
\draw [turq,-, very thick] (3) to (4);
\draw [turq,-, very thick] (4) to (5);
\draw [turq,-, very thick] (5) to (6);
\begin{pgfonlayer}{background}
\highlight{8pt}{blue,opacity=.2}{(3.center) to (2.center) to (1.center)}
\highlight{8pt}{green,opacity=.2}{(5.center) to (4.center) to (3.center)}
\highlight{8pt}{red,opacity=.2}{(5.center) to (6.center) to (1.center)}
\end{pgfonlayer}
\end{tikzpicture}}
\caption{Examples of a median graph and non-median graph from Example~\ref{ex:median}.} \label{fig:median}
\end{figure} 

The next proposition from~\cite{ovchinnikov2011} connects partial cubes and median graphs.  

\begin{proposition}\label{prop:median implies partial cube}
If a graph $G$ is median, then $G$ is a partial cube.
\end{proposition}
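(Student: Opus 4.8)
I would prove this via Proposition~\ref{prop:TFAE partial cube}, condition~(ii): a graph is a partial cube precisely when it is bipartite and all of its semicubes are convex. So, given a median graph $G$, the plan is to establish these two properties.

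\emph{Bipartiteness.} Suppose not, and let $C = v_0 v_1 \cdots v_{2k} v_0$ be a shortest odd cycle in $G$. A standard exchange argument shows $C$ is isometric in $G$: a strictly shorter path between two vertices of $C$ would combine with an arc of $C$ of the appropriate parity to yield an odd closed walk of length less than $2k+1$, hence a shorter odd cycle. Now take $u = v_0$, $v = v_k$, $w = v_{k+1}$, so $d(u,v) = d(u,w) = k$ while $d(v,w) = 1$. Since $d(v,w) = 1$ we have $I(v,w) = \{v,w\}$, so $\med(u,v,w)$ must equal $v$ or $w$; but $v \in I(u,w)$ forces $d(u,w) = k+1$ and $w \in I(u,v)$ forces $d(u,v) = k+1$, each impossible. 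Thus $\med(u,v,w)$ does not exist, contradicting that $G$ is median. Hence $G$ is bipartite, and Proposition~\ref{prop:bisemis} gives $V(G) = W_{ab} \sqcup W_{ba}$ for every edge $\{a,b\}$.

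\emph{Convexity of semicubes.} The key preliminary fact is that $x \in W_{ab}$ implies $I(x,a) \subseteq W_{ab}$: a vertex $w \in I(x,a) \cap W_{ba}$ would satisfy $d(w,b) = d(w,a)-1$ by Proposition~\ref{prop:semicubeonestepcloser}, whence $d(x,b) \le d(x,w) + d(w,b) = d(x,a)-1$, contradicting $d(x,b) = d(x,a)+1$. Now assume for contradiction that some semicube fails to be convex, and among all edges $\{a,b\}$ and all pairs $x,y$ lying in a common semicube $W_{ab}$ with $I(x,y) \not\subseteq W_{ab}$, choose one minimizing $d(x,y) =: k \geq 2$. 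Choosing $z \in I(x,y) \cap W_{ba}$, a geodesic through $z$, and the first and last of its vertices lying in $W_{ba}$, minimality forces this geodesic to have the form $x = p_0, p_1, \ldots, p_k = y$ with $p_1, \ldots, p_{k-1} \in W_{ba}$. The preliminary fact together with bipartite parity then pins down $d(p_1,a) = d(x,a)+1$ (the other parity-feasible value would place $p_1$ in $I(x,a) \subseteq W_{ab}$) and, symmetrically, $d(p_{k-1},a) = d(y,a)+1$. Finally, examining the medians $\med(x,y,a)$ and $\med(x,y,b)$, translating the resulting distance equalities into constraints on where these medians sit relative to the geodesic, and comparing with the length equalities above, produces a pair of vertices that lie strictly inside the geodesic yet still witness non-convexity over a smaller distance --- contradicting minimality. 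With all semicubes convex, Proposition~\ref{prop:TFAE partial cube}(ii) yields that $G$ is a partial cube.

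The routine parts are bipartiteness, the interval lemma $I(x,a) \subseteq W_{ab}$, and the reduction to a geodesic that crosses the cut exactly as described; the genuine obstacle is the final step of extracting a contradiction from the minimal counterexample by means of the median operator, which requires careful bookkeeping of the distance identities among $x$, $y$, $a$, $b$ and the $p_i$. (An alternative to the convexity argument is to show directly that the Djoković--Winkler relation $\btheta$ is transitive and then apply Proposition~\ref{prop:TFAE partial cube}(iii); this is close to the treatment in~\cite{ovchinnikov2011}.)
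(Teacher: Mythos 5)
First, note that the paper does not prove this proposition at all: it is quoted from~\cite{ovchinnikov2011} as background, so there is no in-paper argument to match yours against. Judged on its own terms, your outline has the right skeleton (bipartiteness plus convexity of all semicubes, then Proposition~\ref{prop:TFAE partial cube}(ii)), and the parts you actually carry out are correct: the shortest-odd-cycle argument for bipartiteness, the interval lemma $x\in W_{ab}\Rightarrow I(x,a)\subseteq W_{ab}$ (which indeed needs Proposition~\ref{prop:bisemis} and Proposition~\ref{prop:semicubeonestepcloser}), and the reduction of a minimal counterexample to a geodesic $x=p_0,\ldots,p_k=y$ with all interior vertices in $W_{ba}$ and $d(p_1,a)=d(x,a)+1$, $d(p_{k-1},a)=d(y,a)+1$.

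The genuine gap is the final step, which is exactly where the median hypothesis has to do real work and which you only assert. You claim that inspecting $\med(x,y,a)$ and $\med(x,y,b)$ ``produces a pair of vertices that lie strictly inside the geodesic yet still witness non-convexity over a smaller distance.'' As stated this cannot be the mechanism: in the minimal counterexample one cannot exclude $k=2$ a priori, and when $k=2$ the geodesic has a single interior vertex, so no such pair exists; moreover a counterexample at distance $1$ is impossible (an interval of adjacent vertices is the pair itself), so $k=2$ is precisely the case that must be killed by a direct argument rather than by descent. A correct completion in that case looks quite different: with $x,y\in W_{ab}$ and common neighbor $w\in W_{ba}$ one first gets $d(x,a)=d(y,a)=:\alpha$ and $d(w,b)=\alpha$; then $m:=\med(x,y,a)$ is a second common neighbor of $x,y$ lying in $W_{ab}$, and $t:=\med(w,m,b)$ is a common neighbor of $w$ and $m$ distinct from $x$ and $y$ (compare distances to $b$), so $w$ and $m$ are two distinct vertices lying in $I(x,y)\cap I(y,t)\cap I(x,t)$, contradicting uniqueness of $\med(x,y,t)$. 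Nothing of this K$_{2,3}$-type contradiction, nor any verified descent for $k\geq 3$, appears in your sketch, so the proof is incomplete at its crux. If you want a proof that stays inside the paper's quoted toolkit, a cleaner route is to invoke Proposition~\ref{prop:mulder}: a median graph is built from a single vertex by peripheral expansions, and it is a short check (essentially the bookkeeping done in the proof of Theorem~\ref{thm:braid graph for link median}, run in reverse) that a peripheral expansion of a partial cube is again a partial cube, with the new edges forming one new $\btheta$-class; induction then gives the result.
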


\begin{example}
As seen in Example~\ref{ex:partial cubes}, the cycle graph with six vertices from Figure~\ref{fig:partialcube_a} can be isometrically embedded into a hypercube, and is therefore a partial cube. However, as shown in Example~\ref{ex:median}, this graph is not median, so the converse of the previous proposition does not hold.
\end{example}

The following result is commonly known and states that, like the collection of partial cubes, the collection of median graphs is closed under the box product operation.

\begin{proposition}\label{medprodmed}
If graphs $G_1$ and $G_2$ are median, then $G_1 \square G_2$ is also median.
\end{proposition}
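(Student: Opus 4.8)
The plan is to reduce everything to the corresponding statements about the factors $G_1$ and $G_2$, exploiting the fact that the box-product metric is additive across coordinates. First I would establish the distance formula
\[
d_{G_1 \square G_2}\big((u_1,u_2),(v_1,v_2)\big) = d_{G_1}(u_1,v_1) + d_{G_2}(u_2,v_2)
\]
for all vertices $(u_1,u_2),(v_1,v_2)$ of $G_1 \square G_2$. The inequality ``$\le$'' follows by concatenating a lift of a geodesic from $u_1$ to $v_1$ in $G_1$ (obtained by freezing the second coordinate at $u_2$) with a lift of a geodesic from $u_2$ to $v_2$ in $G_2$ (obtained by freezing the first coordinate at $v_1$); this yields a walk in $G_1 \square G_2$ of length $d_{G_1}(u_1,v_1)+d_{G_2}(u_2,v_2)$. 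For ``$\ge$'', observe that every edge of $G_1 \square G_2$ alters exactly one of the two coordinates, so an arbitrary path $P$ from $(u_1,u_2)$ to $(v_1,v_2)$ decomposes according to which coordinate each of its edges changes: reading off the first coordinates along $P$ and collapsing consecutive repetitions yields a walk in $G_1$ from $u_1$ to $v_1$, reading off the second coordinates yields a walk in $G_2$ from $u_2$ to $v_2$, and the length of $P$ equals the sum of the lengths of these two walks, which is at least $d_{G_1}(u_1,v_1)+d_{G_2}(u_2,v_2)$.

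Next I would use this formula, together with the triangle inequality applied coordinatewise, to show that intervals factor as Cartesian products:
\[
I\big((u_1,u_2),(v_1,v_2)\big) = I(u_1,v_1) \times I(u_2,v_2).
\]
Indeed, $(x_1,x_2) \in I((u_1,u_2),(v_1,v_2))$ means
\[
\sum_{i=1}^{2} d_{G_i}(u_i,x_i) + \sum_{i=1}^{2} d_{G_i}(x_i,v_i) = \sum_{i=1}^{2} d_{G_i}(u_i,v_i),
\]
and since $d_{G_i}(u_i,x_i) + d_{G_i}(x_i,v_i) \ge d_{G_i}(u_i,v_i)$ for each $i$, equality of the sums forces equality in each coordinate, i.e.\ $x_i \in I(u_i,v_i)$ for $i = 1,2$; the reverse containment is immediate.

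To finish, take three vertices $p=(u_1,u_2)$, $q=(v_1,v_2)$, $r=(w_1,w_2)$ of $G_1 \square G_2$. By the interval factorization,
\[
I(p,q) \cap I(p,r) \cap I(q,r) = M_1 \times M_2,
\]
where $M_i := I(u_i,v_i) \cap I(u_i,w_i) \cap I(v_i,w_i)$ for $i = 1,2$. Since $G_1$ and $G_2$ are median, $|M_1| = |M_2| = 1$, hence $|M_1 \times M_2| = 1$. Thus $G_1 \square G_2$ is median, and moreover $\med(p,q,r) = \big(\med(u_1,v_1,w_1),\, \med(u_2,v_2,w_2)\big)$.

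There is no real obstacle here: the substantive content is the distance formula of the first step (and the coordinatewise bookkeeping used to extract it), after which the interval factorization and the one-line computation of the triple intersection are routine.
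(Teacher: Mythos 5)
Your proof is correct and complete: the additive distance formula for the box product, the resulting factorization of intervals, and the coordinatewise computation of the triple intersection are exactly the standard argument, and each step is justified (the coordinate-projection bound for the distance formula and the forced coordinatewise equality in the interval factorization are the only places where care is needed, and you handle both). The paper itself gives no proof of this proposition — it is stated as commonly known and imported from the literature — so your writeup simply supplies the folklore argument the paper is invoking, with the added bonus of the explicit formula $\med(p,q,r)=\bigl(\med(u_1,v_1,w_1),\med(u_2,v_2,w_2)\bigr)$.
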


To conclude this section on graphs, we discuss peripheral expansions and their relationship to median graphs. Given a graph $G$ and a convex set $U \subseteq V(G)$, we define the \emph{peripheral expansion of $G$ along $U$}, denoted $P(G,U)$, as follows:
\begin{itemize}
\item Start with the graph $G$;
\item Form the disjoint union of $G$ and an isomorphic copy of $G[U]$, denoted $G'_U$, where each $u \in U$ corresponds to $u' \in U' := V(G'_U)$; 
\item For each $u \in U$, join $u$ and $u'$ with an edge.
\end{itemize}
Note that peripheral expansions are a special case of convex expansions as described in~\cite{Mulder1978}.

The illustration given in Figure~\ref{fig:expansion} shows a rough depiction of the process described above. The vertices in $G[U]$ mirror the vertices in the \textcolor{magenta}{magenta} $G'_U$, and each pair of vertices $u$ and $u'$ are connected by a black edge. The rectangles $G[U]$ and $G'_U$ are drawn the same size to indicate the isomorphism between the two graphs: $\{u,v\} \in E(G[U])$ if and only if $\{u',v'\} \in E(G'_U)$. When $G$ is a partial cube, the \textcolor{turq}{teal} $G$ and \textcolor{magenta}{magenta} $G_U'$ are opposite semicubes and the black edges joining each $u$ and $u'$ pair are all part of the same $F$-class of edges.

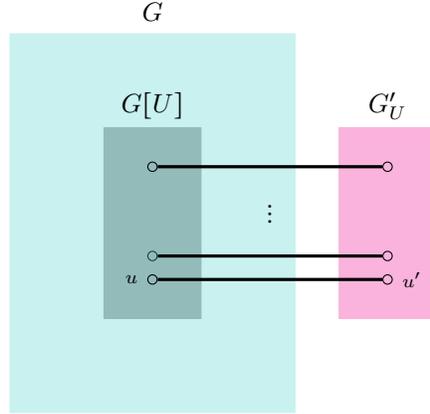
\begin{figure}[h!]
\centering
 \begin{tikzpicture}[every circle node/.style={draw, circle ,inner sep=1.25pt}, scale=1.25]
\node [circle] (6) [] at (.5,.65){};
\draw[line width=1.5pt, draw=black, fill=black, opacity=0.3] (0,0) rectangle (1,2);
\draw[line width=1.5pt, draw=magenta, fill=magenta, opacity=0.3] (2.5,0) rectangle (3.5,2);
\draw[line width=1.5pt, draw=turq, fill=turq, opacity=0.3] (-1,-1) rectangle (2,3);
\node [circle] (1) [label=left:$\scriptstyle u$] at (.5,.4){};
\node [circle] (7) [] at (3,.65){};
\node [circle] (4) [] at (.5,1.6){};
\node [circle] (5) [label=right:$\scriptstyle u'$] at (3,.4){}; 
\node [circle] (8) [] at (3,1.6){};
\node (x) at (.5,3.25) {$G$};
\node (x) at (.5,2.25) {$G[U]$};
\draw [black,-, very thick] (1) to (5);
\draw [black,-, very thick] (6) to (7);
\draw [black,-, very thick] (4) to (8);
\node (x) at (3,2.25) {$G_U'$};
\node (z) at (1.75,1.1) {$\vdots$};
\end{tikzpicture}

\caption{A rough illustration of the peripheral expansion process.}
\label{fig:expansion}
\end{figure}

\begin{example}
Figure~\ref{fig:convexexp} illustrates a sequence of peripheral expansions starting from a single vertex. The grey highlighted portion of each subfigure shows which subgraph is playing the role of $G[U]$. Each subsequent graph represents the graph obtained when the peripheral expansion is performed along the convex grey portion.
\end{example}

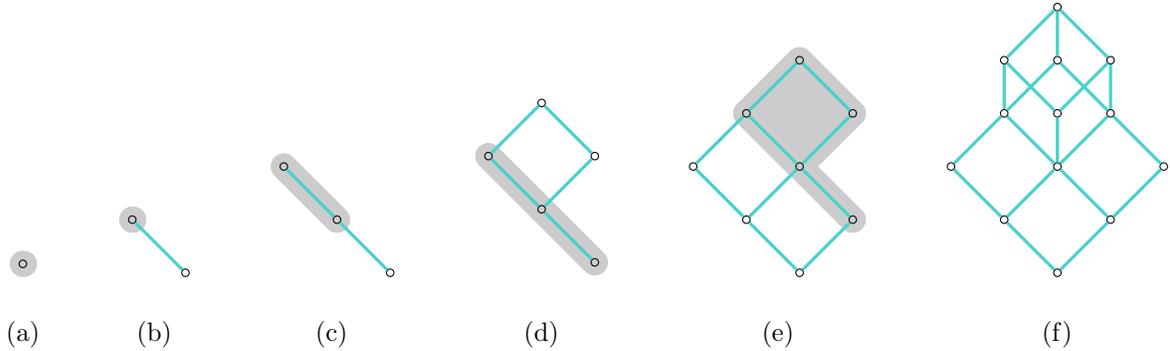
\begin{figure}[!h]
	\begin{center}
	\begin{tabular}{ccccccccccc}
		\begin{tikzpicture}[every circle node/.style={draw, circle,inner sep=1pt}]
		\node [circle] (1) at (.707,.293){};
		\begin{pgfonlayer}{background}
		\highlight{10pt}{black}{(1.center) to (1.center)};
		\end{pgfonlayer}
		\end{tikzpicture}
	& \hspace{1em} & 
		\begin{tikzpicture}[every circle node/.style={draw, circle,fill=none,inner sep=1pt}]
			\node [circle] (1) at (0,1){};
			\node [circle] (2) at (.707,.293){};
			\draw [turq,-, very thick] (1) to (2);
							\begin{pgfonlayer}{background}
								\highlight{10pt}{black}{(1.center) to (1.center)};
							\end{pgfonlayer}
		\end{tikzpicture}
		& \hspace{1em} &
		\begin{tikzpicture}[every circle node/.style={draw, circle,fill=none, inner sep=1pt}]
			\node [circle] (1) at (0,1){};
			\node [circle] (2) at (.707,.293){};
			\node [circle] (3) at (-.707,1.707){};
			\draw [turq,-, very thick] (1) to (2);
			\draw [turq,-, very thick] (1) to (3);
							\begin{pgfonlayer}{background}
								\fill[black,opacity=0.2] \convexpath{1,3}{5pt};
							\end{pgfonlayer}
		\end{tikzpicture}
		&  \hspace{1em}  &
		\begin{tikzpicture}[every circle node/.style={draw, circle,fill=none, inner sep=1pt}]
			\node [circle] (1) at (0,1){};
			\node [circle] (2) at (.707,.293){};
			\node [circle] (3) at (-.707,1.707){};
			\node [circle] (4) at (.707,1.707){};
			\node [circle] (5) at (0,2.414){};
			\draw [turq,-, very thick] (1) to (2);
			\draw [turq,-, very thick] (1) to (3);
			\draw [turq,-, very thick] (1) to (4);
			\draw [turq,-, very thick] (3) to (5);
			\draw [turq,-, very thick] (4) to (5);
							\begin{pgfonlayer}{background}
								\fill[black,opacity=0.2] \convexpath{1,2,3}{5pt};
							\end{pgfonlayer}
		\end{tikzpicture}
		&  \hspace{1em}  &
		\begin{tikzpicture}[every circle node/.style={draw, circle,fill=none, inner sep=1pt}]
			\node [circle] (1) at (0,1){};
			\node [circle] (2) at (.707,.293){};
			\node [circle] (3) at (-.707,1.707){};
			\node [circle] (4) at (.707,1.707){};
			\node [circle] (5) at (0,2.414){};
			\node [circle] (6) at (-.707,.293){};
			\node [circle] (7) at (0,-0.414){};
			\node [circle] (8) at (-1.414,1){};
			\draw [rotate = 90, turq,-, very thick] (1) to (2);
			\draw [turq,-, very thick] (1) to (3);
			\draw [turq,-, very thick] (1) to (4);
			\draw [turq,-, very thick] (1) to (6);
			\draw [turq,-, very thick] (8) to (3);
			\draw [turq,-, very thick] (8) to (6);
			\draw [turq,-, very thick] (3) to (5);
			\draw [turq,-, very thick] (6) to (7);
			\draw [turq,-, very thick] (5) to (4);
			\draw [turq,-, very thick] (7) to (2);
							\begin{pgfonlayer}{background}
								\fill[black,opacity=0.2] \convexpath{3,5,4,1,3,2}{5pt};
							\end{pgfonlayer}
		\end{tikzpicture}
		&  \hspace{1em}  &
		\begin{tikzpicture}[every circle node/.style={draw, circle,fill=white,inner sep=1pt}]
			\node [circle] (1) at (0,1){};
			\node [circle] (2) at (.707,.293){};
			\node [circle] (3) at (-.707,1.707){};
			\node [circle] (4) at (.707,1.707){};
			\node [circle] (5) at (0,2.414){};
			\node [circle] (6) at (-.707,.293){};
			\node [circle] (7) at (0,-0.414){};
			\node [circle] (8) at (-1.414,1){};
			\draw [rotate = 90, turq,-, very thick] (1) to (2);
			\draw [turq,-, very thick] (1) to (3);
			\draw [turq,-, very thick] (1) to (4);
			\draw [turq,-, very thick] (1) to (6);
			\draw [turq,-, very thick] (8) to (3);
			\draw [turq,-, very thick] (8) to (6);
			\draw [turq,-, very thick] (3) to (5);
			\draw [turq,-, very thick] (6) to (7);
			\draw [turq,-, very thick] (5) to (4);
			\draw [turq,-, very thick] (7) to (2);
			\node [circle] (9) at (1.414,1){};
			\node [circle] (10) at (0,1.707){};
			\node [circle] (11) at (0,3.121){};
			\node [circle] (13) at (.707,2.414){};
			\node [circle] (14) at (-.707,2.414){};
			\draw [turq,-, very thick] (1) to (10);
			\draw [turq,-, very thick] (10) to (13);
			\draw [turq,-, very thick] (10) to (14);
			\draw [turq,-, very thick] (14) to (11);
			\draw [turq,-, very thick] (13) to (11);
			\draw [turq,-, very thick] (13) to (4);
			\draw [turq,-, very thick] (4) to (9);
			\draw [turq,-, very thick] (9) to (2);
			\draw [turq,-, very thick] (5) to (11);
			\draw [turq,-, very thick] (14) to (3);
		\end{tikzpicture}\\
	& & & & & & & & & &\\
	(a) & & (b) & & (c) & & (d) & & (e) & & (f)
	\end{tabular}
\end{center}
\caption{A sequence of peripheral expansions starting from a single vertex.}
\label{fig:convexexp}
\end{figure}

The following result appears in~\cite{Mulder2010} and is a special case of a result often referred to as Mulder's Theorem~\cite{Mulder1978}. It states that a median graph can always be obtained through a sequence of peripheral expansions that begin from a single vertex.

\begin{proposition}\label{prop:mulder}
A graph $G$ is median if and only if it can be obtained from a single vertex by a sequence of peripheral expansions.
\end{proposition}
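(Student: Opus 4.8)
The plan is to prove both implications by induction on $|V(G)|$: the ``if'' direction is a triple-by-triple verification, and the ``only if'' direction reduces to one structural fact.

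\textbf{The ``if'' direction.} A single vertex is vacuously median, so it suffices to show that the peripheral expansion $H:=P(G,U)$ of a median graph $G$ along a convex set $U$ is median. I would first record the metric on $H$: with $U'=V(G'_U)$ and $u'\in U'$ the copy of $u\in U$, one has $d_H(x,y)=d_G(x,y)$ for $x,y\in V(G)$, $d_H(x',y')=d_{G[U]}(x,y)=d_G(x,y)$ for $x,y\in U$, and $d_H(y,x')=d_G(y,x)+1$ for $y\in V(G)$, $x\in U$ --- convexity of $U$ is exactly what makes $G[U]$ isometric in $G$, which forbids a geodesic from shortcutting through $G'_U$. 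These formulas give that $V(G)$ and $U'$ are convex in $H$. Now take three vertices of $H$. If all three (respectively none) lie in $V(G)$, the distance formulas force any common median into $V(G)$, where $\med_G$ of the triple (respectively, of its image in $U\subseteq V(G)$, which lies in $U$ since $U$ is convex) is the unique one. If the triple is $a,b\in V(G)$ and $c'\in U'$, then for $x\in V(G)$ the three geodesic conditions collapse to $x\in I_G(a,b)\cap I_G(a,c)\cap I_G(b,c)$, and any median of the $H$-triple lies in the convex set $V(G)$; thus $\med_G(a,b,c)$ is the unique median, and the remaining subcase ($a',b'\in U'$, $c\in V(G)$) is symmetric via convexity of $U'$.

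\textbf{The ``only if'' direction.} Let $G$ be median with $|V(G)|\ge 2$. By Proposition~\ref{prop:median implies partial cube} it is a partial cube, so Proposition~\ref{prop:TFAE partial cube} makes every semicube convex. The crux is that $G$ has a \emph{peripheral split}: an edge $\{u,v\}$ with $W_{vu}=U_{vu}$, i.e.\ every vertex of the semicube $W_{vu}$ is incident to an edge of $F_{uv}$. Granting this, Proposition~\ref{prop:iso for U sets} supplies an isomorphism $G[U_{uv}]\cong G[U_{vu}]=G[W_{vu}]$ realized by $F_{uv}$; since the edge set of $G$ splits into the edges inside $W_{uv}$, the edges inside $W_{vu}$, and the matching $F_{uv}$, comparing with the construction of a peripheral expansion gives $G\cong P\bigl(G[W_{uv}],U_{uv}\bigr)$. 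Here $W_{uv}$ is convex, so $G[W_{uv}]$ is a convex subgraph of a median graph, hence median, with $|W_{uv}|<|V(G)|$ since $v\in W_{vu}$; and $U_{uv}$ is convex in $G[W_{uv}]$ (the boundary of a half-space in a median graph is convex --- this follows from closure of $V(G)$ under coordinate-wise majority in a hypercube model). By induction $G[W_{uv}]$ is built from a single vertex by peripheral expansions, and adjoining the expansion along $U_{uv}$ builds $G$.

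\textbf{The main obstacle: existence of a peripheral split.} This is where the median hypothesis is genuinely needed --- e.g.\ $C_6$ is a partial cube with no peripheral split --- and it is the heart of the theorem. My plan is an extremal argument: among the finitely many semicubes of $G$, take $W_{vu}$ minimal under inclusion, and suppose it is not peripheral. Then there is $z\in W_{vu}\setminus U_{vu}$, all of whose neighbours lie in $W_{vu}$; choosing $z$ to maximize $d(z,u)$ forces any neighbour $z'$ of $z$ with $d(z',u)=d(z,u)-1$ to lie in $U_{vu}$, so transporting $z'$ across $F_{uv}$ gives $z''\in U_{uv}$ having $z'$ as its unique common neighbour with $z$ (uniqueness because $z$ has no neighbour in $W_{uv}$), with the length-two geodesic from $z$ to $z''$ passing through $z'$. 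I would then show $W_{zz'}\subsetneq W_{vu}$, contradicting minimality. Properness is immediate since $z'\in W_{vu}\setminus W_{zz'}$; for containment, assume $w\in W_{zz'}\cap W_{uv}$. The median of $z$, $z''$, $w$ lies in $I(z,z'')=\{z,z',z''\}$, and it can be neither $z'$ (which would put $w$ closer to $z'$ than to $z$) nor $z''$ (which would put $z'$ on a $w$-to-$z$ geodesic, again contradicting $w\in W_{zz'}$), so it is $z$; hence $d(w,z'')=d(w,z)+2$. Expanding a $w$-to-$z$ geodesic through its unique crossing edge $\{p,p'\}$ in $F_{uv}$ and using the isometry $G[U_{uv}]\cong G[U_{vu}]$ of Proposition~\ref{prop:iso for U sets} (so $d(p,z'')=d(p',z')$), this equation forces $d(p',z)+3\le d(p',z')\le d(p',z)+1$, which is absurd. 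Making this contradiction fully rigorous --- in particular keeping precise track of which side of each $\btheta$-class the auxiliary medians fall on --- is the real work, and is in essence the classical argument underlying~\cite{Mulder1978, Mulder2010}.
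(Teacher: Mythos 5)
First, a point of comparison: the paper does not prove this proposition at all --- it is imported from the literature as (a special case of) Mulder's expansion theorem, with references to~\cite{Mulder1978} and~\cite{Mulder2010} --- so what you have written is an attempt to reprove a substantial classical theorem rather than a parallel to anything in the paper. Your route is the standard one, and much of it does check out: the distance formulas for a peripheral expansion in the ``if'' direction are correct and the triple-by-triple verification goes through, and in the ``only if'' direction the contradiction showing $W_{zz'}\subsetneq W_{vu}$ for an inclusion-minimal semicube is essentially sound (the computation $d(p',z)+3\le d(p',z')\le d(p',z)+1$ via the unique crossing edge does yield the absurdity).

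There are, however, three concrete gaps. (1) Your extremal choice is backwards: if $z$ maximizes $d(z,u)$ over $W_{vu}\setminus U_{vu}$, a neighbour $z'$ with $d(z',u)=d(z,u)-1$ may perfectly well lie again in $W_{vu}\setminus U_{vu}$; maximality says nothing about vertices closer to $u$. You should minimize $d(\cdot,u)$ over $W_{vu}\setminus U_{vu}$ (then $z'\in W_{vu}$ since otherwise $z\in U_{vu}$, and $z'\in U_{vu}$ by minimality), or, simpler, use connectedness of the convex set $W_{vu}$ to produce an edge between $W_{vu}\setminus U_{vu}$ and $U_{vu}$ --- your subsequent computation never actually uses $d(z',u)=d(z,u)-1$. (2) The identity $d(p,z'')=d(p',z')$ does not follow from Proposition~\ref{prop:iso for U sets}, which only gives an isomorphism of induced subgraphs; you need the partial-cube fact that $\btheta$-equivalent edges flip the same coordinate in an isometric hypercube embedding (or an equivalent argument via Proposition~\ref{prop:TFAE partial cube}). (3) Most seriously, the induction requires $U_{uv}$ to be convex, since the paper's notion of peripheral expansion is only defined along a convex set; you assert this via ``closure under coordinate-wise majority in a hypercube model,'' which is itself a nontrivial property of median graphs that you have not established. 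Since you yourself concede that making the crux rigorous ``is the real work,'' the proposal as it stands is an outline of Mulder's argument rather than a complete proof; for the purposes of this paper the appropriate course is the one the authors take, namely citing~\cite{Mulder1978, Mulder2010}.
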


\begin{example}
Proposition~\ref{prop:mulder} implies that each graph in Figure~\ref{fig:convexexp} is median.
\end{example}

\section{Coxeter systems and braid graphs}\label{sec:Coxeter systems}

A \emph{Coxeter matrix} is an $n\times n$ symmetric matrix $M=(m_{ij})$ with entries $m_{ij}\in\{1,2,3,\ldots,\infty\}$ such that $m_{ii}=1$ for all $1\leq i\leq n$ and $m_{ij}\geq 2$ for $i\neq j$. A \emph{Coxeter system} is a pair $(W,S)$ consisting of a finite set $S=\{s_1,s_2,\ldots,s_n\}$ and a group $W$, called a \emph{Coxeter group}, with presentation
\[
W = \langle s_1,s_2,\ldots,s_n \mid (s_is_j)^{m(s_i,s_j)}=e\rangle,
\] 
where $m(s_i,s_j):=m_{ij}$ for some $n\times n$ Coxeter matrix $M=(m_{ij})$.  For $s,t\in S$, the condition $m(s,t)=\infty$ means that there is no relation imposed between $s$ and $t$. It turns out that the elements of $S$ are distinct as group elements and $m(s,t)$ is the order of $st$~\cite{Humphreys1990}. Since elements of $S$ have order two, the relation $(st)^{m(s,t)} = e$ can be written as
\[
\underbrace{sts \cdots}_{m(s,t)} = \underbrace{tst \cdots}_{m(s,t)}
\]
with $m(s,t)\geq 2$ letters on each side.  When $m(s,t)=2$, $st=ts$ is called a \emph{commutation relation} and when $m(s,t) \geq 3$, the corresponding relation is called a \emph{braid relation}. The replacement
\[
\underbrace{sts\cdots}_{m(s,t)} \longmapsto  \underbrace{tst\cdots}_{m(s,t)}
\]
is called a \emph{commutation move} if $m(s,t)=2$ and a \emph{braid move} if $m(s,t) \geq 3$.

We can visually encode the information given in a Coxeter system into a \emph{Coxeter graph}, $\Gamma$, having vertex set $S$ and edges $\{s,t\}$ for each $m(s,t)\geq 3$. Moreover, each edge is labeled with the corresponding $m(s,t)$, although typically the labels of $3$ are omitted because they are the most common. We say that $(W,S)$, or just $W$, is type $\Gamma$, and we may denote the Coxeter group as $W(\Gamma)$ and the generating set as $S(\Gamma)$ for emphasis.

In this paper, our focus will be on a special class of Coxeter systems. A Coxeter system is \emph{simply laced} if $m(s,t) \leq 3$ for all $s,t \in S$. That is, a Coxeter system is said to be simply laced if any pair of distinct generators either commute or satisfy a braid relation of length three. If a Coxeter graph $\Gamma$ contains no three-cycles, we say that the corresponding Coxeter system $(W,S)$ is \emph{triangle free}. A Coxeter system that is both simply laced and triangle free is said to be of \emph{type $\Lambda$}.
\begin{example}
The Coxeter graphs given in Figure~\ref{fig:labeledgraphs} correspond to four common simply-laced Coxeter systems. The Coxeter group $W(A_n)$ is isomorphic to the symmetric group $S_{n+1}$ under the mapping that sends $s_i$ to the adjacent transposition $(i,i+1)$. The Coxeter group $W(D_n)$ is isomorphic to the index two subgroup of the group of signed permutations on $n$ letters having an even number of sign changes. The Coxeter systems of types $\widetilde{A}_n$ and $\widetilde{D}_n$ depicted in Figures~\ref{fig:CoxgraphaffA_n} and~\ref{fig:CoxgraphaffD_4}, respectively, turn out to yield infinite Coxeter groups. All of these Coxeter systems are of type $\Lambda$ except type $\widetilde{A}_2$ since its Coxeter graph is a 3-cycle.
\end{example}

\begin{figure}[h]
\centering
\subcaptionbox{\label{fig:labeledA}$A_{n}$}[.45\linewidth]{
\begin{tikzpicture}[every circle node/.style={draw, circle, inner sep=1.25pt}]
\node [circle] (1) [label=below:$s_1$] at (0,0){};
\node [circle] (2) [label=below:$s_2$] at (1,0){};
\node [circle] (3) [label=below:$s_3$] at (2,0){};
\node at (2.5,0) {$\dots$};
\node [circle] (4) [label=below:$s_{n-1}$] at (3,0){};
\node [circle] (5) [label=below:$s_n$] at (4,0){};
\draw [-, very thick] (1) to (2);
\draw [-, very thick] (2) to (3);
\draw [-, very thick] (4) to (5);
\end{tikzpicture}
}
\subcaptionbox{\label{fig:labeledB}$D_n$}[.45\linewidth]{
\begin{tikzpicture}[every circle node/.style={draw, circle, inner sep=1.25pt}]
\node [circle] (1) [label=below:$s_2$] at (0.25,-0.5){};
\node [circle] (2) [label=below:$s_3$] at (1,0){};
\node [circle] (3) [label=below:$s_4$] at (2,0){};
\node at (2.5,0) {$\dots$};
\node [circle] (4) [label=below:$s_{n-1}$] at (3,0){};
\node [circle] (5) [label=below:$s_{n}$] at (4,0){};
\node [circle] (6) [label=above:$s_1$] at (.25,.5){};
\node [label=below:\phantom{$s_{n+1}$}](8) at (4.75,0){};
\draw [-, very thick] (1) to (2);
\draw [-, very thick] (2) to (3);
\draw [-, very thick] (4) to (5);
\draw [-, very thick] (2) to (6);
\end{tikzpicture}
}
\subcaptionbox{\label{fig:CoxgraphaffA_n}$\widetilde{A}_n$}[.45\linewidth]{
\begin{tikzpicture}[every circle node/.style={draw, circle, semithick, inner sep=1.25pt}]
\node [circle] (1) [label=below:$s_1$] at (0,0){};
\node [circle] (2) [label=below:$s_2$] at (1,0){};
\node [circle] (3) [label=below:$s_3$] at (2,0){};
\node at (2.5,0) {$\dots$};
\node [circle] (4) [label=below:$s_{n-1}$] at (3,0){};
\node [circle] (5) [label=below:$s_n$] at (4,0){};
\node [circle] (6) [label={[label distance=2mm]right:$s_{n+1}$}] at (2,1){};
\draw [-, very thick] (1) to (2);
\draw [-, very thick] (2) to (3);
\draw [-, very thick] (4) to (5);
\draw [-, very thick] (1) to (6);
\draw [-, very thick] (5) to (6);
\end{tikzpicture}
}
\subcaptionbox{\label{fig:CoxgraphaffD_4}$\widetilde{D}_n$}[.45\linewidth]{
\begin{tikzpicture}[every circle node/.style={draw, circle, inner sep=1.25pt}]
\node [circle] (1) [label=below:$s_2$] at (0.25,-0.5){};
\node [circle] (2) [label=below:$s_3$] at (1,0){};
\node [circle] (3) [label=below:$s_4$] at (2,0){};
\node at (2.5,0) {$\dots$};
\node [circle] (4) [label=below:$s_{n-2}$] at (3,0){};
\node [circle] (5) [label=below:$s_{n-1}$] at (4,0){};
\node [circle] (6) [label=above:$s_1$] at (.25,.5){};
\node [circle] (7) [label=above:$s_{n}$] at (4.75,.5){};
\node [circle] (8) [label=below:$s_{n+1}$] at (4.75,-.45){};
\draw [-, very thick] (1) to (2);
\draw [-, very thick] (2) to (3);
\draw [-, very thick] (4) to (5);
\draw [-, very thick] (2) to (6);
\draw [-, very thick] (5) to (7);
\draw [-, very thick] (5) to (8);
\end{tikzpicture}
}
\caption{Examples of common simply-laced Coxeter graphs.}\label{fig:labeledgraphs}
\end{figure}
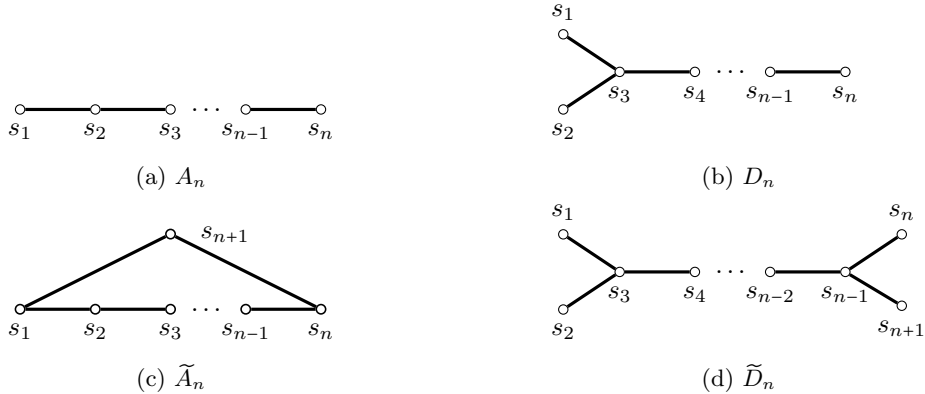

Consider a Coxeter system $(W,S)$. Define $S^*$ to be the free monoid on $S$. We call $\ralpha= s_{x_1}s_{x_2} \cdots s_{x_m} \in S^*$ a \emph{word} while a \emph{factor} of $\ralpha$ is a word of the form $s_{x_i}s_{x_{i+1}} \cdots s_{x_{j-1}}s_{x_j}$ for $1 \leq i \leq j \leq m$. We will write $\rbeta \leq \ralpha$ if $\rbeta$ is a factor of $\ralpha$. The relation $\leq$ makes the set of all factors of $\ralpha$ into a partially ordered set. Now, let $w \in W$. If $\ralpha = s_{x_1}s_{x_2} \cdots s_{x_m} \in S^*$ is equal to $w$ when considered as an element of the group $W$, we say that $\ralpha$ is an \emph{expression} for $w$. If $m$ is minimal among all possible expressions for $w$, we say that $\ralpha$ is a \emph{reduced expression} for $w$. We define the \emph{length} of $w$, denoted $\ell(w)$, to be the number of letters in a reduced expression. We will also say that any reduced expression for $w$ has length $\ell(w)$. Note that any factor of a reduced expression is also reduced. We denote the set of all reduced expressions for a group element $w \in W$ by $\mathcal{R}(w)$. For brevity, if we are considering a particular labeling of a Coxeter graph, we will often replace $s_i$ with $i$.

The following result, called Matsumoto's Theorem~\cite[Theorem~1.2.2]{Geck2000}, characterizes the relationship among reduced expressions for a given group element.

\begin{proposition}[Matsumoto's Theorem]{\label{prop:matsumoto}}
In a Coxeter system $(W,S)$, any two reduced expressions for the same group element differ by a sequence of commutation and braid moves.
\end{proposition}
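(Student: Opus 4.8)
The plan is to prove the statement by induction on the length $\ell(w)$, the engine being a ``lifting lemma'' about left descents. Call two words in $S^*$ \emph{elementarily equivalent} if one arises from the other by a single commutation or braid move, and let $\sim$ be the equivalence relation this generates; the goal is to show $\mathcal{R}(w)$ is a single $\sim$-class. The cases $\ell(w)\in\{0,1\}$ are immediate since then $|\mathcal{R}(w)|=1$. For the inductive step, write reduced expressions $\ralpha=s\,\ralpha'$ and $\rbeta=t\,\rbeta'$ for $w$ with $s,t\in S$. If $s=t$, then (since a factor of a reduced word is reduced) $\ralpha'$ and $\rbeta'$ are reduced expressions for $sw$, which has length $\ell(w)-1$; by induction $\ralpha'\sim\rbeta'$, and since prepending a fixed letter carries any single move to a single move (hence carries $\sim$ to $\sim$), we get $\ralpha=s\ralpha'\sim s\rbeta'=\rbeta$.

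So assume $s\neq t$; then $s$ and $t$ are both left descents of $w$, i.e.\ $\ell(sw)<\ell(w)$ and $\ell(tw)<\ell(w)$. The crux is the \emph{key claim}: if $s\neq t$ are both left descents of $w$, then $m(s,t)<\infty$ and $w$ has a reduced expression whose initial factor is the alternating word $\Delta_{s,t}:=\underbrace{sts\cdots}_{m(s,t)}$. Granting this, write such an expression as $\Delta_{s,t}\,\rgamma$ with $\rgamma\in S^*$. Then $\underbrace{tst\cdots}_{m(s,t)}\rgamma$ is also a reduced expression for $w$ (it represents the same element and has the same length), and it differs from $\Delta_{s,t}\rgamma$ by exactly one braid or commutation move. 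Now $\ralpha$ and $\Delta_{s,t}\rgamma$ are reduced expressions for $w$ both beginning with $s$, so by the case already handled $\ralpha\sim\Delta_{s,t}\rgamma$; symmetrically $\rbeta\sim\underbrace{tst\cdots}_{m(s,t)}\rgamma$. Chaining these with the single move between the two alternating words gives $\ralpha\sim\rbeta$, completing the induction.

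It remains to prove the key claim, which is where the real content lies. I would deduce it from the standard parabolic coset factorization. Put $J=\{s,t\}$ and $W_J=\langle s,t\rangle$. Every $w\in W$ factors uniquely as $w=w_J\cdot{}^Jw$ with $w_J\in W_J$, $\ell(w)=\ell(w_J)+\ell({}^Jw)$, and ${}^Jw$ the unique minimal-length representative of $W_Jw$; in particular ${}^Jw$ has no left descent in $J$. Since $s,t$ are left descents of $w$ but not of ${}^Jw$, they are left descents of $w_J$. An element of a dihedral group $W_J$ having \emph{both} $s$ and $t$ as left descents can only be the longest element — which forces $W_J$ finite, i.e.\ $m(s,t)<\infty$, and $w_J=\Delta_{s,t}=\underbrace{sts\cdots}_{m(s,t)}=\underbrace{tst\cdots}_{m(s,t)}$. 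Concatenating the reduced expression $\Delta_{s,t}$ for $w_J$ with any reduced expression for ${}^Jw$ yields the required reduced expression for $w$.

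The main obstacle is thus proving the coset factorization itself; everything above it is bookkeeping with the induction. I would obtain it from the exchange condition for Coxeter systems, or, more geometrically, from the action of $w^{-1}$ on the root system: $s$ is a left descent of $w$ precisely when $w^{-1}\alpha_s$ is a negative root, and if $m(s,t)=\infty$ the rank-two subsystem spanned by $\alpha_s,\alpha_t$ contains infinitely many positive roots, all of which $w^{-1}$ would have to send to negative roots, contradicting $\ell(w)=|\{\text{positive roots sent negative}\}|<\infty$; a finite check inside the dihedral parabolic then pins down $w_J$. Assembling a clean, self-contained version of this descent/root-system dictionary is the delicate part, but once it is in hand the proof of Matsumoto's Theorem is the short induction described above.
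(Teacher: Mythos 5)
Your argument is correct, but there is nothing in the paper to compare it against: the paper does not prove this statement at all, it imports it as Matsumoto's Theorem from the literature (Geck--Pfeiffer, Theorem~1.2.2) and uses it as a black box. What you have written is essentially the classical Tits/Matsumoto proof: induction on $\ell(w)$, with the case of equal first letters handled by the inductive hypothesis after cancelling, and the case of distinct first letters handled by the key claim that two distinct left descents $s,t$ force $m(s,t)<\infty$ and a reduced expression for $w$ beginning with the alternating word $\underbrace{sts\cdots}_{m(s,t)}$. Your derivation of the key claim from the parabolic coset factorization $w=w_J\cdot{}^Jw$ with $J=\{s,t\}$, plus the observation that only the longest element of a (necessarily finite) dihedral parabolic has both generators as left descents, is sound; the auxiliary facts you invoke (additivity of length in the coset decomposition, the exchange condition or the root-system characterization of descents, uniqueness of reduced words in dihedral groups below the top element) are all established in the standard references prior to and independently of Matsumoto's theorem, so there is no circularity. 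The one place where your write-up is only a sketch is the coset factorization / descent dictionary itself, which you acknowledge; filling that in would make the proof self-contained, but as a proof of the proposition modulo standard Coxeter-theoretic facts it is complete and correct. In short: the paper cites, you prove, and your route is the standard one.
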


In light of Matsumoto's Theorem, we can define a graph on the set of reduced expressions of a given element in a Coxeter group. For a Coxeter system $(W,S)$ and $w \in W$, the \emph{Matsumoto graph} $\mathcal{G}(w)$ is defined to be the graph whose vertex set is $\mathcal{R}(w)$ and two vertices $\ralpha$ and $\rbeta$ are connected by an edge if and only if $\ralpha$ and $\rbeta$ are related via a single commutation or braid move. Temporarily, we will color an edge \textcolor{nectarine}{orange} if it corresponds to a commutation move and we will color an edge \textcolor{turq}{teal} if it corresponds to a braid move. Matsumoto's Theorem implies that $\mathcal{G}(w)$ is connected. In~\cite{Bergeron2015}, Bergeron, Ceballos, and Labbé proved that every cycle in a Matsumoto graph for finite Coxeter groups is of even length. This result was extended to arbitrary Coxeter systems in~\cite{Grinberg2017}. As a result of this fact, we get the following proposition.

\begin{proposition}\label{prop:Masumoto bipartite}
If $(W,S)$ is a Coxeter system and $w \in W$, then $\mathcal{G}(w)$ is bipartite. 
\end{proposition}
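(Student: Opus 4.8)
The plan is to reduce the statement to the standard characterization of bipartiteness: a graph is bipartite if and only if it contains no cycle of odd length. Hence it suffices to verify that every cycle in $\mathcal{G}(w)$ has even length. This is precisely the result of Bergeron, Ceballos, and Labbé~\cite{Bergeron2015} when $W$ is finite, together with its extension to arbitrary Coxeter systems in~\cite{Grinberg2017}; invoking that result completes the argument. An explicit bipartition then also drops out: $\mathcal{G}(w)$ is connected by Matsumoto's Theorem (Proposition~\ref{prop:matsumoto}), so fixing any $\ralpha_0 \in \mathcal{R}(w)$ and partitioning $V(\mathcal{G}(w)) = \mathcal{R}(w)$ according to the parity of the distance to $\ralpha_0$ yields the two color classes.

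Since only the simply-laced case is needed in the sequel, I would also record a self-contained argument there that avoids the even-cycle theorem. The idea is to attach to each reduced expression $\ralpha = s_{x_1}\cdots s_{x_m} \in \mathcal{R}(w)$ the induced linear ordering $(t_1,\dots,t_m)$ of the inversion set $\{\, t_k := (s_{x_1}\cdots s_{x_{k-1}})\, s_{x_k}\, (s_{x_1}\cdots s_{x_{k-1}})^{-1} : 1 \le k \le m \,\}$ of reflections, a set of size $m = \ell(w)$ depending only on $w$. One checks directly that a commutation move interchanges the two adjacent entries $t_i, t_{i+1}$ and fixes all others, while a length-three braid move (using $sts=tst$) interchanges $t_i$ and $t_{i+2}$ and fixes all others; in each case the ordering changes by an odd permutation. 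Therefore, fixing a reference $\ralpha_0$, the sign $\varepsilon(\ralpha) \in \{\pm1\}$ of the permutation carrying the ordering of $\ralpha_0$ to that of $\ralpha$ flips along every edge of $\mathcal{G}(w)$, so $\varepsilon$ is a proper $2$-coloring.

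The step I expect to be the genuine obstacle is exactly what forces the general case to rely on~\cite{Bergeron2015,Grinberg2017}: this parity bookkeeping breaks down for longer braid relations. A braid move of length $m$ reverses a block of $m$ consecutive reflections in the inversion ordering, an operation of sign $(-1)^{\lfloor m/2 \rfloor}$, which is not always odd (e.g.\ $m = 4$ or $m = 5$). So no naive sign statistic can serve as a $2$-coloring for arbitrary $(W,S)$, and in full generality one must instead appeal to the structural analysis of cycles in $\mathcal{G}(w)$ from~\cite{Bergeron2015,Grinberg2017}.
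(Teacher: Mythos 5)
Your proof is correct and takes the same route as the paper: the paper likewise deduces bipartiteness of $\mathcal{G}(w)$ directly from the even-cycle results of Bergeron--Ceballos--Labb\'e and Grinberg--Postnikov cited in the surrounding text, with no further argument. Your supplementary sign-of-the-inversion-ordering argument for the simply-laced case (and the observation that a length-$m$ braid move reverses a block of $m$ reflections, of sign $(-1)^{\lfloor m/2\rfloor}$, so the statistic fails in general) is a correct bonus not present in the paper, but it is not needed for the proposition as stated.
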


\begin{example}\label{ex:matsumotoD4}
Consider the expression $\ralpha=1321434$ for some $w$ in the Coxeter system of type $D_4$. It turns out that $\ralpha$ is reduced, so that $\ell(w)=7$. Moreover, there are 15 reduced expressions in $\mathcal{R}(w)$ and the corresponding Matsumoto graph is given in Figure~\ref{fig:Matsumoto graph}. The edges of $\mathcal{G}(w)$ show how pairs of reduced expressions are related via commutation or braid moves. 
\end{example}

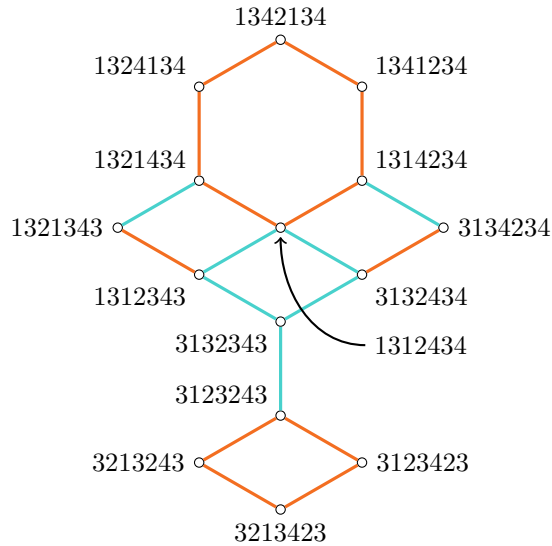
\begin{figure}[!ht]
\begin{center}
\begin{tikzpicture}[every circle node/.style={draw, circle, inner sep=1.25pt}, scale=1.25]
\node [circle] (1) [label=above right:$1341234$] at (30:1){};
\node [circle] (2) [label=above:$1342134$] at (90:1){};
\node [circle] (3) [label=above left:$1324134$]at (150:1){};
\node [circle] (4) [label=above left:$1321434$] at (210:1){};
\node [circle] (5) [] at (0,-1){};
\node (h) [label = right:$1312434$] at (.8,-2.25){};
\node (y) at (0,-1){};
\node (x) at (1,-2.25){};
\node [circle] (6) [label=above right:$1314234$] at (330:1){};
\node [circle] (7) [label=left:$1321343$] at (210:2){};
\node [circle] (8) [label=right:$3134234$] at (330:2){};
\node [circle] (9) [label=below left:$1312343$] at ({-sqrt(3)/2},-1.5){};
\node [circle] (10) [label=below right:$3132434$] at ({sqrt(3)/2},-1.5){};
\node [circle] (11) [label=below left:$3132343$] at (0,-2){};
\node [circle] (12) [label=above left:$3123243$] at (0,-3){};
\node [circle] (13) [label=left:$3213243$] at ({-sqrt(3)/2},-3.5){};
\node [circle] (14) [label=right:$3123423$] at ({sqrt(3)/2},-3.5){};
\node [circle] (15) [label=below:$3213423$] at (0,-4){};

\draw [nectarine,-, very thick] (1) to (2);
\draw [nectarine,-, very thick] (2) to (3);
\draw [nectarine,-, very thick] (3) to (4);
\draw [nectarine,-, very thick] (4) to (5);
\draw [nectarine,-, very thick] (5) to (6);
\draw [nectarine,-, very thick] (6) to (1);
\draw [turq,-, very thick] (4) to (7);
\draw [turq,-, very thick] (6) to (8);
\draw [turq,-, very thick] (5) to (9);
\draw [turq,-, very thick] (5) to (10);
\draw [nectarine,-, very thick] (7) to (9);
\draw [nectarine,-, very thick] (8) to (10);
\draw [turq,-, very thick] (9) to (11);
\draw [turq,-, very thick] (10) to (11);
\draw [turq,-, very thick] (11) to (12);
\draw [nectarine,-, very thick] (12) to (13);
\draw [nectarine,-, very thick] (12) to (14);
\draw [nectarine,-, very thick] (13) to (15);
\draw [nectarine,-, very thick] (14) to (15);
\draw [->, thick] (x) to [out=180,in=270] (y);
\end{tikzpicture}
\setlength{\belowcaptionskip}{-6pt}
\caption{Matsumoto graph for the reduced expression in the Coxeter system of type $D_4$ described in Example~\ref{ex:matsumotoD4}}
\label{fig:Matsumoto graph}
\end{center}
\end{figure}

We now define two different equivalence relations on the set of reduced expressions for a given element of a Coxeter group. Take $(W,S)$ to be a Coxeter system and let $w \in W$. For $\ralpha, \rbeta \in \mathcal{R}(w)$, we define a symmetric relation $\sim_c$ via $\ralpha \sim_c \rbeta$ if $\ralpha$ may be obtained from $\rbeta$ by performing a single commutation move. The equivalence relation $\approx_c$ is defined by taking the reflexive and transitive closure of $\sim_c$ (i.e., $\approx_c$ is the smallest equivalence relation containing $\sim_c$). The corresponding equivalence classes under $\approx_c$ are referred to as \emph{commutation classes}, denoted $[\ralpha]_c$. Appropriately, we say that two reduced expressions are \emph{commutation equivalent} if they are in the same commutation class. 

Analogously, we define $\sim_b$ via $\ralpha \sim_b \rbeta$ if $\ralpha$ may be obtained from $\rbeta$ by applying a single braid move. We define the equivalence relation $\approx_b$ by taking the reflexive and transitive closure of $\sim_b$, and call each equivalence class under $\approx_b$ a \emph{braid class}, denoted $[\ralpha]_b$. If two reduced expressions are in the same braid class, we say that these expressions are \emph{braid equivalent}. 

\begin{example}\label{classes}
Consider the reduced expression $\ralpha=1321434$ in the Coxeter system of type $D_4$ from Example~\ref{ex:matsumotoD4}. The set of 15 reduced expressions is partitioned into five commutation classes and nine braid classes. The braid classes correspond to the connected components of the \textcolor{turq}{teal} subgraph obtained by deleting the \textcolor{nectarine}{orange} edges of the Matsumoto graph given in Figure~\ref{fig:Matsumoto graph}. In particular, the singleton braid classes correspond to the six vertices that are not incident to any \textcolor{turq}{teal} edges.
\end{example}

The remainder of this paper focuses exclusively on braid classes, so we will now write $[\ralpha]$ in place of $[\ralpha]_b$. Each \textcolor{turq}{teal} connected component of a Matsumoto graph provides a graphical representation of the corresponding braid class. For a reduced expression $\ralpha$, the \emph{braid graph} of $\ralpha$, denoted $\B(\ralpha)$, is the graph whose vertex set is  $[\ralpha]$ and $\rbeta, \rgamma \in [\ralpha]$ are connected by an edge if and only if $\rgamma$ and $\rbeta$ are related by a single braid move. If $\ralpha$ and $\rbeta$ are braid equivalent, then $\B(\ralpha) = \B(\rbeta)$. On the other hand, if $\ralpha$ and $\rbeta$ are related via a commutation move, then $\B(\ralpha) \neq \B(\rbeta)$ but they might be isomorphic. Note that we are defining braid graphs with respect to a fixed reduced expression (or equivalence class) as opposed to the corresponding group element. The latter are the graphs that arise from contracting the edges corresponding to braid moves in the Matsumoto graph. 

\begin{example}\label{ex:braid classes}
Below we describe braid classes for three different reduced expressions and their corresponding braid graphs. We have used underlines and overlines to indicate where braid moves may occur.
\begin{enumerate}[(a)] 
\item In the Coxeter system of type $A_6$, the expression $\ralpha_1 =1213243565$ is reduced. Its braid class consists of the following reduced expressions: 
\[
\UOLaugment\overline 
\UOLaugment\underline
\ralpha_1 = \underline{121}3243\underline{565}, \
\ralpha_2 = \underline{21}[2]\overline{32}43\underline{565}, \
\ralpha_3 = 21\underline{32}[3]\overline{43}\underline{565}, \
\ralpha_4 = 2132\overline{434}\underline{565},
\]
\[
\UOLaugment\overline 
\UOLaugment\underline
\ralpha_5 = \underline{121}3243\underline{656}, \
\ralpha_6 = \underline{21}[2]\overline{32}43\underline{656}, \
\ralpha_7 = 21\underline{32}[3]\overline{43}\underline{656}, \
\ralpha_8 = 2132\overline{434}\underline{656}.
\]
\item In the Coxeter system of type $D_4$, the expression $\rbeta_1 = 4341232$ is reduced and its braid class consists of the following reduced expressions: 

\[
\UOLaugment\overline 
\UOLaugment\underline
\rbeta_1 = \underline{434} 1 \underline{232}, \
\rbeta_2 = \underline{343} 1 \underline{232}, \
\rbeta_3 = \underline{434} 1 \underline{323}, \
\rbeta_4 = \underline{34}[3]\overline{1}[3]\underline{23}, \
\rbeta_5 = 34\underline{131}23.
\]
\item In the Coxeter system of type $D_4$, the expression $\rgamma_1 = 343132343$ is reduced and its braid class consists of the following reduced expressions:
\[
\UOLaugment\overline 
\UOLaugment\underline
\rgamma_1 = \underline{34}[3]\overline{1}[3]\underline{2}[3]\overline{43}, \
\rgamma_2 = 34\underline{131}2\underline{343}, \
\rgamma_3 = \underline{434}1\underline{32}[3]\overline{43}, \
\rgamma_4 = \underline{343}1\underline{232}43, \
\]
\[
\UOLaugment\overline 
\UOLaugment\underline
\rgamma_5 = \underline{434}1\underline{232}43, \
\rgamma_6 = \underline{343}132\underline{434}, \
\rgamma_7 = 34\underline{131}2\underline{434}, \
\rgamma_8 = \underline{434}132\underline{434}.
\]
\end{enumerate}
The braid graphs $B(\ralpha_1)$, $B(\rbeta_1)$, and $B(\rgamma_1)$ are depicted in Figure~\ref{fig:braid graphs}.
\end{example}

\begin{figure}[!ht]
\centering
\subcaptionbox{\label{fig:braidgraph_a}}[.28\linewidth]{
\begin{tikzpicture}[every circle node/.style={draw, circle, inner sep=1.25pt}]
\node [circle] (1) [label=left:$\ralpha_4$] at (0,1){};
\node [circle] (2) [label=left:$\ralpha_3$] at (0,2){};
\node [circle] (3) [label=left:$\ralpha_2$] at (0,3){};
\node [circle] (4) [label=left:$\ralpha_1$] at (0,4){};
\node [circle] (5) [label=right:$\ralpha_8$] at (1,1){};
\node [circle] (6) [label=right:$\ralpha_7$] at (1,2){};
\node [circle] (7) [label=right:$\ralpha_6$] at (1,3){};
\node [circle] (8) [label=right:$\ralpha_5$] at (1,4){};
\draw [turq,-, very thick] (1) to (2);
\draw [turq,-, very thick] (2) to (3);
\draw [turq,-, very thick] (3) to (4);
\draw [turq,-, very thick] (5) to (6);
\draw [turq,-, very thick] (6) to (7);
\draw [turq,-, very thick] (7) to (8);
\draw [turq,-, very thick] (1) to (5);
\draw [turq,-, very thick] (2) to (6);
\draw [turq,-, very thick] (3) to (7);
\draw [turq,-, very thick] (4) to (8);
\end{tikzpicture}
}
\subcaptionbox{\label{fig:braidgraph_b}}[.28\linewidth]{
\begin{tikzpicture}[every circle node/.style={draw, circle, inner sep=1.25pt}]
\node [circle] (1) [label=left:$\rbeta_4$] at (0,1){};
\node [circle] (2) [label=left:$\rbeta_5$] at (.707,.293){};
\node [circle] (3) [label=left:$\rbeta_3$] at (-.707,1.707){};
\node [circle] (4) [label=right:$\rbeta_2$] at (.707,1.707){};
\node [circle] (5) [label=left:$\rbeta_1$] at (0,2.414){};
\draw [turq,-, very thick] (1) to (2);
\draw [turq,-, very thick] (1) to (3);
\draw [turq,-, very thick] (1) to (4);
\draw [turq,-, very thick] (3) to (5);
\draw [turq,-, very thick] (5) to (4);
\end{tikzpicture}
}
\subcaptionbox{\label{fig:braidgraph_c}}[.25\linewidth]{
\begin{tikzpicture}[every circle node/.style={draw, circle ,inner sep=1.25pt}]
\node [circle] (1) [label=right:$\rgamma_1$] at (0,1){};
\node [circle] (2) [label=right:$\rgamma_2$] at (.707,.293){};
\node [circle] (3) [label=left:$\rgamma_3$] at (-.707,1.707){};
\node [circle] (4) [label=right:$\rgamma_4$] at (.707,1.707){};
\node [circle] (5) [label=right:$\rgamma_5$] at (0,2.414){};
\node [circle] (6) [label=left:$\rgamma_6$] at (-.707,.293){};
\node [circle] (7) [label=right:$\rgamma_7$] at (0,-0.414){};
\node [circle] (8) [label=left:$\rgamma_8$]at (-1.414,1){};
\draw [turq,-, very thick] (1) to (2);
\draw [turq,-, very thick] (1) to (3);
\draw [turq,-, very thick] (1) to (4);
\draw [turq,-, very thick] (1) to (6);
\draw [turq,-, very thick] (8) to (3);
\draw [turq,-, very thick] (8) to (6);
\draw [turq,-, very thick] (3) to (5);
\draw [turq,-, very thick] (6) to (7);
\draw [turq,-, very thick] (5) to (4);
\draw [turq,-, very thick] (7) to (2);
\draw (1.2,.3) node[label=right:\phantom{$\ralpha_1$}]{};
\end{tikzpicture}}
\caption{Braid graphs corresponding to Example~\ref{ex:braid classes}.\label{fig:braid graphs}}
\end{figure}
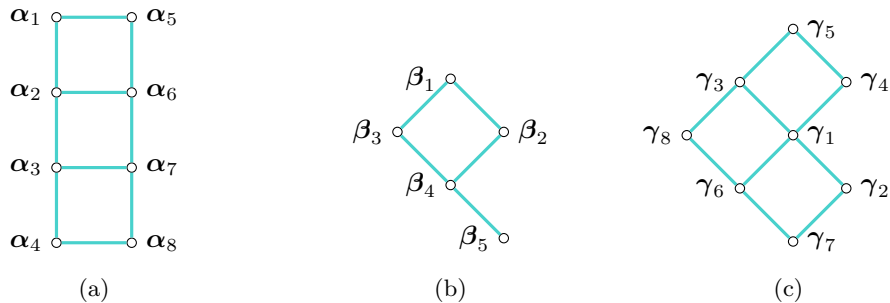

The next proposition is a direct result of Proposition~\ref{prop:Masumoto bipartite}.

\begin{proposition}\label{braidbi}
If $(W,S)$ is a Coxeter system and $\ralpha$ is a reduced expression for $w \in W$, then $\B(\ralpha)$ is bipartite. 
\end{proposition}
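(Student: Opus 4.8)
The plan is to exhibit $\B(\ralpha)$ as a subgraph of the Matsumoto graph $\mathcal{G}(w)$ and then invoke Proposition~\ref{prop:Masumoto bipartite}. First I would observe that a braid move sends a reduced expression for a group element to another reduced expression for the \emph{same} group element: applying the relation $\underbrace{sts\cdots}_{m(s,t)}\mapsto\underbrace{tst\cdots}_{m(s,t)}$ does not change the element of $W$ represented by the word, and it preserves length. Hence if $\ralpha\in\mathcal{R}(w)$, then every $\rbeta\in[\ralpha]$ also lies in $\mathcal{R}(w)$, so $V(\B(\ralpha))=[\ralpha]\subseteq\mathcal{R}(w)=V(\mathcal{G}(w))$. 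Moreover, by the definitions of the two graphs, any edge $\{\rbeta,\rgamma\}$ of $\B(\ralpha)$ arises from a single braid move and is therefore also an edge of $\mathcal{G}(w)$. Thus $\B(\ralpha)$ is a (connected) subgraph of $\mathcal{G}(w)$; in fact it is precisely one of the connected components of the subgraph obtained from $\mathcal{G}(w)$ by deleting the edges corresponding to commutation moves.

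Next I would use the elementary fact that every subgraph of a bipartite graph is bipartite: if $V(\mathcal{G}(w))=X\sqcup Y$ is a bipartition witnessing Proposition~\ref{prop:Masumoto bipartite}, then $[\ralpha]=\bigl([\ralpha]\cap X\bigr)\sqcup\bigl([\ralpha]\cap Y\bigr)$ is a bipartition of $\B(\ralpha)$, since no edge of $\B(\ralpha)$ joins two vertices of $X$ or two vertices of $Y$. (Equivalently, $\mathcal{G}(w)$ has no odd cycles, so neither does any subgraph.) This completes the argument.

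There is essentially no obstacle here: the statement is an immediate corollary of Proposition~\ref{prop:Masumoto bipartite} once one notes the inclusion $\B(\ralpha)\hookrightarrow\mathcal{G}(w)$. The only point requiring a word of care is the verification that braid equivalence keeps one inside $\mathcal{R}(w)$ for a fixed $w$, which is immediate from the fact that braid moves are instances of the defining relations of $W$ and preserve word length.
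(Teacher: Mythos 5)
Your argument is correct and is exactly the route the paper takes: the braid graph is a subgraph of the Matsumoto graph $\mathcal{G}(w)$ (a connected component after deleting commutation edges), so bipartiteness is inherited directly from Proposition~\ref{prop:Masumoto bipartite}. The paper simply records this as an immediate consequence; your write-up fills in the same observation with no substantive difference.
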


\section{Architecture of braid graphs}\label{sec:architecture of braid graphs}

Throughout the remainder of this paper, we will assume $(W,S)$ is simply laced. The following terminology and definitions allow us to introduce the notions of braid shadow and link, which first appeared in~\cite{ABCE2024} . For $i,j \in \mathbb{N}$ with $i \leq j$, we define the interval $\llb i,j \rrb := \{i, i+1, \ldots, j-1,j\}$. It follows that $\llb i, i \rrb = \{i\}$. The intervals $\llb i,j \rrb$ will be used to denote a contiguous set of positions of a reduced expression. 

For a reduced expression $\ralpha = s_{x_1}s_{x_2} \cdots s_{x_m}$, the \emph{local support} of $\ralpha$ over the interval $\llb i,j \rrb$ is defined via
\[
\supp_{\llb i,j \rrb}(\ralpha) := \{s_{x_k} \mid k \in \llb i,j \rrb\}.
\]
We define the \emph{local support of the braid class} $[\ralpha]$ over the interval $\llb i,j \rrb$ via
\[
\supp_{\llb i,j \rrb}([\ralpha]) := \bigcup_{\rbeta \in [\ralpha]} \supp_{\llb i,j \rrb}(\rbeta).
\]
That is, the set $\supp_{\llb i,j \rrb} (\ralpha)$ contains the generators that appear in positions $i,i+1, \ldots, j$ of a single reduced expression $\ralpha$, while $\supp_{\llb i,j \rrb}([\ralpha])$ contains the generators that appear in positions $i, i+1, \ldots ,j$ of some reduced expression in the braid class $[\ralpha]$. In the special case of the degenerate interval $\llb i, i \rrb$, we write $\supp_i (\ralpha):= \supp_{\llb i,i \rrb}(\ralpha)$ and $\supp_i([\ralpha]) := \supp_{\llb i,i \rrb}([\ralpha])$. Further, we let $\ralpha_{\llb i,j \rrb}$ denote the factor of $\ralpha$ appearing in positions $i,i+1, \ldots ,j$ of $\ralpha$.  

Let $\ralpha = s_{x_1}s_{x_2} \cdots s_{x_m}$ be a reduced expression for $w \in W$. Following~\cite{ABCE2024}, we say $\llb i-1,i+1 \rrb$ is a \emph{braid shadow} for $\ralpha$ if $\ralpha_{\llb i-1,i+1 \rrb} = sts$ with $m(s,t) = 3$. That is, a braid shadow is the triple of locations where one may apply a braid move. We denote the collection of braid shadows for $\ralpha$ by $\mathcal{S}(\ralpha)$. The set of braid shadows for the braid class $[\ralpha]$ is aptly defined as
\[
\mathcal{S}([\ralpha]) := \bigcup_{\rbeta \in [\ralpha]} \mathcal{S}(\rbeta).
\]
Note that $\mathcal{S}(\ralpha)$ is the collection of all braid shadows for a specific reduced expression $\ralpha$, while $\mathcal{S}([\ralpha])$ is the set of braid shadows for all reduced expressions braid equivalent to $\ralpha$.
The \emph{dimension} of a reduced expression $\ralpha$, denoted $\dim(\ralpha)$, is defined to be the cardinality of $\mathcal{S}([\ralpha])$. In~\cite{ABCE2024}, the authors used the term ``rank'' instead of ``dimension''.

\begin{example}\label{ex:braidshadows}
Consider the reduced expressions given in Example~\ref{ex:braid classes}. We see that:
\begin{enumerate}[(a)]
\item $\mathcal{S}(\ralpha_1) = \{\llb 1,3 \rrb ,\llb  8,10 \rrb\}$, $\mathcal{S}([\ralpha_1]) = \{\llb 1,3 \rrb,\llb 3,5 \rrb,\llb  5,7 \rrb,\llb  8,10 \rrb\}$, and $\dim(\ralpha_1)=4$;
\item $\mathcal{S}(\rbeta_1) = \{\llb 1,3 \rrb,\llb 5,7 \rrb\}$, $\mathcal{S}([\rbeta_1]) = \{\llb 1,3 \rrb,\llb 3,5 \rrb,\llb 5,7 \rrb\}$, and $\dim(\rbeta_1)=3$;
\item $\mathcal{S}(\rgamma_1) = \{\llb 1,3 \rrb,\llb 3,5 \rrb,\llb 5,7 \rrb,\llb  7,9 \rrb\}$, $\mathcal{S}([\rgamma_1]) = \{\llb 1,3 \rrb,\llb 3,5 \rrb,\llb 5,7 \rrb,\llb 7,9 \rrb\}$, and $\dim(\rgamma_1)=4$.
\end{enumerate}
\end{example}

The following result from~\cite{ABCE2024} states that for any reduced expression $\ralpha$, any distinct pair of braid shadows across $[\ralpha]$ must either be disjoint or overlap by exactly one position.

\begin{proposition}
Suppose $(W,S)$ is a simply-laced Coxeter system. If $\ralpha$ is a reduced expression for $w \in W$ with $\llb i-1, i+1 \rrb \in \mathcal{S}([\ralpha])$, then $\llb i-2, i \rrb, \llb i, i+2 \rrb \notin \mathcal{S}([\ralpha])$.
\end{proposition}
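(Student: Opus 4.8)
The plan is to reduce to a single inclusion by word‑reversal, dispose of the degenerate case with the observation that $stst$ is not reduced when $m(s,t)=3$, and otherwise run a minimal‑distance argument in the braid graph. Reversing a reduced expression $\ralpha=s_{x_1}\cdots s_{x_m}$ for $w$ yields a reduced expression $\ralpha^{\mathrm{op}}=s_{x_m}\cdots s_{x_1}$ for $w^{-1}$, and reversal carries braid moves to braid moves, hence induces an isomorphism $\B(\ralpha)\cong\B(\ralpha^{\mathrm{op}})$ taking $\llb i-1,i+1\rrb\in\mathcal{S}([\ralpha])$ to $\llb m-i,m-i+2\rrb\in\mathcal{S}([\ralpha^{\mathrm{op}}])$ and $\llb i-2,i\rrb$ to $\llb m-i+1,m-i+3\rrb$; thus it suffices to prove that $\llb i-1,i+1\rrb\in\mathcal{S}([\ralpha])$ forces $\llb i,i+2\rrb\notin\mathcal{S}([\ralpha])$, for every reduced $\ralpha$. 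Assume this fails, and among all pairs $(\rbeta,\rgamma)\in[\ralpha]\times[\ralpha]$ with $\rbeta_{\llb i-1,i+1\rrb}=sts$ and $\rgamma_{\llb i,i+2\rrb}=aba$ (for suitable generators with $m(s,t)=m(a,b)=3$) choose one minimizing $d:=d_{\B(\ralpha)}(\rbeta,\rgamma)$.

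If $d=0$ then $\rbeta=\rgamma$ realizes both shadows, and equating the two factors on the overlap $\{i,i+1\}$ gives $t=\rbeta_i=a$ and $s=\rbeta_{i+1}=b$, so $\rbeta_{\llb i-1,i+2\rrb}=stst$; but $stst$ equals $ts$ in $\langle s,t\rangle$ and so is not reduced, contradicting that every factor of the reduced expression $\rbeta$ is reduced. If $d\geq 1$, look at the first braid move $\rbeta\to\rbeta'$ along a geodesic to $\rgamma$, performed at a shadow $\llb j-1,j+1\rrb$ of $\rbeta$. If $\{j-1,j,j+1\}$ misses $\{i-1,i,i+1\}$, or if $j=i$ (so the move merely toggles $sts\leftrightarrow tst$), then $\rbeta'$ still realizes a shadow at $\llb i-1,i+1\rrb$ and $d_{\B(\ralpha)}(\rbeta',\rgamma)=d-1$, contradicting minimality. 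If $j=i-1$ or $j=i+1$, matching the $xyx$‑shape of that move against $\rbeta_i=t$ and $\rbeta_{i-1}=\rbeta_{i+1}=s$ forces $\rbeta$ to contain the non‑reduced factor $tsts$ or $stst$ — again impossible. The only surviving possibilities are $j=i-2$ and $j=i+2$, i.e. the geodesic begins with a braid move at $\llb i-3,i-1\rrb$ or $\llb i+1,i+3\rrb$, a shadow that meets $\llb i-1,i+1\rrb$ in exactly one endpoint and does destroy that shadow.

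The heart of the matter is therefore to rule out a geodesic that starts with such an endpoint‑overlapping move; this is the step I expect to be the real obstacle. I would handle it by tracking inversion data: attach to position $k$ of a reduced word the reflection $r_k=s_{x_1}\cdots s_{x_k}\cdots s_{x_1}$; a braid move at $\llb k-1,k+1\rrb$ transposes $r_{k-1}$ with $r_{k+1}$ and fixes $r_k$, so for each inversion of $w$ the parity of the position it occupies is constant on the braid class. A shadow at $\llb i-1,i+1\rrb$ makes $r_{i-1},r_i,r_{i+1}$ the three reflections of a rank‑two, $m=3$ reflection subgroup $H$, exactly one of them (namely $r_i$) sitting in a position congruent to $i$ modulo $2$; a shadow at $\llb i,i+2\rrb$ makes the corresponding subgroup $H'$ one whose three reflections occupy positions $i,i+1,i+2$, with exactly two of them in positions congruent to $i$ modulo $2$. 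Showing $H=H'$ then contradicts the braid‑class invariance of that count. A possible alternative is an induction on $\ell(w)$: whenever a boundary letter of $\ralpha$ is constant throughout $[\ralpha]$ one removes it and applies the inductive hypothesis to a shorter word, reducing to the situation in which both $\llb 1,3\rrb$ and $\llb m-2,m\rrb$ are braid shadows and the given shadow $\llb i-1,i+1\rrb$ abuts a boundary — a configuration small enough to analyze by hand. Everything except this endpoint‑overlap step should be routine.
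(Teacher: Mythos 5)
You do not have a proof here, and the paper offers nothing to compare against: it imports this proposition from \cite{ABCE2024} without proof. Judged on its own terms, your outline handles only the routine part and explicitly leaves open the case where the difficulty actually lives. The pieces you do carry out are correct: the reversal symmetry reducing to the pair $\llb i-1,i+1\rrb$, $\llb i,i+2\rrb$; the $d=0$ case via the non-reduced factor $stst$; the minimality argument killing a first braid move that is disjoint from the window, centered at $i$, or centered at $i\pm 1$ (the latter again via $stst$/$tsts$); and the observation that a braid move at $\llb k-1,k+1\rrb$ swaps $r_{k-1}$ with $r_{k+1}$ and fixes all other $r_j$, so the parity of the position of each inversion is constant on the braid class. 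But everything up to that point only uses the fact that two shadows cannot overlap in two positions \emph{within a single reduced word}, which is immediate from reducedness. The surviving case -- the geodesic from $\rbeta$ to $\rgamma$ begins with a move centered at $i-2$ or $i+2$, which destroys the shadow at $\llb i-1,i+1\rrb$ -- is exactly the content of the proposition, and it is not proved.

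Neither of your proposed repairs closes this gap as stated. The parity route rests entirely on the unproved claim $H=H'$, that the dihedral reflection subgroup attached to the shadow $\llb i-1,i+1\rrb$ of $\rbeta$ equals the one attached to $\llb i,i+2\rrb$ of $\rgamma$; you give no argument, and establishing it would require controlling how the window's reflections evolve under precisely those braid moves centered at $i\pm 2$ that your geodesic argument could not handle, so it looks no easier than the proposition itself (the parity deduction from $H=H'$ is fine, but it is the hypothesis that carries all the weight). The inductive route also does not terminate in a checkable base case: a boundary letter can be deleted only when $\llb 1,3\rrb$ (resp.\ $\llb m-2,m\rrb$) is not a shadow of the class, so the reduction stops at words in which \emph{both} ends are active, and such words (links, for instance) can be arbitrarily long with the offending pair of shadows far from either end; nothing forces the pair to abut a boundary, so there is no small configuration left to analyze by hand. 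To turn this into a proof you must supply an actual argument for the endpoint-overlap case, e.g.\ a proof of the reflection-subgroup claim or an induction with a reduction that genuinely shrinks that case.
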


The previous result inspires the following definition from~\cite{ABCE2024}. If  $\ralpha$ is a reduced expression for $w \in W$ with $\ell(w) = m \geq 1$, we define $\ralpha$ to be a \emph{link} if either $m = 1$ or $m$ is odd and 
\[
\mathcal{S}([\ralpha]) = \{\llb 1,3 \rrb, \llb 3,5 \rrb, \ldots ,\llb m-4,m-2 \rrb, \llb m-2,m \rrb\}.
\] 
Note that if $\ralpha$ is a link and $\rbeta \in [\ralpha]$, then $\rbeta$ is also a link.

\begin{example}\label{ex:linkschains}
Consider the reduced expressions given in Example~\ref{ex:braid classes}.  Since $\llb 7,9 \rrb \notin \mathcal{S}([\ralpha_1])$ and $m$ is not odd, $\ralpha_1$ is a not a link. It turns out that the factors $1213243$ and $565$ of $\ralpha_1$ are links. However, since $\mathcal{S}([\rbeta_1]) = \{\llb  1,3 \rrb,\llb  3,5 \rrb,\llb  5,7 \rrb\}$, $\rbeta_1$ is a link. Lastly, since  $\mathcal{S}([\rgamma_1])=\{\llb  1,3 \rrb,\llb  3,5 \rrb,\llb  5,7 \rrb,\llb  7,9 \rrb\}$, $\rgamma_1$ is also a link.
\end{example}

Let $\ralpha$ be a reduced expression for $w \in W$ such that $\ell(w) \geq 1$. Then $\rbeta$ is said to be a \emph{link factor} of $\ralpha$ if and only if
\begin{enumerate}[(a)]
\item $\rbeta$ is a factor of $\ralpha$,
\item $\rbeta$ is a link, and
\item If $\rbeta < \rgamma \leq \ralpha$, then $\rgamma$ is not a link.
\end{enumerate} 
That is, the link factors of a reduced expression are maximal among the factors of that expression that are also links. It follows that we may uniquely write each reduced expression $\ralpha$ for a nonidentity group element as a product $\ralpha_1\ralpha_2 \cdots \ralpha_k$, where each $\ralpha_i$ is a link factor. This product is called the \emph{link factorization} of $\ralpha$. We may denote the link factorization as $\ralpha = \ralpha_1 \mid \ralpha_2 \mid \cdots \mid \ralpha_k$. For convenience, we say that the link factorization of the identity is a product consisting of a single copy of the empty word despite the fact that the empty word is not a link. The next result appears in~\cite{ABCE2024}.

\begin{proposition}\label{prop:braid classes simply laced}
Suppose $(W,S)$ is a simply-laced Coxeter system. If $\ralpha$ is a reduced expression for $w \in W$ with link factorization $\ralpha_1 \mid \ralpha_2 \mid \cdots \mid \ralpha_k$, then
\begin{enumerate}[(a)]
\item $[\ralpha] = \{\rbeta_1 \mid \rbeta_2 \mid \cdots \mid \rbeta_k : \rbeta_i \in [\ralpha_i] \text{ for } 1 \leq i \leq k\},$
\item $\card([\ralpha]) = \displaystyle \prod_{i=1}^k \card([\ralpha_i]),$
\item $\displaystyle\dim(\ralpha) = \sum_{i=1}^k \dim(\ralpha_i)$.
\item\label{box prod} $\B(\ralpha) \cong \B(\ralpha_1) \square \B(\ralpha_2) \square \cdots \square \B(\ralpha_k)$.
\end{enumerate}
\end{proposition}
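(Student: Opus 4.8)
The plan is to reduce everything to one structural lemma: \emph{braid moves never mix two distinct link factors}. Precisely, I would prove that if $\rgamma$ is a reduced expression with link factorization $\rgamma=\rgamma_1\mid\rgamma_2\mid\cdots\mid\rgamma_k$, then no braid shadow in $\mathcal{S}([\rgamma])$ meets positions lying in two different link factors; equivalently $\mathcal{S}([\rgamma])$ is the disjoint union of the position-shifted copies of $\mathcal{S}([\rgamma_1]),\dots,\mathcal{S}([\rgamma_k])$, and hence every braid move applied to any member of $[\rgamma]$ changes the letters of exactly one link factor while leaving the positions occupied by the link factors unchanged. All four parts of the proposition fall out of this.

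For the lemma I would argue by contradiction. Suppose some $\sigma=\llb t-1,t+1\rrb\in\mathcal{S}([\rgamma])$ ``straddles'' a boundary between consecutive link factors, and choose $\rgamma^{\ast}\in[\rgamma]$ for which $\sigma$ is an honest braid shadow, so that $\rgamma^{\ast}_{\sigma}=sts$ with $m(s,t)=3$; note that $\rgamma^{\ast}_{\sigma}$ is itself a length-three link. Let $\rho$ be the leftmost link factor met by $\sigma$. If $\rho$ has length at least $3$, then, being a link, it carries a braid shadow on the three of its positions nearest the boundary, and this shadow overlaps $\sigma$ in two positions, contradicting the proposition that distinct braid shadows overlap in at most one position; if $\sigma$ meets $\rho$ in only one position, one runs the same dichotomy on the next link factor instead, and the remaining ``length-two'' case cannot occur because links have odd length. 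If $\rho$ has length $1$, then $\rho<\rgamma^{\ast}_{\sigma}\leq\rgamma^{\ast}$ displays $\rho$ as a proper factor of a link, contradicting the maximality clause in the definition of link factor. The inclusion ``$\supseteq$'' in the disjoint-union statement is immediate — lift any braid move performed inside a single factor to a braid move on the whole word — and ``$\subseteq$'' is the absence of straddling shadows just established. The delicate part, and the step I expect to be the main obstacle, is that the argument tacitly uses ``$\rho$ is a link factor of $\rgamma^{\ast}$'' and ``the positions of the link factors are preserved,'' which are \emph{class-level} facts; I would secure them by an induction on braid distance, showing that the set of members of $[\rgamma]$ whose link factorization is positioned exactly like that of $\rgamma$, with factors in the matching braid classes, is nonempty and closed under braid moves, hence is all of $[\rgamma]$. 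Getting this bootstrap right — particularly the handling of length-one link factors — is where essentially all the difficulty sits.

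Granting the lemma, the proposition follows quickly. For (a), the inclusion ``$\supseteq$'' needs no lemma: given $\rbeta_i\in[\ralpha_i]$, realize each $\rbeta_i$ from $\ralpha_i$ by a sequence of braid moves and carry them out inside $\ralpha_1\mid\cdots\mid\ralpha_k$ one factor at a time, each being a legitimate braid move on the whole word, arriving at $\rbeta_1\mid\cdots\mid\rbeta_k\in[\ralpha]$; for ``$\subseteq$,'' an induction on braid distance from $\ralpha$ together with the lemma shows that every braid move stays inside a single link factor, so any $\rgamma\in[\ralpha]$ splits as $\rgamma_1\mid\cdots\mid\rgamma_k$ with $\rgamma_i\in[\ralpha_i]$. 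Part (b) is then just the observation that $\rgamma\mapsto(\rgamma_1,\dots,\rgamma_k)$ is a bijection $[\ralpha]\to[\ralpha_1]\times\cdots\times[\ralpha_k]$. Part (c) is the count $\dim(\ralpha)=|\mathcal{S}([\ralpha])|=\sum_{i=1}^{k}|\mathcal{S}([\ralpha_i])|=\sum_{i=1}^{k}\dim(\ralpha_i)$, where the middle equality is the disjoint-union clause of the lemma. Finally, for (d), the same map $\rgamma\mapsto(\rgamma_1,\dots,\rgamma_k)$ is a vertex bijection by (a) and (b), and the lemma shows that two vertices of $\B(\ralpha)$ are joined by an edge exactly when their images agree in every coordinate but one and are adjacent in that coordinate — which, using the associativity and commutativity of $\square$, is precisely the adjacency relation of $\B(\ralpha_1)\square\cdots\square\B(\ralpha_k)$ — so the map is a graph isomorphism.
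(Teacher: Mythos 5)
A preliminary remark: the paper does not prove this proposition at all — it is quoted from~\cite{ABCE2024} ("The next result appears in\ldots") — so your argument has to stand on its own. Your overall strategy is the right one: reduce everything to the claim that braid moves never cross link-factor boundaries, from which (a)--(d) follow exactly as you say. Your case analysis for a single straddling shadow is also essentially sound in the simply-laced setting: two-position overlaps are excluded by the proposition that two class shadows overlap in at most one position (and class shadows of a contiguous factor do lift to class shadows of the whole class), while the length-one cases are excluded by the maximality clause in the definition of link factor.

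The genuine gap is in the bootstrap, but not quite where you locate it. To propagate your induction on braid distance you need the invariant ``the \emph{link factorization} of $\rgamma'$ is positioned exactly like that of $\rgamma$, with factors in the matching braid classes'' to survive one braid move. Granting the invariant at $\rgamma'$, your case analysis shows only that the move happens inside a single factor, so that $\rgamma''$ again decomposes positionally into links $\rgamma'_1\mid\cdots\mid\rgamma''_i\mid\cdots\mid\rgamma'_k$ with each piece in the correct braid class. It does \emph{not} show that this decomposition is the link factorization of $\rgamma''$, i.e.\ that each piece is still \emph{maximal} among links occurring as factors of the new word: after the letters of piece $i$ change, a longer link could a priori appear in $\rgamma''$ that swallows a neighbouring length-one piece (the configuration ``singleton $x$ followed by a factor $\rho$ some member of whose braid class begins with $yx$, where $m(x,y)=3$'' is precisely what must be ruled out, and nothing in your sketch rules it out; reducedness alone does not, since $xyx$ is reduced). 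Without maximality in $\rgamma''$ the induction hypothesis cannot be carried forward, and your length-one case at the next step loses its contradiction, because you would be invoking the maximality clause for a word for which you have not established it. This preservation statement is essentially a static form of the very lemma you are proving (no link occurring as a factor of any member of the class straddles a boundary), so as written the bootstrap is circular; closing it requires genuine structural control on how the extremal letters of a link can vary over its braid class, which is the real content of the result in~\cite{ABCE2024} and is not supplied by your outline. Once the lemma is actually secured, your derivations of (a)--(d) are fine.
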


\begin{example}\label{ex:boxprod2}
\UOLaugment\overline 
\UOLaugment\underline
Consider the reduced expression $\ralpha = 3231343565787$ in the Coxeter system of type $D_7$.  The link factorization for $\ralpha$ is $\textcolor{turq}{3231343} \mid \textcolor{magenta}{565} \mid \textcolor{nectarine}{787}$. The braid graph for the first link factor is isomorphic to the braid graph in Figure~\ref{fig:braidgraph_b}. The braid graph for the entire reduced expression and its decomposition are shown in Figure~\ref{fig:productoflollipops}. We have utilized colors to help distinguish the link factors.
\end{example}

\begin{figure}[ht!]
\centering
\UOLaugment\overline 
\UOLaugment\underline
\begin{tikzpicture}[every circle node/.style={draw, circle, inner sep=1.25pt}]
\node [circle] (1) at (1.3,0){};
\node [label=above right:{$\textcolor{turq}{\underline{32}[3]\overline{1}[3]\underline{43}} \mid \textcolor{magenta}{\overline{565}} \mid \textcolor{nectarine}{\underline{787}}$}] (0) at (1.2,-.45){};
\node [circle] (2) at ({1.3*cos(45)},{1.3*sin(45)}){};
\node [circle] (3) at (0,1.3){};
\node [circle] (4) at ({1.3*cos(135)},{1.3*sin(135)}){};
\node [circle] (5) at (-1.3,0){};
\node [circle] (6) at ({1.3*cos(225)},{1.3*sin(225)}){};
\node [circle] (7) at (0,-1.3){};
\node [circle] (8) at ({1.3*cos(315)},{1.3*sin(315)}){};
\draw [turq,-, very thick] (1) to (2);
\draw [turq,-, very thick] (2) to (3);
\draw [magenta,-, very thick] (3) to (4);
\draw [nectarine,-, very thick] (4) to (5);
\draw [turq,-, very thick] (5) to (6);
\draw [turq,-, very thick] (6) to (7);
\draw [magenta,-, very thick] (7) to (8);
\draw [nectarine,-, very thick] (8) to (1);

\node [circle] (9) at (.55,0){};
\node [circle] (10) at ({.55*cos(45)},{.55*sin(45)}){};
\node [circle] (11) at (0,.55){};
\node [circle] (12) at ({.55*cos(135)},{.55*sin(135)}){};
\node [circle] (13) at (-.55,0){};
\node [circle] (14) at ({.55*cos(225)},{.55*sin(225)}){};
\node [circle] (15) at (0,-.55){};
\node [circle] (16) at ({.55*cos(315)},{.55*sin(315)}){};

\draw [turq,-, very thick] (8) to (9);

\draw [magenta,-, very thick] (1) to (16);
\draw [nectarine,-, very thick] (2) to (9);
\draw [turq,-, very thick] (3) to (10);

\draw [magenta,-, very thick] (5) to (12);
\draw [nectarine,-, very thick] (6) to (13);

\draw [turq,-, very thick] (8) to (15);
\draw [turq,-, very thick] (1) to (10);
\draw [magenta,-, very thick] (2) to (11);
\draw [nectarine,-, very thick] (3) to (12);
\draw [turq,-, very thick] (4) to (13);
\draw [turq,-, very thick] (5) to (14);
\draw [magenta,-, very thick] (6) to (15);
\draw [nectarine,-, very thick] (7) to (16);

\draw [turq,-, very thick] (9) to (12);
\draw [magenta,-, very thick] (10) to (13);

\draw [turq,-, very thick] (12) to (15);
\draw [turq,-, very thick] (13) to (16);
\draw [magenta,-, very thick] (14) to (9);
\draw [nectarine,-, very thick] (15) to (10);
\draw [turq,-, very thick] (16) to (11);

\node [circle] (17) at (1.3+.92,-.393){};
\node [circle] (18) at (0+.92,-1.3-.393){};
\node [circle] (19) at ({1.3*cos(315)+.92},{1.3*sin(315)-.393}){};
\node [circle] (20) at ({.55*cos(315)+.92},{.55*sin(315)-.393}){};

\draw [turq,-, very thick] (19) to (8);
\draw [turq,-, very thick] (17) to (1);
\draw [turq,-, very thick] (18) to (7);
\draw [turq,-, very thick] (20) to (16);

\draw [nectarine,-, very thick] (17) to (19);
\draw [magenta,-, very thick] (18) to (19);
\draw [magenta,-, very thick] (17) to (20);
\draw [nectarine,-, very thick] (20) to (18);

\draw [turq,-, very thick] (7) to (14);

\draw [nectarine,-, very thick] (11) to (14);
\draw [turq,-, very thick] (4) to (11);

\def\lshiftA{-9.3}

\node (a) at (-2.5,-.504) {$\cong$};

\node [circle] [label=left:{$\textcolor{magenta}{\overline{565}}$}](26) at (4 + \lshiftA ,-.504 + .45){};
\node [circle] (27) at (4 + \lshiftA ,-.504 - .45){};
\draw [magenta,-, very thick] (26) to (27);

\node (b) at (4.75 + \lshiftA,-.504) {$\Box$};

\node [circle] [label=left:{$\textcolor{nectarine}{\underline{787}}$}] (28) at (5.5 + \lshiftA ,-.504 + .45){};
\node [circle] (29) at (5.5 + \lshiftA ,-.504 - .45){};
\draw [nectarine,-, very thick] (28) to (29);

\node (c) at (3 + \lshiftA,-.5) {$\Box$};

\def\lshiftB{1.25}

\node [circle] (33) [label=left: {$\textcolor{turq}{\underline{32}[3]\overline{1}[3]\underline{43}}\phantom{1}$}] at (\lshiftA + \lshiftB,1-1.5){};
\node [circle] (34) at (.707+\lshiftA + \lshiftB,.293-1.5){};
\node [circle] (35) at (-.707+\lshiftA+ \lshiftB,1.707-1.5){};
\node [circle] (36) at (.707+ \lshiftA+ \lshiftB,1.707-1.5){};
\node [circle] (37) at (\lshiftA+ \lshiftB,2.414-1.5){};
\draw [turq,-, very thick] (33) to (34);
\draw [turq,-, very thick] (33) to (35);
\draw [turq,-, very thick] (33) to (36);
\draw [turq,-, very thick] (35) to (37);
\draw [turq,-, very thick] (37) to (36);
\end{tikzpicture}
\caption{Braid graph for the reduced expression from Example~\ref{ex:boxprod2} and its decomposition into a box product of braid graphs for the corresponding link factors.}\label{fig:productoflollipops}
\end{figure}
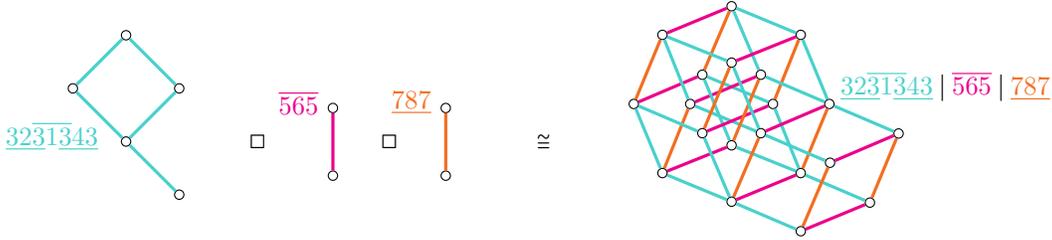

We now focus our attention on Coxeter systems of type $\Lambda$.  The next several propositions summarize key results from~\cite{ABCE2024} concerning the local structure of links in Coxeter systems of type $\Lambda$.

\begin{proposition}\label{prop:local structure of braid shadows}
Suppose $(W,S)$ is type $\Lambda$ and let $\ralpha$ be a link of dimension $r \geq 1$. For each $1\leq i\leq r$, there exists unique $s,t\in S$ with $m(s,t)=3$ such that if $\rbeta\in [\ralpha]$ with $\llb 2i-1,2i+1 \rrb \in \mathcal{S}(\rbeta)$, then $\rbeta_{\llb 2i-1,2i+1 \rrb}=sts$ or $\rbeta_{\llb 2i-1,2i+1 \rrb}=tst$ and $\supp_{2i}([\ralpha])=\{s,t\}$.
\end{proposition}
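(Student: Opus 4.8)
The plan is an induction on $i$ in which triangle-freeness is used exactly once. Throughout I write $\rbeta_k\in S$ for the $k$th letter of a word $\rbeta$, abbreviate the interval $\llb 2i-1,2i+1\rrb$ to \emph{slot $i$}, and say $\rbeta$ \emph{has a braid shadow at slot $i$} when $\llb 2i-1,2i+1\rrb\in\mathcal{S}(\rbeta)$.

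\textbf{Setup.} Since $\ralpha$ is a link of dimension $r$, the definition of a link gives $\ell(w)=2r+1$ and $\mathcal{S}([\ralpha])=\{\,\text{slot }i:1\le i\le r\,\}$. Three elementary facts are used repeatedly. (i) $\rbeta$ has a braid shadow at slot $i$ exactly when $\rbeta_{2i-1}=\rbeta_{2i+1}$, and then $\rbeta_{\llb 2i-1,2i+1\rrb}=uvu$ with $\{u,v\}=\{\rbeta_{2i-1},\rbeta_{2i}\}$ and $m(u,v)=3$ — here $u\ne v$ because consecutive letters of a reduced word differ, $m(u,v)\ne 2$ because $uvu$ is reduced, and hence $m(u,v)=3$ because the system is simply laced. (ii) Position $2i$ lies in slot $i$ and in no other slot, while position $2i-1$ lies in slots $i-1$ and $i$ and in no other (distinct slots overlap in at most one position). (iii) A braid move performed at slot $j$ alters only the three letters of slot $j$, sending $(\rbeta_{2j-1},\rbeta_{2j},\rbeta_{2j+1})=(u,v,u)$ to $(v,u,v)$; so along any sequence of braid moves the letter in position $2i$ can change only at a slot-$i$ move, and the letter in position $2i-1$ only at a slot-$(i-1)$ or slot-$i$ move.

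\textbf{Inductive claim.} I would prove, by induction on $i=1,\dots,r$, the statement $(\star_i)$: there is a two-element set $P_i\subseteq S$ with $m$-value $3$ such that (a) $\rbeta_{2i}\in P_i$ for every $\rbeta\in[\ralpha]$, and (b) whenever $\rbeta\in[\ralpha]$ has a braid shadow at slot $i$, the unordered pair $\{\rbeta_{2i-1},\rbeta_{2i}\}$ equals $P_i$. Granting $(\star_i)$ for all $i$, the proposition follows with $\{s,t\}:=P_i$: clause (b) says every braid shadow at slot $i$ reads $sts$ or $tst$; moreover $\supp_{2i}([\ralpha])=P_i$, since clause (a) gives $\subseteq$ while performing a slot-$i$ move on some $\rbeta$ that admits one realizes both elements of $P_i$ as values $\rbeta_{2i}$; and this identity pins $\{s,t\}$ down uniquely.

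\textbf{Induction step.} Fix $i$, assume $(\star_{i-1})$ (vacuous when $i=1$), and suppose toward a contradiction that two expressions in $[\ralpha]$ have braid shadows at slot $i$ with \emph{different} pairs. Among all such pairs of expressions together with a path joining them in the connected graph $\B(\ralpha)$, pick one whose path $\rbeta^{(0)}\to\cdots\to\rbeta^{(N)}$ has length $N$ minimal. No step of this path is a slot-$i$ move: a slot-$i$ move preserves the braid pair, so if one occurred the pair of $\rbeta^{(0)}$ and the pair of $\rbeta^{(N)}$ would each be forced — otherwise a strictly shorter disagreeing path exists — to equal the pair at that step, making them equal. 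Hence the letter in position $2i$ is constant along the path, say equal to $c$, and the clashing pairs are $\{a,c\}$ (for $\rbeta^{(0)}$) and $\{a',c\}$ (for $\rbeta^{(N)}$) with $a=\rbeta^{(0)}_{2i-1}$, $a'=\rbeta^{(N)}_{2i-1}$, and $a\ne a'$; since $a\ne c$ and $a'\ne c$ (consecutive letters differ), the three of $a,a',c$ are pairwise distinct, and $m(a,c)=m(a',c)=3$. Along the path the letter in position $2i-1$ runs from $a$ to $a'$ and can change only at a slot-$(i-1)$ move; by $(\star_{i-1})$ every slot-$(i-1)$ move is performed at the pair $P_{i-1}$, hence swaps that letter between the two elements of $P_{i-1}$. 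If no slot-$(i-1)$ move occurs — always the case when $i=1$ — then $a=a'$, a contradiction. Otherwise $a,a'\in P_{i-1}$, so $\{a,a'\}=P_{i-1}$ and $m(a,a')=3$, whence $\{a,a',c\}$ is a triangle in the Coxeter graph, contradicting triangle-freeness. This proves the braid pair at slot $i$ is a well-defined two-element set $P_i$ with $m$-value $3$, i.e.\ clause (b). For clause (a), join an arbitrary $\rbeta\in[\ralpha]$ to some $\rbeta^{\ast}$ having a braid shadow at slot $i$; along the connecting path the letter in position $2i$ changes only at slot-$i$ moves, each performed at $P_i$ and hence toggling it within $P_i$, so $\rbeta_{2i}\in P_i$.

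\textbf{Anticipated main obstacle.} The positional bookkeeping in the Setup is routine. The crux is the induction step, specifically the device of passing to a \emph{shortest} path witnessing a disagreement of braid pairs at slot $i$: minimality forces that path to contain no slot-$i$ move, which freezes the middle letter and reduces everything to comparing two pairs that share a letter. Triangle-freeness then enters in exactly one line, after the inductive hypothesis at slot $i-1$ has pinned the remaining two letters to $P_{i-1}$. The delicate point to get right is fact (ii) — that position $2i-1$ is touched only by slots $i-1$ and $i$ — since this is precisely what lets $(\star_{i-1})$ control the letters that could otherwise drift.
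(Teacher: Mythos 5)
Your proposal is correct as written: the positional bookkeeping (slots pairwise overlapping in at most one position, a slot-$j$ move touching only positions $2j-1,2j,2j+1$), the minimal-path device forcing the absence of slot-$i$ moves on a shortest disagreeing path, and the single application of triangle-freeness after $(\star_{i-1})$ pins $\{a,a'\}=P_{i-1}$ all hold up, and the base case $i=1$ is correctly absorbed into the induction step. Note, however, that the paper does not actually prove this proposition — Section~\ref{sec:architecture of braid graphs} imports it verbatim from Awik et al.~\cite{ABCE2024} — so there is no in-paper argument to compare against; your contribution is a self-contained proof of a cited result. Your structure is at least consistent with the paper's surrounding discussion: the example following Proposition~\ref{prop:local structure of overlapping braid shadows} (the $\widetilde A_2$ link $1213121$) shows exactly the failure mode your triangle argument excludes, namely $\supp_{2i}([\ralpha])$ acquiring a third generator via overlapping shadows whose pairs form a $3$-cycle. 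Two points you leave implicit and could state in one line each: (1) some $\rbeta\in[\ralpha]$ does have a braid shadow at slot $i$ (this is exactly the definition of a link with $r\geq 1$, and is needed both to define $P_i$ and to get $\supp_{2i}([\ralpha])\supseteq P_i$); (2) the "paths" you minimize over are walks in $\B(\ralpha)$, which exist because a braid class is by definition connected under braid moves.
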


If $\ralpha$ is a reduced expression such that $\llb i-1, i+1 \rrb$ is a braid shadow for $[\ralpha]$, then the position $i$ in any reduced expression in $[\ralpha]$ is called the \emph{center} of the braid shadow regardless of whether one may apply a braid move in that location in a given reduced expression. The previous proposition states that the support of a braid shadow determines which generators may appear at the corresponding center across the entire braid class. In particular, in Coxeter systems of type $\Lambda$, the center of each braid shadow may take on one of two possible values. Proposition~\ref{prop:local structure of braid shadows} implies that if $\ralpha$ is a link of dimension at least one in a Coxeter system of type $\Lambda$ such that $\supp_{2}([\ralpha])=\{s,t\}$ (with $m(s,t)=3$), then $\ralpha_{\llb1,2\rrb}=st$ or $\ralpha_{\llb1,2\rrb}=ts$. Certainly, we have a similar statement for the right end of a link.

\begin{proposition}\label{prop:local structure of overlapping braid shadows}
Suppose $(W,S)$ is type $\Lambda$ and let $\ralpha$ be a link of dimension $r \geq 2$. For each $1\leq i\leq r$, there exists unique $s,t,u\in S$ with $m(s,t)=3=m(t,u)$ and $m(s,u)=2$ such that $\supp_{2i}([\ralpha])=\{s,t\}$ and $\supp_{2i+2}([\ralpha])=\{t,u\}$.  Additionally, there are three possible forms that $\ralpha_{\llb  2i,2i+2 \rrb}$ may take:
\begin{enumerate}[(i)]
\item $\displaystyle\underbrace{\cdots\lfrac{?}{2i-1}\lfrac{s}{2i}\lfrac{u}{2i+1}\lfrac{t}{2i+2}\lfrac{?}{2i+3}\cdots}_{\ralpha}$,
\item $\displaystyle\underbrace{\cdots\lfrac{?}{2i-1}\lfrac{s}{2i}\lfrac{t}{2i+1}\lfrac{u}{2i+2}\lfrac{?}{2i+3}\cdots}_{\ralpha}$,
\item $\displaystyle\underbrace{\cdots\lfrac{?}{2i-1}\lfrac{t}{2i}\lfrac{s}{2i+1}\lfrac{u}{2i+2}\lfrac{?}{2i+3}\cdots}_{\ralpha}$.
\end{enumerate}
\end{proposition}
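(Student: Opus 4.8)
The plan is to leverage Proposition~\ref{prop:local structure of braid shadows} to pin down the supports $\supp_{2i}([\ralpha])$ and $\supp_{2i+2}([\ralpha])$, then analyze what generator can appear at the shared position $2i+1$, and finally rule out the configurations that would force a forbidden braid shadow. First I would apply Proposition~\ref{prop:local structure of braid shadows} twice: since $\ralpha$ is a link of dimension $r \geq 2$, the $i$-th braid shadow $\llb 2i-1,2i+1\rrb$ has a support $\{s,t\}$ with $m(s,t)=3$, and the $(i+1)$-st braid shadow $\llb 2i+1,2i+3\rrb$ has a support $\{t',u\}$ with $m(t',u)=3$. The two shadows overlap precisely in position $2i+1$. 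Pick any $\rbeta\in[\ralpha]$ with $\llb 2i-1,2i+1\rrb\in\mathcal{S}(\rbeta)$; then $\rbeta_{\llb 2i-1,2i+1\rrb}$ is $sts$ or $tst$, so position $2i+1$ of $\rbeta$ is either $s$ or $t$; likewise, choosing $\rgamma\in[\ralpha]$ with $\llb 2i+1,2i+3\rrb\in\mathcal{S}(\rgamma)$, position $2i+1$ of $\rgamma$ lies in $\{t',u\}$.

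Next I would argue that $t'=t$, i.e., the two supports share exactly the ``middle'' generator. The key observation is that position $2i+1$ ranges over $\{s,t\}$ across the braid class (by the argument above applied to the first shadow: one value when a braid move has been applied there, the other otherwise), and symmetrically over $\{t',u\}$. Since $\supp_{2i+1}([\ralpha])$ is a single well-defined set, we get $\{s,t\}\cap\{t',u\}\neq\emptyset$ and in fact the set of values assumed at position $2i+1$ must be consistent; combined with the fact that $s \neq u$ (otherwise the two shadows would coincide or collide, contradicting that $\ralpha$ is a link with the prescribed list of shadows) and that $s,u$ cannot both equal $t'$, one deduces $\{s,t\}\cap\{t',u\} = \{t\}$, hence $t' = t$. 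So $\supp_{2i}([\ralpha])=\{s,t\}$ and $\supp_{2i+2}([\ralpha])=\{t,u\}$ with $m(s,t)=3=m(t,u)$. The relation $m(s,u)=2$ then follows because the Coxeter system is triangle-free (type $\Lambda$): $s$, $t$, $u$ are three distinct generators with $s{-}t$ and $t{-}u$ edges in the Coxeter graph, so an $s{-}u$ edge would create a three-cycle; since the system is simply laced, $m(s,u)\neq\infty$ is forced as well (a link cannot contain a non-finite braid relation in its support — or more directly, in a reduced expression $s$ and $u$ must be able to coexist near each other, which triangle-freeness already handles). I should double check the precise reason $m(s,u)=2$ rather than merely $m(s,u)\le 3$; triangle-freeness gives $m(s,u)\le 2$ directly since there is no edge, hence $m(s,u)=2$.

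Finally I would enumerate the possible forms of $\ralpha_{\llb 2i,2i+2\rrb}$. We know position $2i+1$ is $s$ or $t$ (from the first shadow) and simultaneously $t$ or $u$ (from the second shadow) — but this is a statement about values occurring somewhere in $[\ralpha]$, not necessarily in $\ralpha$ itself, so I instead reason about $\ralpha$ directly. By Proposition~\ref{prop:local structure of braid shadows} applied to $\ralpha$, since $\supp_{2i}([\ralpha])=\{s,t\}$ we have $\ralpha$ restricted to position $2i$ is $s$ or $t$; similarly position $2i+2$ is $t$ or $u$. The three cases of the proposition correspond exactly to the assignments at positions $2i,2i+1,2i+2$ that are (a) consistent with these support constraints, (b) yield a reduced factor, and (c) do not exhibit a braid shadow at $\llb 2i,2i+2\rrb$ (which is excluded since $\ralpha$ is a link, so only $\llb 2i-1,2i+1\rrb$ and $\llb 2i+1,2i+3\rrb$ are shadows, not $\llb 2i,2i+2\rrb$). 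Writing out the candidate triples on $\{s,t,u\}$ subject to using each generator consistently with the supports, and discarding $tus$, $sut$ flipped improperly, etc., by checking reducedness via $m(s,u)=2$ (so $su=us$, and a word like $s\,s$ or $t\,u\,t\,u$-type repetitions are non-reduced), leaves precisely the three listed forms (i)–(iii). I expect the main obstacle to be case (iii) versus the symmetric-looking $t\,u\,s$ variant: one must carefully use the commutation $su=us$ together with the braid shadow positions to see that $tus$ and $tsu$ collapse to the same class element under a commutation and hence only one representative form survives, and to verify no fourth configuration (e.g. $stu$ with center $t$, which would actually make $\llb 2i,2i+2\rrb$ itself a braid shadow) is reduced-and-admissible. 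Carefully tracking which position is the ``center'' of which shadow is the bookkeeping-heavy part, but it is routine once the support identities $\supp_{2i}=\{s,t\}$, $\supp_{2i+2}=\{t,u\}$, $m(s,u)=2$ are in hand.
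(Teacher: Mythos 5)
The paper never proves this proposition itself: it is one of the results imported from~\cite{ABCE2024} in Section~\ref{sec:architecture of braid graphs}, so there is no in-paper proof to compare against, and I am judging your argument on its own terms. It has two genuine gaps. First, the step identifying the shared generator ($t'=t$) rests on a false claim: you assert that the letter in position $2i+1$ ranges over $\{s,t\}$ across the braid class (``one value when a braid move has been applied there, the other otherwise''), and symmetrically over $\{t',u\}$. But position $2i+1$ is not the center of any braid shadow, and Proposition~\ref{prop:local structure of braid shadows} controls only the centers $2i$ and $2i+2$; the overlap position can carry three different generators across the class. In the paper's own Example~\ref{ex:braid classes}(c), with $\rgamma_1=343132343$ in type $D_4$ and $i=1$, position $3$ carries the letter $3$ in $\rgamma_1$, the letter $4$ in $\rgamma_3$, and the letter $1$ in $\rgamma_2$, i.e.\ all of $s,t,u$. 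Worse, if both of your ``ranges over'' claims were true, they would force $\{s,t\}=\{t',u\}$, the opposite of what you need, and the inference ``$\supp_{2i+1}([\ralpha])$ is a well-defined set, hence the two pairs intersect'' is a non sequitur. The conclusion is reachable, but by a different route: only braid moves in shadows $i$ and $i+1$ can alter the letter in position $2i+1$, a shadow-$i$ move requires (and preserves) a letter of $\{s,t\}$ there, and a shadow-$(i{+}1)$ move requires (and preserves) a letter of $\{t',u\}$; following a braid path from an expression in which shadow $i$ is present to one in which shadow $i+1$ is present forces the letter to lie in both sets at some step, so the supports intersect, and they cannot coincide since that would create an alternating factor of length four in two generators with $m=3$, contradicting reducedness. (Your justification of $s\neq u$ --- ``the shadows would coincide or collide'' --- is not an argument; $s=u$ is exactly this degenerate case and needs the reducedness argument.)

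Second, in the final enumeration you restrict the letter of $\ralpha$ itself at position $2i+1$ to the alphabet $\{s,t,u\}$ without justification. The support constraints you have established concern only the centers $2i$ and $2i+2$; nothing you prove forbids a fourth generator $v$ in position $2i+1$, and your filters (reducedness, and no braid shadow at $\llb 2i,2i+2\rrb$) do not exclude a factor such as $s\,v\,t$. Pinning down the middle letter is the real content of the proposition, and it is exactly what you hand-wave. The same observation as above closes this gap: since braid moves in shadows other than $i$ and $i+1$ never touch position $2i+1$, and a move in shadow $i$ (resp.\ $i+1$) can only be applied when that letter already lies in $\{s,t\}$ (resp.\ $\{t,u\}$), a letter $v\notin\{s,t,u\}$ in position $2i+1$ of $\ralpha$ would persist in every member of $[\ralpha]$, so no member could have the braid shadow $\llb 2i-1,2i+1\rrb$, contradicting that $\ralpha$ is a link. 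Once the alphabet is justified, your case check (support constraints at the centers, reducedness using $m(s,u)=2$, and exclusion of a shadow at $\llb 2i,2i+2\rrb$) does yield exactly forms (i)--(iii), and your deduction of $m(s,u)=2$ from triangle-freeness is fine once $s\neq u$ has actually been established.
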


The next example illustrates the need for the triangle-free assumption in both Propositions~\ref{prop:local structure of braid shadows} and~\ref{prop:local structure of overlapping braid shadows}.

\begin{example}
Consider the link $\rdelta_1 = 1213121$ in the Coxeter system of type $\widetilde A_2$, which is determined by the Coxeter graph in Figure~\ref{fig:CoxgraphaffA_n}. The braid class for $\rdelta_1$ consists of the following links:
\[
\UOLaugment\overline 
\UOLaugment\underline
\rdelta_1 = \underline{12}[1]\overline{{3}}[1]\underline{21}, \
\rdelta_2 = 12\overline{3{1}3}21, \
\rdelta_3 = \underline{212}{3}\underline{121}, \
\]
\[
\UOLaugment\overline 
\UOLaugment\underline
\rdelta_4 = \underline{12}[1]\overline{{3}}[2]\underline{12}, \
\rdelta_5 = \underline{21}[2]\overline{{3}}[2]\underline{12}, \
\rdelta_6 = 21\overline{3{2}3}12. \
\]
Notice that $\supp_{\llb 3,5 \rrb}(\rdelta_1) = \{1,3\}$, $\supp_{\llb 3,5 \rrb}(\rdelta_5)= \{2,3\}$, and $\supp_4([\rdelta_1])=\{1,2,3\}$, and hence Propositions~\ref{prop:local structure of braid shadows} and~\ref{prop:local structure of overlapping braid shadows} do not hold unless the Coxeter system is triangle free.
\end{example}

The \emph{signature} of a reduced expression $\ralpha$, denoted $\sig(\ralpha)$, is the ordered list of generators of $\ralpha$ appearing in the centers of the braid shadows of $[\ralpha]$. Note that if $\ralpha$ is a link, then $\sig(\ralpha)$ is the ordered list of generators appearing in the even positions. We use $\sig_i(\ralpha)$ to represent the $i$th entry of $\sig(\ralpha)$. In light of Proposition~\ref{prop:local structure of braid shadows}, each $\sig_i(\ralpha)$ takes on one of the two values of the support of the corresponding center. We also define
\[
\barsig_i(\ralpha):=\{ \rx \in [\ralpha] \mid \sig_i(\rx)=\sig_i(\ralpha)\}.
\] 
In other words, $\barsig_i(\ralpha)$ is the set of reduced expressions that are braid equivalent to $\ralpha$ and have the same generator in the center of the $i$th braid shadow as $\ralpha$. One consequence of Proposition~\ref{prop:local structure of overlapping braid shadows} is that in Coxeter systems of type $\Lambda$, adjacent values of the signature are never the same.

\begin{example}\label{ex:signature}
Consider the braid class for $\rgamma_1 = 343132343$ in the Coxeter system of type $D_4$ from Example~\ref{ex:braid classes}(c).  We see that $\sig(\rgamma_1)=(4,1,2,4)$ and $\barsig_4(\rgamma_1)=\{\rgamma_1,\rgamma_2,\rgamma_3,\rgamma_4,\rgamma_5\}$.
\end{example}

The next proposition states that each link in a Coxeter system of type $\Lambda$ is uniquely determined by its signature. This result originally appeared in~\cite{ABCE2024}, but we have rephrased it in terms of signature.

\begin{proposition}\label{prop:equal iff sig equal}
Suppose $(W,S)$ is type $\Lambda$ and let $\ralpha$ and $\rbeta$ be two braid equivalent links of dimension at least one. Then $\ralpha = \rbeta$ if and only if $\sig(\ralpha) = \sig(\rbeta)$.
\end{proposition}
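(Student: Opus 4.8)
The plan is to prove both directions, with the forward direction being essentially immediate and the reverse direction being the substance. If $\ralpha = \rbeta$ as words, then they have the same generators in every position, hence in particular the same generators in the centers of the braid shadows of $[\ralpha] = [\rbeta]$, so $\sig(\ralpha) = \sig(\rbeta)$. For the converse, suppose $\ralpha$ and $\rbeta$ are braid equivalent links of dimension $r \geq 1$ with $\sig(\ralpha) = \sig(\rbeta)$; I want to show they agree in every position. Write $\ralpha = s_{x_1} \cdots s_{x_m}$ and $\rbeta = s_{y_1} \cdots s_{y_m}$ (they have the same length since they are braid equivalent), where $m = 2r+1$ since a link of dimension $r$ has braid shadows exactly at $\llb 1,3\rrb, \llb 3,5\rrb, \ldots, \llb m-2,m\rrb$.

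**Key steps.** First I would handle the even positions: for each $1 \le i \le r$, position $2i$ is the center of the $i$-th braid shadow, and by definition of signature $\sig_i(\ralpha)$ is precisely the generator of $\ralpha$ in position $2i$, likewise for $\rbeta$; so $\sig(\ralpha) = \sig(\rbeta)$ gives $s_{x_{2i}} = s_{y_{2i}}$ for all $i$. The work is then to pin down the odd positions $1, 3, \ldots, m$. For an interior odd position $2i+1$ (with $1 \le i \le r-1$), it sits between the braid shadows $\llb 2i-1, 2i+1\rrb$ (with support $\{s,t\}$, say, where $\sig_i = $ the center value) and $\llb 2i+1, 2i+3\rrb$ (with support $\{t,u\}$ where $m(s,t) = m(t,u) = 3$, $m(s,u) = 2$, by Proposition~\ref{prop:local structure of overlapping braid shadows}). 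The three allowed forms for $\ralpha_{\llb 2i, 2i+2\rrb}$ in that proposition are $su t$, $st u$, $ts u$ — and crucially the entry in position $2i+1$ is determined by the pair $(\sig_i(\ralpha), \sig_{i+1}(\ralpha)) = (s_{x_{2i}}, s_{x_{2i+2}})$: form (i) has $(s,t)$ in positions $(2i, 2i+2)$ and $u$ in the center; form (ii) has $(s,u)$ and $t$ in the center; form (iii) has $(t,u)$ and $s$ in the center. Since the three ordered pairs $(s,t), (s,u), (t,u)$ are distinct (as $s,t,u$ are distinct generators), knowing $s_{x_{2i}}$ and $s_{x_{2i+2}}$ uniquely determines which form applies, hence determines $s_{x_{2i+1}}$. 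As $s_{x_{2i}} = s_{y_{2i}}$ and $s_{x_{2i+2}} = s_{y_{2i+2}}$, we get $s_{x_{2i+1}} = s_{y_{2i+1}}$. Finally, for the boundary odd positions $1$ and $m = 2r+1$: by the remark following Proposition~\ref{prop:local structure of braid shadows}, if $\supp_2([\ralpha]) = \{s,t\}$ then $\ralpha_{\llb 1,2\rrb} \in \{st, ts\}$, and since $s_{x_2} = \sig_1(\ralpha)$ is known, the other letter $s_{x_1}$ is forced to be the remaining element of $\{s,t\}$; the same argument with the symmetric statement for the right end handles position $m$. (One should also note the base case $r = 1$, $m = 3$, where there are no interior odd positions and only the two boundary arguments are needed, and the degenerate case $m = 1$ where the link is a single generator determined by $\sig$ trivially — or one restricts to $r \ge 1$ with $m = 1$ excluded as the statement says "dimension at least one," meaning $m \ge 3$; I would clarify this reading.)

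**Main obstacle.** The conceptual content is light once Propositions~\ref{prop:local structure of braid shadows} and~\ref{prop:local structure of overlapping braid shadows} are in hand — the real work was done there. The one place to be careful is the bookkeeping at the two ends of the link versus the interior, and making sure the "three forms determine the odd entry from the even entries" observation is stated cleanly: the point is simply that the map $(\text{form}) \mapsto (\text{ordered pair of even-position generators})$ is injective, which is clear by inspection of the list in Proposition~\ref{prop:local structure of overlapping braid shadows}. An alternative, slightly slicker phrasing avoids case analysis by induction on $r$: peel off the last braid shadow, use Proposition~\ref{prop:local structure of braid shadows}/\ref{prop:local structure of overlapping braid shadows} to see positions $m-1, m$ are determined by $\sig_{r-1}, \sig_r$, and that the prefix $\ralpha_{\llb 1, m-2\rrb}$ is itself a link of dimension $r-1$ whose signature is the truncation of $\sig(\ralpha)$ — but this requires checking that the prefix really is a link (i.e., that $\mathcal{S}([\ralpha_{\llb 1, m-2\rrb}])$ is the expected set), so the direct positional argument above is probably cleaner to write.
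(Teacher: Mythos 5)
Your proof is correct. Note that the paper itself does not supply an argument for this proposition: it is quoted from~\cite{ABCE2024} (``this result originally appeared in \ldots, but we have rephrased it in terms of signature''), so there is no in-paper proof to compare against; your write-up is effectively a self-contained reconstruction from the local-structure results. The skeleton is sound: for a link of dimension $r$ the length is $2r+1$, the even positions are literally the signature entries, each interior odd position $2i+1$ is forced by the pair of adjacent even entries because the three admissible forms of $\ralpha_{\llb 2i,2i+2\rrb}$ in Proposition~\ref{prop:local structure of overlapping braid shadows} produce three distinct ordered pairs in positions $(2i,2i+2)$, and the two boundary positions are forced by Proposition~\ref{prop:local structure of braid shadows} together with the remark that $\ralpha_{\llb 1,2\rrb}\in\{st,ts\}$ when $\supp_2([\ralpha])=\{s,t\}$ (and symmetrically at the right end). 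Two small points worth making explicit when writing this up: first, the triple $s,t,u$ (and the supports $\supp_{2i}([\ralpha])$) in Propositions~\ref{prop:local structure of braid shadows} and~\ref{prop:local structure of overlapping braid shadows} depend only on the braid class, so the \emph{same} three forms and the same injective form-to-pair correspondence apply simultaneously to $\ralpha$ and $\rbeta$ --- this is what licenses the conclusion that equal even entries force equal odd entries; second, the overlapping-shadow proposition should be invoked only for $1\leq i\leq r-1$ (the range $1\leq i\leq r$ as printed would reference position $2r+2$, beyond the word), which is exactly the range you use. With those clarifications your positional argument is complete, and the alternative inductive phrasing you sketch is unnecessary.
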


The next proposition from~\cite{ABCE2024} states that if $\ralpha$ is a link, then for all pairs of overlapping braid shadows in $[\ralpha]$, there is a link in $[\ralpha]$ where overlapping braid shadows occur simultaneously.

\begin{proposition}\label{prop:special link}
If $(W,S)$ is type $\Lambda$ and $\ralpha$ is a link of dimension $r \geq 2$, then for all $1\leq i\leq r-1$, there exists $\rsigma \in [\ralpha]$ with the property that $\llb 2i-1,2i+1 \rrb, \llb 2i+1,2i+3 \rrb \in \mathcal{S}(\rsigma)$.
\end{proposition}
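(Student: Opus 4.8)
\emph{Plan.} Fix $i$ and apply Proposition~\ref{prop:local structure of overlapping braid shadows} to the consecutive pair of braid shadows $\llb 2i-1,2i+1\rrb,\llb 2i+1,2i+3\rrb$ of $[\ralpha]$: choose $s,t,u\in S$ with $m(s,t)=3=m(t,u)$, $m(s,u)=2$, $\supp_{2i}([\ralpha])=\{s,t\}$ and $\supp_{2i+2}([\ralpha])=\{t,u\}$. The first step is to pin down exactly what we are looking for: a reduced expression $\rsigma\in[\ralpha]$ satisfies $\llb 2i-1,2i+1\rrb,\llb 2i+1,2i+3\rrb\in\mathcal{S}(\rsigma)$ if and only if $\rsigma_{\llb 2i-1,2i+3\rrb}=tstut$. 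The ``if'' direction is immediate. For ``only if'', Proposition~\ref{prop:local structure of braid shadows} forces any realization of $\llb 2i-1,2i+1\rrb$ as a braid shadow to have the form $aba$ on positions $2i-1,2i,2i+1$ with $\{a,b\}=\supp_{2i}([\ralpha])=\{s,t\}$, and any realization of $\llb 2i+1,2i+3\rrb$ to have the form $cdc$ on positions $2i+1,2i+2,2i+3$ with $\{c,d\}=\{t,u\}$; agreement at the shared position $2i+1$ gives $a=c\in\{s,t\}\cap\{t,u\}=\{t\}$, so $a=c=t$, $b=s$, $d=u$. Hence it suffices to produce \emph{one} $\rsigma\in[\ralpha]$ with $\rsigma_{\llb 2i-1,2i+3\rrb}=tstut$.

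To build such a $\rsigma$, I would start from any $\rbeta\in[\ralpha]$ realizing the braid shadow $\llb 2i+1,2i+3\rrb$, which exists because $\llb 2i+1,2i+3\rrb\in\mathcal{S}([\ralpha])$. By Proposition~\ref{prop:local structure of braid shadows} we have $\rbeta_{\llb 2i+1,2i+3\rrb}\in\{tut,utu\}$, and after a single braid move at $\llb 2i+1,2i+3\rrb$ if necessary we may assume $\rbeta_{\llb 2i+1,2i+3\rrb}=tut$. Then $\rbeta_{2i+1}=t$, and since $\rbeta$ is reduced and $\rbeta_{2i}\in\supp_{2i}([\ralpha])=\{s,t\}$, we are forced to have $\rbeta_{2i}=s$; thus $\rbeta_{\llb 2i,2i+3\rrb}=stut$. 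The only remaining obstruction to $\rbeta_{\llb 2i-1,2i+3\rrb}=tstut$ is the single entry $\rbeta_{2i-1}$: once $\rbeta_{2i-1}=t$, both $\llb 2i-1,2i+1\rrb$ and $\llb 2i+1,2i+3\rrb$ are braid shadows of $\rbeta$ (using $m(t,s)=3=m(t,u)$), and $\rsigma:=\rbeta$ finishes the proof.

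So the heart of the argument — and the step I expect to be the main obstacle — is arranging $\rbeta_{2i-1}=t$ by braid moves that do not touch any position $\ge 2i$, and hence leave $\rbeta_{\llb 2i,2i+3\rrb}=stut$ intact. If $i=1$, the remark following Proposition~\ref{prop:local structure of braid shadows} says $\rbeta_{\llb 1,2\rrb}$ is an ordering of $\supp_2([\ralpha])=\{s,t\}$, so $\rbeta_2=s$ forces $\rbeta_1=t$ and there is nothing to do. If $i\ge 2$, I would run a finite descent along the braid shadows $\llb 2i-3,2i-1\rrb,\llb 2i-5,2i-3\rrb,\dots$\,: feeding $\rbeta_{2i}=s$ into Proposition~\ref{prop:local structure of overlapping braid shadows} for the pair $\bigl(\llb 2i-3,2i-1\rrb,\llb 2i-1,2i+1\rrb\bigr)$, the trichotomy there leaves only two possibilities — either already $\rbeta_{2i-1}=t$, or $\rbeta_{2i-2}=t$ and $\rbeta_{2i-1}$ is the non-$t$ generator of $\supp_{2i-2}([\ralpha])$; in the latter case $\llb 2i-3,2i-1\rrb$ becomes a realized braid shadow of $\rbeta$ as soon as $\rbeta_{2i-3}$ has been corrected to that same generator, and the braid move there replaces $\rbeta_{2i-1}$ by $t$. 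Repeating the identical analysis one braid shadow further left to perform each needed correction, the position indices strictly decrease and the process bottoms out at $\llb 1,3\rrb$, where the remark following Proposition~\ref{prop:local structure of braid shadows} again determines the relevant entry and makes the last braid move available; so the descent terminates. Throughout, the triangle-free hypothesis is doing the real work: it is what makes Propositions~\ref{prop:local structure of braid shadows} and~\ref{prop:local structure of overlapping braid shadows} available, guaranteeing that the generators appearing in each three-letter window are distinct and interact only through the prescribed braid and commutation relations, which is exactly what keeps the cascade of braid moves well defined. If the bookkeeping for this descent became unwieldy, the fallback I would use is to prove instead, by induction on $\dim(\ralpha)$ and using Proposition~\ref{prop:equal iff sig equal}, that every sequence $(\sigma_1,\dots,\sigma_r)$ with $\sigma_j\in\supp_{2j}([\ralpha])$ and $\sigma_j\neq\sigma_{j+1}$ is the signature of some element of $[\ralpha]$, and then read off the element whose signature realizes the local pattern $tstut$.
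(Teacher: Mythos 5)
The paper itself does not prove Proposition~\ref{prop:special link}: it is imported from~\cite{ABCE2024} and stated without proof, so there is no in-paper argument to compare yours against; your proposal therefore has to stand as a self-contained proof, and it does. The reduction is correct: realizing both shadows forces $\rsigma_{\llb 2i-1,2i+3\rrb}=tstut$, since by Proposition~\ref{prop:local structure of braid shadows} the shared letter in position $2i+1$ must lie in $\supp_{2i}([\ralpha])\cap\supp_{2i+2}([\ralpha])=\{t\}$ (using $s\neq u$, which follows from $m(s,u)=2$). Starting from $\rbeta$ with $\rbeta_{\llb 2i+1,2i+3\rrb}=tut$, reducedness together with $\supp_{2i}([\ralpha])=\{s,t\}$ indeed forces $\rbeta_{2i}=s$, and your reading of the trichotomy of Proposition~\ref{prop:local structure of overlapping braid shadows} at the center pair $(2i-2,2i)$ is accurate: the only bad configuration is $\rbeta_{2i-2}=t$ and $\rbeta_{2i-1}=s'$, where $\supp_{2i-2}([\ralpha])=\{s',t\}$.

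The one point you should spell out is the execution of the leftward cascade. The chain of ``needed corrections'' is determined from the original $\rbeta$, and the pending braid moves must then be performed from the leftmost center rightward; one has to check that the braid at center $2k$ (touching only positions $2k-1,2k,2k+1$) sets position $2k+1$ to exactly the letter required by the next pending braid at center $2k+2$, whose positions $2k+2$ and $2k+3$ are still untouched at that moment. This does check out: the bad configuration at each level gives an alternating pattern, the descent bottoms out at position $1$, where the remark after Proposition~\ref{prop:local structure of braid shadows} makes the required entry automatic, and each move then enables the next. Since every move occurs at a center at most $2i-2$, the suffix $stut$ in positions $2i,\dots,2i+3$ is preserved, and the final expression has window $tstut$, realizing both shadows. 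So this is an omission of bookkeeping rather than a gap; the proof is sound.
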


In light of Propositions~\ref{prop:local structure of overlapping braid shadows} and~\ref{prop:special link}, if $\ralpha$ is a link of dimension $r \geq 2$, then for each $1\leq i\leq r-1$, there exists $\rsigma \in [\ralpha]$ with the property that $\rsigma_{\llb  2i-1,2i+3 \rrb}=tstut$ for unique $s,t,u\in S$ with $m(s,t)=3=m(t,u)$ and $m(s,u)=2$.

For a reduced expression $\ralpha$, we number the braid shadows in $\bs([\ralpha])$ from left to right as 1 through $\dim(\ralpha)$. If $\ralpha$ and $\rbeta$ are related by a single braid move that occurs in the $j$th braid shadow (i.e., only the $j$th entry of the signature differs between $\ralpha$ and $\rbeta$), we denote this braid move as $b^j$ and write $b^j(\ralpha) = \rbeta$ to indicate that applying the braid move $b^j$ to $\ralpha$ yields $\rbeta$. We accordingly label the edge in $\B(\ralpha)$ connecting $\ralpha$ and $\rbeta$ with a $j$.

As a special case of Proposition~\ref{prop:special link}, if we choose $\rsigma \in [\ralpha]$ according to Proposition~\ref{prop:special link} with $\llb 2r-3,2r-1 \rrb,\llb 2r-1,2r+1 \rrb \in \mathcal{S}(\rsigma)$, then $\barsig_r(\rsigma)$ is the set of links in $[\ralpha]$ that share the same penultimate generator as $\ralpha$ while $\barsig_r(b^r(\rsigma))$ is the set of links that do not. In fact, all links in $\barsig_r(\rsigma)$ have the same final two generators and all links in $\barsig_r(b^r(\rsigma))$ have the same final two generators. Specifically, if $\supp_{2r}([\ralpha]) = \{s,t\}$ and $\supp_{2r}(\ralpha) = \{s\}$, then every link in $\barsig_r(\rsigma)$ ends in $st$ while every link in $\barsig_r(b^r(\rsigma))$ ends in $ts$.  The sets $\barsig_r(\rsigma)$ and $\barsig_r(b^r(\rsigma))$ will play an important role in the remainder of this paper. Note that $\barsig_r(\rsigma)$ and $\barsig_r(b^r(\rsigma))$ were respectively denoted by $X_\rsigma$ and $Y_\rsigma$ in~\cite{ABCE2024}.
 
Let $\ralpha$ be a link of dimension at least 1. We define $\hatralpha$ to be the reduced expression obtained by deleting the two rightmost letters of $\ralpha$. Certainly, $\hatralpha$ is reduced, but it is important to note that $\hatralpha$ may no longer be a link. The subsequent proposition, which combines multiple results from~\cite{ABCE2024}, establishes a sufficient condition on a link $\ralpha$ such that $\hatralpha$ is also a link. 

\begin{proposition}\label{prop:multiple results cobbled}
Suppose $(W,S)$ is type $\Lambda$ and $\ralpha$ is a link of dimension $r\geq 2$ and choose $\rsigma$ according to Proposition~\ref{prop:special link} such that $\llb 2r-3,2r-1 \rrb, \llb 2r-1,2r+1 \rrb \in \mathcal{S}(\rsigma)$. Then: 
\begin{enumerate}[(a)]
\item $\{\barsig_r(\rsigma),\barsig_r(b^r(\rsigma))\}$ is a partition of $[\ralpha]$;
\item $\hatrsigma$ is a link of dimension $r-1$;
\item If $\rbeta \in \barsig_r(\rsigma)$, then $\hatrbeta \in [\hatrsigma]$;
\item Every element of $[\hatrsigma]$ is of the form $\hatrbeta$ for some $\rbeta \in \barsig_r(\rsigma)$;
\item\label{cor:subgraph of a link a} There exists an isometric embedding from $\B(\hatrsigma)$ into $\B(\ralpha)$ whose image is $\B(\ralpha)[\barsig_r(\rsigma)]$;
\item If $\rbeta \in \barsig_r(b^r(\rsigma))$, then $\llb 2r-1,2r+1 \rrb \in \mathcal{S}(\rbeta)$ and $\left(b^r(\rbeta)\right)_{\llb 1,2r-1 \rrb} \in [\hatrsigma]$.
\item\label{cor:subgraph of a link b} The induced subgraph $B(\ralpha)[\barsig_r(b^r(\rsigma))]$ is an isometric subgraph of $B(\ralpha)$;
\item\label{cor:subgraph of a link c} If $\rbeta \in \barsig_r(\rsigma)$ and  $\rgamma \in \barsig_r(b^r(\rsigma))$, then $d(\rbeta ,\rgamma) = d(\rbeta ,b^r(\rgamma)) + 1$.
\end{enumerate}
\end{proposition}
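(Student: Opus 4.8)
The plan is to organize everything around the \emph{truncation map} $\phi\colon\barsig_r(\rsigma)\to S^{*}$ given by $\phi(\rbeta)=\hatrbeta$ (deletion of the last two letters), after first pinning down those two letters for every member of $[\ralpha]$. By Proposition~\ref{prop:special link} together with Proposition~\ref{prop:local structure of overlapping braid shadows}, the chosen $\rsigma$ satisfies $\rsigma_{\llb 2r-3,2r+1\rrb}=tstut$ with $m(s,t)=m(t,u)=3$ and $m(s,u)=2$, so $\supp_{2r-2}([\ralpha])=\{s,t\}$, $\supp_{2r}([\ralpha])=\{t,u\}$, $\sig_r(\rsigma)=u$, and $\sig_r(b^r(\rsigma))=t$. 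The crucial local fact is that the two rightmost positions $2r$ and $2r+1$ are affected only by the braid move $b^r$: every braid shadow of $[\ralpha]$ except $\llb 2r-1,2r+1\rrb$ lies inside $\llb 1,2r-1\rrb$, hence so do the positions acted on by the corresponding braid move. Since $b^r$ replaces a window $tut$ by $utu$ (or the reverse), it toggles position $2r$ between $u$ and $t$ and, in lockstep, position $2r+1$ between $t$ and $u$; starting from $\rsigma$, which reads $ut$ there, it follows that every $\rbeta\in\barsig_r(\rsigma)$ ends in $ut$ and every $\rbeta\in\barsig_r(b^r(\rsigma))$ ends in $tu$. Part~(a) is then immediate: by Proposition~\ref{prop:local structure of braid shadows} the entry $\sig_r$ is $\{t,u\}$-valued on all of $[\ralpha]$, and $\barsig_r(\rsigma)$, $\barsig_r(b^r(\rsigma))$ are exactly its two fibers.

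Next I would prove that $\phi$ is a bijection $\barsig_r(\rsigma)\to[\hatrsigma]$ and, in fact, a graph isomorphism onto the induced subgraph $\B(\ralpha)[\barsig_r(\rsigma)]$. Injectivity is clear since $\rbeta=\hatrbeta\,ut$ recovers $\rbeta$. For surjectivity, given $\hatrgamma\in[\hatrsigma]$ the word $\hatrgamma\,ut$ represents the same group element as $\hatrsigma\,ut=\rsigma$, hence has length $2r+1$ and is reduced; appending $ut$ to a braid sequence from $\hatrgamma$ to $\hatrsigma$ (every intermediate word stays reduced for the same length reason) shows $\hatrgamma\,ut\in[\ralpha]$, and $\sig_r(\hatrgamma\,ut)=u$, so $\hatrgamma\,ut\in\barsig_r(\rsigma)$ with $\phi(\hatrgamma\,ut)=\hatrgamma$. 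That $\phi$ and its inverse preserve edges is the same observation twice: a braid move relating two members of $\barsig_r(\rsigma)$ must be some $b^{j}$ with $j\le r-1$ (it cannot flip $\sig_r$), and such a move acts only on positions $\le 2r-1$, so it commutes with deleting positions $2r,2r+1$; conversely appending $ut$ carries a braid move on $\hatrgamma$ to the same braid move on $\hatrgamma\,ut$. This yields the isomorphism $\B(\ralpha)[\barsig_r(\rsigma)]\cong\B(\hatrsigma)$, hence (c) and (d). Part~(b) is checked separately by identifying $\mathcal{S}([\hatrsigma])$: no braid shadow of a length-$(2r-1)$ word can reach past position $2r-1$, and each $\llb 2j-1,2j+1\rrb$ with $j\le r-1$ is realized in some $\hatrbeta$ with $\rbeta\in\barsig_r(\rsigma)$ — take $\hatrsigma$ itself for $j=r-1$, and for $j<r-1$ take a witnessing $\rbeta'\in[\ralpha]$, replacing it by $b^r(\rbeta')$ if $\rbeta'\in\barsig_r(b^r(\rsigma))$, which leaves positions $\le 2r-3$ untouched — so $\mathcal{S}([\hatrsigma])=\{\llb 1,3\rrb,\dots,\llb 2r-3,2r-1\rrb\}$ and $\hatrsigma$ is a link of dimension $r-1$.

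Parts (e), (f), (g), (h) then reduce to convexity. For (f): an expression $\rgamma\in\barsig_r(b^r(\rsigma))$ ends in $tu$, and since $\sig_r(\rgamma)=t$ forces the form of Proposition~\ref{prop:local structure of overlapping braid shadows} in which position $2r-1$ is $u$, the window $\rgamma_{\llb 2r-1,2r+1\rrb}$ reads $utu$; thus $\llb 2r-1,2r+1\rrb\in\mathcal{S}(\rgamma)$, applying $b^r$ flips $\sig_r$ back to $u$ so $b^r(\rgamma)\in\barsig_r(\rsigma)$, and $(b^r(\rgamma))_{\llb 1,2r-1\rrb}$, being $b^r(\rgamma)$ with its last two letters removed, lies in $[\hatrsigma]$ by (c). For the remaining parts one uses that $\barsig_r(\rsigma)$ and $\barsig_r(b^r(\rsigma))$ are convex in $\B(\ralpha)$: a convex induced subgraph is isometric, which gives (e) (via the isomorphism above) and (g); and (h) follows because bipartiteness of $\B(\ralpha)$ (Proposition~\ref{braidbi}) forces $d(\rbeta,\rgamma)$ and $d(\rbeta,b^r(\rgamma))$ to differ by exactly $1$ (the vertices $\rgamma$ and $b^r(\rgamma)$ are adjacent and lie in opposite parts), while convexity of $\barsig_r(\rsigma)$ rules out the alternative $d(\rbeta,\rgamma)=d(\rbeta,b^r(\rgamma))-1$ — otherwise $\rgamma\in\barsig_r(b^r(\rsigma))$ would sit on a geodesic between the two points $\rbeta,b^r(\rgamma)$ of $\barsig_r(\rsigma)$.

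The step I expect to be the main obstacle is precisely this convexity of $\barsig_r(\rsigma)$ and $\barsig_r(b^r(\rsigma))$, equivalently the assertion that a geodesic of $\B(\ralpha)$ never uses the braid move $b^r$ more than once. Since this proposition is meant to collect and repackage results of~\cite{ABCE2024}, the efficient route is to quote from there that $\B(\ralpha)$ is a partial cube — so its semicubes are convex, and $\{\barsig_r(\rsigma),\barsig_r(b^r(\rsigma))\}$ is a pair of opposite semicubes with the $b^r$-edges forming a single $\btheta$-class. A fully self-contained argument would instead have to reprove the ``each braid move occurs at most once along a geodesic'' statement that is established later in Section~\ref{sec:geodetic structure}; everything else in the proposition is bookkeeping for the truncation map $\phi$.
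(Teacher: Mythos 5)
The paper never actually proves this proposition (it is imported wholesale from~\cite{ABCE2024}), so the question is whether your reconstruction stands on its own; it does not quite. Your preliminary analysis of the last two letters, part (a), part (d) (appending $ut$ to a braid sequence), the first half of (f), and the reductions of (g), (h) to convexity are all fine. The genuine gap is at (c). You declare $\phi$ a bijection $\barsig_r(\rsigma)\to[\hatrsigma]$ after checking injectivity, surjectivity, and edge-preservation, but none of these shows that $\phi$ \emph{lands} in $[\hatrsigma]$, i.e.\ that $\hatrbeta\approx_b\hatrsigma$ for an arbitrary $\rbeta\in\barsig_r(\rsigma)$ --- which is exactly statement (c). A braid path from $\rsigma$ to $\rbeta$ may use $b^r$ (an even number of times) and leave $\barsig_r(\rsigma)$, and truncating such a path is not a braid path; your edge-preservation remark only covers paths that stay inside $\barsig_r(\rsigma)$, so it tacitly assumes connectivity of $\B(\ralpha)[\barsig_r(\rsigma)]$, which is a piece of the very convexity you defer --- yet you list only (e), (g), (h) as needing it, while (b), (e) and the second half of (f) all cite (c). One way to close this without convexity: project each vertex $\rx$ of an arbitrary path to $\hat{\rx}$ if $\rx\in\barsig_r(\rsigma)$ and to $\widehat{b^r(\rx)}$ otherwise; a $b^r$-edge projects to a constant, a $b^j$-edge inside $\barsig_r(\rsigma)$ projects to the same $b^j$-move, and a $b^j$-edge inside $\barsig_r(b^r(\rsigma))$ forces $j\le r-2$ (every word there ends in $sutu$, so no $b^{r-1}$-move is available since $m(s,u)=2$), whence $b^j$ commutes with $b^r$ and again projects to a $b^j$-move; the projected walk exhibits $\hatrbeta\approx_b\hatrsigma$. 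Separately, in (b) you only exclude shadows reaching past position $2r-1$; to get a link you must also exclude odd-centered shadows from $\mathcal{S}([\hatrsigma])$, which does follow from your appending argument together with $\mathcal{S}([\ralpha])$ being exactly the even-centered shadows, but it needs saying.

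The convexity of $\barsig_r(\rsigma)$ and $\barsig_r(b^r(\rsigma))$, which you correctly identify as the crux for (e), (g), (h), is also only routed, not proved. Quoting the partial-cube theorem of~\cite{ABCE2024} does not suffice as stated: you still must show that these two sets are the opposite semicubes attached to the edge $\{\rsigma,b^r(\rsigma)\}$, an identification you assert without argument; in the present paper that is Proposition~\ref{prop:same color semis}, and it rests on the geodesic results of Section~\ref{sec:geodetic structure} (each braid move at most once on a geodesic, $d=\Delta$ of signatures). Those results do not depend on this proposition, so invoking or reproving them is legitimate, but as written your ``efficient route'' has an unproven step, and deducing these lemmas from the partial-cube theorem is in any case circular relative to~\cite{ABCE2024}, where that theorem sits downstream of precisely the statements being assembled here.
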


\begin{example}\label{ex:braid graph with partition}
Consider the link $\ralpha = 32313435464$ in the Coxeter system of type $\widetilde D_5$. One possible choice for a link satisfying the conditions in Proposition~\ref{prop:special link} with braid shadows $\llb 7,9 \rrb$ and $\llb 9,11 \rrb$ is $\rsigma= 32314345464$. Set $\rtau =b^5(\rsigma)=32314345646$.  The braid graph for $\ralpha$ is given in Figure~\ref{fig:BoxWithTwoFlaps}. We have highlighted $\B(\ralpha)[\barsig_5(\rsigma)]$ in \textcolor{turq}{teal} and $\B(\ralpha)[\barsig_5(\rtau)]$ in \textcolor{magenta}{magenta}. In this case, $\hatrsigma=323143454$, and $\B(\hatrsigma)\cong \B(\ralpha)[\barsig_5(\rsigma)]$. Each of the edges joining $\B(\ralpha)[\barsig_5(\rsigma)]$ and $\B(\ralpha)[\barsig_5(\rtau)]$ correspond to the braid move $b^5$ and are shown in black. 
\end{example} 

\begin{figure}[ht!]
\centering
\begin{tikzpicture}[every circle node/.style={draw, circle, inner sep=1.25pt}, scale = 1.2]
	\node at (-2.25,2) {$\textcolor{magenta}{\B(\ralpha)[\barsig_5(\rtau)]}$};
	\node at (-1.2,3.75) {$\textcolor{turq}{\B(\hatrsigma)\cong\B(\ralpha)[\barsig_5(\rsigma)]}$};
	\node (x) at (-2,1) {$\rtau=\underline{323}1\underline{434}5\underline{646}$};
		\UOLaugment\overline 
	\UOLaugment\underline
\node [circle] (1) at (0,1){};
\node [circle] (2) at (.707,.293){};
\node [circle] (3) at (-.707,1.707){};
\node [circle] (4) [label=right:$\ {\underline{32}[3]\overline{1}[3]\underline{43}5\underline{464}=\ralpha}$] at (.707,1.707){};
\node [circle] (5) at (0,2.414){};
\draw [rotate = 90, magenta,-, very thick] (1) to (2);
\draw [magenta,-, very thick] (1) to (3);
\draw [black,-, very thick] (1) to (4);
\draw [black,-, very thick] (3) to (5);
\draw [turq,-, very thick] (5) to (4);
\node [circle] (9) at (1.414,1){};
\node (y) at (-.05,1.707){};
\node [circle] (10) at (0,1.707){};
\node [circle] (11) at (0,3.121){};
\node [circle] (13) [label=right:$\ {\underline{323}1\underline{43}[4]\overline{5}[4]\underline{64}=\rsigma}$] at (.707,2.414){};
\node [circle] (14) at (-.707,2.414){};
\node [circle] (15) at (1.414,3.121){};
\node [circle] (16) at (.707,3.828){};
\draw [magenta,-, very thick] (1) to (10);
\draw [black,-, very thick] (10) to (13);
\draw [magenta,-, very thick] (10) to (14);
\draw [black,-, very thick] (14) to (11);
\draw [turq,-, very thick] (13) to (11);
\draw [turq,-, very thick] (13) to (4);
\draw [turq,-, very thick] (4) to (9);
\draw [black,-, very thick] (9) to (2);
\draw [turq,-, very thick] (5) to (11);
\draw [magenta,-, very thick] (14) to (3);
\draw [turq,-, very thick] (15) to (16);
\draw [turq,-, very thick] (11) to (16);
\draw [turq,-, very thick] (15) to (13);
\draw [->, thick] (x) to [out=0,in=180] (y);
\begin{pgfonlayer}{background}
\highlight{8pt}{turq}{(9.center) to (4.center) to (5.center) to (11.center) to (16.center) to (15.center) to (13.center) to (11.center)}
\highlight{8pt}{turq}{(13.center) to (4.center)}
\highlight{8pt}{magenta}{(2.center) to (1.center) to (3.center) to (14.center) to (10.center) to (1.center)}
\end{pgfonlayer}
\end{tikzpicture}
\caption{Braid graph for the reduced expression in Example~\ref{ex:braid graph with partition} together with a partition of the vertices according to Proposition~\ref{prop:multiple results cobbled}.}
\label{fig:BoxWithTwoFlaps}
\end{figure}
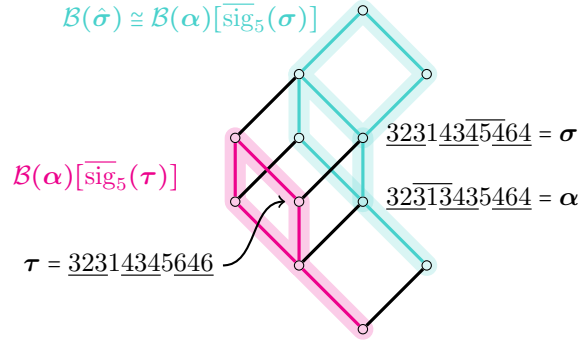

\section{Geodetic structure of braid graphs}\label{sec:geodetic structure}

If $\ralpha$ and $\rbeta$ are braid equivalent reduced expressions in a Coxeter system of type $\Lambda$, we denote a minimal sequence of braid moves from $\ralpha$ to $\rbeta$ as $b_1^{j_1}, b_2^{j_2}, \ldots, b_k^{j_k}$, where $b_i^{j_i}$ is the $i$th braid move in the sequence that occurs in the $j_i$th shadow in $\bs([\ralpha])$. A minimal braid sequence from $\ralpha$ to $\rbeta$ corresponds to a geodesic in $\B(\ralpha)$ from $\ralpha$ to $\rbeta$ consisting of edges labeled consecutively $j_1, j_2, \ldots , j_k$. 

\begin{proposition}\label{prop:each edge once in minimal sequence}
Suppose $(W,S)$ is type $\Lambda$ and let $\ralpha$ and $\rbeta$ be two braid equivalent reduced expressions of dimension at least one.  A braid sequence $b_1^{j_1},b_2^{j_2},...,b_k^{j_k}$ from $\ralpha$ to $\rbeta$ is minimal if and only if each $j_i$ appears exactly once.
\end{proposition}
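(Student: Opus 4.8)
The plan is to first prove the distance formula
$d_{\B(\ralpha)}(\ralpha,\rbeta)=\#\{\,i:\sig_i(\ralpha)\neq\sig_i(\rbeta)\,\}$
and then deduce the proposition by a parity count on the labels $j_1,\dots,j_k$ of a braid sequence. Only the ``$\ge$'' half of the formula is needed for the ``if'' direction; the ``only if'' direction needs the full formula.

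The setup: put $d=\dim(\ralpha)$ and send each $\rgamma\in[\ralpha]$ to the string $\varphi(\rgamma)\in V(Q_d)$ whose $i$th bit records whether $\sig_i(\rgamma)=\sig_i(\ralpha)$. Because each signature entry has only two possible values and a reduced expression in $[\ralpha]$ is determined by its signature (Proposition~\ref{prop:equal iff sig equal} together with the link-factorization decomposition of Proposition~\ref{prop:braid classes simply laced}), $\varphi$ is injective; and because a braid move $b^{j}$ flips exactly the $j$th signature entry, $\varphi$ sends edges to edges, so it is a graph homomorphism. Graph homomorphisms do not increase distance, so $d_{\B(\ralpha)}(\rx,\ry)\ge d_{Q_d}(\varphi(\rx),\varphi(\ry))=\#\{i:\sig_i(\rx)\neq\sig_i(\ry)\}$. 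This already settles the ``if'' direction: if $b_1^{j_1},\dots,b_k^{j_k}$ runs from $\rx$ to $\ry$ with each label occurring once, then $\varphi(\rx)$ and $\varphi(\ry)$ differ in exactly the coordinates $j_1,\dots,j_k$, so $k=d_{Q_d}(\varphi(\rx),\varphi(\ry))\le d_{\B(\ralpha)}(\rx,\ry)\le k$, and the sequence is a geodesic.

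For the ``$\le$'' half of the formula I would exhibit a braid sequence of length $\#\{i:\sig_i(\rx)\neq\sig_i(\ry)\}$. By Proposition~\ref{prop:braid classes simply laced} a braid sequence decomposes into one piece per link factor, with pairwise disjoint label sets, and is minimal iff each piece is, so it suffices to treat links, by strong induction on the dimension $r$; the cases $r\le 1$ are trivial. For $r\ge 2$, take $\rsigma$ as in Proposition~\ref{prop:multiple results cobbled} with $\llb 2r-3,2r-1\rrb,\llb 2r-1,2r+1\rrb\in\mathcal{S}(\rsigma)$, giving the partition $[\ralpha]=\barsig_r(\rsigma)\sqcup\barsig_r(b^r(\rsigma))$ into isometric subgraphs of $\B(\ralpha)$, with $\B(\ralpha)[\barsig_r(\rsigma)]\cong\B(\hatrsigma)$ a link of dimension $r-1$ (same edge labels $1,\dots,r-1$), and with $d(\rbeta,\rgamma)=d(\rbeta,b^r(\rgamma))+1$ whenever $\rbeta\in\barsig_r(\rsigma)$, $\rgamma\in\barsig_r(b^r(\rsigma))$. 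If $\rx,\ry$ both lie in $\barsig_r(\rsigma)$, apply the inductive hypothesis inside $\B(\hatrsigma)$ (here $\sig_r(\rx)=\sig_r(\ry)$, so the last coordinate costs nothing). If they lie in opposite halves, one application of the displayed identity removes the last coordinate and puts us in the previous case. If they both lie in $\barsig_r(b^r(\rsigma))$, the map $\rbeta\mapsto(b^r\rbeta)_{\llb 1,2r-1\rrb}$ identifies $\B(\ralpha)[\barsig_r(b^r(\rsigma))]$ with an induced isometric subgraph of $\B(\hatrsigma)$ that is itself a link braid graph, of dimension $r-2$ with matching edge labels, because triangle-freeness forces the last two signature entries to be constant on $\barsig_r(b^r(\rsigma))$; the inductive hypothesis applies there as well. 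With the distance formula in hand, the ``only if'' direction follows: for a minimal sequence $b_1^{j_1},\dots,b_k^{j_k}$ from $\rx$ to $\ry$, reading it in $Q_d$ shows the set of labels occurring an odd number of times equals $\{i:\sig_i(\rx)\neq\sig_i(\ry)\}$, whence $\#\{\text{distinct labels}\}\ge\#\{i:\sig_i(\rx)\neq\sig_i(\ry)\}=d_{\B(\ralpha)}(\rx,\ry)=k\ge\#\{\text{distinct labels}\}$, and equality throughout forces each label to occur exactly once.

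The hard part will be the ``$\le$'' half of the distance formula, i.e.\ the inductive peeling: the delicate points are that minimality must be seen to survive restriction to one half (guaranteed by the isometric-subgraph statements of Proposition~\ref{prop:multiple results cobbled}) and that crossing the cut costs exactly one step (the ``one step across'' identity), and that the half $\barsig_r(b^r(\rsigma))$ is not a translate of $\B(\hatrsigma)$ but only embeds as an induced subgraph of it, which is why strong induction and careful edge-label bookkeeping are needed.
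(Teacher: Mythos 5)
Your strategy is genuinely different from the paper's: the paper proves this proposition directly, by taking a minimal counterexample with a repeated label $j_1=j_k$ and ruling out both the disjoint case (commute the two moves) and the overlapping case (Proposition~\ref{prop:local structure of overlapping braid shadows} shows the shadow needed for the final move is destroyed), and only afterwards derives the distance formula as Corollary~\ref{cor:distisdelta}; you invert this order and try to establish the distance formula first by induction on links, which is legitimate in principle and not circular. Your ``$\ge$'' half (the signature-recording homomorphism into $Q_d$), the ``if'' direction, the deduction of the ``only if'' direction from the full formula, and Cases 1 and 2 of the induction are all sound. The problem is Case 3.

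There you claim that $\rbeta\mapsto(b^r\rbeta)_{\llb 1,2r-1\rrb}$ identifies $\B(\ralpha)[\barsig_r(b^r(\rsigma))]$ with a subgraph of $\B(\hatrsigma)$ ``that is itself a link braid graph of dimension $r-2$,'' justified only by the constancy of the last two signature entries. Constancy shows the image equals the set of elements of $[\hatrsigma]$ with fixed last signature entry, i.e.\ one of the two halves of the partition of $[\hatrsigma]$ coming from Proposition~\ref{prop:multiple results cobbled} applied to $\hatrsigma$ --- but it can be the half of type $\barsig_{r-1}(b^{r-1}(\rsigma'))$, which part (g) only guarantees to be an isometric subgraph, not the braid class of a link. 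Concretely, take the link $\ralpha=1213243$ in type $A_4$, so $r=3$: the only choice allowed by Proposition~\ref{prop:special link} is $\rsigma=2132343$, with $b^3(\rsigma)=2132434$, and $\barsig_3(b^3(\rsigma))=\{2132434\}$ is a single vertex, not a braid graph of a dimension-$1$ link; its image $\{21323\}$ is the non-distinguished half of $[\hatrsigma]=\{21323,21232,12132\}$. So ``the inductive hypothesis applies there as well'' is not justified as written. The gap is repairable within your framework: apply the inductive hypothesis inside $[\hatrsigma]$ itself (dimension $r-1$), note via your hypercube bound that any sequence of length $\Delta$ between the two images flips exactly the differing coordinates once each and hence never uses the label $r-1$, so every intermediate expression keeps the same last signature entry and stays in the image, and then lift the sequence back to $\barsig_r(b^r(\rsigma))$ by restoring the common three-letter suffix. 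With that substitution your argument closes; as it stands, Case 3 rests on a false structural claim.
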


\begin{proof}
Suppose to the contrary that there exists a minimal braid sequence $b_1^{j_1},b_2^{j_2},...,b_k^{j_k}$ from $\ralpha$ to $\rbeta$ where $j_i=j_{i^*}$ for some $i\neq i^*$. Choose $\ralpha$ and $\rbeta$ such that $k$ is minimal among all such pairs. Since $k$ is minimal, we can assume that $j_1 = j_k$ and this is the only repeated braid move in the sequence. There are two cases.

First suppose that the braid shadows centered at $2j_1$ and $2j_2$ are disjoint. Certainly, $b_1^{j_1}$ and $b_2^{j_2}$ commute, so that $b_2^{j_2},b_1^{j_1},\cdots,b_k^{j_1}$ is also a minimal braid sequence from $\ralpha$ to $\rbeta$. This contradicts the minimality of $k$. 

Now suppose the braid shadows centered at $2j_1$ and $2j_2$ overlap. Without loss of generality, assume that $j_2 = j_1 + 1$. By Proposition~\ref{prop:local structure of overlapping braid shadows}, it must be the case that $b_1^{j_1}(\ralpha)_{\llb 2j_1-1,2j_1+3\rrb}=tstut$, where $m(s,t)=3=m(t,u)$ and $m(s,u)=2$. This implies that $b_2^{j_2}b_1^{j_1}(\ralpha)_{\llb 2j_1-1,2j_1+3\rrb}=tsutu$. Let $\ralpha'$ denote the reduced expression obtained from $\ralpha$ after applying the braid moves $b_1^{j_1},b_2^{j_2},\cdots ,b_{k-1}^{j_{k-1}}$. Since ${j_1}$ and ${j_2}$ are  distinct from $j_3,\ldots,j_{k-1}$ (since we have assumed $j_1=j_k$ is only one repeated pair in this sequence), we see that $\ralpha'_{\llb 2j_1-1,2j_1+1\rrb}=tsu$. Thus, $\llb2j_1-1,2j_1+1\rrb \not\in \bs(\ralpha')$. However, this contradicts the fact that $b_k^{j_1}$ is the final braid move in the sequence.

For the converse, assume that each $j_i$ appears exactly once in a braid sequence $b_1^{j_1},b_2^{j_2},...,b_k^{j_k}$ from $\ralpha$ to $\rbeta$. Then $k$ is at most the number of generators for which $\sig(\ralpha)$ and $\sig(\rbeta)$ differ. On the other hand, a geodesic from $\ralpha$ to $\rbeta$ must include at least one braid move for each position at which the signatures of $\ralpha$ and $\rbeta$ differ. This implies that $k\leq d(\ralpha,\rbeta)$, and so $b_1^{j_1},b_2^{j_2},...,b_k^{j_k}$ must be a minimal braid sequence.
\end{proof}

The next result states that every minimal braid sequence from $\ralpha$ to $\rbeta$ uses the same set of braid shadows.

\begin{proposition}\label{prop:same set in minimal sequence}
Suppose $(W,S)$ is type $\Lambda$ and let $\ralpha$ and $\rbeta$ be two braid equivalent reduced expressions of dimension at least one. If $b_1^{j_1},b_2^{j_2},\ldots,b_k^{j_k}$ and $b_1^{l_1},b_2^{l_2},\ldots,b_k^{l_k}$ are minimal braid sequences from $\ralpha$ to $\rbeta$, then $\{j_1,...,j_k\}=\{l_1,...,l_k\}$.
\end{proposition}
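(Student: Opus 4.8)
The plan is to describe precisely what a single braid move does to the signature, and then to observe that, for a minimal braid sequence, the set of braid shadows used is completely determined by the endpoints $\ralpha$ and $\rbeta$.

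\textbf{Step 1: the local effect of $b^j$ on the signature.} The key claim is that for each $j \in \{1, \ldots, \dim(\ralpha)\}$, applying $b^j$ to any reduced expression in $[\ralpha]$ in which the $j$th braid shadow is realized changes $\sig_j$ to the other of its two possible values and leaves $\sig_i$ unchanged for every $i \neq j$. Let $c_j$ denote the center of the $j$th braid shadow of $[\ralpha]$, and write $\{s,t\} = \supp_{c_j}([\ralpha])$, which has exactly two elements with $m(s,t)=3$ by Proposition~\ref{prop:local structure of braid shadows}. The move $b^j$ alters only the three letters in positions $c_j - 1, c_j, c_j + 1$, swapping $sts$ with $tst$; in particular it changes the center letter from one of $s,t$ to the other, so $\sig_j$ changes, and since there are only these two possibilities $b^j$ simply toggles $\sig_j$. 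For $i \neq j$, the center $c_i$ cannot lie in $\{c_j - 1, c_j, c_j + 1\}$: indeed $c_i \neq c_j$, and $|c_i - c_j| = 1$ would force the $i$th and $j$th braid shadows of $[\ralpha]$ to share two consecutive positions, contradicting the fact that distinct braid shadows of $[\ralpha]$ are disjoint or overlap in exactly one position. Hence $b^j$ does not touch the letter in position $c_i$, so $\sig_i$ is unaffected. Note also that $\bs([\ralpha])$ and $\dim(\ralpha)$ are constant along any braid sequence, since $[\ralpha]$ is fixed, so the signature has a fixed length throughout.

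\textbf{Step 2: reading off the set of shadows.} Given a minimal braid sequence $b_1^{j_1}, b_2^{j_2}, \ldots, b_k^{j_k}$ from $\ralpha$ to $\rbeta$, Proposition~\ref{prop:each edge once in minimal sequence} tells us the indices $j_1, \ldots, j_k$ are pairwise distinct. By Step 1, the cumulative effect of the sequence on the signature is to toggle coordinate $i$ exactly once if $i \in \{j_1, \ldots, j_k\}$ and not at all otherwise; since the two possible values of any signature entry are distinct, a single toggle genuinely changes a coordinate, so $\sig_i(\rbeta) \neq \sig_i(\ralpha)$ precisely when $i \in \{j_1, \ldots, j_k\}$. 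Therefore
\[
\{j_1, \ldots, j_k\} \;=\; \{\, i \mid \sig_i(\ralpha) \neq \sig_i(\rbeta) \,\},
\]
and the right-hand side depends only on the pair $\ralpha, \rbeta$. The same argument applied to $b_1^{l_1}, \ldots, b_k^{l_k}$ gives $\{l_1, \ldots, l_k\} = \{\, i \mid \sig_i(\ralpha) \neq \sig_i(\rbeta) \,\}$, whence $\{j_1, \ldots, j_k\} = \{l_1, \ldots, l_k\}$.

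The only delicate point is the local claim in Step 1 — in particular, ruling out that a braid move at shadow $j$ disturbs the signature entry of a neighboring shadow $j \pm 1$. This is where the structural results for type $\Lambda$ are used (Proposition~\ref{prop:local structure of braid shadows} to guarantee exactly two center values, together with the ``overlap in at most one position'' property of braid shadows to force non-adjacency of distinct centers); the rest of the argument is just bookkeeping with the definition of signature.
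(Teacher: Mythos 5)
Your proof is correct and takes essentially the same route as the paper: both arguments use Proposition~\ref{prop:each edge once in minimal sequence} together with the fact that a braid move at shadow $j$ toggles only $\sig_j$, to conclude that each minimal sequence uses exactly the set $\{i \mid \sig_i(\ralpha)\neq\sig_i(\rbeta)\}$. The only difference is that you spell out the local toggle claim (non-adjacency of distinct shadow centers) explicitly, which the paper leaves implicit.
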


\begin{proof}
For $\ralpha\neq \rbeta$, assume that $b_1^{j_1},b_2^{j_2},\ldots,b_k^{j_k}$ and $b_1^{l_1},b_2^{l_2},\ldots,b_k^{l_k}$ are minimal braid sequences from $\ralpha$ to $\rbeta$. Let $1\leq i \leq k$. By Proposition~\ref{prop:each edge once in minimal sequence}, we have $\sig_{j_i}(\ralpha)\neq\sig_{j_i}(\rbeta)$. Then it must be the case that there exists $1\leq m\leq k$ such that $l_m=j_i$. We can conclude that  $\{j_1,...j_k\}=\{l_1,...,l_k\}$.
\end{proof}

Graphically, Propositions~\ref{prop:each edge once in minimal sequence} and~\ref{prop:same set in minimal sequence} imply that each geodesic between two reduced expressions $\ralpha$ and $\rbeta$ utilizes the same set of edge labels and each label appears once. Instead of labeling edges with the corresponding braid shadow location, we will often assign colors to edges in a braid graph to represent which braid shadow that edge corresponds to.

\begin{example}\label{ex:braid colors new}
Figure~\ref{fig:geocolor new} depicts the braid graph for the link $\rgamma_4= \textcolor{magenta}{343}1\textcolor{turq}{232}43$ from Example~\ref{ex:braid classes}(c). The \textcolor{nectarine}{orange} edges correspond to $b^4$, the \textcolor{turq}{teal} edges correspond to $b^3$, the \textcolor{ggreen}{green} edges correspond to $b^2$, and finally the \textcolor{magenta}{magenta} edges correspond to $b^1$. One can verify that each geodesic between any pair of vertices utilizes the same set of colors with each color appearing exactly once.
\end{example}

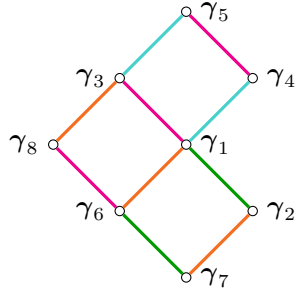
\begin{figure}[ht!]
	\centering
	\begin{tikzpicture}[every circle node/.style={draw, circle, inner sep=1.25pt},scale=1.25,rotate=-45]
		\node [circle] (a) [label=left:$\rgamma_8$] at (0,0){};
		\node [circle] (b) [label=left:$\rgamma_3$] at (0,1){};
		\node [circle] (c) [label=left:$\rgamma_6$] at (1,0){};
		\node [circle] (d) [label=right:$\rgamma_1$] at (1,1){};
		
		\node [circle] (e) [label=right:$\rgamma_5$] at (0,2){};
		\node [circle] (f) [label=right:$\rgamma_4$] at (1,2){};
		\node [circle] (g) [label=right:$\rgamma_2$] at (2,1){};
		\node [circle] (h) [label=right:$\rgamma_7$] at (2,0){};
		
		\draw [nectarine,-, very thick] (a) to (b);
		\draw [magenta,-, very thick] (b) to (d);
		\draw [nectarine,-, very thick] (c) to (d);
		\draw [magenta,-, very thick] (c) to (a);
		\draw [turq,-, very thick] (b) to (e);
		\draw [magenta,-, very thick] (e) to (f);
		\draw [turq,-, very thick] (f) to (d);
		\draw [ggreen,-, very thick] (d) to (g);
		\draw [nectarine,-, very thick] (g) to (h);
		\draw [ggreen,-, very thick] (h) to (c);
	\end{tikzpicture}
	\caption{Braid graph for the link in Example~\ref{ex:braid colors new}. Edges are colored according to which braid shadow they correspond to.} \label{fig:geocolor new}
\end{figure}

For braid equivalent reduced expressions $\ralpha$ and $\rbeta$, we define $\Delta(\sig(\ralpha), \sig(\rbeta))$ to be the number of generators that differ between the signatures of $\ralpha$ and $\rbeta$. The following corollaries are immediate consequences of Propositions~\ref{prop:each edge once in minimal sequence} and~\ref{prop:same set in minimal sequence}. 

\begin{corollary}\label{cor:distisdelta}
If $(W,S)$ is type $\Lambda$ and $\ralpha$ and $\rbeta$ are braid equivalent reduced expressions, then $d(\ralpha,\rbeta)=\Delta(\sig(\ralpha),\sig(\rbeta))$.
\end{corollary}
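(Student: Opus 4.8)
The plan is to prove the two inequalities $d(\ralpha,\rbeta)\le\Delta(\sig(\ralpha),\sig(\rbeta))$ and $\Delta(\sig(\ralpha),\sig(\rbeta))\le d(\ralpha,\rbeta)$ separately, leaning on Proposition~\ref{prop:each edge once in minimal sequence} for the former. I would first dispose of the trivial case $\ralpha=\rbeta$ (which also covers $\dim(\ralpha)=0$), where both sides vanish, and then assume $\ralpha\neq\rbeta$ throughout.

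The one auxiliary observation I would record first is that applying a braid move $b^j$ to a reduced expression on which it is defined changes the single signature entry $\sig_j$ --- from one of the two support values of that center to the other, which are distinct by Proposition~\ref{prop:local structure of braid shadows} --- and leaves every other entry $\sig_{j'}$ with $j'\neq j$ unchanged. The first part is immediate from the definition of a braid move; for the second part I would note that $b^j$ only disturbs the three positions of the braid shadow centered at position $2j$, and since distinct braid shadows of $[\ralpha]$ overlap in at most one position, none of those three positions can be the center of another braid shadow.

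With this in hand, for $d(\ralpha,\rbeta)\le\Delta(\sig(\ralpha),\sig(\rbeta))$ I would take a minimal braid sequence $b_1^{j_1},\ldots,b_k^{j_k}$ from $\ralpha$ to $\rbeta$, so $k=d(\ralpha,\rbeta)$; Proposition~\ref{prop:each edge once in minimal sequence} gives that the $j_i$ are pairwise distinct, and the observation then shows the net effect of the sequence is precisely to flip the entries $\sig_{j_1},\ldots,\sig_{j_k}$ and nothing else, so all $k$ of these indices are genuine points of disagreement between $\sig(\ralpha)$ and $\sig(\rbeta)$, giving $k\le\Delta(\sig(\ralpha),\sig(\rbeta))$. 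For the reverse inequality I would take an arbitrary braid sequence from $\ralpha$ to $\rbeta$: for each of the $\Delta(\sig(\ralpha),\sig(\rbeta))$ positions where the signatures differ, the observation forces the sequence to contain at least one braid move carrying that index, so its length --- and in particular $d(\ralpha,\rbeta)$ --- is at least $\Delta(\sig(\ralpha),\sig(\rbeta))$.

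I do not expect a real obstacle here: the corollary essentially re-packages the length count already embedded in the proof of Proposition~\ref{prop:each edge once in minimal sequence}. The only step needing a sentence of care is the claim that $\sig_j$ can be altered only by a braid move with index $j$, i.e.\ the second half of the auxiliary observation; everything else is routine bookkeeping. Proposition~\ref{prop:same set in minimal sequence} is not strictly needed for the distance formula itself, but it is what makes the index set $\{j_1,\ldots,j_k\}$ above independent of the chosen geodesic, which is why the corollary is naturally stated alongside it.
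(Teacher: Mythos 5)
Your proposal is correct and follows exactly the route the paper intends: the paper leaves Corollary~\ref{cor:distisdelta} as an immediate consequence of Proposition~\ref{prop:each edge once in minimal sequence}, whose proof already contains the two counting facts you spell out (a minimal sequence flips each differing signature entry exactly once, and only a move labeled $j$ can alter $\sig_j$). Your auxiliary observation and the resulting two inequalities are just the explicit bookkeeping behind that "immediate consequence," and your remark that Proposition~\ref{prop:same set in minimal sequence} is not strictly needed is accurate.
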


The signature of a reduced expression of dimension $r$ is a sequence consisting of $r$ generators. Thus, we get the following.

\begin{corollary}\label{diamlrank}
If $(W,S)$ is type $\Lambda$ and $\ralpha$ is a reduced expression, then $\diam(\B(\ralpha)) \leq \dim(\ralpha)$.
\end{corollary}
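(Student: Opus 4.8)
The plan is to read the bound off directly from Corollary~\ref{cor:distisdelta}, after observing that the length of the signature is an invariant of the braid class.

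First I would unwind the relevant definitions: $\dim(\ralpha) := \card(\mathcal{S}([\ralpha]))$, and the signature of a reduced expression is, by definition, the ordered list of generators appearing in the centers of the braid shadows of its braid class. Since every pair of vertices $\rbeta, \rgamma \in V(\B(\ralpha)) = [\ralpha]$ satisfies $[\rbeta] = [\rgamma] = [\ralpha]$, and hence $\mathcal{S}([\rbeta]) = \mathcal{S}([\rgamma]) = \mathcal{S}([\ralpha])$, the sequences $\sig(\rbeta)$ and $\sig(\rgamma)$ each have length exactly $r := \dim(\ralpha)$. Consequently they can disagree in at most $r$ positions, so $\Delta(\sig(\rbeta),\sig(\rgamma)) \leq r$.

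Next I would invoke Corollary~\ref{cor:distisdelta}, which gives $d(\rbeta,\rgamma) = \Delta(\sig(\rbeta),\sig(\rgamma)) \leq r$ for every such pair. Maximizing over all pairs of vertices of $\B(\ralpha)$ then yields $\diam(\B(\ralpha)) \leq r = \dim(\ralpha)$, as desired.

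I do not expect a genuine obstacle: this is essentially a one-line consequence of Corollary~\ref{cor:distisdelta}. The only point worth making explicit is that all vertices of $\B(\ralpha)$ carry signatures of the same length $\dim(\ralpha)$, which is forced by the definitions of braid shadow, dimension, and signature.
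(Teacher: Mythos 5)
Your argument is correct and matches the paper's reasoning: the paper derives this corollary directly from Corollary~\ref{cor:distisdelta} by noting that the signature of any expression of dimension $r$ has exactly $r$ entries, so signatures of braid-equivalent expressions differ in at most $r$ positions. Your added remark that all vertices of $\B(\ralpha)$ share the same braid-shadow set, and hence signatures of the same length, is exactly the observation the paper leaves implicit.
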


We conjecture the following.

\begin{conjecture}\label{conj:diamisrank}
If $(W,S)$ is type $\Lambda$ and $\ralpha$ is a reduced expression, then $\diam(\B(\ralpha))=\dim(\ralpha)$.
\end{conjecture}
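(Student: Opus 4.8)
The plan is to reduce to links, then establish a combinatorial description of which tuples of generators occur as signatures of links in a fixed braid class, and finally read off a diametrical pair from that description.

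\emph{Reduction to links.} Write the link factorization $\ralpha = \ralpha_1 \mid \ralpha_2 \mid \cdots \mid \ralpha_k$. By Proposition~\ref{prop:braid classes simply laced} we have $\B(\ralpha) \cong \B(\ralpha_1) \square \cdots \square \B(\ralpha_k)$ and $\dim(\ralpha) = \sum_i \dim(\ralpha_i)$. Since the distance between two vertices of a box product is the sum of the coordinate distances, $\diam$ is additive over $\square$, so it suffices to prove $\diam(\B(\ralpha)) = \dim(\ralpha)$ when $\ralpha$ is a link. By Corollary~\ref{diamlrank} we already know $\diam(\B(\ralpha)) \le \dim(\ralpha)$, and by Corollary~\ref{cor:distisdelta}, $d(\rbeta,\rgamma) = \Delta(\sig(\rbeta),\sig(\rgamma))$ for all $\rbeta,\rgamma \in [\ralpha]$. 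Thus, for a link $\ralpha$ of dimension $r$, it is enough to exhibit two links $\rbeta,\rgamma \in [\ralpha]$ whose signatures differ in every one of the $r$ coordinates.

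\emph{Characterizing realized signatures.} Call a tuple $(\sigma_1,\dots,\sigma_r)$ with $\sigma_i \in \supp_{2i}([\ralpha])$ \emph{admissible} if $\sigma_i \ne \sigma_{i+1}$ for all $i$; by the remark following Proposition~\ref{prop:local structure of overlapping braid shadows}, $\sig(\rdelta)$ is admissible for every link $\rdelta \in [\ralpha]$. I claim the converse: every admissible tuple equals $\sig(\rdelta)$ for some $\rdelta \in [\ralpha]$. The plan is to prove this by induction on $r$. For $r=1$, $[\ralpha] = \{sts,tst\}$ realizes both elements of $\supp_2([\ralpha])$. For $r \ge 2$, choose $\rsigma$ as in Proposition~\ref{prop:special link} with $\llb 2r-3,2r-1\rrb, \llb 2r-1,2r+1\rrb \in \bs(\rsigma)$ and apply Proposition~\ref{prop:multiple results cobbled}: $\hatrsigma$ is a link of dimension $r-1$, $[\ralpha] = \barsig_r(\rsigma) \sqcup \barsig_r(b^r(\rsigma))$, truncation $\rbeta \mapsto \hatrbeta$ is a bijection $\barsig_r(\rsigma) \to [\hatrsigma]$ preserving $\sig_i$ for $i \le r-1$, and $b^r$ restricts to a bijection from $\barsig_r(b^r(\rsigma))$ onto the set of $\rbeta \in \barsig_r(\rsigma)$ that admit the move $b^r$. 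Using the local structure at the right end of a link (Propositions~\ref{prop:local structure of braid shadows} and~\ref{prop:local structure of overlapping braid shadows}), the last two letters of a link in $\barsig_r(\rsigma)$ are determined by $\sig_r$ and the third-from-last letter by $\sig_{r-1}$; consequently ``$\rbeta$ admits $b^r$'' is equivalent to $\sig_{r-1}(\rbeta) = \sig_{r-1}(\rsigma)$. Feeding the inductive hypothesis for $\hatrsigma$ through these facts, the signatures realized in $\barsig_r(\rsigma)$ are exactly the admissible tuples with $\sigma_r = \sig_r(\rsigma)$, and those realized in $\barsig_r(b^r(\rsigma))$ are exactly the admissible tuples with $\sigma_r$ the other element of $\supp_{2r}([\ralpha])$; together these account for every admissible tuple.

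\emph{Conclusion and main obstacle.} Granting the characterization, construct two admissible tuples by left-to-right propagation: fix $\sigma_1 \in \supp_2([\ralpha])$ arbitrarily, and for each $i$ let $t_i$ be the common element of $\supp_{2i}([\ralpha])$ and $\supp_{2i+2}([\ralpha])$ and take $\sigma_{i+1}$ to be the unique element of $\supp_{2i+2}([\ralpha])$ for which exactly one of $\sigma_i,\sigma_{i+1}$ equals $t_i$. A direct check shows both $\sigma = (\sigma_i)$ and the tuple $\bar\sigma$ obtained by replacing each $\sigma_i$ by the other element of $\supp_{2i}([\ralpha])$ are admissible. By the characterization there are $\rbeta,\rgamma \in [\ralpha]$ with $\sig(\rbeta) = \sigma$ and $\sig(\rgamma) = \bar\sigma$, whence $d(\rbeta,\rgamma) = \Delta(\sigma,\bar\sigma) = r = \dim(\ralpha)$ by Corollary~\ref{cor:distisdelta}, so $\diam(\B(\ralpha)) \ge \dim(\ralpha)$; with Corollary~\ref{diamlrank} this gives equality, and the general case follows from the box-product reduction. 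I expect the genuine difficulty to lie entirely in the inductive step of the characterization --- pinning down exactly which links of $\barsig_r(\rsigma)$ admit $b^r$, and verifying that passing to $\hatrsigma$ preserves the supports $\supp_{2i}(\cdot)$ and the admissibility constraints for $i \le r-1$ --- all of which rests on the right-end local structure from~\cite{ABCE2024} and must be assembled with care. A fallback that avoids the full characterization is to carry a complementary signature pair for $\hatrsigma$ up the induction directly via Proposition~\ref{prop:multiple results cobbled}(h), choosing at each stage a pair one of whose members has a prescribed $(r-1)$st signature entry so that the needed braid move is available; this trades the characterization for a more delicate choice of which pair to propagate.
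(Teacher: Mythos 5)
This statement is not proved in the paper at all: it is posed as Conjecture~\ref{conj:diamisrank}, only the inequality $\diam(\B(\ralpha))\leq\dim(\ralpha)$ (Corollary~\ref{diamlrank}) is established, and the equality is restated as an open problem in Section~\ref{sec:closing}. So there is no paper proof to compare against; you are proposing a resolution of an open conjecture.

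That said, I checked your outline against the results you cite and I do not see a genuine gap. The reduction to links (additivity of diameter over $\square$ together with Proposition~\ref{prop:braid classes simply laced}) is fine, and by Corollary~\ref{cor:distisdelta} everything hinges on exhibiting two links whose signatures differ in every coordinate. The real content is your characterization ``realized signatures $=$ admissible tuples,'' and the induction you sketch does go through on the strength of the quoted results: truncation is a signature-preserving bijection $\barsig_r(\rsigma)\to[\hatrsigma]$ (injective because all members of $\barsig_r(\rsigma)$ share their last two letters, surjective by Proposition~\ref{prop:multiple results cobbled}); the supports satisfy $\supp_{2i}([\hatrsigma])=\supp_{2i}([\ralpha])$ for $i\leq r-1$ because the former is a two-element set (Proposition~\ref{prop:local structure of braid shadows} applied to $\hatrsigma$) contained in the two-element latter set, so the admissibility constraints match; and, writing $\rsigma_{\llb 2r-3,2r+1\rrb}=tstut$, the three local forms in Proposition~\ref{prop:local structure of overlapping braid shadows} show that a member of $\barsig_r(\rsigma)$ (necessarily ending in $ut$) carries the shadow $\llb 2r-1,2r+1\rrb$ exactly when its letter in position $2r-1$ is $t$, equivalently when its $(r-1)$st signature entry equals $\sig_{r-1}(\rsigma)$ --- which is precisely the equivalence you assert. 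Note that for the lower bound you only need the direction ``admissible implies realized,'' so even Proposition~\ref{prop:multiple results cobbled}(f) is dispensable: it suffices that applying $b^r$ to such a member flips only $\sig_r$. Your alternating construction of $\sigma$ and its coordinatewise complement $\bar\sigma$ is easily checked to give two admissible tuples (in each consecutive pair exactly one entry equals the shared generator, for both tuples), and on the paper's examples (e.g.\ the class of $\rgamma_1$ in type $D_4$, where it produces the signatures $(4,3,2,3)$ and $(3,1,3,4)$ of $\rgamma_7$ and $\rgamma_5$) it recovers an honest diametrical pair, consistent with item (7) of Section~\ref{sec:closing}. So, modulo writing out the inductive step with the care you yourself flag, this argument would settle Conjecture~\ref{conj:diamisrank}, and via Proposition~\ref{prop:dim equals rank} it would also yield Conjecture~\ref{conj:iso dim is diam}.
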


\section{Partial cube structure of braid graphs}\label{sec:partial cube structure}

The next result states that the reduced expressions in a semicube of a braid graph for a link in Coxeter systems of type $\Lambda$ are uniquely determined by their signature. Moreover, every $F$-class of edges is equal to the set of edges with the same label.

\begin{proposition}\label{prop:same color semis}
If $(W,S)$ is type $\Lambda$, $\ralpha$ is a reduced expression of dimension at least one, and $\{\ralpha,\rbeta\}$ is an edge in $\B(\ralpha)$, then there exists a unique $i$ such that $\sig_i(\ralpha)\neq\sig_i(\rbeta)$ and $W_{\ralpha\rbeta}=\barsig_i(\ralpha)$. Moreover, $\{\rx, \ry\} \in F_{\ralpha\rbeta}$ if and only if $\{\rx, \ry\}$ is labeled by $i$.
\end{proposition}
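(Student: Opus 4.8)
The plan is to derive everything from Corollary~\ref{cor:distisdelta}, which identifies $d(\cdot,\cdot)$ with the Hamming distance of signatures, together with the fact that in type $\Lambda$ the center of each braid shadow of $[\ralpha]$ assumes exactly one of two possible generator values over $[\ralpha]$ (Proposition~\ref{prop:local structure of braid shadows} for links, extended to arbitrary reduced expressions via the link factorization of Proposition~\ref{prop:braid classes simply laced}(a)). I would first pin down the index $i$ and the edge labels, then compute the semicube, and finally read off the $F$-class.

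For the index and the labels: since $\{\ralpha,\rbeta\}$ is an edge of $\B(\ralpha)$, we have $\rbeta = b^i(\ralpha)$ for some braid shadow index $i$, and $b^i$ rewrites $\ralpha$ only within the three positions of braid shadow $i$. Among those three positions, only the center of shadow $i$ is the center of any braid shadow of $[\ralpha]$, because a braid shadow centered at an endpoint of shadow $i$ would be centered one step from the center of shadow $i$, which is forbidden by the no-overlap property of braid shadows established in~\cite{ABCE2024}. Hence $\sig(\rbeta)$ differs from $\sig(\ralpha)$ in coordinate $i$ and nowhere else; in particular $i$ is the unique coordinate on which these two signatures disagree, and the edge $\{\ralpha,\rbeta\}$ carries the label $i$. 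The same argument applied to an arbitrary edge $\{\rx,\ry\}$ shows it is labeled $j$ exactly when $\sig(\rx)$ and $\sig(\ry)$ disagree in the single coordinate $j$; I would isolate this as a reusable observation.

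Next, for $W_{\ralpha\rbeta} = \barsig_i(\ralpha)$: fix $\rx \in [\ralpha]$. By the two-value property, $\sig_i(\rx)$ equals $\sig_i(\ralpha)$ or $\sig_i(\rbeta)$, and by Corollary~\ref{cor:distisdelta} the quantities $d(\rx,\ralpha) = \Delta(\sig(\rx),\sig(\ralpha))$ and $d(\rx,\rbeta) = \Delta(\sig(\rx),\sig(\rbeta))$ are computed from identical contributions at every coordinate other than $i$; hence they differ by exactly $1$, the smaller belonging to whichever of $\ralpha,\rbeta$ agrees with $\rx$ in coordinate $i$. Therefore $\rx \in W_{\ralpha\rbeta}$ iff $\sig_i(\rx) = \sig_i(\ralpha)$, i.e. $W_{\ralpha\rbeta} = \barsig_i(\ralpha)$; symmetrically $W_{\rbeta\ralpha} = \barsig_i(\rbeta)$, and since $\B(\ralpha)$ is bipartite (Proposition~\ref{braidbi}, Proposition~\ref{prop:bisemis}) these two sets partition $[\ralpha]$. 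For the $F$-class, recall $F_{\ralpha\rbeta}$ consists of the edges joining $W_{\ralpha\rbeta}$ to $W_{\rbeta\ralpha}$. If $\{\rx,\ry\}$ is labeled $i$, then $\sig_i(\rx) \neq \sig_i(\ry)$; since both are values of the center of shadow $i$ over $[\ralpha]$ — of which there are only the two, $\sig_i(\ralpha)$ and $\sig_i(\rbeta)$ — one of $\rx,\ry$ lies in $\barsig_i(\ralpha) = W_{\ralpha\rbeta}$ and the other in $\barsig_i(\rbeta) = W_{\rbeta\ralpha}$, so $\{\rx,\ry\} \in F_{\ralpha\rbeta}$. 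Conversely, if $\{\rx,\ry\} \in F_{\ralpha\rbeta}$, then $\sig_i(\rx) = \sig_i(\ralpha) \neq \sig_i(\rbeta) = \sig_i(\ry)$, so the unique disagreement coordinate of the adjacent pair $\rx,\ry$ is $i$ and the edge is labeled $i$.

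The main obstacle is the bookkeeping in the second paragraph: a braid move rewrites three letters, so one must verify that of those three positions exactly one is a braid-shadow center of $[\ralpha]$, so that precisely one signature coordinate changes. This is where triangle-freeness enters — through the no-overlap property of braid shadows, and, when $\ralpha$ is not itself a link, through the compatibility of braid shadows with the link factorization. Everything downstream is a routine manipulation of signatures and Hamming distances.
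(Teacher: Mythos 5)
Your proposal is correct and follows essentially the same route as the paper: both arguments rest on Corollary~\ref{cor:distisdelta} (distance equals the number of differing signature entries) together with the two-value property of each signature coordinate from Proposition~\ref{prop:local structure of braid shadows}, yielding $W_{\ralpha\rbeta}=\barsig_i(\ralpha)$ and then the $F$-edge statement. The only differences are cosmetic: you pin down the unique index $i$ by the no-overlap property of braid shadows and spell out the $F$-edge equivalence, whereas the paper gets uniqueness of $i$ directly from Corollary~\ref{cor:distisdelta} and leaves the $F$-edge claim as immediate.
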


\begin{proof}
Let $\ralpha$ be a reduced expression of dimension $r\geq1$ and let $\{\ralpha,\rbeta\}$ be an edge of $\B(\ralpha)$. By Corollary~\ref{cor:distisdelta}, there exists a unique $i$ such that $\sig_i(\ralpha)\neq\sig_i(\rbeta)$. Now, we will show that $W_{\ralpha\rbeta}=\barsig_i(\ralpha)$. 

For the forward containment, suppose $\rx\in W_{\ralpha\rbeta}$. For sake of a contradiction, suppose $\sig_i(\rx)\neq\sig_i(\ralpha)$. Using this fact together with Corollary~\ref{cor:distisdelta}, we see that 
\[
d(\rx,\ralpha)=\Delta(\sig(\rx),\sig(\ralpha))=\Delta(\sig(\rx),\sig(\rbeta))+1=d(\rx,\rbeta)+1.
\]
This contradicts Proposition~\ref{prop:semicubeonestepcloser}. Hence, $\sig_i(\rx)=\sig_i(\ralpha)$ and $W_{\ralpha\rbeta}\subseteq \barsig_i(\ralpha)$.

For the reverse containment, suppose $\rx\in\barsig_i(\ralpha)$. Then 
\[
d(\rx,\rbeta)=\Delta(\sig(\rx),\sig(\rbeta))=\Delta(\sig(\rx),\sig(\ralpha))+1=d(\rx,\ralpha)+1.
\]
 Thus, $\rx\in W_{\ralpha\rbeta}$ by Proposition~\ref{prop:semicubeonestepcloser}. This implies that $\barsig_i(\ralpha)\subseteq W_{\ralpha\rbeta}$. 

The statement about $F$-edges now follows immediately.
\end{proof}

\begin{example}\label{ex:semicubesigclass}
Consider the braid graph shown in Figure~\ref{fig:BoxWithTwoFlaps} (see Example~\ref{ex:braid graph with partition}). The semicubes $W_{\rsigma \rtau}=\barsig_5(\rsigma)$ and $W_{\rtau \rsigma}=\barsig_5(\rtau)$ have been highlighted in \textcolor{magenta}{magenta} and \textcolor{turq}{teal}, respectively. Moreover, the black edges connecting $W_{\sigma \rtau}$ and $W_{\rtau \rsigma}$ constitute the equivalence class of $F$-edges containing $\{\rsigma,\rtau\}$, all of which correspond to the fifth braid shadow, $\llb 9,11\rrb$.
\end{example}

One consequence of Proposition~\ref{prop:same color semis} is that the Djoković--Winkler relation $\btheta$ is transitive, and hence an equivalence relation. Since every braid graph is bipartite (Proposition~\ref{braidbi}) and $\btheta$ is an equivalence relation, Proposition~\ref{prop:TFAE partial cube} implies that every braid graph for a reduced expression in a Coxeter system of type $\Lambda$ is a partial cube. This provides an alternate proof of the main result in~\cite{ABCE2024}. Additionally, the authors of~\cite{ABCE2024} conjectured that the isometric dimension of the braid graph is equal to the dimension of the corresponding link. This conjecture now follows from Propositions~\ref{prop:isometric dimension} and~\ref{prop:same color semis}, and so we get the following.

\begin{proposition}\label{prop:dim equals rank}
If $(W,S)$ is type $\Lambda$ and $\ralpha$ is a reduced expression, then $\B(\ralpha)$ is a partial cube with $\dim_I(\B(\ralpha)) = \dim(\ralpha)$.
\end{proposition}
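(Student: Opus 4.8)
The plan is to assemble pieces that are already in place: Proposition~\ref{prop:same color semis} carries essentially all the weight, so the argument reduces to a short bookkeeping step together with the dispatch of one degenerate case.

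First I would handle $\dim(\ralpha)=0$. In this case $\bs([\ralpha])=\emptyset$, so no braid move can be applied to any reduced expression braid equivalent to $\ralpha$; hence $[\ralpha]=\{\ralpha\}$ and $\B(\ralpha)$ is a single vertex. A single vertex is isometric to $Q_0$, so it is a partial cube with $\dim_I(\B(\ralpha))=0=\dim(\ralpha)$, and there is nothing more to prove.

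Now assume $r:=\dim(\ralpha)\geq 1$. By Proposition~\ref{prop:same color semis}, every edge of $\B(\ralpha)$ has a well-defined label in $\{1,\dots,r\}$, namely the unique braid shadow whose signature entry changes across that edge, and for any edge $\{\rx,\ry\}$ the $F$-class $F_{\rx\ry}$ is exactly the set of edges sharing that label. Thus the partition of $E(\B(\ralpha))$ into $\btheta$-classes coincides with the partition by edge label; in particular $\btheta$ is transitive, hence an equivalence relation on $E(\B(\ralpha))$. Since $\B(\ralpha)$ is bipartite by Proposition~\ref{braidbi}, the equivalence of conditions (i) and (iii) in Proposition~\ref{prop:TFAE partial cube} shows that $\B(\ralpha)$ is a partial cube. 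Finally, Proposition~\ref{prop:isometric dimension} identifies $\dim_I(\B(\ralpha))$ with the number of $\btheta$-classes, which equals the number of edge labels actually realized; since each of the $r$ braid shadows in $\bs([\ralpha])$ lies in $\bs(\rbeta)$ for some $\rbeta\in[\ralpha]$, the braid move $b^i$ produces an edge of $\B(\ralpha)$ labeled $i$ for every $i$, so all $r$ labels occur and $\dim_I(\B(\ralpha))=r=\dim(\ralpha)$.

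The only points requiring any care are confirming that each of the $r$ braid shadows genuinely contributes an edge (so that the number of $\btheta$-classes is exactly $r$, not smaller) and treating the zero-dimensional case by hand; there is no real obstacle once Proposition~\ref{prop:same color semis} is available. Should one prefer to avoid invoking the general form of that proposition, an alternative route is to prove the statement first for links and then bootstrap to arbitrary reduced expressions through the decomposition $\B(\ralpha)\cong\B(\ralpha_1)\square\cdots\square\B(\ralpha_k)$ of Proposition~\ref{prop:braid classes simply laced}(\ref{box prod}), using Proposition~\ref{boxplus} to see that a box product of partial cubes is a partial cube with additive isometric dimension, together with the additivity $\dim(\ralpha)=\sum_i \dim(\ralpha_i)$ from the same proposition.
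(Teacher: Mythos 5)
Your proposal is correct and follows essentially the same route as the paper: Proposition~\ref{prop:same color semis} identifies the $\btheta$-classes with the edge labels, so transitivity of $\btheta$ plus bipartiteness (Proposition~\ref{braidbi}) and Proposition~\ref{prop:TFAE partial cube} give the partial cube property, and Proposition~\ref{prop:isometric dimension} then yields $\dim_I(\B(\ralpha)) = \dim(\ralpha)$. Your explicit treatment of the zero-dimensional case and the check that every braid shadow actually contributes an edge are minor bookkeeping details the paper leaves implicit, not a different argument.
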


If Conjecture~\ref{conj:diamisrank} is true, then Proposition~\ref{prop:dim equals rank} would imply the following.

\begin{conjecture}\label{conj:iso dim is diam}
If $(W,S)$ is type $\Lambda$ and $\ralpha$ is a reduced expression, then $\dim_I(\B(\ralpha))=\diam(\B(\ralpha))$.
\end{conjecture}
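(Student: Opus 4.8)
The plan is that, granting the hypothesis of Conjecture~\ref{conj:diamisrank}, the stated equality follows immediately by transporting equalities: Conjecture~\ref{conj:diamisrank} asserts $\diam(\B(\ralpha)) = \dim(\ralpha)$, Proposition~\ref{prop:dim equals rank} gives $\dim_I(\B(\ralpha)) = \dim(\ralpha)$ unconditionally, and transitivity yields $\dim_I(\B(\ralpha)) = \diam(\B(\ralpha))$. So the conditional statement carries no content beyond Conjecture~\ref{conj:diamisrank}, and a genuine proof must establish that conjecture outright; the remainder of this proposal addresses that.

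For Conjecture~\ref{conj:diamisrank} itself, the first step is to reduce to links. By Proposition~\ref{prop:braid classes simply laced}(d) we have $\B(\ralpha) \cong \B(\ralpha_1) \square \cdots \square \B(\ralpha_k)$ for the link factorization $\ralpha_1 \mid \cdots \mid \ralpha_k$, and $\dim(\ralpha) = \sum_{i=1}^{k} \dim(\ralpha_i)$. Since distance in a box product is the sum of the coordinatewise distances, the diameter is additive over $\square$, so it suffices to prove $\diam(\B(\ralpha_i)) = \dim(\ralpha_i)$ when $\ralpha_i$ is a link.

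So let $\ralpha$ be a link of dimension $r$. Corollary~\ref{diamlrank} already gives $\diam(\B(\ralpha)) \le r$, so it remains to exhibit $\rbeta, \rgamma \in [\ralpha]$ with $d(\rbeta, \rgamma) = r$. By Corollary~\ref{cor:distisdelta}, $d(\rbeta, \rgamma) = \Delta(\sig(\rbeta), \sig(\rgamma))$, so we want two reduced expressions in $[\ralpha]$ whose signatures disagree in every coordinate; since by Proposition~\ref{prop:local structure of braid shadows} each coordinate takes exactly one of two values, this is equivalent to showing that the coordinatewise complement of any achievable signature is again achievable. I would prove this by induction on $r$, using Proposition~\ref{prop:multiple results cobbled}: choose $\rsigma$ as in Proposition~\ref{prop:special link}, so that $[\ralpha]$ partitions into $\barsig_r(\rsigma)$ and $\barsig_r(b^r(\rsigma))$; the truncation $\hatrsigma$ is a link of dimension $r-1$ whose braid graph embeds isometrically as $\B(\ralpha)[\barsig_r(\rsigma)]$, and parts (c), (d), (f) of Proposition~\ref{prop:multiple results cobbled} let one transport reduced expressions between $[\hatrsigma]$ and each half of $[\ralpha]$ while tracking the first $r-1$ signature coordinates. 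By the inductive hypothesis $[\hatrsigma]$ contains a complementary pair; lifting one element into $\barsig_r(\rsigma)$ and the other into $\barsig_r(b^r(\rsigma))$ and invoking part (h), namely $d(\rbeta,\rgamma) = d(\rbeta, b^r(\rgamma)) + 1$, produces the missing disagreement in the last coordinate. The base case $r=1$ is immediate, as a link of dimension one has a two-element braid class whose elements are at distance one.

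The main obstacle is the last inductive step: one must verify that the two lifts genuinely yield signatures that disagree in all $r$ coordinates, which requires pinning down exactly how the isometric embeddings of Proposition~\ref{prop:multiple results cobbled} act on signatures — in particular that $\rbeta \mapsto \hatrbeta$ preserves the first $r-1$ signature entries on the $\barsig_r(\rsigma)$ side, and that the analogous statement holds through $\rbeta \mapsto (b^r(\rbeta))_{\llb 1, 2r-1 \rrb}$ on the other side, so that a complementary pair upstairs remains complementary after lifting. If these bookkeeping points are confirmed, Conjecture~\ref{conj:diamisrank} follows, and hence so does Conjecture~\ref{conj:iso dim is diam}.
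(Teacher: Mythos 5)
The statement you were asked to prove is recorded in the paper only as a conjecture: the paper offers no proof beyond the remark immediately preceding it, namely that Conjecture~\ref{conj:diamisrank} combined with Proposition~\ref{prop:dim equals rank} would give $\dim_I(\B(\ralpha))=\diam(\B(\ralpha))$. Your opening paragraph reproduces exactly that conditional derivation, so on the statement itself you and the paper coincide; everything else in your proposal is an attempt to settle the open Conjecture~\ref{conj:diamisrank}, and that attempt has genuine gaps. The reduction to links via Proposition~\ref{prop:braid classes simply laced}\ref{box prod} and additivity of diameter over $\square$ is fine, and Corollaries~\ref{cor:distisdelta} and~\ref{diamlrank} do reduce the problem to exhibiting two elements of $[\ralpha]$ whose signatures disagree in every coordinate. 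But your reformulation of this as ``the coordinatewise complement of any achievable signature is again achievable'' is false: for the link $\rbeta_1=4341232$ of Example~\ref{ex:braid classes}(b) the signature $(3,1,2)$ occurs, while its complement $(4,3,3)$ occurs for no reduced expression, because adjacent signature entries can never both equal the shared generator of two overlapping shadows (the consequence of Proposition~\ref{prop:local structure of overlapping braid shadows} noted in the paper). What is needed is only the existence of a single complementary pair, and that existence is exactly the content of the open conjecture.

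The real gap is the lifting step of your induction. Proposition~\ref{prop:multiple results cobbled}(d) lets you lift any element of $[\hatrsigma]$ into $\barsig_r(\rsigma)$, but there is no companion statement lifting an arbitrary element of $[\hatrsigma]$ into $\barsig_r(b^r(\rsigma))$: part (f) maps $\barsig_r(b^r(\rsigma))$ into $[\hatrsigma]$, and this map is not surjective in general. An element of $[\hatrsigma]$ can be pushed to the other side only if its lift lies in $U_{\rsigma b^r(\rsigma)}$, i.e.\ ends in $tst$ so that $b^r$ applies, and this is typically a proper subset of $\barsig_r(\rsigma)$; in Figure~\ref{fig:BoxWithTwoFlaps}, for instance, two of the seven vertices of $\barsig_5(\rsigma)$ meet no black ($b^5$) edge. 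So ``lifting one element into $\barsig_r(\rsigma)$ and the other into $\barsig_r(b^r(\rsigma))$'' is itself the assertion requiring proof, and the bookkeeping you flag (that truncation preserves the first $r-1$ signature entries) is the easy part. The missing argument would have to run as follows: with $\rsigma_{\llb 2r-3,2r+1 \rrb}=tutst$ as in the proof of Theorem~\ref{thm:braid graph for link median}, the three local forms of Proposition~\ref{prop:local structure of overlapping braid shadows} show that $\rbeta\in\barsig_r(\rsigma)$ lies in $U_{\rsigma b^r(\rsigma)}$ precisely when $\sig_{r-1}(\rbeta)=u$; since your complementary pair in $[\hatrsigma]$ disagrees in coordinate $r-1$ and those entries lie in $\supp_{2r-2}([\ralpha])=\{u,t\}$, exactly one member has entry $u$ there, and it is that member which must be lifted and pushed across by $b^r$, the other being lifted by part (d). With this addition the induction appears to close, but as written your proposal neither states nor proves it, so it does not establish Conjecture~\ref{conj:diamisrank} and hence does not prove the statement.
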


\begin{example}\label{dimrankdiam}
Recall the braid class for the link $\rbeta_1$ in the Coxeter system type $D_4$ from Example~\ref{ex:braid classes}(b). Notice that $\dim(\rbeta_1) = 3$. Figure~\ref{fig:embedding} depicts an isometric embedding of $\B(\rbeta_1)$ into a hypercube of dimension $3$. It is clear that we cannot embed into a hypercube of lower dimension, and so $\dim_I(\B(\rbeta_1)) = 3$, which illustrates Proposition~\ref{prop:dim equals rank}. Moreover, $\diam(\B(\rbeta_1)) = 3$, which confirms Conjectures~\ref{conj:diamisrank} and~\ref{conj:iso dim is diam} in this example.

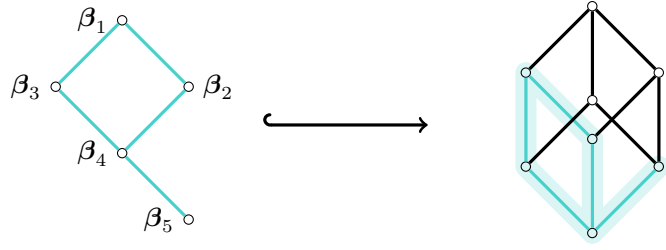
\begin{figure}[h!]
\centering
\begin{tikzpicture}[every circle node/.style={draw, circle ,inner sep=1.25pt}, scale = 1.25]
\begin{scope}[shift={(1,-.15)}] 
\node [circle] (1) [label=left:$\rbeta_4$] at (0,1){};
\node [circle] (2) [label=left:$\rbeta_5$] at (.707,.293){};
\node [circle] (3) [label=left:$\rbeta_3$] at (-.707,1.707){};
\node [circle] (4) [label=right:$\rbeta_2$] at (.707,1.707){};
\node [circle] (5) [label=left:$\rbeta_1$] at (0,2.414){};
\draw [turq,-, very thick] (1) to (2);
\draw [turq,-, very thick] (1) to (3);
\draw [turq,-, very thick] (1) to (4);
\draw [turq,-, very thick] (3) to (5);
\draw [turq,-, very thick] (5) to (4);
\end{scope}
\node [circle] (6) at (6,1){};
\node [circle] (7) [label=left:\phantom{$\scriptstyle 101$}] at (5.293,1.707){};
\node [circle] (8) [label=right:\phantom{$\scriptstyle  011$}] at (6.707,1.707){};
\node [circle] (9) [label=above:\phantom{$\scriptstyle  111$}] at (6,2.414){};
\node [circle] (10) [label=below:\phantom{$\scriptstyle 000$}] at (6,0){};
\node [circle] (11) [label=left:\phantom{$\scriptstyle 100$}] at (5.293,0.707){};
\node [circle] (12) [label=right:\phantom{$\scriptstyle 010$}] at (6.707,0.707){};
\node [circle] (13) at (6,1.414){};
\node at (6-.25,1-.2) {\phantom{$\scriptstyle 001$}};
\node at (6+.25,1.414+.2) {\phantom{$\scriptstyle 110$}};
\draw [turq,-, very thick] (6) to (7);
\draw [black,-, very thick] (6) to (8);
\draw [black,-, very thick] (7) to (9);
\draw [black,-, very thick] (9) to (8);
\draw [turq,-, very thick] (10) to (11);
\draw [turq,-, very thick] (10) to (12);
\draw [black,-, very thick] (11) to (13);
\draw [black,-, very thick] (13) to (12);
\draw [black,-, very thick] (13) to (9);
\draw [turq,-, very thick] (6) to (10);
\draw [turq,-, very thick] (7) to (11);
\draw [black,-, very thick] (8) to (12);
\begin{pgfonlayer}{background}
\highlight{8pt}{turq}{(6.center) to (7.center) to (11.center) to (10.center) to (12.center) to (10.center) to (6.center)}
\end{pgfonlayer}

\draw [very thick, right hook->, black] (2.5,1.15) -- (4.25,1.15) node[midway,above]{};

\end{tikzpicture}
\caption{A braid graph as a partial cube with isometric dimension, dimension, and diameter equal to 3 as described in Example~\ref{dimrankdiam}.}\label{fig:embedding}
\end{figure}
\end{example}

The next result follows immediately from Propositions~\ref{prop:TFAE partial cube}, \ref{prop:same color semis}, and~\ref{prop:dim equals rank}.

\begin{corollary}\label{cor:barsig convex}
If $(W,S)$ is type $\Lambda$ and $\ralpha$ is a reduced expression of dimension at least one, then $\barsig_i(\ralpha)$ is convex.
\end{corollary}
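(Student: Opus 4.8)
The plan is to identify $\barsig_i(\ralpha)$ with a semicube of $\B(\ralpha)$ and then invoke the standard fact that semicubes of partial cubes are convex. By Proposition~\ref{prop:dim equals rank}, $\B(\ralpha)$ is a partial cube, so by Proposition~\ref{prop:TFAE partial cube}(ii) every semicube of $\B(\ralpha)$ is convex. Hence it suffices to show that, for each index $i$ with $1 \le i \le \dim(\ralpha)$, the set $\barsig_i(\ralpha)$ equals a semicube $W_{\rx\rbeta}$ for some edge $\{\rx,\rbeta\}$ of $\B(\ralpha)$.

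First I would exhibit an edge of $\B(\ralpha)$ labeled $i$. Since the $i$th braid shadow of $\bs([\ralpha])$ lies in $\bs(\rx)$ for some $\rx \in [\ralpha]$, the braid move $b^i$ applies to $\rx$; set $\rbeta = b^i(\rx)$, so $\{\rx,\rbeta\}$ is an edge of $\B(\ralpha)$ and $\rx,\rbeta$ differ only in the $i$th entry of their signatures. By Proposition~\ref{prop:local structure of braid shadows} and the remarks following it, the center of the $i$th braid shadow takes exactly two values across $[\ralpha]$, and since $\sig_i(\rx)\ne\sig_i(\rbeta)$ these are precisely those two values. Therefore exactly one of $\rx,\rbeta$ agrees with $\ralpha$ in position $i$; after interchanging the roles of $\rx$ and $\rbeta$ if necessary, we may assume $\sig_i(\rx)=\sig_i(\ralpha)$, which gives $\barsig_i(\rx)=\barsig_i(\ralpha)$ directly from the definition.

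Finally I would apply Proposition~\ref{prop:same color semis}, taking $\rx$ as the base reduced expression (note $\B(\rx)=\B(\ralpha)$) and $\{\rx,\rbeta\}$ as the edge: this yields $W_{\rx\rbeta}=\barsig_i(\rx)=\barsig_i(\ralpha)$. Thus $\barsig_i(\ralpha)$ is a semicube of the partial cube $\B(\ralpha)$, and convexity follows from Proposition~\ref{prop:TFAE partial cube}(ii).

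I do not expect a genuine obstacle; the only points needing care are bookkeeping, namely confirming that an edge of $\B(\ralpha)$ labeled $i$ exists (this is precisely where one uses that the $i$th braid shadow of $\bs([\ralpha])$ is realized by some braid-equivalent expression) and tracking the two possible values of $\sig_i$ so that $\barsig_i(\ralpha)$ is matched with the correct one of the opposite semicubes $W_{\rx\rbeta}$ and $W_{\rbeta\rx}$. With these in hand, the statement follows at once from the three cited propositions, consistent with the claim that it is an immediate corollary.
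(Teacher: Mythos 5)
Your proof is correct and follows the same route the paper intends: the paper derives this corollary immediately from Propositions~\ref{prop:TFAE partial cube}, \ref{prop:same color semis}, and~\ref{prop:dim equals rank}, and you have simply filled in the routine bookkeeping (choosing $\rx\in[\ralpha]$ realizing the $i$th braid shadow and matching $\barsig_i(\ralpha)$ with the correct semicube $W_{\rx\rbeta}$).
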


We now state a special case of the previous corollary, which can be viewed as a strengthening of Parts~(e) and (g) of  Proposition~\ref{prop:multiple results cobbled}.

\begin{corollary}\label{convexmoon}
If $(W,S)$ is type $\Lambda$, $\ralpha$ is a link of dimension $r\geq 2$, and we choose $\rsigma \in [\ralpha]$ such that $\llb 2r-3,2r-1 \rrb, \llb 2r-1,2r+1 \rrb \in \mathcal{S}(\rsigma)$ according to Proposition~\ref{prop:special link}, then $\barsig_r(\rsigma)$ and $\barsig_r(b^r(\rsigma))$ are convex. 
\end{corollary}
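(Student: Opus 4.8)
The plan is to recognize that both $\barsig_r(\rsigma)$ and $\barsig_r(b^r(\rsigma))$ are nothing more than instances of the sets $\barsig_i(\rx)$ already shown to be convex in Corollary~\ref{cor:barsig convex}; so convexity will follow as soon as the (mild) hypotheses of that corollary are checked for each.

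First I would observe that $\rsigma \in [\ralpha]$, so $\rsigma$ is a reduced expression for the same group element as $\ralpha$. Since $\dim$ depends only on the braid class (recall $\dim(\ralpha) = \card(\mathcal{S}([\ralpha]))$), we have $\dim(\rsigma) = r \geq 2 \geq 1$, so $i = r$ is a legitimate index for $\sig(\rsigma)$. Applying Corollary~\ref{cor:barsig convex} with the reduced expression $\rsigma$ and index $i = r$ then gives that $\barsig_r(\rsigma)$ is convex. Next I would apply the identical argument to $b^r(\rsigma)$: the hypothesis that $\llb 2r-1, 2r+1 \rrb \in \mathcal{S}(\rsigma)$ guarantees that $b^r$ may be applied to $\rsigma$, and the resulting reduced expression $b^r(\rsigma)$ again lies in $[\ralpha]$ and has dimension $r \geq 1$; so Corollary~\ref{cor:barsig convex}, now applied with the reduced expression $b^r(\rsigma)$ and index $i = r$, yields that $\barsig_r(b^r(\rsigma))$ is convex as well.

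There is essentially no obstacle: this corollary is a direct specialization of Corollary~\ref{cor:barsig convex}, and the only content is the bookkeeping that $\rsigma$ and $b^r(\rsigma)$ are reduced expressions of dimension $r \geq 1$ belonging to the braid class $[\ralpha]$, together with the fact that $r$ is a valid signature index for each. The appeal to Proposition~\ref{prop:special link} in the hypothesis merely ensures that such a $\rsigma$ exists (and, via Proposition~\ref{prop:multiple results cobbled}(a), that $\barsig_r(\rsigma)$ and $\barsig_r(b^r(\rsigma))$ partition $[\ralpha]$); it is not otherwise needed for the convexity claim. The point of stating this as a separate corollary is that Parts~(e) and~(g) of Proposition~\ref{prop:multiple results cobbled} only asserted that the induced subgraphs $\B(\ralpha)[\barsig_r(\rsigma)]$ and $\B(\ralpha)[\barsig_r(b^r(\rsigma))]$ are \emph{isometric} subgraphs, whereas here we strengthen "isometric" to "convex."
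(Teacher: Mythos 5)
Your proposal is correct and matches the paper's treatment: the paper presents Corollary~\ref{convexmoon} as an immediate special case of Corollary~\ref{cor:barsig convex}, applied to the reduced expressions $\rsigma$ and $b^r(\rsigma)$ (both in $[\ralpha]$, of dimension $r\geq 1$) with index $i=r$, exactly as you do. The extra bookkeeping you supply is harmless and the remark about strengthening ``isometric'' to ``convex'' in Proposition~\ref{prop:multiple results cobbled}(e),(g) is precisely the paper's stated motivation.
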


\section{Cycle structure of braid graphs}\label{sec:cycle structure}

This section contains several new results pertaining to the cycle structure of braid graphs. Recall that every cycle in a Matsumoto graph is of even length. Consequently, every cycle in a braid graph is of even length. A pair of edges in a cycle of even length are called \emph{opposite} if they are opposite when the cycle is interpreted as a regular polygon. Figure~\ref{fig:geocolor new} provides a nice visualization of the next result.

\begin{proposition}\label{prop:opposite edges in 4-cycle}
If $(W,S)$ is type $\Lambda$ and $\ralpha$ is a reduced expression such that $\B(\ralpha)$ contains a 4-cycle, then opposite edges in that 4-cycle correspond to the same braid move.
\end{proposition}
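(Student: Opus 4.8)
The plan is to leverage the partial cube structure established in Proposition~\ref{prop:dim equals rank} together with the description of $F$-classes from Proposition~\ref{prop:same color semis}. Suppose $\B(\ralpha)$ contains a 4-cycle $\rx_1, \rx_2, \rx_3, \rx_4$ with edges $e_1 = \{\rx_1,\rx_2\}$, $e_2 = \{\rx_2,\rx_3\}$, $e_3 = \{\rx_3,\rx_4\}$, $e_4 = \{\rx_4,\rx_1\}$. I want to show that $e_1$ and $e_3$ carry the same label, and likewise $e_2$ and $e_4$. First I would recall the standard fact about $\btheta$ in any graph: no two edges of a geodesic are in relation $\btheta$ (since a geodesic meets each semicube at most once). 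In a 4-cycle, the path $\rx_1, \rx_2, \rx_3$ is a geodesic (the two vertices $\rx_1, \rx_3$ are at distance $2$ because the graph is bipartite, so there is no chord), hence $e_1 \not\btheta e_2$; similarly $e_2 \not\btheta e_3$, etc. The remaining relations among the four edges are forced: in a 4-cycle of a partial cube, exactly two pairs of opposite edges must be $\btheta$-related. Concretely, consider the semicube $W_{\rx_1 \rx_2}$: since $\B(\ralpha)$ is a partial cube, $W_{\rx_1\rx_2}$ is convex (Proposition~\ref{prop:TFAE partial cube}(ii)), and $\rx_1 \in W_{\rx_1\rx_2}$, $\rx_2 \in W_{\rx_2\rx_1}$. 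Then $\rx_4$ lies in $W_{\rx_1\rx_2}$ (it is adjacent to $\rx_1$ and at distance $2$ from $\rx_2$, so by Proposition~\ref{prop:semicubeonestepcloser} it is on the $\rx_1$ side), and $\rx_3$ lies in $W_{\rx_2\rx_1}$ by the same reasoning. Therefore the edge $e_3 = \{\rx_3,\rx_4\}$ joins a vertex of $W_{\rx_2\rx_1}$ to a vertex of $W_{\rx_1\rx_2}$, which by definition means $e_1 \btheta e_3$, i.e. $\{\rx_1,\rx_2\} \btheta \{\rx_3,\rx_4\}$.

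Having shown $e_1 \btheta e_3$, I would invoke Proposition~\ref{prop:same color semis}: every $\btheta$-equivalence class of edges consists precisely of the edges carrying a fixed label $i$ (equivalently, the $F$-class $F_{\rx_1\rx_2}$ is exactly the set of edges labeled $i$, where $i$ is the unique index with $\sig_i(\rx_1) \neq \sig_i(\rx_2)$). Since $e_1 \btheta e_3$, the edges $e_1$ and $e_3$ lie in the same $F$-class, hence carry the same label, hence correspond to the same braid move $b^i$. The identical argument applied to the other pair shows $e_2 \btheta e_4$ and thus $e_2$, $e_4$ correspond to the same braid move. This is exactly the claim.

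The only real content to check carefully is the positional claim in the middle paragraph — that in a 4-cycle of a bipartite graph, $\rx_3 \in W_{\rx_2 \rx_1}$ and $\rx_4 \in W_{\rx_1 \rx_2}$ — and this is where I would be most careful, though it is not genuinely hard. It follows from bipartiteness (so $d(\rx_1,\rx_3) = 2 = d(\rx_2,\rx_4)$, there being no chords) combined with Proposition~\ref{prop:semicubeonestepcloser}: $d(\rx_4,\rx_2) = 2 = d(\rx_4,\rx_1)+1$ forces $\rx_4 \in W_{\rx_1\rx_2}$, and symmetrically for $\rx_3$. An alternative, perhaps cleaner, route that avoids even this is purely signature-theoretic: by Corollary~\ref{cor:distisdelta}, traversing the 4-cycle changes the signature back to its starting value, so the multiset of labels read around the cycle returns each coordinate of the signature to where it began; by Proposition~\ref{prop:each edge once in minimal sequence} applied to the two length-$2$ geodesics from $\rx_1$ to $\rx_3$, the two edges on each such geodesic have distinct labels, and matching up the two geodesics forces the label of $e_1$ to equal the label of $e_3$. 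I expect to present the $\btheta$/semicube argument as the main proof since it is shortest, with the signature argument as a remark. The principal obstacle is merely making sure the "no chord, hence distance $2$" step is stated with the bipartiteness hypothesis (Proposition~\ref{braidbi}) explicitly invoked.
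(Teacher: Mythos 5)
Your proof is correct and follows essentially the same route as the paper: the paper's proof simply observes that opposite edges of a $4$-cycle lie in the same $F$-class and then cites Proposition~\ref{prop:same color semis}, exactly as you do. The only difference is that you spell out, via bipartiteness, Proposition~\ref{prop:semicubeonestepcloser}, and the definition of $\btheta$, the standard fact that the paper leaves implicit.
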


\begin{proof}
Any two opposite edges in a $4$-cycle belong to the same $F$-class. Thus, the result follows from Proposition~\ref{prop:same color semis}.
\end{proof}

The next four results investigate when a 4-cycle appears in a braid graph.

\begin{proposition}\label{when4}
 Suppose $(W,S)$ is type $\Lambda$ and let $\ralpha$ be a reduced expression. Then there exists $\rbeta \in [\ralpha]$ such that $\rbeta$ has two disjoint braid shadows $\llb 2i-1,2i+1 \rrb$ and $\llb 2j-1, 2j+1 \rrb$ if and only if $\B(\ralpha)$ has a 4-cycle with the opposite edges labeled by $i$ and $j$.
\end{proposition}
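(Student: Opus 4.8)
The plan is to prove the two directions separately: ``disjoint braid shadows $\Rightarrow$ 4-cycle'' is straightforward, while the converse will require the local structure of overlapping braid shadows.

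For the forward implication, suppose $\rbeta \in [\ralpha]$ has disjoint braid shadows that are the $i$th and $j$th shadows of $\bs([\ralpha])$, with $i \neq j$. Since $b^i$ and $b^j$ alter disjoint sets of positions, performing one leaves the other applicable and the two commute, so $b^ib^j(\rbeta) = b^jb^i(\rbeta)$. The four reduced expressions $\rbeta$, $b^i(\rbeta)$, $b^j(\rbeta)$, $b^ib^j(\rbeta)$ have pairwise distinct signatures (they differ from $\sig(\rbeta)$ exactly in positions $\{i\}$, $\{j\}$, and $\{i,j\}$, and $i \neq j$), hence are pairwise distinct reduced expressions by Corollary~\ref{cor:distisdelta}, and they form a 4-cycle whose two edges labeled $i$ are opposite and whose two edges labeled $j$ are opposite.

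For the converse, suppose $\B(\ralpha)$ contains a 4-cycle. By Proposition~\ref{prop:opposite edges in 4-cycle} its opposite edges share labels, and since each $F$-class is a matching (Proposition~\ref{prop:iso for U sets}) the cycle cannot use a single label; so it reads $v_1, v_2, v_3, v_4$ with $\{v_1,v_2\},\{v_3,v_4\}$ labeled $i$ and $\{v_2,v_3\},\{v_4,v_1\}$ labeled $j$ for some $i \neq j$. At $v_1$ both $b^i$ and $b^j$ are applicable, so the $i$th and $j$th braid shadows are both active in $v_1$; I would then show they are disjoint, which makes $\rbeta := v_1$ the desired expression. Suppose instead they overlap. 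Since distinct braid shadows of $[\ralpha]$ overlap in at most one position, the two overlapping shadows have consecutive indices, say $j = i+1$; passing to the link factor that contains both (Proposition~\ref{prop:braid classes simply laced}), Proposition~\ref{prop:local structure of overlapping braid shadows} restricts $v_1$ on the overlap window to one of the forms $sut$, $stu$, $tsu$, where $m(s,t)=m(t,u)=3$ and $m(s,u)=2$. The relation $m(s,u)=2$ rules out $sut$ and $tsu$ (in each, one of the two shadows fails the required ``$ZYZ$'' pattern), leaving $(v_1)_{\llb 2i-1,2i+3\rrb}=tstut$.

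The finishing step is to apply $b^i$ to $v_1$: it turns $tst$ into $sts$ on positions $\llb 2i-1,2i+1\rrb$, so $v_2=b^i(v_1)$ reads $sut$ on positions $\llb 2i+1,2i+3\rrb$, which is not of the form $ZYZ$; hence $b^{i+1}$ is not applicable at $v_2$, contradicting the presence of the edge $\{v_2,v_3\}$ labeled $j$. Therefore the two shadows are disjoint. The main obstacle is exactly this structural argument: triangle-freeness enters through $m(s,u)=2$ in Proposition~\ref{prop:local structure of overlapping braid shadows} to force the rigid configuration $tstut$ and to guarantee that two overlapping braid shadows cannot both persist through an exchange, so this is precisely where the proof must be delicate and where it would break down without the triangle-free hypothesis.
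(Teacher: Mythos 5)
Your proof is correct and follows essentially the same route as the paper: the forward direction by commuting disjoint braid moves, and the converse by contradiction, using the local structure of overlapping braid shadows to force the configuration $tstut$ and then observing that applying $b^i$ destroys the $(i+1)$st shadow, contradicting the labeled edge on the 4-cycle. Your write-up is just slightly more explicit than the paper's (ruling out a single repeated label via the matching property and deriving the $tstut$ form from the three cases of Proposition~\ref{prop:local structure of overlapping braid shadows}), but the argument is the same.
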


\begin{proof}
The forward implication is clear. Now, suppose that $\B(\ralpha)$ has a 4-cycle. By Proposition~\ref{prop:opposite edges in 4-cycle}, we may assume that the pairs of opposite edges of this cycle are labeled by some $i$ and $j$ with $i\neq j$. Let $\rbeta$ be any vertex on this cycle. Then $\rbeta$ has two distinct braid shadows $\llb 2i-1,2i+1 \rrb$ and $\llb 2j-1, 2j+1 \rrb$. It remains to show that $\llb 2i-1, 2i+1 \rrb$ and $\llb 2j-1, 2j+1 \rrb$ are disjoint. For sake of contradiction, suppose that $\llb 2i-1, 2i+1 \rrb$ and $\llb 2j-1, 2j+1 \rrb$ intersect, and without loss of generality suppose that $i+1 = j$. Further, suppose that $\rbeta_{\llb 2i-1, 2i+1 \rrb} = sts$ and $\rbeta_{\llb 2i+1, 2i+3 \rrb} = sus$ where $m(s,t) = 3 = m(s,u)$ and $m(t,u) = 2$. Then $b^i(\rbeta)_{\llb 2i+1, 2i+3 \rrb} = tus$, so that $\llb 2j-1,2j+1\rrb= \llb 2i+1, 2i+3 \rrb \notin \mathcal{S}(b^i(\rbeta))$. This is a contradiction since $b^i(\rbeta)$ lies on this 4-cycle.
\end{proof}

\begin{proposition}\label{deg34cyc}
If $(W,S)$ is type $\Lambda$ and $\ralpha$ is a reduced expression such that $\B(\ralpha)$ has a vertex $\rlambda$ of degree 3 or more, then $\rlambda$ is contained in a $4$-cycle.
\end{proposition}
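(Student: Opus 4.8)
The plan is to exploit the fact that a vertex of degree at least $3$ in $\B(\ralpha)$ has at least three braid moves available to it, and then to show that some two of these moves act on disjoint sets of positions, hence commute, and so close up into a $4$-cycle through $\rlambda$.

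First I would note that the degree of $\rlambda$ in $\B(\ralpha)$ equals $|\bs(\rlambda)|$: each edge at $\rlambda$ corresponds to a single applicable braid move, and distinct braid shadows of $\rlambda$ produce distinct neighbors (if $b^i(\rlambda)=b^j(\rlambda)$ with $i\ne j$, then $\sig$ would have to agree and disagree with $\sig(\rlambda)$ in entry $i$, which is absurd). So the hypothesis yields three distinct braid shadows $B_1,B_2,B_3\in\bs(\rlambda)\subseteq\bs([\ralpha])$; write $B_m=\llb c_m-1,c_m+1\rrb$, note the centers $c_1,c_2,c_3$ are distinct (a braid shadow is determined by its center), and assume $c_1<c_2<c_3$. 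The combinatorial heart of the argument is the claim that some two of $B_1,B_2,B_3$ are disjoint. Suppose not. Recall that distinct braid shadows in $\bs([\ralpha])$ are either disjoint or overlap in exactly one position. Then $B_1\cap B_2\ne\emptyset$ together with $c_1<c_2$ forces $c_2=c_1+2$ (the alternative $c_2=c_1+1$ would give $|B_1\cap B_2|=2$), and likewise $B_2\cap B_3\ne\emptyset$ forces $c_3=c_2+2=c_1+4$. But then $B_1=\llb c_1-1,c_1+1\rrb$ and $B_3=\llb c_1+3,c_1+5\rrb$ are disjoint, contradicting our assumption. Hence two of the three shadows are disjoint; let their labels in $\{1,\dots,\dim(\ralpha)\}$ be $i$ and $j$ with $i\ne j$, so that $b^i$ and $b^j$ are both applicable at $\rlambda$.

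Since these two shadows are disjoint, the moves $b^i$ and $b^j$ alter non-overlapping sets of positions, so $b^j$ is still applicable to $b^i(\rlambda)$, $b^i$ is still applicable to $b^j(\rlambda)$, and $b^j(b^i(\rlambda))=b^i(b^j(\rlambda))$. Therefore
\[
\rlambda,\ b^i(\rlambda),\ b^j(b^i(\rlambda)),\ b^j(\rlambda)
\]
is a closed walk of length $4$ in $\B(\ralpha)$ passing through $\rlambda$. To confirm it is a genuine $4$-cycle, I would invoke Corollary~\ref{cor:distisdelta}: consecutive vertices on the walk have signatures differing in exactly one entry, hence are distinct and adjacent, while $\rlambda$ and $b^j(b^i(\rlambda))$ differ in entries $i$ and $j$, and $b^i(\rlambda)$ and $b^j(\rlambda)$ also differ in entries $i$ and $j$; thus all four vertices are pairwise distinct and $\rlambda$ lies on a $4$-cycle.

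I expect the only real subtlety to be the combinatorial claim that three braid shadows cannot pairwise overlap; every other step follows directly from results already available. Alternatively, once $\rlambda$ is shown to possess two disjoint braid shadows, the conclusion can be read off from the forward direction of Proposition~\ref{when4}, whose proof produces exactly the $4$-cycle through such a vertex.
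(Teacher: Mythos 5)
Your proposal is correct and follows essentially the same route as the paper: a vertex of degree at least $3$ has three distinct braid shadows, two of which must be disjoint (since overlapping shadows have centers differing by exactly two), and these yield a $4$-cycle through $\rlambda$ via the commuting moves, i.e., the forward direction of Proposition~\ref{when4}. You simply spell out the disjointness argument and the explicit cycle through $\rlambda$ that the paper leaves as ``clear.''
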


\begin{proof}
Let $\rlambda$ correspond to a vertex of degree 3 or more. Then there exists $\llb i-1, i+1 \rrb, \llb j-1, j+1 \rrb, \llb k-1, k+1 \rrb \in \mathcal{S}(\rlambda)$. It is clear that in any arrangement of these three braid moves, there are at least two that are disjoint, which yields a 4-cycle by Proposition~\ref{when4}.
\end{proof}

The next result is immediate from Proposition~\ref{deg34cyc}.

\begin{corollary}\label{treepath}
If $(W,S)$ is type $\Lambda$ and $\ralpha$ is a reduced expression such that $\B(\ralpha)$ is a tree, then $\B(\ralpha)$ is a path.
\end{corollary}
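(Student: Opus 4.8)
The plan is to argue by contrapositive, with Proposition~\ref{deg34cyc} doing essentially all the work. First I would note that since $\B(\ralpha)$ is a tree it is acyclic, so in particular it contains no $4$-cycle. Applying the contrapositive of Proposition~\ref{deg34cyc}, no vertex of $\B(\ralpha)$ can have degree $3$ or more, so every vertex of $\B(\ralpha)$ has degree at most $2$.

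Next I would invoke the standard fact that a finite connected graph in which every vertex has degree at most $2$ is either a simple cycle or a path: pick any vertex and traverse edges greedily, never immediately backtracking; since the graph is finite, this walk either closes up into a cycle or terminates at a vertex of degree $1$, and connectedness forces the whole graph to be exactly this cycle or path. Here I would explicitly use the standing convention of Section~\ref{sec:graph theory} that $\B(\ralpha)$ is finite and connected. Since $\B(\ralpha)$ is a tree it is acyclic, ruling out the cycle case, so $\B(\ralpha)$ is a path.

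There is no real obstacle: the combinatorial content of the statement is entirely contained in Proposition~\ref{deg34cyc}, and the remainder is an elementary graph-theoretic deduction. The only point requiring a modicum of care is to use \emph{both} that $\B(\ralpha)$ is connected (to conclude it is a single path rather than a disjoint union of paths) and that it is acyclic (to exclude the possibility that the degree-$\le 2$ connected graph is a cycle).
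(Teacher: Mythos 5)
Your proof is correct and is exactly the argument the paper has in mind: the corollary is stated there as immediate from Proposition~\ref{deg34cyc}, via the contrapositive (tree $\Rightarrow$ no $4$-cycle $\Rightarrow$ max degree $2$) plus the standard fact that a finite connected acyclic graph of maximum degree $2$ is a path. Your write-up just makes the elementary graph-theoretic step explicit.
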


The following proposition extends the result in Proposition~\ref{prop:opposite edges in 4-cycle} and is a stepping stone to Proposition~\ref{prim4}. 

\begin{proposition}\label{primopp}
If $(W,S)$ is type $\Lambda$ and $\ralpha$ is a reduced expression, then the opposite edges of an even-length isometric cycle in $\B(\ralpha)$ correspond to the same braid move.
\end{proposition}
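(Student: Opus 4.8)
The plan is to reduce the statement to a purely combinatorial fact about how the edge labels are distributed around the cycle, using only the distance formula of Corollary~\ref{cor:distisdelta} (together with the fact that cycles in $\B(\ralpha)$ have even length, noted at the start of this section). Write the isometric cycle as $C=\rx_0,\rx_1,\dots,\rx_{2n-1},\rx_0$, setting $\rx_{2n}:=\rx_0$; since $\B(\ralpha)$ is simple we have $n\ge 2$. For each $i$ the vertices $\rx_{i-1}$ and $\rx_i$ are related by a single braid move, and because $d(\rx_{i-1},\rx_i)=1=\Delta(\sig(\rx_{i-1}),\sig(\rx_i))$, there is a unique position $\ell_i$ at which $\sig(\rx_{i-1})$ and $\sig(\rx_i)$ differ; this $\ell_i$ is the label (braid shadow) of the edge $e_i:=\{\rx_{i-1},\rx_i\}$. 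In this language, ``opposite edges correspond to the same braid move'' is exactly the assertion that $\ell_i=\ell_{i+n}$ for all $i$, with indices read modulo $2n$.

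The key step, which I expect to be the main obstacle, is the claim that for every $i$ the labels $\ell_{i+1},\ell_{i+2},\dots,\ell_{i+n}$ are pairwise distinct. To prove it, I would consider the subpath $\rx_i,\rx_{i+1},\dots,\rx_{i+n}$ of $C$, which has length $n$; since $C$ is isometric, $d(\rx_i,\rx_{i+n})=n$, hence $\Delta(\sig(\rx_i),\sig(\rx_{i+n}))=n$ by Corollary~\ref{cor:distisdelta}. Every position at which $\sig(\rx_i)$ and $\sig(\rx_{i+n})$ differ must be altered somewhere along this subpath, so it belongs to the set $\{\ell_{i+1},\dots,\ell_{i+n}\}$; therefore $n=\Delta(\sig(\rx_i),\sig(\rx_{i+n}))\le|\{\ell_{i+1},\dots,\ell_{i+n}\}|\le n$, which forces this set to have $n$ elements, i.e.\ the $\ell_{i+1},\dots,\ell_{i+n}$ are distinct. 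This is the only point where isometry (beyond bipartiteness) is used, and everything else is bookkeeping.

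The remainder is elementary arithmetic in $\mathbb{Z}/2n\mathbb{Z}$. If $\ell_a=\ell_b$ with $a\neq b$, then $a$ and $b$ cannot both lie in any block of $n$ cyclically consecutive indices, since such a block is one of the windows $\{i+1,\dots,i+n\}$ ruled out by the claim; hence the cyclic distance between $a$ and $b$ is at least $n$, and since the cyclic distance on $2n$ points is at most $n$ it equals $n$, i.e.\ $b\equiv a+n$. Consequently no label occurs three or more times among $\ell_1,\dots,\ell_{2n}$ (three occurrences would be pairwise antipodal, which is impossible), while each occurring label occurs an even number of times because the value $\sig_m$ returns to its starting value after one traversal of the closed walk $C$. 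Hence every occurring label occurs exactly twice, at antipodal positions; in particular $\ell_i$ occurs precisely at positions $i$ and $i+n$, so $\ell_{i+n}=\ell_i$, which is the desired conclusion. I would close with a remark that this recovers, in the present setting, the familiar fact that an isometric cycle in a partial cube meets each of its Djoković--Winkler classes in exactly two antipodal edges (cf.\ Proposition~\ref{prop:same color semis}), the direct signature computation above making that machinery unnecessary here.
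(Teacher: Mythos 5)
Your proof is correct, and it reaches the conclusion by a somewhat different route than the paper. The paper's proof is local: it fixes one pair of opposite edges $\{\ralpha,\rbeta\}$, $\{\ralpha',\rbeta'\}$, observes that because the cycle is isometric the two half-paths of the cycle through these edges are geodesics, and concludes directly from Proposition~\ref{prop:each edge once in minimal sequence} that the two edges carry the same label. You instead work globally from Corollary~\ref{cor:distisdelta}: the counting argument showing that any $n$ consecutive edge labels of the $2n$-cycle are pairwise distinct is in effect the same use of isometry (half-cycles are geodesics, so no braid shadow repeats along them), but you then finish with cyclic bookkeeping in $\mathbb{Z}/2n\mathbb{Z}$ -- repeated labels must sit at antipodal positions, no label occurs three times, and parity forces each occurring label to appear exactly twice -- which yields the slightly stronger statement that the cycle meets each label (equivalently each Djokovi\'c--Winkler class, by Proposition~\ref{prop:same color semis}) in exactly zero or two edges, and those two are opposite. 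One small point to make explicit: your parity step ("$\sig_m$ returns to its starting value, hence an even number of changes") tacitly uses that each signature entry can take only two values, so that every braid move at shadow $m$ toggles $\sig_m$; this is exactly Proposition~\ref{prop:local structure of braid shadows} (see the remark following the definition of signature), so it is available, but it should be cited. Alternatively, you could avoid parity altogether by noting that the two complementary windows of $n$ labels both equal the set of positions where $\sig(\rx_i)$ and $\sig(\rx_{i+n})$ differ, hence coincide as sets, and then invoking your antipodality observation.
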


\begin{proof}
Let $C$ be the subgraph of $B(\ralpha)$ induced by the vertices of an isometric cycle in $\B(\ralpha)$ and let $\{\ralpha, \rbeta\}$ and $\{\ralpha', \rbeta'\}$ be opposite edges of $C$ such that $d(\ralpha, \ralpha') < d(\ralpha, \rbeta')$.

Since $C$ is an isometric cycle consisting of an even number of edges, both the path from $\ralpha$ to $\rbeta'$ passing through $\ralpha'$ along $C$ and the path from $\rbeta$ to $\ralpha'$ passing through $\ralpha$ along $C$ are geodesics. In light of Proposition~\ref{prop:each edge once in minimal sequence}, it must be the case that $\{\ralpha, \rbeta\}$ and $\{\ralpha', \rbeta'\}$ have the same label.
\end{proof}

\begin{proposition}\label{prim4}
If $(W,S)$ is type $\Lambda$ and $\ralpha$ is a reduced expression, then every convex cycle in $\B(\ralpha)$ is of length 4 such that the pairs of opposite edges correspond to disjoint braid shadows.
\end{proposition}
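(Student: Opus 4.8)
The plan is to take a convex cycle $C$ in $\B(\ralpha)$, show it must have length $4$, and then deduce the statement about disjoint braid shadows from the reasoning in the proof of Proposition~\ref{when4} (a convex $4$-cycle is in particular a $4$-cycle). Write the length of $C$ as $2n$; it is even because braid graphs are bipartite, and $n\geq 2$ because there is no triangle. Since every convex cycle is isometric, $C$ has no chords, so the subgraph induced by $V(C)$ is exactly $C$, and each arc of $C$ joining two antipodal vertices is a geodesic. By Proposition~\ref{primopp} opposite edges of $C$ carry the same label, so the cyclic sequence of labels around $C$ is periodic with period $n$, say $c_1,c_2,\dots,c_n$ repeated twice; and by Proposition~\ref{prop:each edge once in minimal sequence} applied to a length-$n$ arc (which is a minimal braid sequence), the shadow numbers $c_1,\dots,c_n$ are pairwise distinct.

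Now suppose for contradiction that $n\geq 3$. Fix a vertex $v$ of $C$; its two incident cycle-edges carry distinct labels $c_i$ and $c_{i+1}$ (indices mod $n$), and the corresponding braid shadows of $v$ are either disjoint or overlapping. First I would rule out the disjoint case. If those two shadows are disjoint, then $b^{c_i}$ and $b^{c_{i+1}}$ commute at $v$ and each remains applicable after the other, producing a vertex $w$ so that $v$, $a:=b^{c_i}(v)$, $w$, $b:=b^{c_{i+1}}(v)$ form a $4$-cycle. Here $a,b$ are the two $C$-neighbors of $v$; they satisfy $d(a,b)=2$ by Corollary~\ref{cor:distisdelta}, while $w$ is adjacent to both $a$ and $b$, so $w\in I(a,b)$. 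Convexity of $V(C)$ forces $w\in V(C)$, and since $V(C)$ induces exactly the cycle $C$ and $n\geq 3$, the only common $C$-neighbor of $a$ and $b$ is $v$; hence $w=v$, contradicting $\sig_{c_i}(w)\neq\sig_{c_i}(v)$. Therefore every vertex of $C$ is in the overlapping case, which means every pair of consecutive labels of $C$ corresponds to overlapping — hence consecutively numbered — braid shadows, i.e. $|c_i-c_{i+1}|=1$ for all $i$ mod $n$.

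Finally I would observe that a cyclic sequence $c_1,\dots,c_n$ of pairwise distinct integers with all cyclic consecutive differences equal to $\pm 1$ cannot exist for $n\geq 3$: at an index realizing the maximum, both cyclic neighbors must equal the maximum minus $1$, forcing two terms to coincide. This contradiction shows $n=2$, so $C$ has length $4$; and then, by Proposition~\ref{prop:opposite edges in 4-cycle} together with the argument in the proof of Proposition~\ref{when4}, the two pairs of opposite edges of $C$ correspond to disjoint braid shadows. I expect the disjoint case of the dichotomy to be the main obstacle: one must carefully use chordlessness of $C$ and $n\geq 3$ to conclude that $w\in V(C)$ actually pins $w$ down to $v$, and verify that $w$ and $v$ really differ in signature coordinate $c_i$. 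The overlapping case, by contrast, collapses cleanly once it is rephrased in terms of shadow numbers via the extremal-value observation.
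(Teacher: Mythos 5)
Your argument is correct, and while it shares the paper's skeleton---opposite edges of the convex (hence isometric, hence chordless) cycle carry equal labels by Proposition~\ref{primopp}, antipodal arcs are geodesics, Proposition~\ref{prop:each edge once in minimal sequence} makes the labels $c_1,\dots,c_n$ pairwise distinct, and the two shadows meeting at a vertex are either disjoint or overlapping---it resolves the two cases differently. In the disjoint case you expand a step the paper compresses into ``$C$ must be a cycle of length $4$ by Proposition~\ref{when4} since $C$ is a convex cycle'': the commuting moves at $v$ produce a fourth vertex $w \in I(a,b)$, convexity places $w$ in $V(C)$, and chordlessness together with $2n\geq 6$ forces $w=v$, contradicting $\sig_{c_i}(w)\neq \sig_{c_i}(v)$; this is the same convexity-forcing idea, just made explicit. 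Where you genuinely diverge is the overlapping case: the paper argues locally via Proposition~\ref{prop:local structure of overlapping braid shadows}, showing that $b^{j}$ destroys shadow $i$ and, since no second $j$-labeled edge is available along a geodesic, shadow $i$ cannot be restored by the time one reaches the opposite edge, contradicting that it is labeled $i$; you argue globally, observing that if every consecutive pair of labels overlapped then $c_1,\dots,c_n$ would be a cyclic sequence of pairwise distinct integers with all consecutive differences $\pm 1$, which is impossible for $n\geq 3$ by the extremal-value observation. Your route buys a clean, purely combinatorial finish (and records, as a byproduct, the period-$n$ label structure of convex cycles), while the paper's route avoids quantifying over all vertices and works directly from one vertex and its antipodal edge. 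One shared and harmless dependency: both your step ``convex $\Rightarrow$ isometric $\Rightarrow$ chordless'' and the paper's own application of Proposition~\ref{primopp} to a convex cycle rest on the paper's stated note that every convex cycle is isometric.
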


\begin{proof}
Let $C$ be the subgraph of $B(\ralpha)$ induced by the vertices of a convex cycle in $\B(\ralpha)$ and let $\{\ralpha, \rbeta\}$ and $\{\ralpha', \rbeta'\}$ be opposite edges such that $d(\ralpha,\ralpha')<d(\ralpha,\rbeta')$. Suppose $\{\ralpha, \rbeta\}$ is labeled with $i$ and the other edge incident to $\ralpha$ on $C$ is labeled with $j$, so that $\llb 2i-1, 2i+1 \rrb, \llb 2j-1, 2j+1 \rrb \in \bs(\ralpha)$. By Proposition~\ref{primopp}, $\{\ralpha', \rbeta'\}$ is also labeled with $i$. If $\llb 2i-1, 2i+1 \rrb$ and $\llb 2j-1, 2j+1 \rrb$ are disjoint, then $C$ must be a cycle of length 4 by Proposition~\ref{when4} since $C$ is a convex cycle.

Towards a contradiction, suppose $\llb 2i-1, 2i+1 \rrb$ and $\llb 2j-1, 2j+1 \rrb$ overlap, and without loss of generality, assume that $i+1 = j$. Then by Proposition~\ref{prop:local structure of overlapping braid shadows}, $\ralpha$ is of the form
\[
\underbrace{\cdots\lfrac{t}{2i-1}{\lfrac{s}{2i}}\lfrac{t}{2i+1}{\lfrac{u}{2i+2}}\lfrac{t}{2i+3}\cdots}_{\ralpha},
\]
where $m(s,t) = 3 = m(t,u)$ and $m(s,u) = 2$.
Then $b^j(\ralpha)$ is of the form
\[
\underbrace{\cdots\lfrac{t}{2i-1}{\lfrac{s}{2i}}\lfrac{u}{2i+1}{\lfrac{t}{2i+2}}\lfrac{u}{2i+3}\cdots}_{b^j(\ralpha)},
\]
so that $\llb 2i-1,2i+1\rrb \notin \bs(b^j(\ralpha))$. By Proposition~\ref{prop:each edge once in minimal sequence}, there is not another edge labeled with $j$ along the geodesic between $\ralpha$ and $\ralpha'$ passing through $b^j(\ralpha)$. Given the form of $b^j(\ralpha)$, this implies that $\llb 2i-1, 2i+1 \rrb\notin \bs(\ralpha')$, which contradicts $\{\ralpha', \rbeta'\}$ being labeled with $i$. The result now follows.
\end{proof}

\section{Median structure of braid graphs}\label{sec:median structure}

The next result together with Corollary~\ref{cor:braid graph for reduced exp median} constitute the main result of this paper.  

\begin{theorem}\label{thm:braid graph for link median}
If $(W,S)$ is type $\Lambda$ and $\ralpha$ is a link, then $\B(\ralpha)$ is median.
\end{theorem}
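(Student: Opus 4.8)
The plan is to induct on $r=\dim(\ralpha)$, and for any three braid-equivalent links $\rx,\ry,\rz$ to exhibit the median explicitly as the link whose signature is the coordinatewise majority of $\sig(\rx),\sig(\ry),\sig(\rz)$. This candidate is well defined because each entry of a signature takes exactly one of two prescribed values (Proposition~\ref{prop:local structure of braid shadows}), so the coordinatewise majority makes sense, and by Proposition~\ref{prop:equal iff sig equal} at most one link in $[\ralpha]$ carries it. The two workhorses are Corollary~\ref{cor:distisdelta} (distance in $\B(\ralpha)$ is the Hamming distance of signatures) and the truncation package of Proposition~\ref{prop:multiple results cobbled}; a bonus of this route is that it simultaneously produces the promised combinatorial description of the median operator.

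First I would isolate the part that needs no induction. If a link $\rlambda$ lies in $I(\rx,\ry)$, then $d(\rx,\ry)=d(\rx,\rlambda)+d(\rlambda,\ry)$, so by Corollary~\ref{cor:distisdelta} the coordinatewise triangle inequality for signatures is an equality in \emph{every} coordinate; hence $\sig_i(\rlambda)\in\{\sig_i(\rx),\sig_i(\ry)\}$ for all $i$, with $\sig_i(\rlambda)=\sig_i(\rx)$ whenever $\sig_i(\rx)=\sig_i(\ry)$. Intersecting this over the three pairs forces $\sig(\rlambda)$ to equal the coordinatewise majority at every position, so $|I(\rx,\ry)\cap I(\rx,\rz)\cap I(\ry,\rz)|\le 1$ and any element of it is the link carrying the majority signature. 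Running the same coordinatewise bookkeeping in reverse shows that a link carrying the majority signature necessarily lies in all three intervals. Hence the theorem reduces to one existence statement: the coordinatewise majority of three signatures of links in $[\ralpha]$ is itself the signature of a link in $[\ralpha]$.

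To prove that by induction on $r$: for $r\le 1$ the braid class has at most two elements and there is nothing to check, so take $r\ge 2$. Choose $\rsigma\in[\ralpha]$ realizing the overlapping shadows $\llb 2r-3,2r-1\rrb$ and $\llb 2r-1,2r+1\rrb$ as in Proposition~\ref{prop:special link}; by the remarks following it $\rsigma_{\llb 2r-3,2r+1\rrb}=tstut$ with $m(s,t)=3=m(t,u)$ and $m(s,u)=2$, so $\sig_{r-1}(\rsigma)=s$ and $\sig_r(\rsigma)=u$. Put $\rtau=b^{r}(\rsigma)$; then $\{\barsig_r(\rsigma),\barsig_r(\rtau)\}$ partitions $[\ralpha]$ and $\hatrsigma$ is a link of dimension $r-1$ (Proposition~\ref{prop:multiple results cobbled}(a),(b)). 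To each of $\rx,\ry,\rz$ associate an element of $[\hatrsigma]$: to such a link $\rlambda$ associate $\hatrbeta$-style truncation $\rlambda_{\llb 1,2r-1\rrb}$ if $\rlambda\in\barsig_r(\rsigma)$, and $\bigl(b^{r}(\rlambda)\bigr)_{\llb 1,2r-1\rrb}$ if $\rlambda\in\barsig_r(\rtau)$; in both cases this lands in $[\hatrsigma]$ by Proposition~\ref{prop:multiple results cobbled}(c),(f), and its signature is $(\sig_1(\rlambda),\dots,\sig_{r-1}(\rlambda))$ since truncation and the move $b^{r}$ both leave positions $1,\dots,2r-2$ alone. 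By the inductive hypothesis there is $\rmu_0\in[\hatrsigma]$ whose signature is the coordinatewise majority of these three, i.e. the coordinatewise majority of the first $r-1$ entries of $\sig(\rx),\sig(\ry),\sig(\rz)$. Since at least two of $\rx,\ry,\rz$ lie in the same block, the majority value of $\sig_r$ is $\sig_r(\rsigma)$ or $\sig_r(\rtau)$. If it is $\sig_r(\rsigma)$, Proposition~\ref{prop:multiple results cobbled}(d) provides $\rmu\in\barsig_r(\rsigma)$ with $\rmu_{\llb 1,2r-1\rrb}=\rmu_0$, and then $\sig(\rmu)=(\sig(\rmu_0),\sig_r(\rsigma))$ is the required majority signature. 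If it is $\sig_r(\rtau)$, then at least two of $\rx,\ry,\rz$ lie in $\barsig_r(\rtau)$; Proposition~\ref{prop:local structure of overlapping braid shadows} (with $i=r-1$) forces every element of $\barsig_r(\rtau)$ to have $\llb 2r-2,2r\rrb$-factor $sut$, hence $\sig_{r-1}=\sig_{r-1}(\rsigma)$, so $\sig_{r-1}(\rmu_0)=\sig_{r-1}(\rsigma)$. Taking $\rmu^{+}\in\barsig_r(\rsigma)$ with $\rmu^{+}_{\llb 1,2r-1\rrb}=\rmu_0$ (Proposition~\ref{prop:multiple results cobbled}(d)) now puts $\rmu^{+}\in\barsig_{r-1}(\rsigma)\cap\barsig_r(\rsigma)$, which by Proposition~\ref{prop:local structure of overlapping braid shadows} and the right-end structure of a link forces $\llb 2r-1,2r+1\rrb\in\mathcal{S}(\rmu^{+})$; then $\rmu:=b^{r}(\rmu^{+})\in\barsig_r(\rtau)$ is defined and $\sig(\rmu)=(\sig(\rmu_0),\sig_r(\rtau))$ is the required majority signature. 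This closes the induction, and with the second paragraph proves the theorem; the case of an arbitrary reduced expression then follows from the link factorization (Proposition~\ref{prop:braid classes simply laced}) together with closure of median graphs under box products (Proposition~\ref{medprodmed}).

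The step I expect to be the real obstacle is the second branch of the inductive step. There the majority of $\sig_r$ sits on the block $\barsig_r(\rtau)$, about which Proposition~\ref{prop:multiple results cobbled} tells me little directly, so I must build the candidate median inside the ``good'' block $\barsig_r(\rsigma)$ and then flip its final signature entry with the braid move $b^{r}$ — legitimate only when the intermediate link $\rmu^{+}$ actually admits $b^{r}$. Everything hinges on the rigidity of the local forms in Proposition~\ref{prop:local structure of overlapping braid shadows}: links in $\barsig_r(\rtau)$ are so constrained near their right end that their penultimate signature entry is forced to match $\rsigma$'s, which forces that of $\rmu_0$, which is precisely the condition placing $\rmu^{+}$ in the sub-block where $b^{r}$ is available; this is also where the triangle-free hypothesis does work beyond what is already baked into the cited propositions. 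A less computational alternative, which I would mention and could substitute, is to verify that $\B(\ralpha)$ is the peripheral expansion of $\B(\hatrsigma)$ along the convex set $\barsig_{r-1}(\rsigma)\cap\barsig_r(\rsigma)$ — convex by Corollary~\ref{cor:barsig convex} as an intersection of $\barsig$-sets — and to finish via Proposition~\ref{prop:mulder}; the two routes consume the same structural input about which reduced expressions admit the last braid move.
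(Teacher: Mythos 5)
Your proposal is correct, but it takes a genuinely different route from the paper. The paper's proof is structural: after choosing $\rsigma$ via Proposition~\ref{prop:special link}, it shows that $\B(\ralpha)$ is obtained from the median graph $\B(\ralpha)[\barsig_r(\rsigma)]\cong\B(\hatrsigma)$ by a single peripheral expansion along the convex set $U_{\rsigma b^r(\rsigma)}$ (convexity proved via Proposition~\ref{prop:each edge once in minimal sequence}, the matching via Proposition~\ref{prop:iso for U sets}), and then invokes Mulder's Theorem (Proposition~\ref{prop:mulder}); your closing ``less computational alternative'' is essentially this argument, including the identification of $U_{\rsigma b^r(\rsigma)}$ with $\barsig_{r-1}(\rsigma)\cap\barsig_r(\rsigma)$. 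Your main route instead verifies the median axiom directly: uniqueness and the interval characterization come from Corollary~\ref{cor:distisdelta} (Hamming distance of signatures) together with Proposition~\ref{prop:equal iff sig equal}, and the whole theorem is reduced to the existence of a link realizing the coordinatewise majority signature, which you establish by induction using Proposition~\ref{prop:multiple results cobbled}(c),(d),(f) to pass to $[\hatrsigma]$ and lift back, with the delicate second branch (majority landing in $\barsig_r(b^r(\rsigma))$) handled correctly by the rigidity of the three local forms in Proposition~\ref{prop:local structure of overlapping braid shadows}, which forces $\sig_{r-1}$ to be constant on that block and guarantees the lifted candidate admits the final braid move $b^r$. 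I checked this branch and it is sound. What the two approaches buy: the paper's argument is shorter and less coordinate-heavy, outsourcing the median verification to Mulder's Theorem, while yours is more hands-on but simultaneously produces the explicit median formula $\sig(\med(\ralpha,\rbeta,\rgamma))=\maj(\ralpha,\rbeta,\rgamma)$ and the interval description $I(\ralpha,\rbeta)=\barsig(\ralpha,\rbeta)$, which the paper only derives afterwards (Proposition~\ref{prop:sigbar equals interval}, Lemma~\ref{lem:wicked awesome}, and the proposition following it), and your uniqueness step does not rely on the theorem itself, only on Proposition~\ref{prop:equal iff sig equal}.
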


\begin{proof}
We will proceed by induction on the dimension $r$ of $\ralpha$. If $\dim(\ralpha) = 0$, then $\B(\ralpha)$ is a single vertex, which is clearly median. If $\dim(\ralpha)=1$, then $\B(\ralpha)$ consists of two vertices connected by a single edge. Since this graph is a peripheral expansion of a single vertex, $\B(\ralpha)$ is median by Mulder's Theorem (Proposition~\ref{prop:mulder}). This verifies the base cases. 

Now, let $\ralpha$ be a link of dimension $r\geq2$ and assume that every braid graph for a link of dimension $r-1$ is median. According to Proposition~\ref{prop:special link}, choose $\rsigma \in [\ralpha]$ such that $ \llb 2r-3,2r-1 \rrb, \llb 2r-1,2r+1 \rrb \in \mathcal{S}(\rsigma)$. Using Proposition~\ref{prop:local structure of overlapping braid shadows}, we may assume that $\rsigma_{\llb 2r-3, 2r+1 \rrb} = tutst$, where $m(u,t) = 3 = m(s,t)$ and $m(u,s) = 2$. By Proposition~\ref{prop:multiple results cobbled}, $\hat{\rsigma}$ is a link of dimension $r-1$ and $\B(\hat{\rsigma})$ is isomorphic to $\B(\ralpha)[\barsig_r(\rsigma)]$. By induction, $\B(\hat{\rsigma})$ is median and as a result, $\B(\ralpha)[\barsig_r(\rsigma)]$ is also median. Hence, by Proposition~\ref{prop:mulder}, it suffices to show that $\B(\ralpha)$ can be obtained from $\B(\ralpha)[\barsig_r(\rsigma)]$ via a single peripheral expansion.

Consider the set
\[
U:=U_{\rsigma b^r(\rsigma)}=\{\rbeta \in \barsig_r(\rsigma)\mid \rbeta\text{ is adjacent to a vertex in }\barsig_r(b^r(\rsigma))\}.
\]
By Proposition~\ref{prop:local structure of braid shadows}, we have $\rbeta_{\llb 2r, 2r+1 \rrb} = st$ for all $\rbeta \in \barsig_r(\rsigma)$ and by Proposition~\ref{prop:multiple results cobbled} we have $\rgamma_{\llb 2r-1, 2r+1 \rrb} = sts$ for all $\rgamma \in \barsig_r(b^r(\rsigma))$. This implies that $U = \{\rbeta \in \barsig_r(\rsigma) \mid \rbeta_{\llb 2r-1, 2r+1 \rrb} = tst\}$, so that if $\rbeta \in U$, then $\llb 2r-1, 2r+1 \rrb \in \mathcal{S}(\rbeta)$. We argue that $U$ is convex. Let $\rgamma_1, \rgamma_2 \in U$. Toward a contradiction, suppose that there exists a geodesic between $\rgamma_1$ and $\rgamma_2$ that includes a vertex $\rkappa \in \barsig_r(\rsigma) \setminus U$. By the preceding argument, $\rgamma_1$ and $\rgamma_2$ end in $tst$ while $\rkappa$ ends in $ust$. Thus, the braid move $b^{r-1}$ must occur at least twice in the geodesic. However, this contradicts Proposition~\ref{prop:each edge once in minimal sequence}. Therefore, $U$ is convex.

Now, consider the set
\[
U_{b^r(\rsigma) \rsigma}=\{\rbeta \in \barsig_r(b^r(\rsigma))\mid \rbeta\text{ is adjacent to a vertex in }\barsig_r(\rsigma)\}.
\]
In this case, we have that $U_{b^r(\rsigma) \rsigma}$ is actually equal to the entirety of $\barsig_r(b^r(\rsigma))$. According to Proposition~\ref{prop:iso for U sets}, the corresponding $F$-edges labeled by $b^r$ induce an isomorphism between the induced subgraphs $\B(\ralpha)[U]$ and $\B(\ralpha)[\barsig_r(b^r(\rsigma))]$.
 It follows that $\B(\ralpha)$ is obtained by a peripheral expansion from the median graph $\B(\ralpha)[\barsig_r(\rsigma)]$. This completes the proof.
\end{proof}

The above theorem combined with Propositions~\ref{medprodmed} and~\ref{prop:braid classes simply laced}\ref{box prod} yields the following.

\begin{corollary}\label{cor:braid graph for reduced exp median}
If $(W,S)$ is type $\Lambda$ and $\ralpha$ is a reduced expression, then $\B(\ralpha)$ is median.
\end{corollary}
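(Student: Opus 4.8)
The plan is to reduce the statement to the link case already established in Theorem~\ref{thm:braid graph for link median}, using the link factorization together with the fact that box products preserve the median property. First I would apply Proposition~\ref{prop:braid classes simply laced}\ref{box prod}: if $\ralpha = \ralpha_1 \mid \ralpha_2 \mid \cdots \mid \ralpha_k$ is the link factorization of $\ralpha$, then
\[
\B(\ralpha) \cong \B(\ralpha_1) \square \B(\ralpha_2) \square \cdots \square \B(\ralpha_k),
\]
and each factor $\ralpha_i$ is a link. By Theorem~\ref{thm:braid graph for link median}, each $\B(\ralpha_i)$ is therefore median.

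The only remaining point is that a box product of finitely many median graphs is median. This follows from Proposition~\ref{medprodmed} (the two-factor case) by a routine induction on $k$: using associativity of $\square$ up to isomorphism, write $\B(\ralpha_1) \square \cdots \square \B(\ralpha_k) \cong \bigl(\B(\ralpha_1) \square \cdots \square \B(\ralpha_{k-1})\bigr) \square \B(\ralpha_k)$, apply the inductive hypothesis to the first $k-1$ factors, and invoke Proposition~\ref{medprodmed}. The base case $k=1$ is immediate, and the degenerate case in which $w$ is the identity produces a single-vertex graph, which is trivially median.

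I do not expect any genuine obstacle here: every ingredient is a cited result, so the corollary is essentially a packaging step, and the real work is contained in Theorem~\ref{thm:braid graph for link median}. Accordingly I would keep the proof to a couple of sentences rather than spelling out the induction in detail.
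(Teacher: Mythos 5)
Your proposal is correct and matches the paper's argument exactly: the paper derives the corollary from Theorem~\ref{thm:braid graph for link median} together with Proposition~\ref{prop:braid classes simply laced}\ref{box prod} and Proposition~\ref{medprodmed}, treating the extension to several box-product factors as routine. Nothing further is needed.
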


As the next example illustrates, not every median graph can be realized as a braid graph in a Coxeter system of type $\Lambda$.

\begin{example}\label{ex:mednotbraid}
Consider the graph $G$ in Figure~\ref{fig:squarestouchingtips}. It is easy to see that $G$ can be obtained from a single vertex via a sequence of peripheral expansions of length four. We have colored the edges of $G$ in order to distinguish the equivalence classes for the Djoković–Winkler relation. By Proposition~\ref{prop:isometric dimension}, we see that $\dim_I(G) = 4$. For sake of contradiction, suppose that $\ralpha$ is a reduced expression with braid graph $G$. Since $G$ is indecomposable with respect to $\square$, we can assume that $\ralpha$ is a link. Then by Proposition~\ref{prop:dim equals rank}, we must have $\dim(\ralpha) = 4$. Since $G$ has a degree four vertex, there exists $\rphi \in [\ralpha]$ such that $|\bs(\rphi)| = 4$. According to~\cite{ABCE2024}, $\rphi$ is a so-called Fibonacci link and thus $G$ must be a Fibonacci cube. It turns out that all Fibonacci cubes have a Fibonacci number of vertices, but $G$ does not, a contradiction. Therefore, $G$ is not the braid graph for a reduced expression in a Coxeter system of type $\Lambda$. 
\end{example}

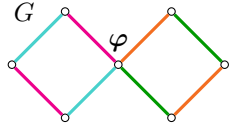
\begin{figure}[h!]
\centering
\begin{tikzpicture}[rotate=90,every circle node/.style={draw, circle, inner sep=1pt}, scale = 1]
\node [circle] [label=above:$\rphi$] (1) at (0,1){};
\node [circle] (2) at (.707,.293){};
\node [circle] (3) at (-.707,1.707){};
\node [circle] (4) at (.707,1.707){};
\node [circle] [](5) at (0,2.414){};
\node [circle] (6) at (-.707,.293){};
\node [circle] [] (7) at (0,-0.414){};
\node (x) at (.7,2.25) {$G$};
\draw [nectarine,-, very thick] (1) to (2);
\draw [turq,-, very thick] (1) to (3);
\draw [magenta,-, very thick] (1) to (4);
\draw [ggreen,-, very thick] (1) to (6);
\draw [magenta,-, very thick] (3) to (5);
\draw [nectarine,-, very thick] (6) to (7);
\draw [turq,-, very thick] (5) to (4);
\draw [ggreen,-, very thick] (7) to (2);
\begin{pgfonlayer}{background}
\end{pgfonlayer}
\end{tikzpicture}
\caption{A median graph that does not arise as a braid graph in a Coxeter system of type $\Lambda$ as described in Example~\ref{ex:mednotbraid}.} \label{fig:squarestouchingtips}
\end{figure}

Corollary~\ref{cor:braid graph for reduced exp median} states that the braid graph of any reduced expression in a Coxeter system of type $\Lambda$ is median. But perhaps braid graphs in all simply-laced Coxeter systems, even those that are not triangle free, are median as the next example suggests.

\begin{example}\label{ex:trimed}
Consider the reduced expression $\rdelta = 1213121$ in the Coxeter system of type $\widetilde A_2$. The braid graph $\B(\rdelta)$ is shown in Figure~\ref{fig:affinebraid}. This graph is median by Proposition~\ref{prop:mulder}. However, the Coxeter system of type $\widetilde A_2$ is not triangle free.
\end{example}

\begin{figure}[ht!]
\centering
\UOLaugment\overline 
\UOLaugment\underline
 \begin{tikzpicture}[every circle node/.style={draw, circle ,inner sep=1.25pt}, scale=1]
  \node [circle] (1) [] at (0,0) {};
  \node [circle] (2) [] at (1,0) {};
  \node [circle] (3) [] at (1 + 0.70711,0.70711) {};
  \node [circle] (4) [] at (1 + 0.70711,-0.70711) {};
  \node [circle] (5) [] at (1 + 2 * 0.70711,0){};
  \node [] (a)[label=right:{\hspace{-10pt}${\rdelta =\underline{12}[1]\overline{3}[1]\underline{21}}$} ] at (1.5 + 2 * 0.70711,.5){};
  \node [] (b)[] at (1 + 2 * 0.70711,.05){};
  \node [circle] (6) [] at (2 + 2 * 0.70711,0){};
  \draw [turq,-, very thick] (1) to (2);
  \draw [turq,-, very thick] (2) to (3);
  \draw [turq,-, very thick] (2) to (4);
  \draw [turq,-, very thick] (3) to (5);
  \draw [turq,-, very thick] (4) to (5);
  \draw [turq,-, very thick] (5) to (6);
  \draw [->, thick] (a) to [out=180,in=90] (b);
\end{tikzpicture}
\caption{The braid graph of $\rdelta = 1213121$ in the Coxeter system of type $\widetilde A_2$ as described in Example~\ref{ex:trimed}.}
\label{fig:affinebraid}
\end{figure}
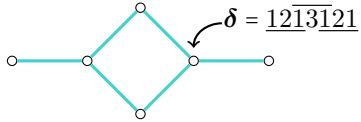

We need an additional definition to proceed. If $\ralpha$ and $\rbeta$ are braid equivalent links, we define \[\barsig(\ralpha,\rbeta):=\{\rx\in[\ralpha]\mid\sig_i(\rx)=\sig_i(\ralpha)\ \text{whenever}\ \sig_i(\ralpha)=\sig_i(\rbeta)\}.\] 
That is, $\barsig(\ralpha,\rbeta)$ is the set of reduced expressions whose signature entries agree with the common signature entries of $\ralpha$ and $\rbeta$. The next proposition states that the reduced expressions on any geodesic between a pair of braid equivalent links share the common signature values of the pair. 

\begin{proposition}\label{prop:sigbar equals interval}
If $(W,S)$ is type $\Lambda$ and $\ralpha$ and $\rbeta$ are braid equivalent reduced expressions, then $I(\ralpha,\rbeta)=\barsig(\ralpha,\rbeta)$.
\end{proposition}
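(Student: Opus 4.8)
The plan is to reduce the statement to the signature–distance formula of Corollary~\ref{cor:distisdelta} and then argue one signature position at a time. Since $\ralpha$ and $\rbeta$ are braid equivalent we have $\B(\ralpha)=\B(\rbeta)$, so both $I(\ralpha,\rbeta)$ and $\barsig(\ralpha,\rbeta)$ are subsets of $[\ralpha]$. It therefore suffices to fix an arbitrary $\rx\in[\ralpha]$ and show that $\rx\in I(\ralpha,\rbeta)$ if and only if $\sig_i(\rx)=\sig_i(\ralpha)$ for every index $i$ with $\sig_i(\ralpha)=\sig_i(\rbeta)$.

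First I would set $r=\dim(\ralpha)$ and recall that $\sig(\ralpha)$, $\sig(\rbeta)$, and $\sig(\rx)$ are all length-$r$ sequences, and that for each $1\le i\le r$ the entry $\sig_i(\,\cdot\,)$ takes one of exactly two values, namely the two generators comprising $\supp_{2i}([\ralpha])$ (see Proposition~\ref{prop:local structure of braid shadows} and the ensuing discussion). By definition, $\rx\in I(\ralpha,\rbeta)$ means $d(\ralpha,\rx)+d(\rx,\rbeta)=d(\ralpha,\rbeta)$, and three applications of Corollary~\ref{cor:distisdelta} rewrite this as $\Delta(\sig(\ralpha),\sig(\rx))+\Delta(\sig(\rx),\sig(\rbeta))=\Delta(\sig(\ralpha),\sig(\rbeta))$.

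The crux is to expand the discrepancy $\Delta(\sig(\ralpha),\sig(\rx))+\Delta(\sig(\rx),\sig(\rbeta))-\Delta(\sig(\ralpha),\sig(\rbeta))$ as a sum over positions $i$ of the quantity $e_i:=[\sig_i(\ralpha)\neq\sig_i(\rx)]+[\sig_i(\rx)\neq\sig_i(\rbeta)]-[\sig_i(\ralpha)\neq\sig_i(\rbeta)]$, where $[\,\cdot\,]$ denotes the Iverson bracket. A two-line case check, using that only two values are available at position $i$, shows that $e_i=0$ unless $\sig_i(\ralpha)=\sig_i(\rbeta)\neq\sig_i(\rx)$, in which case $e_i=2$: indeed, when $\sig_i(\ralpha)\neq\sig_i(\rbeta)$ the value $\sig_i(\rx)$ is forced to coincide with one of those two, and then $e_i=0$ regardless. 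Hence the total discrepancy equals $2\cdot\bigl|\{\,i:\sig_i(\ralpha)=\sig_i(\rbeta)\neq\sig_i(\rx)\,\}\bigr|$, which is nonnegative and vanishes exactly when there is no such $i$ — that is, exactly when $\rx\in\barsig(\ralpha,\rbeta)$. This establishes both inclusions simultaneously.

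I do not expect a genuine obstacle; the single point that needs care is the ``only two values'' fact, since it is precisely what makes the positions where $\ralpha$ and $\rbeta$ already disagree contribute nothing to the discrepancy. The degenerate cases $\ralpha=\rbeta$ and $\dim(\ralpha)=0$ are subsumed by the general argument, but I would flag them explicitly for completeness.
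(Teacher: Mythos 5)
Your proof is correct. The two ingredients you use are both available in the paper: Corollary~\ref{cor:distisdelta} applies to $\ralpha$, $\rbeta$, and $\rx$ because all three are braid equivalent, and the ``only two values per signature entry'' fact is exactly what the paper records after Proposition~\ref{prop:local structure of braid shadows}. Your route does differ from the paper's in how it is packaged: the paper proves the two inclusions separately, getting $I(\ralpha,\rbeta)\subseteq\barsig(\ralpha,\rbeta)$ directly from Proposition~\ref{prop:same set in minimal sequence}, and the reverse inclusion by concatenating a minimal braid sequence from $\ralpha$ to $\rx$ with one from $\rx$ to $\rbeta$ and showing via Proposition~\ref{prop:each edge once in minimal sequence} (plus a contradiction that uses the two-value property) that the concatenation is minimal. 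You instead invoke only the distance formula of Corollary~\ref{cor:distisdelta} (itself a consequence of those two propositions) and settle both inclusions in one stroke by computing the triangle-equality defect coordinate-wise, obtaining $2\,\bigl|\{\,i:\sig_i(\ralpha)=\sig_i(\rbeta)\neq\sig_i(\rx)\,\}\bigr|$. Your version is a bit more economical and makes transparent that the proposition is just the description of intervals in a Hamming-type space with two letters per coordinate; the paper's version keeps the braid-sequence mechanics explicit, which it reuses elsewhere in Section~\ref{sec:geodetic structure}. One cosmetic correction: for a general reduced expression the center of the $i$th braid shadow need not sit in position $2i$ (that indexing is specific to links), so you should phrase the two-value fact in terms of the support of the $i$th center of $\mathcal{S}([\ralpha])$ rather than $\supp_{2i}([\ralpha])$; the claim you actually need is unaffected.
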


\begin{proof}
Let $\ralpha$ and $\rbeta$ be braid equivalent reduced expressions. The inclusion $I(\ralpha,\rbeta)\subseteq\barsig(\ralpha,\rbeta)$ follows immediately from Proposition~\ref{prop:same set in minimal sequence}.

For the reverse containment, suppose $\rx\in \barsig(\ralpha,\rbeta)$. There is a minimal braid sequence $b_1^{j_1},\ldots, b_i^{j_i}$ from $\ralpha$ to $\rx$ and a minimal braid sequence $b_{i+1}^{j_{i+1}}, \ldots,b_k^{j_k}$ from $\rx$ to $\rbeta$. We claim that $b_1^{j_1},\ldots, b_i^{j_i},b_{i+1}^{j_{i+1}}, \ldots,b_k^{j_k}$ is a minimal braid sequence from $\ralpha$ to $\rbeta$.
By Proposition~\ref{prop:each edge once in minimal sequence}, $j_1,\ldots,j_i$ are pairwise distinct and $j_{i+1},\ldots, j_k$ are pairwise distinct. Again by Proposition~\ref{prop:each edge once in minimal sequence}, it suffices to show that each $j_i$ occurs precisely once. Suppose otherwise, so that there exists $j_m = j_n$ for some $1\leq m \leq i$ and $i+1 \leq n \leq k$. Set $j:=j_m = j_n$. Note that $j_m$ and $j_n$ are the only centers equal to $j$. Moreover, it must be the case that $\sig_j(\ralpha)\neq \sig_j(\rx)$ and $\sig_j(\rbeta)\neq \sig_j(\rx)$. This implies that $\sig_j(\ralpha)=\sig_j(\rbeta)$. On other hand, since $\rx \in \barsig(\ralpha,\rbeta)$ and $\sig_j(\ralpha)\neq \sig_j(\rx)$, it must be the case that $\sig_j(\ralpha)\neq \sig_j(\rbeta)$, a contradiction.
\end{proof}

There is an intimate connection between median graphs and the so-called majority rule. Let $\ralpha, \rbeta$, and $\rgamma$ be braid equivalent reduced expressions of dimension $r \geq 1$. We define the $i$th \emph{majority} of $\ralpha, \rbeta, \rgamma$ via
\[
\maj_i(\ralpha,\rbeta,\rgamma):=
\begin{cases}
\sig_i(\ralpha),\ \text{if}\ \sig_i(\ralpha)=\sig_i(\rbeta)\ \text{or}\ \sig_i(\ralpha)=\sig_i(\rgamma)\\
\sig_i(\rbeta),\ \text{otherwise}.
\end{cases}
\]
That is, when at least two of the generators in the $i$th position of a triple of braid equivalent reduced expressions agree, we record that generator. For braid equivalent reduced expressions $\ralpha, \rbeta, \rgamma$ of dimension $r \geq 1$, we define the \emph{majority} of $\ralpha, \rbeta, \rgamma$ via
\[
\maj(\ralpha,\rbeta,\rgamma):=(\maj_1(\ralpha,\rbeta,\rgamma),\ldots,\maj_r(\ralpha,\rbeta,\rgamma)).
\]
The majority of three reduced expressions results in an ordered list of generators. The next result follows immediately from the definitions. It states that the intersection of the intervals between the three pairs among three braid equivalent reduced expressions is the set of reduced expressions in their braid class whose $i$th signature is the $i$th majority.

\begin{lemma}\label{lem:wicked awesome}
If $(W,S)$ is type $\Lambda$ and $\ralpha,\rbeta$, and $\rgamma$ are braid equivalent reduced expressions, then 
\[
\barsig(\ralpha,\rbeta)\cap \barsig(\rbeta,\rgamma) \cap \barsig(\rgamma,\ralpha) =\{\rx \in [\ralpha] \mid \sig(\rx)=\maj(\ralpha, \rbeta, \rgamma)\}.
\]
\end{lemma}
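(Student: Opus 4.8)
The plan is to prove the set equality by double inclusion, unwinding the definitions of $\barsig(\cdot,\cdot)$, $\maj$, and $\maj_i$. Both sides are subsets of $[\ralpha]$, so fix $\rx \in [\ralpha]$ and show that $\rx$ belongs to the triple intersection on the left if and only if $\sig_i(\rx) = \maj_i(\ralpha,\rbeta,\rgamma)$ for every $1 \leq i \leq r$, where $r = \dim(\ralpha)$.

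First I would analyze a single index $i$. By definition, $\rx \in \barsig(\ralpha,\rbeta)$ means: whenever $\sig_i(\ralpha) = \sig_i(\rbeta)$, we have $\sig_i(\rx) = \sig_i(\ralpha)$; similarly for the other two pairs. So $\rx$ lies in the triple intersection exactly when, for each $i$, the value $\sig_i(\rx)$ agrees with $\sig_i(\ralpha)$ whenever $\sig_i(\ralpha)=\sig_i(\rbeta)$, agrees with $\sig_i(\rbeta)$ whenever $\sig_i(\rbeta)=\sig_i(\rgamma)$, and agrees with $\sig_i(\ralpha)$ whenever $\sig_i(\rgamma)=\sig_i(\ralpha)$. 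The key combinatorial observation is that at each position $i$, among the three values $\sig_i(\ralpha),\sig_i(\rbeta),\sig_i(\rgamma)$, at least two must coincide, because Proposition~\ref{prop:local structure of braid shadows} forces each $\sig_i(\cdot)$ to take one of only two possible values (the two generators in $\supp_{2i}([\ralpha])$, or more precisely the support of the $i$th braid shadow center). Hence $\maj_i(\ralpha,\rbeta,\rgamma)$ is exactly the value that occurs at least twice among the three, and it is well-defined.

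Given this, I would argue as follows. If $\sig(\rx) = \maj(\ralpha,\rbeta,\rgamma)$, then for any $i$ with $\sig_i(\ralpha) = \sig_i(\rbeta)$ we have $\maj_i(\ralpha,\rbeta,\rgamma) = \sig_i(\ralpha)$, so $\sig_i(\rx) = \sig_i(\ralpha)$; thus $\rx \in \barsig(\ralpha,\rbeta)$, and symmetrically $\rx$ lies in the other two sets, so $\rx$ is in the triple intersection. Conversely, suppose $\rx$ is in the triple intersection; fix $i$. Since two of the three values agree, say $\sig_i(\ralpha) = \sig_i(\rbeta) = \maj_i(\ralpha,\rbeta,\rgamma)$ (the other cases are handled identically by symmetry), membership in $\barsig(\ralpha,\rbeta)$ forces $\sig_i(\rx) = \sig_i(\ralpha) = \maj_i(\ralpha,\rbeta,\rgamma)$. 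As $i$ was arbitrary, $\sig(\rx) = \maj(\ralpha,\rbeta,\rgamma)$, completing the reverse inclusion.

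I do not expect any serious obstacle here: the statement is essentially a definitional repackaging, and the only substantive input is the two-values-per-position fact, which is already available from Proposition~\ref{prop:local structure of braid shadows}. The mildest care needed is handling the symmetric cases in the definition of $\maj_i$ (which of the three pairs coincides) without writing out all of them, and making sure the dimension-zero and dimension-one edge cases are subsumed — when $r = 0$ both sides are the singleton $[\ralpha]$ trivially. I would keep the write-up short, emphasizing the pigeonhole step and then the mechanical verification of both inclusions.
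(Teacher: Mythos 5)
Your proof is correct, and it is essentially the argument the paper intends: the paper states that the lemma ``follows immediately from the definitions,'' and your double-inclusion unwinding of $\barsig(\cdot,\cdot)$ and $\maj_i$, together with the observation (via Proposition~\ref{prop:local structure of braid shadows}) that each signature entry takes one of only two values so that two of the three must agree at every position, is precisely that intended definitional verification.
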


According to Corollary~\ref{cor:braid graph for reduced exp median}, the braid graph for a reduced expression is a median graph. In particular, the median $\med(\ralpha,\rbeta,\rgamma)$ of any three braid equivalent reduced expressions $\ralpha,\rbeta,\rgamma$ exists. The following result connects the concepts of median and majority. In particular, it shows that the majority of three braid equivalent reduced expressions is the signature of a reduced expression.

\begin{proposition}
If $(W,S)$ is type $\Lambda$ and $\ralpha, \rbeta$, and $\rgamma$ are braid equivalent reduced expressions, then $\med(\ralpha, \rbeta, \rgamma)$ is the unique $\rx$ satisfying $\sig(\rx)=\maj(\ralpha,\rbeta,\rgamma)$.
\end{proposition}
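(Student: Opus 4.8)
The plan is to combine three ingredients that are all essentially in hand: Corollary~\ref{cor:braid graph for reduced exp median} (the braid graph is median, so $\med(\ralpha,\rbeta,\rgamma)$ exists and is unique), Proposition~\ref{prop:sigbar equals interval} (each interval $I(\ralpha,\rbeta)$ equals $\barsig(\ralpha,\rbeta)$), and Lemma~\ref{lem:wicked awesome} (the triple intersection of the $\barsig$ sets is exactly $\{\rx \in [\ralpha] \mid \sig(\rx) = \maj(\ralpha,\rbeta,\rgamma)\}$). Chaining these, $I(\ralpha,\rbeta) \cap I(\rbeta,\rgamma) \cap I(\rgamma,\ralpha) = \barsig(\ralpha,\rbeta) \cap \barsig(\rbeta,\rgamma) \cap \barsig(\rgamma,\ralpha) = \{\rx \in [\ralpha] \mid \sig(\rx) = \maj(\ralpha,\rbeta,\rgamma)\}$, and by the definition of median graph this intersection is the singleton $\{\med(\ralpha,\rbeta,\rgamma)\}$. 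So I would write: by Corollary~\ref{cor:braid graph for reduced exp median}, $\B(\ralpha)$ is median, whence $I(\ralpha,\rbeta)\cap I(\rbeta,\rgamma)\cap I(\rgamma,\ralpha)$ has exactly one element, namely $\med(\ralpha,\rbeta,\rgamma)$; applying Proposition~\ref{prop:sigbar equals interval} to each of the three intervals and then Lemma~\ref{lem:wicked awesome} identifies this unique element as the unique $\rx$ with $\sig(\rx)=\maj(\ralpha,\rbeta,\rgamma)$.

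There is a small gap to patch: a priori the set $\{\rx\in[\ralpha]\mid \sig(\rx)=\maj(\ralpha,\rbeta,\rgamma)\}$ could be empty or have more than one element for reasons unrelated to the median property, but the equality with the nonempty singleton interval-intersection forces it to be a singleton, so in particular there genuinely exists a reduced expression with that signature. I would state this explicitly as the payoff: the content of the proposition is precisely that $\maj(\ralpha,\rbeta,\rgamma)$, which is defined combinatorially entry-by-entry, is actually realized as $\sig(\rx)$ for some (necessarily unique) $\rx\in[\ralpha]$, and that this $\rx$ is the median. One should also double-check the trivial dimension-zero case is not an issue: if $\dim(\ralpha)=0$ then $[\ralpha]$ is a single vertex, all three expressions coincide, $\maj$ is the empty list, and the statement holds vacuously; I would mention this only if needed.

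The main (and only) obstacle is really just bookkeeping about which earlier results do the work — there is no new mathematical difficulty here, since Lemma~\ref{lem:wicked awesome} and Proposition~\ref{prop:sigbar equals interval} were set up exactly for this. The one point deserving a sentence of care is making sure the three applications of Proposition~\ref{prop:sigbar equals interval} are to the correct pairs and that $\barsig(\ralpha,\rbeta)$ is symmetric in its arguments in the sense needed (it is, since ``$\sig_i(\ralpha)=\sig_i(\rbeta)$'' and the condition ``$\sig_i(\rx)=\sig_i(\ralpha)$'' is then equivalent to ``$\sig_i(\rx)=\sig_i(\rbeta)$''), so that $I(\rbeta,\rgamma)=\barsig(\rbeta,\rgamma)$ and $I(\rgamma,\ralpha)=\barsig(\rgamma,\ralpha)$ match the sets appearing in Lemma~\ref{lem:wicked awesome}. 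With that noted, the proof is a two-line chain of equalities followed by the observation that the result is a singleton equal to $\med(\ralpha,\rbeta,\rgamma)$.

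\begin{proof}
By Corollary~\ref{cor:braid graph for reduced exp median}, $\B(\ralpha)$ is median, so $I(\ralpha,\rbeta)\cap I(\rbeta,\rgamma)\cap I(\rgamma,\ralpha)$ consists of the single vertex $\med(\ralpha,\rbeta,\rgamma)$. Applying Proposition~\ref{prop:sigbar equals interval} to each of the three intervals gives
\[
I(\ralpha,\rbeta)\cap I(\rbeta,\rgamma)\cap I(\rgamma,\ralpha)=\barsig(\ralpha,\rbeta)\cap\barsig(\rbeta,\rgamma)\cap\barsig(\rgamma,\ralpha),
\]
and by Lemma~\ref{lem:wicked awesome} the right-hand side equals $\{\rx\in[\ralpha]\mid\sig(\rx)=\maj(\ralpha,\rbeta,\rgamma)\}$. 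Since this set has exactly one element, there is a unique $\rx\in[\ralpha]$ with $\sig(\rx)=\maj(\ralpha,\rbeta,\rgamma)$, and that element is $\med(\ralpha,\rbeta,\rgamma)$.
\end{proof}
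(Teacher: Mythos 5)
Your proof is correct and is essentially identical to the paper's: both chain Proposition~\ref{prop:sigbar equals interval} and Lemma~\ref{lem:wicked awesome} to identify the triple interval intersection with $\{\rx\in[\ralpha]\mid\sig(\rx)=\maj(\ralpha,\rbeta,\rgamma)\}$ and then invoke the median property to conclude it is the singleton $\{\med(\ralpha,\rbeta,\rgamma)\}$. If anything, your citation of Corollary~\ref{cor:braid graph for reduced exp median} (rather than the paper's Theorem~\ref{thm:braid graph for link median}, stated only for links) is the slightly more apt reference for arbitrary reduced expressions.
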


\begin{proof}
Let $\ralpha,\rbeta$, and $\rgamma$ be braid equivalent links. By Proposition~\ref{prop:sigbar equals interval} and Lemma~\ref{lem:wicked awesome}, we have
\[
I(\ralpha, \rbeta) \cap I(\rbeta, \rgamma) \cap I( \rgamma,\ralpha)=\barsig(\ralpha,\rbeta)\cap \barsig(\rbeta,\rgamma) \cap \barsig(\rgamma,\ralpha) = \{\rx \in [\ralpha] \mid \sig(\rx)=\maj(\ralpha, \rbeta, \rgamma)\}.
\]
By Theorem~\ref{thm:braid graph for link median}, this set consists of a unique reduced expression $\med(\ralpha,\rbeta,\rgamma)$.
\end{proof}

\begin{example}\label{ex:medianmaj}
Consider the braid equivalent links
\[
\ralpha= 34\underline{131}2\underline{343}54, \rbeta=\overline{434}1\overline{232}4354, \text{ and }\rgamma=\overline{434}13243\overline{545}
\]
in the Coxeter system of type $D_5$. We see that
$\maj(\ralpha,\rbeta,\rgamma)=(3,1,2,4,5)$,
which is the signature of the link $\rx = 43413234354 \in [\ralpha]$. The braid graph $\B(\ralpha)$ is depicted in Figure~\ref{fig:medianmaj}. The intervals $I(\ralpha,\rbeta)$, $I(\rbeta,\rgamma)$, and $I(\ralpha,\rgamma)$ are highlighted in \textcolor{blue}{blue}, \textcolor{red}{red}, and \textcolor{ggreen}{green}, respectively. The intersection of these three intervals contains precisely one reduced expression $\med(\ralpha,\rbeta,\rgamma) = \rx$.
\end{example}

\begin{figure}[h!]
\centering
\begin{tikzpicture}[every circle node/.style={draw, circle, inner sep=1.25pt},scale=1]
\node [circle] (a) [] at (0,0){};
\node [circle] (b) [] at (0,1){};
\node [circle] (c) [label=below:$\rx$] at (1,0){};
\node [circle] (d) [] at (1,1){};

\node [circle] (e) [] at (0,2){};
\node [circle] (f) [label=above: $\ralpha$] at (1,2){};
\node [circle] (g) [] at (2,1){};
\node [circle] (h) [label=below:$\rbeta$] at (2,0){};

\node [circle] (i) [label=below:$\rgamma$] at (-1,0){};
\node [circle] (j) [] at (-1,1){};
\node [circle] (k) [] at (-1,2){};
\draw [turq,-, very thick] (a) to (b);
\draw [turq,-, very thick] (b) to (d);
\draw [turq,-, very thick] (c) to (d);
\draw [turq,-, very thick] (c) to (a);
\draw [turq,-, very thick] (b) to (e);
\draw [turq,-, very thick] (e) to (f);
\draw [turq,-, very thick] (f) to (d);
\draw [turq,-, very thick] (d) to (g);
\draw [turq,-, very thick] (g) to (h);
\draw [turq,-, very thick] (h) to (c);
\draw [turq,-, very thick] (i) to (j);
\draw [turq,-, very thick] (j) to (k);
\draw [turq,-, very thick] (i) to (a);
\draw [turq,-, very thick] (j) to (b);
\draw [turq,-, very thick] (k) to (e);
\begin{pgfonlayer}{background}
\fill[green,opacity=0.2] \convexpath{a,i,j,k,e,f,d,c}{4pt};
\fill[blue,opacity=0.2] \convexpath{g,h,c,d,f,d}{4pt};
\highlight{8pt}{red}{(i.center) to (a.center) to (h.center)};
\end{pgfonlayer}
\end{tikzpicture}
\caption{Example of median computation for the braid graph discussed in Example~\ref{ex:medianmaj}.}\label{fig:medianmaj}
\end{figure}
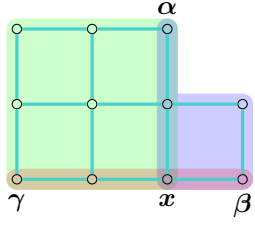

It is well known that median graphs exhibit the Helly Property~\cite{ImrichKlavzarHammack2000}: if $\mathcal{K}$ is an arbitrary family of pairwise-intersecting convex sets, then all sets in $\mathcal{K}$ have a common intersection. In particular, since each $\barsig_i(\ralpha)$ is convex by Corollary~\ref{cor:barsig convex}, we obtain the following. 

\begin{proposition}
If $(W,S)$ is type $\Lambda$, $\ralpha$ is a reduced expression, and $\mathcal{K}$ is a family of pairwise-intersecting sets of the form $\barsig_i(\rbeta)$ for $\rbeta\in[\ralpha]$, then all sets in $\mathcal{K}$ have a common intersection.
\end{proposition}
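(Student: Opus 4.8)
The plan is to deduce the statement directly from the Helly property for median graphs, exactly as the surrounding discussion sets up. First I would observe that $\B(\ralpha)$ is a median graph by Corollary~\ref{cor:braid graph for reduced exp median}, and that it is finite (since $[\ralpha]$ is finite); in particular $\mathcal{K}$ contains only finitely many distinct sets, so there is no issue about passing from finite families to arbitrary ones. Next I would recall that every set of the form $\barsig_i(\rbeta)$ with $\rbeta\in[\ralpha]$ is convex in $\B(\ralpha)$ by Corollary~\ref{cor:barsig convex}. Thus $\mathcal{K}$ is a pairwise-intersecting family of convex subsets of the finite median graph $\B(\ralpha)$, and the Helly property for median graphs (median graphs have Helly number $2$) applies verbatim, yielding $\bigcap_{K\in\mathcal{K}}K\neq\varnothing$, which is the claim.

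I do not expect a genuine obstacle here; the result is essentially a specialization of a known property, and the only points that deserve a sentence of care are (i) that $\B(\ralpha)$ is finite, so the Helly property need only be invoked for a finite family and no compactness argument is needed, and (ii) that the members of $\mathcal{K}$ are convex, which is precisely Corollary~\ref{cor:barsig convex}, so the hypotheses of the Helly property are satisfied.

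If a self-contained argument were preferred over citing the Helly property, I would instead induct on $|\mathcal{K}|$. For the base case $|\mathcal{K}|=3$, say $\mathcal{K}=\{A,B,C\}$, choose $\rx_A\in B\cap C$, $\rx_B\in A\cap C$, and $\rx_C\in A\cap B$; since $\med(\rx_A,\rx_B,\rx_C)$ lies on a geodesic between each pair and each of $A,B,C$ is convex and contains two of the three chosen points, the median lies in $A\cap B\cap C$. For the inductive step with $|\mathcal{K}|=n\geq 4$, replace two members $C_{n-1},C_n$ of $\mathcal{K}$ by $C_{n-1}\cap C_n$: this set is convex, and applying the $n=3$ case to each triple $\{C_i,C_{n-1},C_n\}$ shows it still meets every remaining member, so the smaller family $\{C_1,\dots,C_{n-2},C_{n-1}\cap C_n\}$ is pairwise intersecting and the induction hypothesis applies. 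This reproves the Helly statement in our setting, but the citation-based proof is cleaner and is the one I would present.
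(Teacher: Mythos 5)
Your proposal is correct and follows essentially the same route as the paper: the paper also deduces the proposition directly from the Helly property for median graphs, citing Corollary~\ref{cor:braid graph for reduced exp median} for medianness and Corollary~\ref{cor:barsig convex} for convexity of the sets $\barsig_i(\rbeta)$. Your optional induction-via-medians argument is a fine self-contained alternative, but the citation-based version you lead with is precisely what the paper does.
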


We believe a related claim is true.

\begin{conjecture}\label{conj:triples of sigbar sets not pairwise disjoint}
If $(W,S)$ is type $\Lambda$ and $\ralpha$ is a reduced expression of dimension $r\geq 1$, then any triple $\{\barsig_i(\rbeta),\barsig_j(\rgamma),\barsig_k(\rsigma)\}$ is not pairwise disjoint, where $\rbeta,\rgamma,\rsigma\in [\ralpha]$ and $1\leq i,j,k\leq r$.
\end{conjecture}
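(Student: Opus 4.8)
The plan is to prove the statement in the equivalent form (via Proposition~\ref{prop:same color semis}) that the braid graph of any reduced expression in a Coxeter system of type $\Lambda$ has no three pairwise disjoint semicubes; here, for a braid shadow index $l$, I write ``the two semicubes in coordinate $l$'' for the pair $\barsig_l(\rx),\barsig_l(\ry)$ with $\sig_l(\rx)\neq\sig_l(\ry)$, equivalently $W_{\rx\ry},W_{\ry\rx}$ for any edge $\{\rx,\ry\}$ labeled $l$. First I would reduce to the case that $\ralpha$ is a link: by the box decomposition $\B(\ralpha)\cong\B(\ralpha_1)\square\cdots\square\B(\ralpha_k)$ of Proposition~\ref{prop:braid classes simply laced}\ref{box prod}, each braid shadow of $[\ralpha]$ lies in a unique link factor, and a semicube in a coordinate of factor $t$ is a cylinder $V(\B(\ralpha_1))\times\cdots\times S\times\cdots\times V(\B(\ralpha_k))$ with $S$ a semicube of $\B(\ralpha_t)$; hence, if the three given semicubes do not all come from one factor, then two of them come from different factors and intersect in a product of nonempty sets. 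Next I would record an elementary \textbf{repeated-coordinate lemma}: a pairwise disjoint triple $\{\barsig_i(\rbeta),\barsig_j(\rgamma),\barsig_k(\rsigma)\}$ must have $i$, $j$, $k$ pairwise distinct, since two $\barsig$-sets in the same coordinate are equal or complementary, and a complementary pair partitions $[\ralpha]$ (Propositions~\ref{braidbi} and~\ref{prop:bisemis}), so the third set would have to meet one of them. In particular such a triple forces $\dim(\ralpha)\geq 3$, so the statement is vacuous when $\dim(\ralpha)\leq 2$.

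The heart of the argument is an induction on $r=\dim(\ralpha)$ for links (base cases $r\leq 2$). Suppose $\ralpha$ is a link of dimension $r\geq 3$ and $A_i,A_j,A_k$ are pairwise disjoint semicubes with coordinates $i<j<k\leq r$. Choose $\rsigma\in[\ralpha]$ as in Proposition~\ref{prop:special link} with $\llb 2r-3,2r-1\rrb,\llb 2r-1,2r+1\rrb\in\bs(\rsigma)$, and invoke the analysis from the proof of Theorem~\ref{thm:braid graph for link median}: $\B(\ralpha)$ is the peripheral expansion $P(G,U)$, where $G:=\B(\ralpha)[\barsig_r(\rsigma)]\cong\B(\hatrsigma)$ is a braid graph of the dimension-$(r-1)$ link $\hatrsigma$ (Proposition~\ref{prop:multiple results cobbled}), the new copy $G'_U$ is $\B(\ralpha)[\barsig_r(b^r(\rsigma))]$, and $U=U_{\rsigma b^r(\rsigma)}$; using the analogue for the right end of a link of Proposition~\ref{prop:local structure of braid shadows}, one checks that $U$ equals the coordinate-$(r-1)$ semicube $\barsig_{r-1}(\rsigma)\cap\barsig_r(\rsigma)$ of $G$. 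The key bookkeeping fact about peripheral expansions is: for every coordinate $l<r$, each semicube $A$ of $\B(\ralpha)$ in coordinate $l$ satisfies $A\cap V(G)\in\{W,\,V(G)\setminus W\}$, the pair of semicubes of $G$ in coordinate $l$; in particular $A\cap V(G)$ is a nonempty semicube of $G$, and the condition $A\cap\barsig_r(b^r(\rsigma))=\emptyset$ forces $A=A\cap V(G)$ to be a semicube of $G$ that moreover avoids $U$. Granting this, the inductive step splits into three cases: (1) if $k<r$, then $A_i,A_j,A_k$ restrict to three nonempty pairwise disjoint semicubes of $\B(\hatrsigma)$, contradicting the induction hypothesis; (2) if $k=r$ and $A_k=\barsig_r(\rsigma)=V(G)$, then $A_i\cap A_k\supseteq A_i\cap V(G)\neq\emptyset$; (3) if $k=r$ and $A_k=\barsig_r(b^r(\rsigma))$, then $A_i$ and $A_j$ are semicubes of $G$ disjoint from $U$, so $A_i,A_j,U$ are three pairwise disjoint semicubes of $\B(\hatrsigma)$, again contradicting the induction hypothesis.

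The step I expect to be the main obstacle is the peripheral-expansion bookkeeping feeding cases~(1) and~(3): one must carefully track, for each ``old'' $\btheta$-class, whether it meets the subgraph induced on $U$, and deduce how its two semicubes extend across the new flap $\barsig_r(b^r(\rsigma))$ — in particular verifying that ``$A_i$ misses the new flap'' really forces $A_i$ to be a semicube of $G$ that avoids $U$, rather than merely a subset of $V(G)$. A secondary point requiring attention is that $U$ is a \emph{proper} semicube of $\B(\hatrsigma)$, i.e.\ not all of $V(G)$; this holds because $\dim(\hatrsigma)=r-1$, so coordinate $r-1$ is genuinely active within $\barsig_r(\rsigma)$. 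Once these are settled the induction closes, and combined with the box-decomposition reduction this gives the statement, hence Conjecture~\ref{conj:triples of sigbar sets not pairwise disjoint}, for every reduced expression.
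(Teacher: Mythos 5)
You should be aware that this statement is not proved in the paper at all: it is Conjecture~\ref{conj:triples of sigbar sets not pairwise disjoint}, restated as open problem (3) in Section~\ref{sec:closing}. So there is no proof of record to compare your argument against; it has to stand on its own. Having checked it, I think it does stand. The reduction to links is fine: by Proposition~\ref{prop:braid classes simply laced} no braid shadow of $[\ralpha]$ straddles a link-factor boundary (the dimension count in part (c) forces this), so each $\barsig$-set is a cylinder over a $\barsig$-set of one factor, and cylinders over distinct factors always intersect. The repeated-coordinate lemma is correct, and the three cases of your induction close once the bookkeeping fact is in place, with the peripheral-expansion setup taken verbatim from the proof of Theorem~\ref{thm:braid graph for link median}.

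The two steps you flagged as potential obstacles do hold, and both are short, so there is no genuine gap --- only writing left to do. First, $U=\barsig_{r-1}(\rsigma)\cap\barsig_r(\rsigma)$: every $\rbeta\in\barsig_r(\rsigma)$ ends in $st$, so $\rbeta\in U$ iff the letter in position $2r-1$ is $t$; applying Proposition~\ref{prop:local structure of overlapping braid shadows} at $i=r-1$ (with $\supp_{2r-2}([\ralpha])=\{u,t\}$, $\supp_{2r}([\ralpha])=\{s,t\}$), the only forms of $\rbeta_{\llb 2r-2,2r\rrb}$ compatible with $\sig_r(\rbeta)=s$ are $uts$ and $tus$, so $\rbeta_{2r-1}=t$ iff $\sig_{r-1}(\rbeta)=u=\sig_{r-1}(\rsigma)$; truncating gives $\barsig_{r-1}(\hatrsigma)$, a genuine semicube of $\B(\hatrsigma)$. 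Second, ``misses the flap forces avoids $U$'': if $A$ is a $\barsig$-set in coordinate $l\neq r$ with $A\cap\barsig_r(b^r(\rsigma))=\emptyset$ and $\rx\in A\cap U$, then $b^r(\rx)$ is defined, lies in $\barsig_r(b^r(\rsigma))$, and satisfies $\sig_l(b^r(\rx))=\sig_l(\rx)$, hence lies in $A$, a contradiction. For cases (1) and (2) you also need $A\cap\barsig_r(\rsigma)\neq\emptyset$ for any coordinate-$l$ semicube with $l<r$: this holds because $\supp_{2l}([\hatrsigma])\subseteq\supp_{2l}([\ralpha])$ and both are the two-element set of Proposition~\ref{prop:local structure of braid shadows}, so both coordinate-$l$ values are realized inside $\barsig_r(\rsigma)$. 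Two points of care in the write-up: state the induction hypothesis as the full conjecture (repeated coordinates included), since that is what you invoke when $r-1=2$ or when in case (3) one of $i,j$ equals $r-1$; and note that in case (3) the triple $\{A_i,A_j,U\}$ may contain two semicubes of $[\hatrsigma]$ in the same coordinate, which still yields a contradiction (complementary halves cover $V(G)$). With these details written out, your argument settles the conjecture, which goes beyond what the paper establishes.
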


It would be interesting to provide a succinct algebraic description of an arbitrary convex set in a braid graph in a Coxeter system of type $\Lambda$.

\section{Conjectures and Open Problems}\label{sec:closing}

Below we recall the conjectures explicitly mentioned previously and summarize a few additional open problems.

\begin{enumerate}
\item Conjecture~\ref{conj:diamisrank}: If $(W,S)$ is type $\Lambda$ and $\ralpha$ is a reduced expression, then $\diam(\B(\ralpha))=\dim(\ralpha)$. If true, it follows that that if $\ralpha=\ralpha_1 \mid \cdots \mid \ralpha_k$ is the link factorization, then
\[
\diam(\mathcal{B}(\ralpha))=\sum_{i=1}^k\dim(\ralpha_i).
\]

\item Conjecture~\ref{conj:iso dim is diam}: If $(W,S)$ is type $\Lambda$ and $\ralpha$ is a reduced expression, then $\dim_I(\B(\ralpha))=\diam(\B(\ralpha))$. Note that if Conjecture~\ref{conj:diamisrank} is true, then Proposition~\ref{prop:dim equals rank} would imply Conjecture~\ref{conj:iso dim is diam}, giving
\[
\dim_I(\B(\ralpha))=\diam(\B(\ralpha))=\dim(\ralpha).
\]

\item Conjecture~\ref{conj:triples of sigbar sets not pairwise disjoint}: If $(W,S)$ is type $\Lambda$ and $\ralpha$ is a reduced expression of dimension $r\geq 1$, then any triple $\{\barsig_i(\rbeta),\barsig_j(\rgamma),\barsig_k(\rsigma)\}$ is not pairwise disjoint, where $\rbeta,\rgamma,\rsigma\in [\ralpha]$ and $1\leq i,j,k\leq r$.

\item Can we provide a characterization of convex sets in braid graphs in Coxeter systems of type $\Lambda$?

\item Suppose $(W,S)$ is type $\Lambda$ and $\ralpha$ is a link of dimension at least one. If we apply a commutation move to a link in $[\ralpha]$, do we obtain a link (in a new braid class)? What structural properties are preserved between the braid graph for $\ralpha$ and the braid graph for the reduced expression we obtain by applying a commutation move to $\ralpha$?

\item The \emph{geodetic number} of a connected graph $G$ is the minimum number of vertices in a set $S$ whose geodetic closure (i.e., $\bigcup_{u,v\in S}I(u,v)$) is all of $V(G)$. If $(W,S)$ is type $\Lambda$ and $\ralpha$ is a reduced expression, we conjecture that the geodetic number of $\B(\ralpha)$ is two.  Moreover, if $\ralpha$ is a link, we conjecture that there is a unique set of cardinality two whose geodetic closure is all of $[\ralpha]$. This uniqueness claim is certainly not true for arbitrary reduced expressions (see Example~\ref{ex:braid classes}(a)).

\item Related to the previous item, we conjecture that for a link $\ralpha$ in Coxeter systems of type $\Lambda$, there is a unique diametrical pair that determines the diameter of $\B(\ralpha)$. As with the previous item, this claim is not true for arbitrary reduced expressions (see Example~\ref{ex:braid classes}(a)).

\item As seen in Example~\ref{ex:mednotbraid}, not every median graph can be realized as the braid graph for a reduced expression in a Coxeter system of type $\Lambda$. Can we classify the subclass of median graphs that arise as braid graphs in type $\Lambda$ Coxeter systems? 

\item Call a subset $X \subseteq\{0,1\}^n$ \emph{admissible} if there exists a  Coxeter system of type $\Lambda$ and reduced expression $\ralpha$ such that the induced subgraph $Q_n[X]$ is isomorphic to $B(\ralpha)$. Are there necessary and sufficient conditions for $X$ to be admissible?

\item The \emph{Fibonacci dimension} $\dim_F(G)$ of a partial cube $G$ is the smallest integer $d$ such that $G$ can be isometrically embedded into a Fibonacci cube of dimension $d$. Can the Fibonacci dimension of the braid graph for a reduced expression $\ralpha$ in a Coxeter system of type $\Lambda$ be expressed in terms of $\dim(\ralpha)$? By Proposition~\ref{prop:dim equals rank} and~\cite[Proposition 3.1]{cabello2011fibonacci}, we have $\dim(\ralpha) \leq \dim_F(\mathcal{B}(\ralpha)) \leq 2\dim(\ralpha)-1$.

\item As suggested by Example~\ref{ex:trimed}, perhaps braid graphs in \emph{all} simply-laced Coxeter systems are median.  Nearly all of the results in~\cite{ABCE2024} and this paper fundamentally rely on the Coxeter systems being triangle free.  Can we generalize to overcome the three-cycle obstruction?

\item We conjecture that braid graphs for links in all simply-laced Coxeter systems are indecomposable with respect to box product. That is, if a graph $G$ can be decomposed into the box of two nontrivial graphs, then we claim that $G$ is not the braid graph for a link.

\item Can we generalize the notion of braid shadow and link to account for arbitrary $m(s,t)$? We conjecture that most of the known results concerning braid graphs in Coxeter systems of type $\Lambda$ can be generalized to handle braid moves of odd length. However, generalizing to even length braid moves will require substantially more retooling.

\end{enumerate}


\bibliographystyle{plain}
\bibliography{biblio}

\begin{thebibliography}{10}

\bibitem{Avann1961}
S.P. Avann.
\newblock Metric ternary distributive semi-lattices.
\newblock {\em Proceeding of the American Mathematical Society},
  12(3):407--414, 1961.

\bibitem{ABCE2024}
F.~Awik, J.~Breland, Q.~Cadman, and D.C. Ernst.
\newblock {Braid graphs in simply-laced triangle-free Coxeter systems are
  partial cubes}.
\newblock {\em {European Journal of Combinatorics}}, 118, 2024.

\bibitem{Bandelt2002}
H.~Bandelt, L.~Quintana-Murci, A.~Salas, and V.~Macaulay.
\newblock The fingerprint of phantom mutations in mitochondrial {DNA} data.
\newblock {\em The American Journal of Human Genetics}, 71(5), 2002.

\bibitem{Barnes2022}
J.~Barnes.
\newblock {\em {Structural properties of braid graphs in simply-laced
  triangle-free Coxeter systems}}.
\newblock {MS Thesis}, Northern Arizona University, 2022.

\bibitem{Bedard1999}
R.~Bedard.
\newblock {On Commutation Classes of Reduced Words in Weyl Groups}.
\newblock {\em European Journal of Combinatorics}, 20, 1999.

\bibitem{Bergeron2015}
N.~Bergeron, C.~Ceballos, and J.P. Labb{\'{e}}.
\newblock {Fan realizations of subword complexes and multi-associahedra via
  Gale duality}.
\newblock {\em Discrete \& Computational Geometry}, 54(1):195--231, 2015.

\bibitem{BirkhoffKiss1947}
G.~Birkhoff and S.A. Kiss.
\newblock {A ternary operation in distributive lattices}.
\newblock {\em Bulletin of the American Mathematical Society}, 53(8), 1947.

\bibitem{cabello2011fibonacci}
S.~Cabello, D.~Eppstein, and S.~Klavzar.
\newblock The {F}ibonacci dimension of a graph.
\newblock {\em The Electronic Journal of Combinatorics}, 18(P55):1, 2011.

\bibitem{Denoncourt2016}
H.~Denoncourt, D.C. Ernst, and D.~Story.
\newblock {On the number of commutation classes of the longest element in the
  symmetric group}.
\newblock {\em Open Problems in Mathematics}, 4, 2016.

\bibitem{Dress1997}
A.~Dress, K.~Huber, and V.~Moulton.
\newblock Some variations on a theme by {B}uneman.
\newblock {\em Annals of Combinatorics}, 1, 1997.

\bibitem{Elnitsky1997}
S.~Elnitsky.
\newblock {Rhombic Tilings of Polygons and Classes of Reduced Words in Coxeter
  Groups}.
\newblock {\em The Journal of Combinatorial Theory, Series A}, 77(2), 1997.

\bibitem{Fishel2018}
S.~Fishel, E.~Mili{\'{c}}evi{\'{c}}, R.~Patrias, and B.E. Tenner.
\newblock {Enumerations relating braid and commutation classes}.
\newblock {\em European Journal of Combinatorics}, 74, 2018.

\bibitem{Geck2000}
M.~Geck and G.~Pfeiffer.
\newblock {\em {Characters of finite Coxeter groups and Iwahori--Hecke
  algebras}}.
\newblock 2000.

\bibitem{Grinberg2017}
D.~Grinberg and A.~Postnikov.
\newblock {Proof of a conjecture of Bergeron, Ceballos and Labb{\'{e}}}.
\newblock {\em New York Journal of Mathematics}, 23:1581--1610, 2017.

\bibitem{Gutierres2020}
G.~Gutierres, R.~Mamede, and J.L. Santos.
\newblock Commutation classes of the reduced words for the longest element of
  $\mathfrak{S}_n$.
\newblock {\em The Electronic Journal of Combinatorics}, 27, 2020.

\bibitem{Gutierres2022}
G.~Gutierres, R.~Mamede, and J.L. Santos.
\newblock Diameter of the commutation classes graph of a permutation.
\newblock {\em European Journal of Combinatorics}, 103, 2022.

\bibitem{Humphreys1990}
J.E. Humphreys.
\newblock {\em {Reflection Groups and Coxeter Groups}}.
\newblock Cambridge University Press, Cambridge, 1990.

\bibitem{ImrichKlavzarHammack2000}
W.~Imrich, S.~Klav{\v{z}}ar, and R.H. Hammack.
\newblock {\em {Product Graphs: Structure and Recognition}}.
\newblock Wiley, New York, 2000.

\bibitem{Jonsson2009}
J.~Jonsson and V.~Welker.
\newblock Complexes of injective words and their commutation classes.
\newblock {\em Pacific Journal of Mathematics}, 243(2), 2009.

\bibitem{Klavzar1999}
S.~Klavzar and H.M. Mulder.
\newblock Median graphs: {C}haracterizations, location theory and related
  structures.
\newblock {\em Journal of Combinatorial Mathematics and Combinatorial
  Computing}, 30, 1999.

\bibitem{Mulder1978}
H.M. Mulder.
\newblock {The structure of median graphs}.
\newblock {\em Discrete Mathematics}, 24:197--204, 1978.

\bibitem{Mulder2010}
H.M. Mulder.
\newblock {Median Graphs: A Structure Theory}.
\newblock In {\em Advances in Interdisciplinary Applied Discrete Mathematics},
  pages 93--125. 2010.

\bibitem{Nebesky1971}
L.~Nebesk{\`y}.
\newblock Median graphs.
\newblock {\em Commentationes Mathematicae Universitatis Carolinae}, 12(2),
  1971.

\bibitem{Ovchinnikov2008media}
S.~Ovchinnikov.
\newblock {Media Theory: Representations and Examples}.
\newblock {\em Discrete Applied Mathematics}, 156(8):1197--1219, 2008.

\bibitem{ovchinnikov2008partial}
S.~Ovchinnikov.
\newblock Partial cubes: {S}tructures, characterizations, and constructions.
\newblock {\em Discrete Mathematics}, 308(23):5597--5621, 2008.

\bibitem{ovchinnikov2011}
S.~Ovchinnikov.
\newblock {\em {Graphs and {C}ubes}}.
\newblock Springer Science and Media, 2011.

\bibitem{Perry2024}
R.~Perry.
\newblock {\em {Braid graphs in Coxeter systems of type $\Lambda$ are median}}.
\newblock {MS Thesis}, Northern Arizona University, 2024.

\bibitem{Tenner2006}
B.E. Tenner.
\newblock Reduced decompositions and permutation patterns.
\newblock {\em Journal of Algebraic Combinatorics}, 24, 2006.

\bibitem{Tenner2023}
B.E. Tenner.
\newblock {One-element commutation classes}.
\newblock {\em {\tt arXiv:2212.02480}}, 2023.

\bibitem{Zollinger1994a}
D.M. Zollinger.
\newblock {\em {Equivalence classes of reduced words}}.
\newblock {MS Thesis}, University of Minnesota, 1994.

\end{thebibliography}

\end{document}